\documentclass[11pt,a4paper,reqno]{amsart}

\usepackage[pdftitle={Green's function and the strong maximum principle}, pdfauthor={Luigi Orsina and Augusto C. Ponce}]{hyperref}
\usepackage{tikz}

\usepackage{amsfonts}

\usepackage[T1]{fontenc}

\usepackage{enumerate}

\usepackage[abbrev,backrefs]{amsrefs}
\usepackage{mathtools}

\usepackage[utf8]{inputenc}

\usepackage{graphicx}
\usepackage{amssymb}
\usepackage{esint}

\makeatletter
\newdimen\rh@wd
\newdimen\rh@hta
\newdimen\rh@htb
\newbox\rh@box
\def\rh@measure#1{\setbox\rh@box=\hbox{$#1$}\rh@wd=\wd\rh@box \rh@hta=\ht\rh@box}

\def\widecheck#1{\rh@measure{#1}%
  \setbox\rh@box=\hbox{$\widehat{\vrule height \rh@hta width\z@ \kern\rh@wd}$}%
  \rh@htb=\ht\rh@box \advance\rh@htb\rh@hta \advance\rh@htb\p@
  \ooalign{$\vrule height \ht\rh@box width\z@ #1$\cr
           \raise\rh@htb\hbox{\scalebox{1}[-1]{\box\rh@box}}\cr}}
\makeatother

\usepackage{palatino}




\usepackage{color}
\usepackage{todonotes}

\usepackage[nameinlink]{cleveref}

\newcommand{\abs}[1]{\mathopen\lvert#1\mathclose\rvert}

\newcommand{\Bigabs}[1]{\Bigl\lvert#1\Bigr\rvert}
\newcommand{\biggabs}[1]{\biggl\lvert#1\biggr\rvert}
\newcommand{\norm}[1]{\mathopen\lVert#1\mathclose\rVert}
\newcommand{\bignorm}[1]{\mathopen\big\lVert#1\mathclose\big\rVert}

\newcommand{\N}{{\mathbb N}}
\newcommand{\R}{{\mathbb R}}

\newcommand{\cH}{\mathcal{H}}
\newcommand{\cM}{\mathcal{M}}

\DeclareMathOperator{\diam}{diam}

\DeclareMathOperator{\sgn}{sgn}

\DeclareMathOperator{\capt}{cap}

\newcommand{\dif}{\,\mathrm{d}}
\newcommand{\quasi}[1]{\widehat{#1}}

\newcommand{\lc}{_\mathrm{c}}
\newcommand{\ld}{_\mathrm{d}}
\newcommand{\la}{_\mathrm{a}}
\newcommand{\ls}{_\mathrm{s}}
\newcommand{\loc}{_\mathrm{loc}}

\newcommand{\meas}[1]{\left| #1 \right|}

\theoremstyle{plain}
\newtheorem{proposition}{Proposition}[section]
\newtheorem{lemma}[proposition]{Lemma}
\newtheorem{theorem}[proposition]{Theorem}
\newtheorem{corollary}[proposition]{Corollary}
\newtheorem{definition}{Definition}[section]

\theoremstyle{definition}

\newtheorem{remark}[proposition]{Remark}

\theoremstyle{remark}
\newtheorem{example}{Example}[section]

\newtheorem*{Claim}{Claim}


\hyphenation{Schrö-din-ger}




\numberwithin{equation}{section}

\title[Green's function and the strong maximum principle]{On the nonexistence of Green's function\\ and failure of the strong maximum principle}

\author{Luigi Orsina}
\address{
Luigi Orsina\hfill\break\indent
``Sapienza'' Universit\`a di Roma\hfill\break\indent
Dipartimento di Matematica \hfill\break\indent
P.le A.~Moro 2\hfill\break\indent
00185 Roma, Italy}
\email{orsina@mat.uniroma1.it}

\author{Augusto C. Ponce}
\address{
Augusto C. Ponce\hfill\break\indent
 Université catholique de Louvain\hfill\break\indent
 Institut de Recherche en Mathématique et Physique\hfill\break\indent
 Chemin du cyclotron 2, L7.01.02\hfill\break\indent
1348 Louvain-la-Neuve, Belgium}
\email{Augusto.Ponce@uclouvain.be}

\begin{document}


\begin{abstract}
Given any Borel function \(V : \Omega \to [0, +\infty]\) on a smooth bounded domain \(\Omega \subset \R^{N}\),{}
we establish that the strong maximum principle for the Schrödinger operator \(-\Delta + V\) in \(\Omega\) holds in each Sobolev-connected component of \(\Omega \setminus Z\), where \(Z \subset \Omega\) is the set of points which cannot carry a Green's function for \(- \Delta + V\).{}
More generally, we show that the equation \(- \Delta u + V u = \mu\) has a distributional solution in \(W_{0}^{1, 1}(\Omega)\) for a nonnegative finite Borel measure \(\mu\) if and only if \(\mu(Z) = 0\). 
\end{abstract}

\subjclass[2010]{Primary: 35J10, 35B05, 35B50; Secondary: 31B15, 31B35, 31C15}

\keywords{Schr\"{o}dinger operator, strong maximum principle, measure datum, singular potential}

\maketitle

\section{Introduction and main results}

Let \(\Omega \subset \R^{N}\) be a smooth bounded connected open set and let \(V : \Omega \to [0, +\infty]\) be a Borel function.
The \emph{weak} maximum principle  for the Schr\"odinger operator 
\(-\Delta + V\) ensures that if
\(w \in W_{0}^{1, 2}(\Omega) \cap L^{\infty}(\Omega)\) is a function such that \(Vw \in L^{1}(\Omega)\) and
\begin{equation}
	\label{eqEquationSupersolution}
-\Delta w + Vw = f
\quad \text{in the sense of distributions in \(\Omega\),}
\end{equation}
where \(f \in L^{\infty}(\Omega)\) is nonnegative, then \(w\) must be nonnegative in \(\Omega\).
In this paper, we are interested in the mechanism that guarantees the validity or the 
failure of the \emph{strong} maximum principle for 
\(-\Delta + V\) and whether there is some unifying property that holds regardless of the potential \(V\).
More precisely, for a fixed potential \(V\), we want to understand whether the alternative holds:
\begin{equation}
\label{eqAlternative}
		\text{either}
		\quad
		\text{\(w > 0\) \ in \(\Omega\) \quad{}
		or 
		\quad{}
		\(w \equiv 0\) \ in \(\Omega\),}
\end{equation}
and, when it fails, to identify the location of the zero-set \(\{w = 0\}\) in \(\Omega\) and decide whether there is some form of the strong maximum principle that survives in each component of \(\Omega \setminus \{w = 0\}\). 

To clarify the pointwise meaning of \(w\) in \(\Omega\), we reformulate \eqref{eqAlternative} using the precise representative \(\quasi{w}\).{}
We recall that, by the Lebesgue differentiation theorem, \(\quasi{w} = w\) almost everywhere in \(\Omega\).
Since in our case \(w\) is almost everywhere the difference between a continuous and a bounded superharmonic
 function, every \(x \in \Omega\) is a Lebesgue point of \(w\).{}
Hence, the precise representative can be computed pointwise using the limit 
\[{}
\quasi{w}(x)
= \lim_{r \to 0}{\fint_{B_{r}(x)}{w}},
\]
where \(\fint_{B_{r}(x)} \vcentcolon= \frac{1}{\abs{B_{r}}} \int_{B_{r}(x)}\) denotes the average integral over the ball.

\vspace{3pt}

Observe that \eqref{eqAlternative} classically holds for potentials \(V\) in \(L^{\infty}(\Omega)\) and, more generally, in the Lorentz space \(L^{\frac{N}{2}, 1}(\Omega)\) or in the Kato class \(K(\Omega)\)\,; see \citelist{\cite{Murata:1986} \cite{Zhao:1986} \cite{Devyver:2018}*{Section~3}} or \Cref{exampleLorentz} below.{}
Such a conclusion is no longer true when \(V\) merely belongs to \(L^{p}(\Omega)\) for some \(p \le N/2\)\,:{}

\begin{example}\label{eqExample0}
Given \(1 \le p \le N/2\) and any compact set \(K \subset \Omega\) with finite  \(\cH^{N - 2p}\) Hausdorff measure, we construct in  \cite{Orsina_Ponce:2016}*{Section~6} a nonnegative potential \(V \in L^{p}(\Omega)\), depending on \(K\), such that every nontrivial solution \(w\) associated to the Schrödinger operator \(- \Delta + V\) with nonnegative bounded datum \(f\) satisfies
\begin{equation}
	\label{eqExample0Z}
\{\quasi{w} = 0\} = K.
\end{equation}
\end{example}

We prove in \cite{Orsina_Ponce:2016} that the \(W^{2, p}\)~capacity is the correct way of quantifying the smallness of \(\{\quasi{w} = 0\}\) when one considers the full class of \(L^{p}\) potentials \(V\) for \(p > 1\)\,; the counterpart for \(p = 1\) involves the \(W^{1, 2}\)~capacity, as first identified by Ancona~\cite{Ancona:1979}.{}
By looking at a specific potential \(V\) one may have a zero-set of dimension strictly smaller than \(N-2p\)\,:

\begin{example}\label{eqExample1}
Given \(a \in \Omega\) and \(\alpha \in \R\), let
\begin{equation*}
	V(x) = \frac{1}{\abs{x - a}^{\alpha}}~.
\end{equation*}
When \(\alpha \ge 2\), every nontrivial solution \(w\) with nonnegative bounded datum \(f\) satisfies 
\begin{equation}
	\label{eqExample1Z}
\{\quasi{w} = 0\} = \{a\}.
\end{equation}
To see why \(\quasi{w}(a) = 0\), one relies on the fact that
\[{}
\int_{B_{r}(a)}{Vw}
= o(r^{N-2})
\quad \text{as \(r \to 0\),}
\]
which follows from a scaling argument in the equation \eqref{eqEquationSupersolution} by means of test functions of the form \(\varphi(\frac{x - a}{r})\).
For \(\alpha \ge N\), one may argue differently by observing that \(V\) is not summable in any neighborhood of \(a\) but \(Vw \in L^{1}(\Omega)\).
\end{example}

These examples are particular cases of the general principle implied by our \Cref{theoremMaximumPrinciplePrecise} below that \emph{the zero-set is independent of the solution} whenever \(V \in L^{1}(\Omega)\) or, more generally, when the set where \(V\) fails to be locally summable has \(\cH^{N-1}\) Hausdorff measure zero; see also \Cref{theoremMaximumPrinciplePreciseImproved}.{}
When the singular set of \(V\) is large enough, a splitting of the domain in connected components of \emph{analytic} type may occur:

\begin{example}\label{eqExample2}
In the unit ball \(\Omega = B_{1}(0)\), take	
\begin{equation*}
	V(x)
= \frac{1}{\abs{x_{1} - a}^{\alpha}} + \frac{1}{\abs{x_{1} - b}^{\beta}}\,,
\end{equation*}
where \(-1 < a < b < 1\) and \(x_{1}\) denotes the first component of \(x = (x_{1}, \dots, x_{N})\).{}
Here, the \emph{strength of the singularity modifies the geometric configuration} of the zero-set, even inside the range \(\alpha \ge 1\) and \(\beta \ge 1\) where \(V \not\in L^{1}(B_{1}(0))\)\,:
\begin{enumerate}[(a)]
\item For \(\alpha \ge 2\) and \(1 \le \beta < 2\), every nontrivial solution \(w\) satisfies
	\begin{equation}
		\label{eqExample2Za}
		\{\quasi{w} = 0\} 
		= \{x_{1} \ge a\} \cap B_{1}(0),
	\end{equation}
	and in particular vanishes on a non-empty open set.
\item{}
\label{item-301}
 For a stronger singularity with \(\alpha \ge 2\) and \(\beta \ge 2\), the zero-set of \(w\) depends on \(f\).{}
The reason is that the Dirichlet problem splits in three independent regions inside \(B_{1}(0)\), identified by the conditions 
\[{}
x_{1} < a,
\quad 
a < x_{1} < b
\quad \text{and} \quad{}
x_{1} > b.{}
\]
	In particular, the choice \(f \equiv 1\) yields a smaller zero-set, namely
	\begin{equation}
		\label{eqExample2Zb}
	\{\quasi{w} = 0\} 
	= \bigl( \{x_{1} = a\} \cup \{x_{1} = b\}\bigr) \cap B_{1}(0).
	\end{equation}
\end{enumerate}
These assertions can be established using \cite{Orsina_Ponce:2008}*{Section~9} and are related to the failure of the Hopf boundary lemma.
\end{example}

The previous example illustrates in \eqref{item-301} the fact that strong singularities may be used to confine physical particles in prescribed regions; see \cite{Diaz:2015,Diaz:2017}.
Potentials \(V\) which are \(+\infty\) in some large parts of \(\Omega\) are also of interest and model the presence of impurities or coolers in the domain; see \cite{Rauch_Taylor:1975}.
This is intended to prescribe regions where solutions must vanish:

\begin{example}\label{eqExample3}
	Take 
\begin{equation*}
	V(x)
= \frac{1}{d(x, \overline{\omega})^{\alpha}}\,,
\end{equation*}
where \(\omega \Subset \Omega\) is a smooth open set and \(d(x, \overline{\omega})\) denotes the distance from \(x\) to \(\overline{\omega}\).{}
The strong maximum principle depends on the exponent \(\alpha\)\,:
\begin{enumerate}[(a)]
	\item When \(1 \le \alpha < 2\), there is only the trivial solution \(w \equiv 0\) in \(\Omega\), as an application of the Hopf lemma.
	\item When \(\alpha \ge 2\), nontrivial supersolutions do exist since the Hopf lemma fails pointwise on \(\partial\omega\), see~\cite{Orsina_Ponce:2017}*{Proposition~2.7}, and they all satisfy
\begin{equation*}
\{\quasi{w} = 0\} 
= \overline\omega.
\end{equation*}
\end{enumerate}
\end{example}

To understand the unifying idea behind the strong maximum principle for an arbitrary Borel function \(V : \Omega \to [0, +\infty]\), we first select the subset of points in \(\Omega\) where distributional solutions of the Schrödinger equation must vanish:

\begin{definition}
	Given a Borel function \(V : \Omega \to [0, +\infty]\), the \emph{universal zero-set} \(Z\) associated to \(- \Delta + V\) is the set of points \(x \in \Omega\) such that
	\[{}
	\quasi{w}(x) = 0
	\] 
	for every solution \(w \in W_{0}^{1, 2}(\Omega) \cap L^{\infty}(\Omega)\) of~\eqref{eqEquationSupersolution} for some nonnegative \(f \in L^{\infty}(\Omega)\).
\end{definition}

The universal zero-set depends on \(V\), but to simplify the notation we do not explicit such a dependence.
In our \Cref{eqExample0,eqExample1,eqExample2}, the sets \(Z\) are given by \eqref{eqExample0Z} to \eqref{eqExample2Zb}.
In the latter example, \(\Omega \setminus Z\) has three connected components;
a variant of this case using singularities on infinitely many hyperplanes \(\{x_{1} = a_{i}\}\) with exponents \(\alpha_{i} \ge 2\) yields a set \(\Omega \setminus Z\) with an infinite number of components.
Finally, for \(V\) as in \Cref{eqExample3} one has \(Z = \Omega\) when \(1 \le \alpha < 2\) and \(Z = \overline\omega\) when \(\alpha \ge 2\).{}

We prove later on that \(Z\) is, topologically speaking, a \emph{Sobolev-closed} set in the sense that there exists a nonnegative function \(\xi \in W_{0}^{1, 2}(\Omega)\) such that every \(x \in \Omega\) is a Lebesgue point of \(\xi\) and
\begin{equation}
	\label{eqDefinitionQuasiClosed}
	Z = \bigl\{x \in \Omega : \quasi{\xi}(x) = 0 \bigr\}.
\end{equation}
For example, the solution of \eqref{eqEquationSupersolution} with the characteristic function \(f = \chi_{\Omega \setminus Z}\) satisfies \eqref{eqDefinitionQuasiClosed}, although it is not clear for the moment why this is true nor even why such a solution exists.
These facts are a consequence of \Cref{remarkSetA} and \Cref{corollaryBounded}, respectively. 

We then identify all possible zero-sets of supersolutions of the 
Schrödinger operator \(-\Delta + V\) using the Sobolev-connected components of \(\Omega \setminus Z\).{}
Our main result below provides one with a quantization property for the 
strong maximum principle,
where the relevant singularities of the potential \(V\) for \(-\Delta + V\) are encoded in the universal zero-set \(Z\)\,:

\begin{theorem}
	\label{theoremMaximumPrincipleFull}
	For every Borel function \(V : \Omega \to [0, +\infty]\), the Sobolev-open set \(\Omega \setminus Z\) can be uniquely decomposed as a finite or countably infinite union of disjoint Sobolev-connected-open sets \((D_{j})_{j \in J}\) and any solution \(w \in W_{0}^{1, 2}(\Omega) \cap L^{\infty}(\Omega)\) of the Schrödinger equation~\eqref{eqEquationSupersolution} for nonnegative \(f \in L^{\infty}(\Omega)\) satisfies, in each component \(D_{j}\)\,,
	\begin{center}
		either 
		\quad
		 \(\quasi{w} > 0\) \ in \(D_{j}\) \quad{}
		or 
		\quad{}
		\(\quasi{w} \equiv 0\) \ in \(D_{j}\).
	\end{center}
\end{theorem}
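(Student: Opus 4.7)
The statement splits into two parts: (a) the unique countable decomposition of \(\Omega \setminus Z\) into Sobolev-connected-open components, and (b) the dichotomy on each such component.

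For (a), the set \(\Omega \setminus Z\) is Sobolev-open in view of the representation \(Z = \{\quasi{\xi} = 0\}\) with \(\xi \in W_{0}^{1,2}(\Omega)\) announced after the definition of \(Z\).{} I would invoke a general decomposition result in the fine/Sobolev topology—which I expect to be established in an earlier section—asserting that every Sobolev-open subset of \(\Omega\) admits a unique partition into at most countably many Sobolev-connected-open subsets.{} The standard construction defines the \(D_{j}\)\,'s as equivalence classes under the relation \(x \sim y\) iff \(x\) and \(y\) lie in a common Sobolev-connected Sobolev-open subset of \(\Omega \setminus Z\)\,; countability comes from separability of \(W^{1,2}(\Omega)\) and the fact that distinct components can be separated by disjoint Sobolev-open sets of positive \(W^{1,2}\)-capacity.

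For (b), fix a component \(D_{j}\) and a solution \(w\) of \eqref{eqEquationSupersolution} with \(f \geq 0\).{} The weak maximum principle gives \(\quasi{w} \geq 0\), and by quasi-continuity the set \(A = \{\quasi{w} > 0\}\) is Sobolev-open. The dichotomy in \(D_{j}\) will follow from Sobolev-connectedness of \(D_{j}\) once I show that the complementary set \(N_{j} = D_{j} \cap \{\quasi{w} = 0\}\) is also Sobolev-open, since then \(A \cap D_{j}\) and \(N_{j}\) partition \(D_{j}\) into two disjoint Sobolev-open subsets and one of them must be empty.

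To obtain Sobolev-openness of \(N_{j}\), my plan is to rely on the Green's-function representation developed in the body of the paper, as suggested by its title.{} The expected structural statement is that there exists a nonnegative kernel \(G_{V}\) on \((\Omega \setminus Z) \times (\Omega \setminus Z)\) with \(\quasi{w}(x) = \int_{\Omega}{G_{V}(x,y)\, f(y) \dif y}\) for every \(x \in \Omega \setminus Z\), and that \(G_{V}\) obeys a \emph{cross-component vanishing} property—\(G_{V}(x,y) = 0\) whenever \(x\) and \(y\) belong to distinct Sobolev-connected components—together with a Harnack-type positivity within a single component. Granting this, for \(x \in D_{j}\) the representation reduces to \(\quasi{w}(x) = \int_{D_{j}}{G_{V}(x,y)\, f(y) \dif y}\), whose integrand is either identically zero on \(D_{j}\) (when \(f\) vanishes a.e.\ on \(D_{j}\), forcing \(\quasi{w} \equiv 0\) on \(D_{j}\)) or strictly positive for every \(x \in D_{j}\) by the within-component positivity, delivering the claim.

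The main obstacle is precisely the construction of \(G_{V}\) and the verification of its cross-component vanishing and intra-component positivity, which require delicate capacity estimates tailored to the singular potential \(V\) and a careful analysis of how the fine topology interacts with \(-\Delta + V\). Once this machinery is in place, \Cref{theoremMaximumPrincipleFull} follows as a formal consequence of the representation formula combined with the purely topological decomposition in step (a).
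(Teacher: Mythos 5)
Your part~(b) correctly anticipates the paper's mechanism, but part~(a) contains a genuine gap. You invoke a ``general decomposition result'' asserting that every Sobolev-open set partitions uniquely into at most countably many Sobolev-connected-open components, built as equivalence classes in the standard topological way. No such general theorem is available, and the paper does not prove one: the family of Sobolev-open sets is stable only under finite intersections and \emph{countable} unions, so it is not a topology, and the classical component construction (the union of all connected subsets through a given point) yields a set that has no reason to be Sobolev-open; it is not even clear that the union of two overlapping Sobolev-connected sets is Sobolev-connected. The paper avoids this entirely by \emph{constructing} the components from the operator itself: each \(D_{j}\) is a superlevel set \(U_{x} = \{x\} \cup \{\quasi{G_{x}} > 0\}\) of a Green's function (\Cref{definitionSetF}); these are pairwise equal or disjoint (\Cref{propositionEquivalenceClasses}, proved via the comparison principle \(\quasi{G_{x}}(y) \ge \int_{\Omega} G_{y} H(G_{x})\) and the symmetry \(\quasi{G_{x}}(y) = \quasi{G_{y}}(x)\)), Sobolev-open (\Cref{propositionQuasiOpenFx}), contained in \(\Omega \setminus Z\) when \(x \notin Z\) (\Cref{propositionSZ}), and---the hardest point---Sobolev-connected, which requires the balayage argument of \Cref{lemmaBalayage} and \Cref{propositionConnectedF}. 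Countability follows because each \(U_{x}\) has positive Lebesgue measure; only \emph{uniqueness} of the decomposition is a soft topological argument. Your plan reverses the logic: the decomposition is an output of the Green's-function analysis, not an input to it.

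For the dichotomy, your ``expected structural statement'' is essentially what the paper proves: by \Cref{theoremGreen} and the fact that \(G_{x} = 0\) almost everywhere outside \(U_{x} = D_{j}\), one gets \(\quasi{w}(x) = \int_{D_{j}} G_{x} f\) for \(x \in D_{j}\), and positivity of \(G_{x}\) on \(D_{j}\) forces \(f = 0\) a.e.\ on \(D_{j}\) as soon as \(\quasi{w}\) vanishes at one point there. But note that the ``cross-component vanishing'' and ``intra-component positivity'' you flag as requiring delicate capacity estimates are not additional properties to be verified: they hold by the very definition of \(U_{x}\) as the positivity set of \(G_{x}\), and no Harnack inequality is used (none is available for singular \(V\)). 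Your fallback route---showing \(N_{j} = D_{j} \cap \{\quasi{w} = 0\}\) is Sobolev-open and concluding by connectedness---does not make things easier, since \(\{\quasi{w} = 0\}\) is Sobolev-\emph{closed} by construction and proving that its trace on \(D_{j}\) is also Sobolev-open is essentially equivalent to the dichotomy itself.
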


The concepts of \emph{Sobolev-open} and \emph{Sobolev-connected} sets are directly inspired from their classical topological counterparts; see \Cref{definitionQuasiOpen,definitionQuasiConnected}.
In \Cref{remarkQuasiOpen} below, we relate Sobolev-open sets with other notions of open sets that have been extensively investigated in Potential theory.

As we explain in \Cref{sectionProofs}, the Dirichlet problem in \(\Omega\) uncouples to the various Sobolev-connected components of \(\Omega \setminus Z\) and each possibility provided by \Cref{theoremMaximumPrincipleFull} can effectively happen in \(D_{j}\) without interaction with the other parts of \(\Omega \setminus Z\).{}
In particular, for every subset of indices \(L \subset J\), there exists a solution with 
\begin{center}
\(\quasi{w} > 0\) \ in \(\bigcup\limits_{j \in L}{D_{j}}\) 
\quad and \quad
\(\quasi{w} = 0\) \ otherwise.
\end{center}

In dimension \(N = 1\), solutions are continuous and the picture that comes from \Cref{theoremMaximumPrincipleFull} is rather simple when \(Z \ne \emptyset\)\,:
the universal zero-set \(Z\) is relatively closed in \(\Omega\) for the Euclidean topology and then \(\Omega \setminus Z\) is a finite or countable union of disjoint open intervals \(D_{j} = (a_{j}, b_{j})\) where 
\[{}
V \in L^{1}\loc(D_{j})
\quad \text{and} \quad{}
\int_{D_{j}}{V(x) d(x, \partial D_{j}) \dif x}
= +\infty.
\]
Indeed, at an endpoint \(c_{j} = a_{j}\) or \(b_{j}\) inside \(\Omega\), the Hopf lemma in \(D_{j}\) must fail at \(c_{j}\), which is the case if and only if
\[{}
\int_{c_{j}}^{\frac{a_{j} + b_{j}}{2}}{V(x) (x - c_{j}) \dif x}
= +\infty.
\]
One thus recovers \cite{Bertsch_Smarrazzo_Tesei:2015}*{Theorem~2.1}  by Bertsch, Smarrazzo and Tesei; see also \cite{Orsina_Ponce:2017}.

In dimension \(N \ge 2\), one deduces that \(\Omega \setminus Z\) has only one Sobolev-connected component for small \(Z\) using the Intermediate value theorem for Sobolev functions by Van~Schaftingen and Willem~\cite{VanSchaftingen_Willem:2008}:

\begin{corollary}
	\label{theoremMaximumPrinciplePrecise}
	If\/ \(\cH^{N-1}(Z) = 0\), then \(\Omega \setminus Z\) is Sobolev-connected.
	Hence, the zero-set of any solution \(w \in W_{0}^{1, 2}(\Omega) \cap L^{\infty}(\Omega)\) of the Schrödinger equation \eqref{eqEquationSupersolution} with nonnegative \(f \in L^{\infty}(\Omega)\) and \(\int_{\Omega}{f} > 0\) does not depend on \(w\), and 
	\[{}
		\quasi{w}(x) = 0
		\quad \text{if and only if} \quad
		x \in Z.
	\]
\end{corollary}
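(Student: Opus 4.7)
The plan is to reduce \Cref{theoremMaximumPrinciplePrecise} to \Cref{theoremMaximumPrincipleFull}. The main content is the first assertion, namely that \(\Omega \setminus Z\) consists of a single Sobolev-connected component as soon as \(\cH^{N-1}(Z) = 0\); once this is in hand, \Cref{theoremMaximumPrincipleFull} gives the dichotomy \(\quasi{w} > 0\) or \(\quasi{w} \equiv 0\) on the unique component \(D_{1} = \Omega \setminus Z\), and the hypothesis \(\int_{\Omega}{f} > 0\) rules out \(w \equiv 0\) in \(\Omega\) since testing \eqref{eqEquationSupersolution} with a smooth approximation of \(1\) would otherwise yield \(\int_{\Omega}{f} = 0\).{} Combined with the definition of \(Z\), this gives \(\{\quasi{w} = 0\} = Z\).

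For the Sobolev-connectedness, I argue by contradiction: suppose \(\Omega \setminus Z = A \cup B\) is a nontrivial decomposition into disjoint nonempty Sobolev-open sets. By the characterization of Sobolev-open sets through precise representatives of \(W_{0}^{1, 2}\) functions (in the spirit of \eqref{eqDefinitionQuasiClosed}), pick nonnegative \(\xi_{A}, \xi_{B} \in W_{0}^{1, 2}(\Omega)\) satisfying \(A = \{\quasi{\xi_{A}} > 0\}\) and \(B = \{\quasi{\xi_{B}} > 0\}\). Set
\[
u = \xi_{A} - \xi_{B} \in W_{0}^{1, 2}(\Omega).
\]
Since \(A \cap B = \emptyset\), \(\quasi{\xi_{A}}\) vanishes quasi-everywhere on \(B\) and \(\quasi{\xi_{B}}\) vanishes quasi-everywhere on \(A\), so \(\quasi{u} > 0\) on \(A\), \(\quasi{u} < 0\) on \(B\), and \(\{\quasi{u} = 0\}\) coincides with \(Z\) up to a set of \(W^{1, 2}\)-capacity zero. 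In particular \(\{u > 0\}\) and \(\{u < 0\}\) both have positive Lebesgue measure; moreover, because \(W^{1, 2}\)-null sets have Hausdorff dimension at most \(N - 2\), the hypothesis \(\cH^{N-1}(Z) = 0\) upgrades to
\[
\cH^{N-1}\bigl(\{\quasi{u} = 0\}\bigr) = 0.
\]

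To close the argument, I invoke the intermediate value theorem of Van~Schaftingen and Willem~\cite{VanSchaftingen_Willem:2008} on the connected open set \(\Omega\): any \(u \in W^{1, 1}(\Omega)\) whose positive and negative parts are both of positive Lebesgue measure must have a level set \(\{\quasi{u} = 0\}\) of positive \(\cH^{N-1}\) measure. This contradicts the previous display and establishes the Sobolev-connectedness of \(\Omega \setminus Z\).{}

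The main obstacle is precisely the bridge between the measure-theoretic smallness condition \(\cH^{N-1}(Z) = 0\) and a genuinely topological obstruction to splitting \(\Omega \setminus Z\); the Van~Schaftingen--Willem theorem provides this bridge, but applying it cleanly requires keeping track of the distinction between sets of zero \(W^{1, 2}\)-capacity and sets of zero \(\cH^{N-1}\) measure, and verifying that the nonnegative \emph{representatives} \(\xi_{A}, \xi_{B}\) obtained from the Sobolev-open structure of \(A\) and \(B\) do produce an admissible test function \(u\) whose zero set tracks \(Z\) up to capacity.
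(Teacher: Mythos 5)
Your argument is correct and uses the same key mechanism as the paper: given a putative splitting of \(\Omega \setminus Z\) into disjoint non-empty Sobolev-open sets, apply the Intermediate value theorem of Van~Schaftingen and Willem~\cite{VanSchaftingen_Willem:2008} to \(\xi_{A} - \xi_{B}\) to force \(\cH^{N-1}(Z) > 0\), then feed the single component into \Cref{theoremMaximumPrincipleFull}. The only difference is organizational: the paper runs this argument on the smaller set \(S\) to prove the stronger \Cref{theoremMaximumPrinciplePreciseImproved} (which also yields \(S = Z\) via the superlevel sets \(U_{x}\)) and deduces the corollary from it, whereas you apply it directly to \(Z\), which suffices here and lets you bypass the \(S\)-machinery; note also that by the paper's definition of Sobolev-open sets the identity \(\{\quasi{u} = 0\} = \Omega \setminus (A \cup B) \subset Z\) is exact, so your capacity digression is not needed.
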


The proof of \Cref{theoremMaximumPrincipleFull} relies on the fact that the universal zero-set \(Z\) is the set of points where the Schrödinger operator \(-\Delta + V\) is unable to have a Green's function in the sense of distributions.
For example, in the spirit of the seminal work of Bénilan and Brezis~\cite{Benilan_Brezis:2004} one verifies that when \(V\) is the potential in \Cref{eqExample1} with exponent \(\alpha \ge 2\) the equation
\[{}
- \Delta u + V u = \delta_{a}
\]
involving a Dirac mass \(\delta_{a}\) does not have a distributional solution in \(\Omega\), see \cite{Ponce_Wilmet:2017}*{Section~9}, and as we have observed in this case, \(Z = \{a\}\).
More generally, we establish that

\begin{theorem}
	\label{theoremGreen}
Let \(V : \Omega \to [0, +\infty]\) be a Borel function.
Given \(x \in \Omega\), there  exists 
\(G_{x} \in W_{0}^{1, 1}(\Omega) \cap L^{1}(\Omega; V \dif x)\) such that
\[{}
- \Delta G_{x} + VG_{x} = \delta_{x}
\quad \text{in the sense of distributions in \(\Omega\)}
\]
if and only if 
\[{}
x \not\in Z.
\] 
Moreover, one has Green's representation formula
\begin{equation*}
	\quasi{w}(x)
= \int_{\Omega}{G_{x} f}
\quad \text{at each \(x \in \Omega \setminus Z\),}
\end{equation*} 
for every function \(w \in W_{0}^{1, 2}(\Omega) \cap L^{\infty}(\Omega)\) that satisfies \eqref{eqEquationSupersolution} with \(f \in L^{\infty}(\Omega)\).
\end{theorem}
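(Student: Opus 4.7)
The plan is to approximate the singular potential by its truncations \(V_{n} := V \wedge n\), for which Green's functions exist classically, and then to pass to the limit along monotone sequences. Since each \(V_{n} \in L^{\infty}(\Omega)\) is nonnegative, standard theory provides a Green's function \(G_{x}^{n} \in W_{0}^{1,q}(\Omega)\) (with \(q < N/(N-1)\)) satisfying \(-\Delta G_{x}^{n} + V_{n} G_{x}^{n} = \delta_{x}\) in \(\mathcal{D}'(\Omega)\), and, for any \(f \in L^{\infty}(\Omega)\) with \(f \ge 0\), the Hölder continuous solution \(w_{n} \in W_{0}^{1,2}(\Omega) \cap L^{\infty}(\Omega)\) of \(-\Delta w_{n} + V_{n} w_{n} = f\) satisfies the classical reciprocity identity \(w_{n}(x) = \int_{\Omega} G_{x}^{n} f\).

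Comparing the equations for consecutive indices shows that both \(G_{x}^{n} - G_{x}^{n+1}\) and \(w_{n} - w_{n+1}\) are nonnegative supersolutions of \(-\Delta + V_{n}\) with vanishing boundary trace, so the weak maximum principle forces the sequences \((G_{x}^{n})\) and \((w_{n})\) to be pointwise decreasing. Define \(G_{x} := \lim_{n} G_{x}^{n} \ge 0\); likewise \(w_{n} \downarrow w\), where \(w\) is the unique solution of \(-\Delta w + V w = f\) in \(W_{0}^{1,2}(\Omega) \cap L^{\infty}(\Omega)\), and since each \(w_{n}\) is continuous one has \(w_{n}(x) \downarrow \widehat{w}(x)\) at every \(x \in \Omega\). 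Testing the equation for \(G_{x}^{n}\) against the function \(\phi_{\Omega} \in W_{0}^{1,2}(\Omega) \cap L^{\infty}(\Omega)\) that solves \(-\Delta \phi_{\Omega} = 1\) produces the identity \(\int_{\Omega} G_{x}^{n} + \int_{\Omega} V_{n} G_{x}^{n}\, \phi_{\Omega} = \phi_{\Omega}(x)\), giving a uniform \(L^{1}\) bound on \(G_{x}^{n}\); Stampacchia's \(W^{1,q}\) estimate combined with \(\int_{\Omega} V_{n} G_{x}^{n} \le 1\) (obtained by testing against cutoffs of \(1\) using that \(G_{x}^{n} = 0\) on \(\partial\Omega\)) then delivers uniform control of \(G_{x}^{n}\) in \(W_{0}^{1,q}(\Omega)\). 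Monotone convergence in the reciprocity identity yields the representation formula
\[
\widehat{w}(x) \;=\; \lim_{n} w_{n}(x) \;=\; \lim_{n} \int_{\Omega} G_{x}^{n} f \;=\; \int_{\Omega} G_{x} f.
\]

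The equivalence with \(x \notin Z\) is now read off from this identity. If \(x \notin Z\), choose \(f_{0} \ge 0\) and a solution \(w_{0}\) of \(-\Delta w_{0} + V w_{0} = f_{0}\) with \(\widehat{w_{0}}(x) > 0\); then \(\int_{\Omega} G_{x} f_{0} > 0\), so \(G_{x}\) is nontrivial, and the uniform \(W_{0}^{1,q}\) and weighted \(L^{1}\) bounds let us pass to the limit in the distributional equation, producing a Green's function of the required class. Conversely, any candidate \(H_{x} \in W_{0}^{1,1}(\Omega) \cap L^{1}(\Omega; V\,dx)\) satisfying \(-\Delta H_{x} + V H_{x} = \delta_{x}\) obeys \(-\Delta(G_{x}^{n} - H_{x}) + V_{n}(G_{x}^{n} - H_{x}) = (V - V_{n}) H_{x} \ge 0\), so by the weak maximum principle \(H_{x} \le G_{x}^{n}\) and hence \(H_{x} \le G_{x}\); if \(x \in Z\) the representation formula forces \(G_{x} \equiv 0\), whence \(H_{x} \equiv 0\), contradicting \(-\Delta H_{x} + V H_{x} = \delta_{x}\). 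The main technical hurdle is the passage to the limit in the product \(V_{n} G_{x}^{n}\), which is not monotone because \(V_{n} \uparrow V\) while \(G_{x}^{n} \downarrow G_{x}\); the uniform bound \(\int_{\Omega} V_{n} G_{x}^{n} \le 1\) together with Fatou's lemma yields \(V G_{x} \in L^{1}(\Omega)\), after which a Brezis--Browder-type equi-integrability argument recovers \(V_{n} G_{x}^{n} \to V G_{x}\) in \(L^{1}(\Omega)\), closing the distributional equation in the limit.
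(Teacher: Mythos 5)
Your truncation scheme \(V_{n} = T_{n}(V)\) is the paper's own starting point (\Cref{lemmaDualitySolutionsDistributional}), and the monotone limit \(G_{x} = \lim_{n} G_{x}^{n}\) is precisely the duality solution with datum \(\delta_{x}\). But the forward implication collapses exactly at the step you yourself call ``the main technical hurdle'': nothing in your argument proves \(V_{n}G_{x}^{n} \to VG_{x}\) in \(L^{1}(\Omega)\). Your uniform bounds plus Fatou's lemma only give \(-\Delta G_{x} + VG_{x} \le \delta_{x}\), i.e.\ \(-\Delta G_{x} + VG_{x} = \delta_{x} - \lambda\) for a nonnegative defect measure \(\lambda\) carried by \(S\) (\Cref{propositionSolutionDualityasDistribution}), and this defect is genuinely nonzero in general: for \(V = \abs{x_{1}}^{-\alpha}\) with \(1 \le \alpha < 2\) (\Cref{exampleBallSingular1-2}) the limits \(G_{x}\) are nontrivial for \(x_{1} \ne 0\), yet no distributional solution with datum \(\delta_{x}\) exists. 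The only consequence of \(x \notin Z\) that you actually use is \(G_{x} \not\equiv 0\); but that is equivalent to \(x \notin S\), and \(S \subsetneq Z\) is possible, so the hypothesis is never engaged where it matters. Proving that \(\lambda = 0\) when the datum does not charge \(Z\) is the substance of the theorem and occupies \Cref{sectionExistenceBounded,sectionOrthogonality,sectionComparison,sectionProofThm1}: a Perron-type supersolution built with the state-dependent truncation \(\min\{kv, w_{k}\}\), the orthogonality principle between \(Z\) and \(\Omega \setminus Z\), and a separate treatment of the diffuse and concentrated parts of \(\Delta(\chi_{\Omega\setminus Z}w)\) via Kato's inequality and the inverse maximum principle. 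An unspecified ``Brezis--Browder-type equi-integrability argument'' cannot stand in for this.

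The converse has a parallel gap. From \(H_{x} \le G_{x}\) you want \(H_{x} \equiv 0\) by showing \(G_{x} \equiv 0\) for \(x \in Z\); but your representation formula only pairs \(G_{x}\) against those \(f\) for which a distributional solution \(w\) of \eqref{eqEquationSupersolution} exists, and by \Cref{theoremGoodMeasuresCharacterization} these are exactly the \(f\) vanishing almost everywhere on \(Z\). You therefore only learn that \(G_{x} = 0\) almost everywhere on \(\Omega \setminus Z\); for \(x \in Z \setminus S\) the limit \(G_{x}\) is a nontrivial function supported in \(Z \setminus S\) (\Cref{propositionSZ}), so the asserted vanishing is simply false there. (Your implicit assumption that a solution \(w\) of \eqref{eqEquationSupersolution} exists for \emph{every} nonnegative \(f \in L^{\infty}(\Omega)\) is also circular.) The paper instead runs the necessity argument through the comparison principle: \(H_{x} \ge \zeta_{H(H_{x})}\), where \(\zeta_{H(H_{x})}\) solves \eqref{eqEquationSupersolution} with bounded datum \(H(H_{x})\) and hence has \(\quasi{\zeta_{H(H_{x})}} = 0\) on \(Z\) by definition of \(Z\); pairing in the duality formulation then gives \(\int_{\Omega}{H_{x}\,H(H_{x})} = \quasi{\zeta_{H(H_{x})}}(x) = 0\) for \(x \in Z\), forcing \(H_{x} = 0\) (\Cref{propositionTestFunctionPositive,propositionExistenceBoundedSupersolutions}). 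A further, more minor point: the everywhere-pointwise convergence \(w_{n}(x) \to \quasi{w}(x)\) is not automatic for a decreasing sequence of continuous functions whose \(L^{1}\)-limit is \(w\); it requires the superharmonic comparison argument of \Cref{lemmaVariationalTruncationPointwiseConvergence}.
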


In dimension \(N \ge 3\) and at a point \(x \in \Omega\) where the Newtonian potential
	\begin{equation*}
	\mathcal{N}V : z \in \R^{N} \longmapsto \int_{\Omega}{\frac{V(y)}{\abs{z - y}^{N - 2}} \dif y}
	\end{equation*}
	is finite, the Green's function \(G_{x}\) exists and thus \(x \not\in Z\).{}
	The reason is that the fundamental solution of the Laplacian yields a supersolution for \(-\Delta + V\) with Dirac mass \(\delta_{x}\) and then one can apply the method of sub- and supersolutions from \Cref{sectionMethodSS} below.
	Here are some consequences of this observation:
	
	\begin{example}
		\label{exampleLorentz}
		If \(V \in L^{\frac{N}{2}, 1}(\Omega)\), then by \((L^{\frac{N}{2}, 1}\), \(L^{\frac{N}{N-2}, \infty})\) duality in Lorentz spaces the Newtonian potential \(\mathcal{N}V\) is a bounded function in \(\Omega\).{}
		Thus, 
		\[{}
		Z = \emptyset{}
		\]
		and the classical alternative \eqref{eqAlternative} is satisfied.
	\end{example}

	\begin{example}
		If \(V \in L^{1}(\Omega)\), then  \(\mathcal{N}V\) satisfies the Poisson equation
	\[{}
	- \Delta (\mathcal{N}V)
	= \gamma_{N} V
	\quad \text{in the sense of distributions in \(\Omega\),}
	\]
	where \(\gamma_{N} > 0\).{}
	From classical Potential theory, we have in particular that \(\mathcal{N}V\) can only be infinite on a set of \(W^{1, 2}\) capacity zero.
	Hence, 
	\[{}
	\capt_{W^{1, 2}}{(Z)} = 0
	\]
	and \Cref{theoremMaximumPrinciplePrecise} applies since in this case the Hausdorff dimension of \(Z\) is at most \(N-2\).
	While Ancona's maximum principle from \cite{Ancona:1979} already asserts that \(\{\quasi{w} = 0\}\) has \(W^{1, 2}\)~capacity zero
	for every nontrivial solution of \eqref{eqEquationSupersolution} with nonnegative \(f\), we now have the stronger new property that \(\{\quasi{w} = 0\}\) is actually independent of the solution.
	\end{example}
	
	\begin{example}
	\label{exampleSingularityLp}
		Assume that \(V \in L^{p}(\Omega)\) for some \(1 < p \le N/2\), which is an intermediate case between the two previous examples.
		We now have \(\Delta(\mathcal{N}V) \in L^{p}(\Omega)\) and then, by singular-integral estimates, \(\mathcal{N}V \in W^{2, p}\loc(\Omega)\).{}
		As the exceptional set of \(W^{2, p}\) functions has \(W^{2, p}\)~capacity zero, we deduce that
		\[{}
		\capt_{W^{2, p}}{(Z)} = 0,
		\]
		which combined with \Cref{theoremMaximumPrinciplePrecise} above implies Theorem~1 from our previous work~\cite{Orsina_Ponce:2016}.
	\end{example}

The universal zero-set \(Z\) identifies not only the Dirac masses, but in fact all nonnegative finite Borel measures \(\mu\) for which the Dirichlet problem
\begin{equation}
	\label{eqDirichletProblem}
	\left\{
	\begin{alignedat}{2}
		- \Delta u + Vu & = \mu	&& \quad \text{in \(\Omega\),}\\
		u & = 0	&&	\quad \text{on \(\partial\Omega\),}
	\end{alignedat}
	\right.
\end{equation}
has a \emph{distributional solution}, where by a solution we mean a function \(u \in W_{0}^{1, 1}(\Omega) \cap L^{1}(\Omega; V \dif x)\) which verifies the equation in the sense of distributions in \(\Omega\).{}
Observe that the Green's function \(G_{x}\) arises as a special case of this setting with \(\mu = \delta_{x}\).
We ask that \(u\) belong to the Sobolev space \(W_{0}^{1, 1}(\Omega)\) to encode the zero boundary value of \(u\).{}
An equivalent formulation, without relying on Sobolev spaces, consists of using test functions in the larger class \(C_{0}^{\infty}(\overline{\Omega})\) of smooth functions in \(\overline\Omega\) that vanish on \(\partial\Omega\), not necessarily with compact support in \(\Omega\)\,; see \Cref{sectionMethodSS}.

Our next theorem fully characterizes the nonnegative finite measures for which \eqref{eqDirichletProblem} has a solution:

\begin{theorem}
	\label{theoremGoodMeasuresCharacterization}
	For every Borel function \(V : \Omega \to [0, +\infty]\), the Dirichlet problem \eqref{eqDirichletProblem} has a distributional solution with a nonnegative finite Borel measure \(\mu\) in \(\Omega\) if and only if 
	\[{}
	\mu(Z) = 0.
	\]
\end{theorem}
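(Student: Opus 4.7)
The plan is to prove sufficiency first; the solvability of the Dirichlet problem for $\mu \lfloor (\Omega \setminus Z)$ is then the main input for the necessity direction.

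For the \emph{sufficiency} direction, assume $\mu(Z) = 0$, so that $\mu$ is concentrated on $\Omega \setminus Z$. By \Cref{theoremGreen}, for each $y \in \Omega \setminus Z$ there exists $G_{y} \in W_{0}^{1, 1}(\Omega) \cap L^{1}(\Omega; V \dif x)$ with $-\Delta G_{y} + V G_{y} = \delta_{y}$, and I would construct the solution as the Green superposition
\[
u(x) \vcentcolon= \int_{\Omega \setminus Z}{G_{y}(x) \dif \mu(y)}.
\]
Two prerequisites need to be established. The first is the symmetry relation $G_{x}(y) = G_{y}(x)$ on $(\Omega \setminus Z) \times (\Omega \setminus Z)$, which I would derive by approximating $V$ by the bounded truncations $V_{n} \vcentcolon= \min(V, n)$, invoking the classical selfadjointness of $-\Delta + V_{n}$ at each level, and passing to the limit via the monotonicity $G_{y}^{V_{n}} \searrow G_{y}^{V}$. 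The second is a uniform-in-$y$ bound for $G_{y}$ in $L^{1}(\Omega)$, $L^{1}(\Omega; V \dif x)$ and $W_{0}^{1, 1}(\Omega)$, obtained by comparison with the Dirichlet Green's function of the Laplacian and by testing the equation for $G_{y}$ against smooth cut-offs of the constant $1$. Fubini's theorem then yields $u \in W_{0}^{1, 1}(\Omega) \cap L^{1}(\Omega; V \dif x)$, and a further application of Fubini to the distributional equation for each $G_{y}$ produces the identity $-\Delta u + V u = \mu$ against all $\varphi \in C_{c}^{\infty}(\Omega)$.

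For the \emph{necessity} direction, assume $u \in W_{0}^{1, 1}(\Omega) \cap L^{1}(\Omega; V \dif x)$ is a distributional solution for $\mu \ge 0$; a Kato-inequality argument first gives $u \ge 0$. The heart of the proof is then the duality identity
\[
\int_{\Omega}{f u \dif x} = \int_{\Omega}{\quasi{\xi} \dif \mu},
\]
valid whenever $f \in L^{\infty}(\Omega)$ and $\xi \in W_{0}^{1, 2}(\Omega) \cap L^{\infty}(\Omega)$ solves $-\Delta \xi + V \xi = f$. Granting this, I would invoke sufficiency (already proved) to obtain a solution $u_{2}$ of $-\Delta u_{2} + V u_{2} = \mu \lfloor (\Omega \setminus Z)$; the same identity applied to $u_{2}$ reads $\int{f u_{2} \dif x} = \int{\quasi{\xi} \dif(\mu \lfloor (\Omega \setminus Z))}$. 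Since $\quasi{\xi} = 0$ on $Z$ by the very definition of the universal zero-set, the two right-hand sides coincide, so $\int{f u \dif x} = \int{f u_{2} \dif x}$ for every $f \in L^{\infty}(\Omega)$, and therefore $u = u_{2}$ almost everywhere. The common function then satisfies both $-\Delta u + V u = \mu$ and $-\Delta u + V u = \mu \lfloor (\Omega \setminus Z)$ in the sense of distributions, and subtracting gives $\mu \lfloor Z = 0$, which is the claim.

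The main obstacle is the rigorous justification of the duality identity, since neither $u \in W_{0}^{1, 1}(\Omega)$ nor $\xi \in W_{0}^{1, 2}(\Omega)$ lies in the natural energy space associated with the other equation. I would circumvent this by a double-truncation: replace $u$ by $T_{k}(u) \vcentcolon= \min(u, k)$ and $\xi$ by a smooth approximant, prove the identity at the truncated level (where both factors are bounded and share a common energy space), and pass to the limit using Beppo Levi together with the integrability $\xi V u \in L^{1}(\Omega)$, which holds because $\xi \in L^{\infty}$ and $V u \in L^{1}$. A secondary difficulty is the symmetry of $G_{y}$ for merely Borel $V$: the variational characterization is unavailable, so one must work directly with the monotone limit $G_{y}^{V_{n}} \searrow G_{y}^{V}$ and verify the preservation of symmetry and the convergence of all norms needed in the Fubini step.
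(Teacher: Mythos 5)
There are two genuine gaps, one in each direction.

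\emph{Sufficiency.} Your construction of \(u\) as the superposition \(\int_{\Omega\setminus Z} G_y\,\dif\mu(y)\) invokes \Cref{theoremGreen} to guarantee that each \(G_y\), \(y\in\Omega\setminus Z\), is a \emph{distributional} solution with datum \(\delta_y\). But in the paper \Cref{theoremGreen} is itself deduced from \Cref{theoremGoodMeasuresCharacterization} applied to \(\mu=\delta_y\): the existence of the Green's function at a point of \(\Omega\setminus Z\) is exactly the special case of the theorem you are proving, and it is the hard part. The definition of \(Z\) only tells you that some bounded solution has positive precise representative at \(y\); getting from there to solvability with a Dirac mass requires the whole machinery the paper builds (the Perron-type function of \Cref{propositionExistenceTool}, the case \(\mu=\chi_{\Omega\setminus Z}\,\dif x\) in \Cref{corollaryBounded}, the orthogonality principle \Cref{propositionOrthogonality}, and the analysis of the diffuse and concentrated parts of \(\Delta(\chi_{\Omega\setminus Z}w)\)). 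So your argument is circular unless you supply an independent proof of the existence of \(G_y\) for \(y\notin Z\), which you do not. (Granting that existence, the remaining Fubini and uniform-bound issues are manageable, since the \(W^{1,1}\) and absorption estimates are uniform in the total variation of the datum.)

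\emph{Necessity.} The step ``\(\quasi{\xi}=0\) on \(Z\) by the very definition of the universal zero-set,'' where \(\xi=\zeta_f\) for arbitrary \(f\in L^\infty(\Omega)\), is false in general. The definition of \(Z\) quantifies only over \emph{distributional} solutions \(w\) of \(-\Delta w+Vw=f\) with nonnegative bounded \(f\); the variational function \(\zeta_f\) satisfies only \(-\Delta\zeta_f+V\zeta_f\le f\) (see \eqref{eqInequalityZeta}), and the set where all \(\quasi{\zeta_f}\) vanish is \(S\), which may be strictly smaller than \(Z\) (in \Cref{exampleBallSingular1-2} with \(1\le\alpha<2\) one has \(S=\{x_1=0\}\cap B_1(0)\) but \(Z=\Omega\)). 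Consequently \(\int_\Omega\quasi{\zeta_f}\,\dif\mu\lfloor_Z\) need not vanish, and you cannot conclude \(u=u_2\). The paper circumvents precisely this by first passing to the solution \(v\) with datum \(\mu\lfloor_Z\) (via \Cref{propositionDistributionalExistence}) and then testing with the \emph{single} function \(f=H(v)\) produced by the comparison principle (\Cref{propositionTestFunctionPositive}); the point is that \(\zeta_{H(v)}\) is a genuine distributional solution with datum \(H(v)\) (\Cref{propositionExistenceBoundedSupersolutions}), so the definition of \(Z\) does apply to it, giving \(\int_\Omega vH(v)=0\) and hence \(v=0\), \(\mu\lfloor_Z=0\). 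You need this (or some substitute producing an admissible test function that truly vanishes on all of \(Z\)) to close the argument.
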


Observe in particular that \eqref{eqDirichletProblem} has a distributional solution with \(\mu = \chi_{\Omega \setminus Z} \dif x\), since
\[{}
\mu(Z){}
= \int_{Z}{\chi_{\Omega \setminus Z} \dif x} = 0.
\]
When \(Z\) is negligible with respect to the Lebesgue measure, we also deduce the existence of a distributional solution with \(\mu = f \dif x\) for every \(f \in L^{1}(\Omega)\).{}
Then, for \(f \in L^{\infty}(\Omega)\), such a solution belongs to \(W_{0}^{1, 2}(\Omega) \cap L^{\infty}(\Omega)\) and the representation formula in \Cref{theoremGreen} holds.
Although \Cref{theoremMaximumPrincipleFull} only applies to bounded data, the tools we use can be adapted to get its counterpart for general solutions of \eqref{eqDirichletProblem} with nonnegative measures; see \Cref{theoremMaximumPrincipleFullMeasure}.

We rely in this paper on the notion of duality solution of  \eqref{eqDirichletProblem} by Malusa and Orsina~\cite{Malusa_Orsina:1996}, which was inspired from the fundamental work of Littman, Stampacchia and Weinberger~\cite{Littman_Stampacchia_Weinberger:1963}.
In contrast with distributional solutions, duality solutions exist for any finite measure regardless of the potential \(V\).
One reason is that they typically require less test functions, just enough to ensure uniqueness.
In \Cref{sectionVariational,sectionDualityAsDistributions}, we compare both concepts.

A defect of the duality formulation is that the same function can solve the Schrödinger equation for different measures.
It may happen that \(u \equiv 0\) is the duality solution associated to the Dirac mass \(\delta_{x}\) when \(x \in Z\)\,; see \Cref{sectionGreen}.{}
Duality solutions are nevertheless a convenient tool to apply Perron's method and find distributional solutions of \eqref{eqDirichletProblem}.
Such an approach is pursued in \Cref{sectionExistenceBounded}, where we first prove \Cref{theoremGoodMeasuresCharacterization} for \(\mu = \chi_{\Omega \setminus Z} \dif x\).
This is used in \Cref{sectionOrthogonality} to establish an orthogonality principle between the sets \(Z\) and \(\Omega \setminus Z\) which is later applied in \Cref{sectionProofThm1} to prove the existence of distributional solutions of \eqref{eqDirichletProblem} in full generality.

In \Cref{sectionComparison}, we develop another fundamental tool: 
A comparison principle which relates a solution of \eqref{eqDirichletProblem} with nonnegative \emph{measure} datum to another one with nonnegative \emph{bounded} datum.
Namely, we prove that
\begin{equation}
	\label{eqIntroductionComparison}
u \ge w
\quad \text{almost everywhere in \(\Omega\),}
\end{equation}
where \(w \in W_{0}^{1, 2}(\Omega) \cap L^{\infty}(\Omega)\) is the solution of \eqref{eqEquationSupersolution} with right-hand side \(f = H(u)\) for some fixed bounded nondecreasing continuous function \(H\) that is positive on \((0, +\infty)\).{}
Such a function \(H\) can be chosen of the form 
\[{}
H(t) = \epsilon \min{\{t^{\alpha}, 1\}},
\]
for any \(\alpha > 1\) and any \(\epsilon > 0\) small enough, independently of \(u\) and \(V\).{}
Estimate \eqref{eqIntroductionComparison} relates the zero-sets of \(u\) and \(w\), and works as a replacement of the Harnack inequality, which is false for singular potentials \(V\).{}

In \Cref{sectionProofThm1}, we prove \Cref{theoremGoodMeasuresCharacterization,theoremGreen}, where the comparison principle \eqref{eqIntroductionComparison} is used to prove that \(\mu(Z) = 0\) is necessary for the existence of a distributional solution of \eqref{eqDirichletProblem}.
We apply again \eqref{eqIntroductionComparison} in \Cref{sectionGreen} to show that \(\Omega \setminus Z\) is a disjoint union of superlevel sets of  Green's functions of \(-\Delta + V\).

The topological properties of the Sobolev-components of \(\Omega \setminus Z\) are investigated in \Cref{sectionQuasiTopology,sectionSobolevConnected}.
We show for example that they are Sobolev-connected using a variant of Poincaré's balayage method on Sobolev-open sets.
We then prove \Cref{theoremMaximumPrincipleFull} and \Cref{theoremMaximumPrinciplePrecise} in \Cref{sectionProofs} using the decomposition of \(\Omega \setminus Z\) and Green's representation formula.
In \Cref{sectionCounterpart}, we present a weaker version of this formula for solutions of \eqref{eqDirichletProblem}, which entitles us to adapt the proof of \Cref{theoremMaximumPrincipleFull} and get its counterpart for general nonnegative measures.


\section{Method of sub- and supersolutions}
\label{sectionMethodSS}

We denote by \(\cM(\Omega)\) the vector space of finite Borel measures in \(\Omega\), which we equip with the total variation norm
\[{}
\norm{\mu}_{\cM(\Omega)}
\vcentcolon= |\mu|(\Omega) 
= \int_{\Omega}{\dif\abs{\mu}}.
\]
We recall that a function \(u \in L^{1}(\Omega)\) satisfies the equation
\[
-\Delta u + Vu = \mu \quad \text{in the sense of distributions in \(\Omega\)}
\]
for some \(\mu \in \cM(\Omega)\) whenever one has \(u \in L^{1}(\Omega; V \dif x)\) and
\[
\int_{\Omega}{u\, (-\Delta\varphi + V\varphi)} = \int_{\Omega}{\varphi\dif\mu} \quad \text{for every \(\varphi \in C^{\infty}_{c}(\Omega)\).}
\]

We prove in this section the following form of the method of sub- and supersolutions for distributional solutions of the Dirichlet problem \eqref{eqDirichletProblem} involving the Schrödinger operator:

\begin{proposition}
	\label{propositionDistributionalExistence}
	If \eqref{eqDirichletProblem} has a distributional solution with a nonnegative measure  \(\mu \in \cM(\Omega)\), then \eqref{eqDirichletProblem} also has a distributional solution for every datum \(\nu \in \cM(\Omega)\) such that \(\abs{\nu} \le \mu\).
\end{proposition}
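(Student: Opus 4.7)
First I would write $\nu = \nu^{+} - \nu^{-}$ via the Jordan decomposition, noting that $\nu^{\pm} \le \abs{\nu} \le \mu$. By linearity of \eqref{eqDirichletProblem}, producing distributional solutions for each of $\nu^{+}$ and $\nu^{-}$ and taking their difference gives one for $\nu$. Thus I fix a nonnegative Borel measure $\sigma \in \cM(\Omega)$ with $\sigma \le \mu$ and seek $w \in W_{0}^{1, 1}(\Omega) \cap L^{1}(\Omega; V \dif x)$ satisfying $-\Delta w + V w = \sigma$ in $\cD'(\Omega)$, with the a priori bound $0 \le w \le u$ in mind.

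Next I would truncate the potential by $V_{n} = \min\{V, n\}$. Since $V_{n}$ is bounded and nonnegative, the classical theory of measure data with bounded coefficients (Stampacchia duality, or equivalently the Littman--Stampacchia--Weinberger construction via the Green's function of $-\Delta + V_{n}$) provides unique distributional solutions $u_{n}, w_{n} \in W_{0}^{1, q}(\Omega)$ for every $1 \le q < N/(N - 1)$ of $-\Delta u_{n} + V_{n} u_{n} = \mu$ and $-\Delta w_{n} + V_{n} w_{n} = \sigma$. The weak maximum principle for bounded coefficients gives $0 \le w_{n} \le u_{n}$; applied to the differences $u_{n} - u_{n+1}$ and $w_{n} - w_{n+1}$, which satisfy equations with nonnegative right-hand sides since $V_{n} \le V_{n+1}$, it further makes both sequences nonincreasing in $n$. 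Finally, $u$ itself satisfies $-\Delta u + V_{n} u = \mu - (V - V_{n}) u \le \mu$ distributionally, so $u_{n} - u \ge 0$ by the same principle and hence $u \le u_{n}$.

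The heart of the argument, and its main obstacle, is to upgrade the monotone convergence $u_{n} \searrow u^{*} \ge u$ into the $L^{1}\loc$-convergence $V_{n} u_{n} \to V u$ needed to pass to the limit in the distributional formulation. I would use the torsion function $\xi$ of $\Omega$, namely the solution of $-\Delta \xi = 1$ in $\Omega$ with $\xi = 0$ on $\partial\Omega$. Testing the identity $-\Delta(u_{n} - u) + V_{n} (u_{n} - u) = (V - V_{n}) u$ against $\xi$ yields
\[
\int_{\Omega} (u_{n} - u) + \int_{\Omega} V_{n} (u_{n} - u)\, \xi = \int_{\Omega} \xi\, (V - V_{n}) u.
\]
Since $(V - V_{n}) u \to 0$ pointwise and is dominated by $V u \in L^{1}(\Omega)$, dominated convergence sends the right-hand side to zero, and as both terms on the left are nonnegative, this forces $u_{n} \to u$ in $L^{1}(\Omega)$ and $V_{n}(u_{n} - u) \to 0$ in $L^{1}(\Omega; \xi \dif x)$. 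Combined with the monotone convergence $V_{n} u \nearrow V u$ in $L^{1}(\Omega)$, this yields $V_{n} u_{n} \to V u$ in $L^{1}(\Omega; \xi \dif x)$, hence in $L^{1}\loc(\Omega)$.

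Finally, let $w = \lim_{n \to \infty} w_{n}$ pointwise, so that $0 \le w \le u$. For $\varphi \in C_{c}^{\infty}(\Omega)$, the equation for $w_{n}$ reads $\int V_{n} w_{n} \varphi = \int \varphi \dif\sigma - \int w_{n}(-\Delta\varphi)$, whose right-hand side converges to $\int \varphi \dif\sigma - \int w(-\Delta\varphi)$ by dominated convergence (using $0 \le w_{n} \le u_{1} \in L^{1}$). For the left-hand side, $V_{n} w_{n} \to V w$ pointwise on $\{V < \infty\}$, while on $\{V = +\infty\}$ the bound $w \le u$ together with $V u \in L^{1}(\Omega)$ forces $u = 0$ and hence $w = 0$ almost everywhere, so $V w$ vanishes there under the convention $0 \cdot \infty = 0$. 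The domination $V_{n} w_{n} \le V_{n} u_{n}$ together with the $L^{1}\loc$-convergence $V_{n} u_{n} \to V u$ established above then allows dominated convergence to give $\int V_{n} w_{n} \varphi \to \int V w\, \varphi$. Hence $-\Delta w + V w = \sigma$ distributionally, and the $W_{0}^{1, 1}$-regularity of $w$ follows from uniform a priori estimates for measure-data problems applied to $(w_{n})$.
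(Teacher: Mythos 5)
Your proof is correct and follows essentially the same route as the paper: truncate the potential, solve the truncated problems for both data, use the weak maximum principle to dominate the solutions with datum $\nu$ by those with datum $\mu$, establish $T_{k}(V)u_{k} \to Vu$ in $L^{1}$ using that $u$ is a genuine distributional solution, and pass to the limit by a generalized dominated convergence argument. The only differences are cosmetic: the paper handles signed $\nu$ directly via $\abs{v_{k}} \le u_{k}$ rather than through the Jordan decomposition, and it derives the convergence $T_{k}(V)u_{k} \to Vu$ from the absorption estimate for $-\Delta + T_{k}(V)$ applied to $u_{k} - u$ (giving global $L^{1}$ convergence) instead of testing against the torsion function.
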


Although this statement is already proved in \cite{Ponce:2016}, we present a different argument based on the truncation of the potential.
This will be the occasion for us to recall several properties of solutions involving measures that are used throughout the paper.

In view of the linearity of the equation, a natural approach would be to rely on a duality argument based on the estimate
\begin{equation}
\label{eqDualityEstimate}
\biggabs{\int_{\Omega}{\varphi \dif\mu}}
\le C \norm{-\Delta\varphi + V\varphi}_{L^{p}(\Omega)}
\quad \text{for every \(\varphi \in C_{c}^{\infty}(\Omega)\),}
\end{equation}
where \(p > N/2\), that follows from the Sobolev imbedding of solutions of the Schrödinger equation with measure data; see \eqref{eqEstimateSobolev} below.
However, since \(V\) is merely a Borel function, such an estimate is useless as the right-hand side may be infinite for various choices of \(\varphi \in C_{c}^{\infty}(\Omega)\).

We begin instead by proving that a solution of \eqref{eqDirichletProblem} can be obtained as the limit of solutions of \eqref{eqDirichletProblem} involving the operator \(- \Delta + T_{k}(V)\), with the bounded potential \(T_{k}(V)\).
Here, \(T_{k} : \R \to \R\) denotes the truncation at levels \(\pm k\)\,:{}
\[{}
T_{k}(s)
\vcentcolon=
\begin{cases}
	- k	& \text{if \(s < - k\),}\\
	s	& \text{if \(-k \le s \le k\),}\\
	k	& \text{if \(s > k\).}
\end{cases}
\]

We recall that, for nonnegative bounded potentials, \eqref{eqDirichletProblem} has a solution \(u\) for every \(\mu \in \cM(\Omega)\)\,; see \cite{Stampacchia:1965}. 
Independently of the fact that \(V\) is bounded or not, 
we have the absorption estimate
\begin{equation}
	\label{eqEstimateAbsorption}
	\norm{Vu}_{L^{1}(\Omega)}
	\le \norm{\mu}_{\cM(\Omega)},
\end{equation}
which can be obtained using as test function a suitable approximation of \(\sgn{u}\)\,; see \cite{Brezis_Marcus_Ponce:2007}*{Proposition~4.B.3} or \cite{Ponce:2016}*{Proposition~21.5}.
Moreover, a solution of \eqref{eqDirichletProblem} belongs to \(W_{0}^{1, q}(\Omega)\) for every \(1 \le q < \frac{N}{N-1}\) and satisfies
\begin{equation}
	\label{eqEstimateSobolev}
	\norm{u}_{W^{1, q}(\Omega)}
	\le C \norm{\mu}_{\cM(\Omega)},
\end{equation}
for some constant \(C > 0\) depending on \(q\) and \(\Omega\), but not on the potential \(V\).{}

To see why \(C\) in \eqref{eqEstimateSobolev} can be chosen independently of \(V\), one observes that, by the equation satisfied by \(u\) and by the absorption estimate \eqref{eqEstimateAbsorption},
\[{}
\norm{\Delta u}_{\cM(\Omega)}
\le \norm{Vu}_{L^{1}(\Omega)} + \norm{\mu}_{\cM(\Omega)}
\le 2 \norm{\mu}_{\cM(\Omega)}.
\] 
Since \eqref{eqEstimateSobolev} is true for \(-\Delta\), see \cite{Littman_Stampacchia_Weinberger:1963}*{Lemma~7.3} or \cite{Ponce:2016}*{Proposition~5.1}, we get
\[{}
\norm{u}_{W^{1, q}(\Omega)}
	\le C' \norm{\Delta u}_{\cM(\Omega)}
	\le 2 C' \norm{\mu}_{\cM(\Omega)}.
\]
This argument thus shows that the estimate \eqref{eqEstimateSobolev} for \(-\Delta\) implies its counterpart for every \(-\Delta + V\) with a nonnegative \(V\).

The approximation scheme that is used in the proof of \Cref{propositionDistributionalExistence} is given by the following

\begin{lemma}
	\label{lemmaDualitySolutionsDistributionalBis}
	Let \(\mu \in \cM(\Omega)\) be a nonnegative measure and, for every \(k \in \N\), let \(u_{k} \in W_{0}^{1, 1}(\Omega)\) be such that
	\[{}
	- \Delta u_{k} + T_{k}(V) u_{k} = \mu{}
	\quad \text{in the sense of distributions in \(\Omega\).}
	\]
	If \eqref{eqDirichletProblem} has a distributional solution \(u\), then \((u_{k})_{k \in \N}\) converges to \(u\) and \((T_{k}(V)u_{k})_{k \in \N}\) converges to \(Vu\), both in \(L^{1}(\Omega)\).
\end{lemma}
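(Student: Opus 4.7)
The strategy is to look at the difference $z_k \vcentcolon= u_k - u$. Subtracting the two distributional equations for $u_k$ and $u$, and inserting $\pm T_k(V)u$, gives
\[
-\Delta z_k + T_k(V) z_k = (V - T_k(V))\, u
\quad \text{in the sense of distributions in \(\Omega\).}
\]
This is a Schrödinger equation with the \emph{bounded} potential $T_k(V) \ge 0$, whose right-hand side $g_k \vcentcolon= (V - T_k(V))\,u$ lies in $L^1(\Omega)$, since the definition of distributional solution requires $u \in L^1(\Omega; V\dif x)$, hence $Vu \in L^1(\Omega)$ and a fortiori $T_k(V)u \in L^1(\Omega)$.

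The first step is to show that $g_k \to 0$ in $L^1(\Omega)$. Pointwise almost everywhere, on $\{V < +\infty\}$ one has $(V - T_k(V))(x) = (V(x) - k)^{+} \to 0$, while on $\{V = +\infty\}$ the summability $Vu \in L^1$ forces $u = 0$. Combined with the domination $|g_k| \le V|u| \in L^1(\Omega)$, the dominated convergence theorem yields $\|g_k\|_{L^1(\Omega)} \to 0$.

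With this in hand, the a priori estimates recalled in the section can be applied to $z_k$ with the potential $T_k(V)$ and datum $g_k$; crucially, they hold with constants independent of the potential and without any sign restriction on $z_k$ or $g_k$. The absorption estimate \eqref{eqEstimateAbsorption} gives
\[
\|T_k(V) z_k\|_{L^1(\Omega)} \le \|g_k\|_{L^1(\Omega)} \to 0,
\]
while the Sobolev-type estimate \eqref{eqEstimateSobolev} gives $\|z_k\|_{W^{1,q}(\Omega)} \le C\,\|g_k\|_{L^1(\Omega)} \to 0$ for $1 \le q < \frac{N}{N-1}$. The first convergence of the lemma then follows from the Sobolev embedding. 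For the second, I would write $T_k(V) u_k - Vu = T_k(V) z_k + (T_k(V) - V)\,u$, whose two terms go to $0$ in $L^1(\Omega)$ by the absorption estimate and by dominated convergence, respectively.

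Conceptually, the point to watch out for is avoiding a monotonicity/compactness argument for $u_k$ alone: extracting a weak limit of $u_k$ and identifying it with $u$ would require a uniqueness theorem for distributional solutions of $-\Delta v + Vv = \mu$ that is not available for singular $V$. Passing to the difference $u_k - u$ sidesteps this issue entirely, because after the subtraction the remaining potential $T_k(V)$ is bounded and the two quoted a priori estimates are quantitative and uniform in $k$.
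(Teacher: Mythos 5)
Your proposal is correct and follows essentially the same route as the paper: subtract the two equations to get a Schrödinger equation for \(u_{k} - u\) with bounded potential \(T_{k}(V)\) and datum \((V - T_{k}(V))u\), then apply the absorption estimate \eqref{eqEstimateAbsorption} and the potential-independent Sobolev estimate \eqref{eqEstimateSobolev} together with dominated convergence. The only difference is that you spell out the dominated-convergence step and the final splitting \(T_{k}(V)u_{k} - Vu = T_{k}(V)(u_{k}-u) + (T_{k}(V)-V)u\) in slightly more detail than the paper does.
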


\begin{proof}[Proof of \Cref{lemmaDualitySolutionsDistributionalBis}]
	We first observe that \(u\) also satisfies the Dirichlet problem with the operator \(-\Delta + T_{k}(V)\) and datum \(\mu - (V - T_{k}(V))u\).{}
	Thus, subtracting the equations satisfied by \(u_{k}\) and \(u\) we find
	\[{}
	- \Delta (u_{k} - u) + T_{k}(V) (u_{k} - u) = (V - T_{k}(V)) u
	\]
	in the sense of distributions in \(\Omega\). 
	Using the absorption estimate for the operator \(-\Delta + T_{k}(V)\), we get
	\begin{equation}
		\label{eqContractionTruncation}
	\norm{T_{k}(V)(u_{k} - u)}_{L^{1}(\Omega)}
	\le \norm{(V - T_{k}(V))u}_{L^{1}(\Omega)}.
	\end{equation}
	Since the constant in \eqref{eqEstimateSobolev} does not depend on the potential, we also have
	\begin{equation}
		\label{eqL1Truncation}
	\norm{u_{k} - u}_{L^{1}(\Omega)}
	\le C \norm{(V - T_{k}(V))u}_{L^{1}(\Omega)}.
	\end{equation}
	Observing that \(Vu \in L^{1}(\Omega)\),
	\[{}
	\lim_{k \to \infty}{\norm{(V - T_{k}(V))u}_{L^{1}(\Omega)}}
	= 0.
	\]
	Hence, the conclusion follows from \eqref{eqContractionTruncation} and \eqref{eqL1Truncation}.
\end{proof}

To prove a weak maximum principle for \eqref{eqDirichletProblem}, it is convenient to reformulate the definition of distributional solution as:
\(u \in L^{1}(\Omega)\) is such that \(u \in L^{1}(\Omega; V \dif x)\) and
\[{}
- \Delta u + Vu = \mu{}
\quad \text{in the sense of \((C_{0}^{\infty}(\overline{\Omega}))'\)\,,}
\]
that is,
\begin{equation}
\label{eqIntegralC0Infty}
\int_{\Omega}{u \, (-\Delta\psi + V\psi)}
= \int_{\Omega}{\psi \dif\mu}
\quad \text{for every \(\psi \in C_{0}^{\infty}(\overline\Omega)\).}
\end{equation}
The fact that we can use a larger class of smooth test functions comes from the assumption that \(u \in W_{0}^{1, 1}(\Omega)\), which encodes the zero boundary value of \(u\)\,; see \cite{Ponce:2016}*{Proposition~6.3}.

\begin{lemma}
	\label{lemmaWeakMaximumPrinciple}
	Let \(u\) be the distributional solution of \eqref{eqDirichletProblem} with \(\mu \in \cM(\Omega)\).{}
	If \(\mu \le 0\) in \(\Omega\), then \(u \le 0\) almost everywhere in \(\Omega\).
\end{lemma}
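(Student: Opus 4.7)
The plan is to reduce the statement to the case of a bounded potential, where the weak maximum principle is classical, and then pass to the limit using \Cref{lemmaDualitySolutionsDistributionalBis}. The idea is that the truncation \(T_{k}(V)\) is bounded, so the Dirichlet problem
\[
-\Delta u_{k}+T_{k}(V)u_{k}=\mu \quad \text{in \(\Omega\),}\qquad u_{k}=0 \quad \text{on \(\partial\Omega\),}
\]
has, by the theory of Stampacchia, a distributional solution \(u_{k}\in W_{0}^{1,1}(\Omega)\), and we will first verify that \(u_{k}\le 0\) almost everywhere in \(\Omega\).

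To establish this bounded-potential case, I would approximate the nonpositive measure \(\mu\) by nonpositive smooth functions \(\mu_{n}\in C_{c}^{\infty}(\Omega)\) with \(\mu_{n}\to\mu\) strongly in the dual of \(C_{0}(\overline\Omega)\). For each fixed \(k\), let \(u_{k,n}\in W_{0}^{1,2}(\Omega)\) be the weak solution of the Dirichlet problem associated to \(-\Delta+T_{k}(V)\) with datum \(\mu_{n}\), which exists by Lax--Milgram since \(T_{k}(V)\) is bounded. Testing the equation against \(u_{k,n}^{+}\in W_{0}^{1,2}(\Omega)\), which is admissible here, yields
\[
\int_{\Omega}\abs{\nabla u_{k,n}^{+}}^{2}+\int_{\Omega}T_{k}(V)\,\abs{u_{k,n}^{+}}^{2}=\int_{\Omega}\mu_{n}\,u_{k,n}^{+}\le 0,
\]
so \(u_{k,n}^{+}\equiv 0\), i.e., \(u_{k,n}\le 0\) almost everywhere. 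Since \(u_{k,n}\to u_{k}\) in \(L^{1}(\Omega)\) by the Sobolev-type estimate \eqref{eqEstimateSobolev} applied to the differences (with a constant independent of the potential), passing to a subsequence we obtain \(u_{k}\le 0\) almost everywhere in \(\Omega\).

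To conclude, I invoke \Cref{lemmaDualitySolutionsDistributionalBis}: since the Dirichlet problem \eqref{eqDirichletProblem} is assumed to admit a distributional solution \(u\), the sequence \((u_{k})_{k\in\N}\) converges to \(u\) in \(L^{1}(\Omega)\). Extracting a further subsequence that converges almost everywhere and using that \(u_{k}\le 0\) almost everywhere for every \(k\), the inequality \(u\le 0\) passes to the limit.

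The main subtle point is the weak maximum principle at the bounded-potential stage with a genuine measure datum, which is why I prefer the two-step approximation (first regularize \(\mu\), where Lax--Milgram and the test function \(u_{k,n}^{+}\) are available; then relax the regularization via \eqref{eqEstimateSobolev}) rather than trying to test the measure equation directly with an unbounded or merely \(W^{1,1}_{0}\) positive part.
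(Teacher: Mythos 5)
Your argument is correct in outline but follows a genuinely different route from the paper's. The paper proves the lemma in a single step: from \(-\Delta u + Vu \le 0\) in the sense of \((C_{0}^{\infty}(\overline\Omega))'\), Kato's inequality up to the boundary (\cite{Brezis_Marcus_Ponce:2007}) gives \(-\Delta u^{+} + \chi_{\{u>0\}} Vu \le 0\), so \(-\int_{\Omega} u^{+}\Delta\psi \le 0\) for every nonnegative \(\psi \in C_{0}^{\infty}(\overline\Omega)\), and choosing \(\psi\) with \(-\Delta\psi > 0\) forces \(u^{+} = 0\). Your double approximation (truncate \(V\), then regularize \(\mu\)) trades this for the elementary variational maximum principle with bounded potential and smooth data, at the price of two limit passages. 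The final passage via \Cref{lemmaDualitySolutionsDistributionalBis} is legitimate even though that lemma is stated for nonnegative measures: apply it to \(-\mu \ge 0\) and \(-u\), or observe that its proof never uses the sign of \(\mu\).

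One step does need repair. You cannot in general choose smooth \(\mu_{n} \le 0\) converging to \(\mu\) \emph{strongly} in the dual of \(C_{0}(\overline\Omega)\): strong convergence there is total-variation convergence, which fails whenever \(\mu\) has a nontrivial singular part (e.g.\ \(\mu = -\delta_{a}\)). Consequently \eqref{eqEstimateSobolev} applied to \(u_{k,n} - u_{k}\) does not yield \(u_{k,n} \to u_{k}\) as written. The standard fix: take \(\mu_{n} = \rho_{n} * \mu \le 0\) converging weakly-\(*\), use \eqref{eqEstimateSobolev} only for the uniform bound \(\norm{u_{k,n}}_{W^{1,q}(\Omega)} \le C \norm{\mu}_{\cM(\Omega)}\), extract a subsequence converging in \(L^{1}(\Omega)\) and almost everywhere, pass to the limit in the distributional formulation (the absorption term is handled by dominated convergence since \(T_{k}(V)\) is bounded), and identify the limit with \(u_{k}\) by uniqueness of the solution for the bounded potential \(T_{k}(V)\) (for bounded potentials distributional and duality solutions coincide, and the latter are unique). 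With that correction the proof goes through.
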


\begin{proof}[Proof of \Cref{lemmaWeakMaximumPrinciple}]
	By assumption on \(\mu\),
	\[{}
	- \Delta u + Vu \le 0{}
	\quad \text{in the sense of \((C_{0}^{\infty}(\overline{\Omega}))'\).}
	\]
	Applying the formulation of Kato's inequality up to the boundary from \cite{Brezis_Marcus_Ponce:2007}*{Proposition~4.B.5}, see also \cite{Ponce:2016}*{Lemma~20.8}, we have
	\[{}
	- \Delta u^{+} + \chi_{\{u > 0\}} Vu \le 0{}
	\quad \text{in the sense of \((C_{0}^{\infty}(\overline{\Omega}))'\).}
	\]
	Thus, for every \emph{nonnegative} \(\psi \in C_{0}^{\infty}(\overline\Omega)\),{}
	\[{}
	- \int_{\Omega}{u^{+} \Delta\psi} 
	\le - \int_{\{u > 0\}}{V u \psi}
	\le 0.
	\]
	Taking any such a \(\psi\) with \(-\Delta\psi > 0\) in \(\Omega\), we deduce that \(u^{+} = 0\) almost everywhere in \(\Omega\).
\end{proof}

\begin{proof}[Proof of \Cref{propositionDistributionalExistence}]
	For every \(k \in \N\), let \(u_{k}\) be as in \Cref{lemmaDualitySolutionsDistributionalBis} and let \(v_{k}\) be also a solution of \eqref{eqDirichletProblem} for \(-\Delta + T_{k}(V)\), but with datum \(\nu\).{}
	Observe that \(v_{k}\) exists in this case since \(T_{k}(V)\) is bounded.
	Thus,
	\[{}
	\int_{\Omega}{v_{k} \, (- \Delta \varphi + T_{k}(V) \varphi)}
	= \int_{\Omega}{\varphi \dif\nu} \quad \text{for every \(\varphi \in C_{c}^{\infty}(\Omega)\).}
	\]
	By \eqref{eqEstimateSobolev}, the sequence \((v_{k})_{k \in \N}\) is bounded in \(W_{0}^{1, q}(\Omega)\) for \(1 \le q < \frac{N}{N-1}\).{}
	Hence, there exists a subsequence \((v_{k_{j}})_{j \in \N}\) which converges in \(L^{1}(\Omega)\) and almost everywhere to some function \(v \in W_{0}^{1, 1}(\Omega)\).
	Since \(\abs{\nu} \le \mu\) and \(T_{k}(V)\) is nonnegative, by linearity of the equation and the weak maximum principle above we have 
	\[{}
	\abs{v_{k}} \le u_{k}
	\quad \text{almost everywhere in \(\Omega\).}
	\]
	By \Cref{lemmaDualitySolutionsDistributionalBis}, the sequence \((T_{k}(V)u_{k})_{k \in \N}\) converges to \(Vu\) in \(L^{1}(\Omega)\).{}
	Thus, by the Dominated convergence theorem the sequence \((T_{k_{j}}(V)v_{k_{j}})_{j \in \N}\) converges to \(Vv\) in \(L^{1}(\Omega)\).{}
	Therefore, letting \(k = k_{j} \to \infty\) in the integral identity above, we deduce that \(v\) satisfies the equation involving \(-\Delta + V\) with datum \(\nu\).
\end{proof}


\section{Distributional solutions are duality solutions}
\label{sectionVariational}

Given \(f \in L^{2}(\Omega)\), we denote by \(\zeta_{f}\) the unique minimizer of the energy functional
	\begin{equation}
	\label{eqEnergy}
	E(z) = \frac{1}{2} \int_{\Omega}{(\abs{\nabla z}^{2} + Vz^{2})} - \int_{\Omega}{f z}
	\quad \text{in \(W_{0}^{1, 2}(\Omega) \cap L^{2}(\Omega; V \dif x)\).}
	\end{equation}
	Thus, \(\zeta_{f}\) is the (variational) solution of the Euler-Lagrange equation
	\begin{equation}
		\label{eqEulerLagrange}
	\int_{\Omega}{\bigl(\nabla \zeta_{f} \cdot \nabla z + V\zeta_{f} \, z \bigr)} 
	= \int_{\Omega}{f z}
	\quad \text{for every \(z \in W_{0}^{1, 2}(\Omega) \cap L^{2}(\Omega; V \dif x)\).} 
	\end{equation}
	Under the additional assumption that \(f \in L^{\infty}(\Omega)\), we have \(\zeta_{f} \in L^{\infty}(\Omega)\) and
	\[{}
	\norm{\zeta_{f}}_{L^{\infty}(\Omega)}
	\le \norm{f}_{L^{\infty}(\Omega)} \norm{\theta}_{L^{\infty}(\Omega)},
	\]
	where \(\theta\) is the classical solution of
	\begin{equation}
	\label{eqDirichletTheta}
	\left\{
	\begin{alignedat}{2}
		- \Delta\theta & = 1 && \quad \text{in \(\Omega\),}\\
		\theta & = 0 &&\quad  \text{on \(\partial\Omega\).}
	\end{alignedat}
	\right.
	\end{equation}
	
	We recall that \(x \in \Omega\) is a Lebesgue point of a function \(v \in L^{1}(\Omega)\) whenever there exists \(c \in \R\) such that
	\[{}
	\lim_{r \to 0}{\fint_{B_{r}(x)}{\abs{v - c}}}
	 = 0.
	\]
	The precise representative of \(v\) at \(x\) is then defined as \(\quasi{v}(x) \vcentcolon= c\).
	For \(f \in L^{\infty}(\Omega)\), the precise representative \(\quasi{\zeta_{f}}\) is well-defined everywhere in \(\Omega\).
	To see why this is true, by linearity of the equation it suffices to consider the case where \(f\) is nonnegative.
	One then shows that \(\zeta_{f} \in L^{1}(\Omega; V \dif x)\) and
	\begin{equation}
		\label{eqInequalityZeta}
	-\Delta\zeta_{f} + V\zeta_{f} \le f
	\quad \text{in the sense of distributions in \(\Omega\),} 
	\end{equation}
	which implies that \(\zeta_{f}\) is almost everywhere the difference between a continuous and a bounded superharmonic function, and then every \(x \in \Omega\) is a Lebesgue point of \(\zeta_{f}\) as claimed; see \cite{Ponce:2016}*{Lemma~8.10}.
	Inequality in \eqref{eqInequalityZeta} comes from an application of Fatou's lemma; 
	see \cite{Malusa_Orsina:1996} or \cite{Orsina_Ponce:2017}*{Proposition~8.1}.
	When \(V\) is bounded, one can apply the Dominated convergence theorem instead to get equality;	see \cite{Orsina_Ponce:2016}*{Proposition~3.1}.

	These functions \(\quasi{\zeta_{f}}\) can be used as test functions for distributional solutions of \eqref{eqDirichletProblem} and are more adapted to a duality argument in the spirit of estimate~\eqref{eqDualityEstimate}.

\begin{proposition}
	\label{propositionDistributionImpliesDuality}
	If \(u\) is a distributional solution of \eqref{eqDirichletProblem} for some \(\mu \in \cM(\Omega)\), then
	\[{}
	\int_{\Omega}{uf}
	= \int_{\Omega}{\quasi{\zeta_{f}} \dif\mu}
	\quad \text{for every \(f \in L^{\infty}(\Omega)\).}
	\]	
\end{proposition}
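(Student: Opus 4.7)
The plan is to reduce to the case of bounded potentials via the truncation scheme of \Cref{sectionMethodSS}. For each \(k \in \N\), set \(V_k = T_k(V)\), let \(u_k \in W_0^{1,1}(\Omega)\) be the distributional solution of the Dirichlet problem with operator \(-\Delta + V_k\) and datum \(\mu\), and let \(\zeta_k\) be the variational solution associated to \(V_k\) and \(f\). Since \(V_k\) is bounded and \(f \in L^\infty(\Omega)\), classical elliptic regularity yields \(\zeta_k \in C(\overline\Omega)\), so \(\quasi{\zeta_k} = \zeta_k\) everywhere in \(\Omega\).

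I would first establish the bounded-potential identity
\[
\int_\Omega f u_k = \int_\Omega \zeta_k \dif\mu \quad \text{for each } k \in \N.
\]
To do so, approximate \(\mu\) by a sequence \((\mu_n)_{n \in \N} \subset L^2(\Omega)\) with \(\norm{\mu_n}_{\cM(\Omega)} \le \norm{\mu}_{\cM(\Omega)}\) and \(\mu_n \to \mu\) weakly in the sense of measures. The corresponding solutions \(u_k^n\) lie in \(W_0^{1,2}(\Omega)\), so one can test the weak formulation of the equation for \(u_k^n\) against \(\zeta_k\) and the variational equation for \(\zeta_k\) against \(u_k^n\); subtraction and cancellation of the common gradient and potential terms yield \(\int f u_k^n = \int \zeta_k \dif\mu_n\). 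Letting \(n \to \infty\), the continuity of \(\zeta_k\) on \(\overline\Omega\) handles the right-hand side, while the estimate~\eqref{eqEstimateSobolev} with constant independent of the potential together with uniqueness of distributional solutions ensures \(u_k^n \to u_k\) in \(L^1(\Omega)\).

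Next, I would pass to the limit as \(k \to \infty\). \Cref{lemmaDualitySolutionsDistributionalBis} gives \(u_k \to u\) in \(L^1(\Omega)\), so the left-hand side tends to \(\int_\Omega f u\). By linearity I may assume \(f \ge 0\); applying \Cref{lemmaWeakMaximumPrinciple} to the identity
\[
-\Delta(\zeta_k - \zeta_{k+1}) + V_{k+1}(\zeta_k - \zeta_{k+1}) = (V_{k+1} - V_k)\zeta_k \ge 0
\]
shows that \((\zeta_k)_{k \in \N}\) is monotonically decreasing, and the uniform bound \(\zeta_k \le \norm{f}_{L^\infty(\Omega)} \norm{\theta}_{L^\infty(\Omega)}\) yields \(\zeta_k \searrow \zeta_*\) pointwise in \(\Omega\).

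The main obstacle — and where I expect the bulk of the technical work to lie — is to identify \(\zeta_*(x)\) with the precise representative \(\quasi{\zeta_f}(x)\) at \emph{every} \(x \in \Omega\), not merely almost everywhere, since the finite measure \(\mu\) may charge Lebesgue-null sets. I would represent both \(\zeta_k\) and \(\quasi{\zeta_f}\) via the Green's potential of the Laplacian — namely \(\zeta_k(x) = \int_\Omega G_\Omega(x,y)(f(y) - V_k(y)\zeta_k(y)) \dif y\) together with the analogous expression for \(\quasi{\zeta_f}\) built from the decomposition of \(\zeta_f\) as the difference of a continuous function and a bounded superharmonic function recalled in the excerpt — and prove \(L^1\)-convergence \(V_k \zeta_k \to V \zeta_f\) by combining the absorption estimate \(\int_\Omega V_k \zeta_k \le \int_\Omega f\), Fatou's lemma, and the monotonicity of \(\zeta_k\). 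Once \(\zeta_*(x) = \quasi{\zeta_f}(x)\) is secured pointwise in \(\Omega\), the dominated convergence theorem on the finite measure \(\mu\) delivers \(\int_\Omega \zeta_k \dif\mu \to \int_\Omega \quasi{\zeta_f} \dif\mu\), closing the argument.
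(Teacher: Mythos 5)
Your overall architecture (truncate the potential, prove the duality identity for the bounded operator \(-\Delta+T_{k}(V)\), then let \(k\to\infty\) using \Cref{lemmaDualitySolutionsDistributionalBis} on the left-hand side) matches the paper's, and your treatment of the bounded case by mollifying \(\mu\) rather than smoothing \(\zeta_{f}\) is a legitimate variant (it is essentially the Malusa--Orsina route the paper mentions as an alternative). The monotonicity of \((\zeta_{k})_{k}\) for \(f\ge 0\) is also correct.

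The gap is in the step you yourself flag as the crux: identifying \(\lim_{k}\zeta_{k}(x)\) with \(\quasi{\zeta_{f}}(x)\) at \emph{every} \(x\). Your proposed mechanism — the representation \(\quasi{\zeta_{f}}(x)=\int_{\Omega}G_{\Omega}(x,y)\bigl(f(y)-V(y)\zeta_{f}(y)\bigr)\dif y\) together with \(L^{1}\)-convergence of \(V_{k}\zeta_{k}\) to \(V\zeta_{f}\) — is false in general. If \(V_{k}\zeta_{k}\to V\zeta_{f}\) in \(L^{1}(\Omega)\), then passing to the limit in \(-\Delta\zeta_{k}+V_{k}\zeta_{k}=f\) would show that \(\zeta_{f}\) is a distributional solution with datum \(f\); but the paper shows (see \eqref{eqInequalityZeta}, \Cref{propositionSolutionDualityasDistribution} and \Cref{theoremGoodMeasuresCharacterization}) that this holds only when \(\int_{Z}f=0\), and it genuinely fails, e.g., in \Cref{exampleBallSingular1-2} with \(1\le\alpha<2\) and \(f\equiv 1\). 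In general one only has \(-\Delta\zeta_{f}+V\zeta_{f}=f\lfloor_{\Omega\setminus S}-\lambda\) for a nontrivial nonnegative singular measure \(\lambda\) carried by \(S\), so both your Green representation of \(\quasi{\zeta_{f}}\) and the claimed \(L^{1}\)-limit of \(V_{k}\zeta_{k}\) omit exactly the defect measure the paper is built around; Fatou plus the absorption estimate give only one inequality, not convergence. The paper's \Cref{lemmaVariationalTruncationPointwiseConvergence} avoids this entirely: writing \(v_{k}=w-\zeta_{f,k}\) with \(-\Delta w=f\), the functions \(\quasi{v_{k}}\) form a uniformly bounded, nondecreasing sequence of nonnegative superharmonic functions, and for such sequences the precise representatives converge \emph{everywhere} to the precise representative of the a.e.\ limit — no identification of \(\Delta\zeta_{f}\) is needed. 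Replacing your Green-potential step by this monotone superharmonic argument (or simply invoking \Cref{lemmaVariationalTruncationPointwiseConvergence}) repairs the proof; as written, the key convergence claim is not salvageable.
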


This result is proved in \cite{Malusa_Orsina:1996} using an approximation of \(\mu\) of the form \(\rho_{k} * \mu\) where \((\rho_{k})_{k \in \N}\) is a sequence of mollifiers in \(C_{c}^{\infty}(\R^{N})\)\,; see also \Cref{remarkDualityApproximation} below.{}
For the convenience of the reader, we present an alternative approximation based on the truncation of the potential \(V\), without changing the measure \(\mu\).

\begin{lemma}
	\label{lemmaVariationalTruncationPointwiseConvergence}
	Given \(f \in L^{\infty}(\Omega)\) and \(k \in \N\), let \(\zeta_{f, k}\) be the minimizer of
	\[{}
	E_{k}(z) 
	= \frac{1}{2} \int_{\Omega}{(\abs{\nabla z}^{2} + T_{k}(V) z^{2})} - \int_{\Omega}{f z}	
	\quad \text{in \(W_{0}^{1, 2}(\Omega) \cap L^{2}(\Omega; V \dif x)\).}
	\]
	Then, 
	\[{}
	\lim_{k \to \infty}{\quasi{\zeta_{f, k}}(x)}
	= \quasi{\zeta_{f}}(x)
	\quad \text{for every \(x \in \Omega\).}
	\]
\end{lemma}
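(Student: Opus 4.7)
I would first prove a monotone a.e.\ convergence $\zeta_{f,k} \downarrow \zeta_{f}$ and then promote it to pointwise convergence of the precise representatives by exploiting a common superharmonic decomposition.

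By linearity of \eqref{eqEulerLagrange} and the splitting $f = f^{+} - f^{-}$, it suffices to treat $f \ge 0$, so that $\zeta_{f}$ and each $\zeta_{f,k}$ are nonnegative. To establish $\zeta_{f} \le \zeta_{f,k+1} \le \zeta_{f,k}$ a.e., I would test the difference of the Euler--Lagrange equations for $\zeta_{f}$ and $\zeta_{f,k}$ against $(\zeta_{f} - \zeta_{f,k})^{+}$, which lies in $W_{0}^{1,2}(\Omega) \cap L^{2}(\Omega; V \dif x)$ since it is dominated by $\zeta_{f}$. The subtracted identity reads
\[
\int_{\Omega} \bigabs{\nabla (\zeta_{f} - \zeta_{f,k})^{+}}^{2} + \int_{\{\zeta_{f} > \zeta_{f,k}\}} \bigl[(V - T_{k}(V))\zeta_{f} + T_{k}(V)(\zeta_{f} - \zeta_{f,k})\bigr](\zeta_{f} - \zeta_{f,k}) = 0,
\]
and since both terms are nonnegative they must vanish separately, forcing $(\zeta_{f} - \zeta_{f,k})^{+} = 0$. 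The analogous test with $(\zeta_{f,k+1} - \zeta_{f,k})^{+}$ yields the decrease in $k$. The uniform $L^{\infty}$ bound $\norm{\zeta_{f,k}}_{L^{\infty}(\Omega)} \le \norm{f}_{L^{\infty}(\Omega)} \norm{\theta}_{L^{\infty}(\Omega)}$ then provides an a.e.\ limit $u \ge \zeta_{f}$.

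To identify $u$ with $\zeta_{f}$, I would test the EL equation for $\zeta_{f,k}$ against itself to obtain the uniform bound $\int_{\Omega} \abs{\nabla \zeta_{f,k}}^{2} + \int_{\Omega} T_{k}(V) \zeta_{f,k}^{2} = \int_{\Omega} f \zeta_{f,k} \le C$; weak compactness then yields $u \in W_{0}^{1,2}(\Omega)$ and Fatou applied to the pointwise limit $T_{k}(V) \zeta_{f,k}^{2} \to V u^{2}$ gives $u \in L^{2}(\Omega; V \dif x)$. For every admissible $z \in W_{0}^{1,2}(\Omega) \cap L^{2}(\Omega; V \dif x)$, minimality of $\zeta_{f,k}$ combined with $T_{k}(V) \le V$ yields $E_{k}(\zeta_{f,k}) \le E_{k}(z) \to E(z)$ by monotone convergence, while weak lower semicontinuity of the Dirichlet term, Fatou for the potential term, and $L^{1}$-convergence of $f \zeta_{f,k}$ give $E(u) \le \liminf_{k} E_{k}(\zeta_{f,k})$. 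Combining, $E(u) \le E(z)$ for every admissible $z$, and strict convexity of $E$ forces $u = \zeta_{f}$ a.e.

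For the passage to precise representatives, let $h \in C(\overline{\Omega})$ be the classical solution of $-\Delta h = f$ with $h = 0$ on $\partial \Omega$, and set $u_{k} \vcentcolon= h - \zeta_{f,k}$ and $u_{\infty} \vcentcolon= h - \zeta_{f}$. The identities $-\Delta u_{k} = T_{k}(V) \zeta_{f,k} \ge 0$ and \eqref{eqInequalityZeta} show that $u_{k}$ and $u_{\infty}$ are nonnegative bounded superharmonic functions, with $u_{k} \uparrow u_{\infty}$ a.e. For such functions the averages $r \mapsto \fint_{B_{r}(x)} u$ are nonincreasing, so $\quasi{u}(x) = \sup_{r > 0} \fint_{B_{r}(x)} u$; swapping the two suprema via monotone convergence gives
\[
\lim_{k \to \infty} \quasi{u_{k}}(x) = \sup_{k} \sup_{r > 0} \fint_{B_{r}(x)} u_{k} = \sup_{r > 0} \fint_{B_{r}(x)} u_{\infty} = \quasi{u_{\infty}}(x)
\]
for every $x \in \Omega$, and the continuity of $h$ yields $\quasi{\zeta_{f,k}}(x) = h(x) - \quasi{u_{k}}(x) \to h(x) - \quasi{u_{\infty}}(x) = \quasi{\zeta_{f}}(x)$. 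The main obstacle is the identification of the a.e.\ limit with $\zeta_{f}$ in the absence of any summability hypothesis on $V$: because $\zeta_{f}$ satisfies only the inequality \eqref{eqInequalityZeta} and need not be a distributional solution of the full equation, \Cref{lemmaDualitySolutionsDistributionalBis} is unavailable, and the argument must instead be carried out at the level of the energy functionals, matching a weak lower semicontinuity bound from below with a monotone convergence bound from above on fixed test functions in $L^{2}(\Omega; V \dif x)$.
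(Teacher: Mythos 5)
Your proposal is correct and follows essentially the same route as the paper's proof: identify the limit of \((\zeta_{f,k})_{k}\) as \(\zeta_{f}\) via the energy comparison \(E_{k}(\zeta_{f,k}) \le E_{k}(z) \le E(z)\) together with weak lower semicontinuity and Fatou, reduce to \(f \ge 0\) to get monotonicity, and then subtract from the solution of \(-\Delta h = f\) to obtain a bounded monotone sequence of nonnegative superharmonic functions whose precise representatives converge everywhere. The only difference is that you prove directly the two ingredients the paper cites (the weak maximum principle for the truncated problems, and the everywhere-convergence of precise representatives of a monotone sequence of superharmonic functions via the monotonicity of ball averages), which is a legitimate self-contained substitute.
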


As the function \(\zeta_{f, k}\) satisfies
\[{}
-\Delta\zeta_{f, k} + T_{k}(V)\zeta_{f, k}
= f
\quad \text{in the sense of distributions in \(\Omega\),}
\]
we have that \(\Delta\zeta_{f, k} \in L^{\infty}(\Omega)\) and then \(\quasi{\zeta_{f, k}}\) is continuous (and even \(C^{1}\)) in \(\Omega\).
The proof of the lemma relies on the property that for a uniformly bounded and nondecreasing sequence \((v_{k})_{k \in \N}\) of nonnegative superharmonic functions  converging almost everywhere to \(v\), the sequence of precise representatives \((\quasi{v_{k}})_{k \in \N}\) converges \emph{everywhere} to \(\quasi{v}\)\,; see \cite[Lemma~4.12]{Malusa_Orsina:1996} or \cite{Ponce:2016}*{Exercise~8.4}.

\begin{proof}[Proof of \Cref{lemmaVariationalTruncationPointwiseConvergence}]
	We first prove that the sequence \((\zeta_{f, k})_{k \in \N}\) converges weakly in \(W_{0}^{1, 2}(\Omega)\) to \(\zeta_{f}\).{}
	We begin by observing that
	\begin{equation}
	\label{eqVariationalEstimates}
		E_{k}(\zeta_{f, k})
	\le E_{k}(\zeta_{f})
	\le E(\zeta_{f})
	\quad \text{for every \(k \in \N\).}
	\end{equation}
	This implies that \(({\zeta_{f, k}})_{k \in \N}\) is bounded in \(W_{0}^{1, 2}(\Omega)\).{}
	Thus, there exists a subsequence \(({\zeta_{f, k_{j}}})_{j \in \N}\) which converges weakly in \(W_{0}^{1, 2}(\Omega)\) and almost everywhere in \(\Omega\) to some function \(z\).{}
	In particular, by Fatou's lemma,
	\[{}
	\int_{\Omega}{Vz^{2}}
	\le \liminf_{j \to \infty}{\int_{\Omega}{T_{k_{j}}(V) \zeta_{f, k_{j}}^{2}}}. 
	\]
	Taking \(k = k_{j}\) in \eqref{eqVariationalEstimates} and letting \(j \to \infty\), we get
	\[{}
	E(z) \le E(\zeta_{f}).
	\]
	Since \(\zeta_{f}\) is the unique minimizer of the functional \(E\), we deduce that \(z = \zeta_{f}\) almost everywhere in \(\Omega\).{}
	By uniqueness of the limit, the entire sequence \(({\zeta_{f, k}})_{k \in \N}\) converges weakly to \(\zeta_{f}\).

	By linearity of the Euler-Lagrange equation, we may proceed with the proof of the lemma assuming that \(f\) is nonnegative.
	In this case, by the weak maximum principle the sequence \(({\zeta_{f, k}})_{k \in \N}\) is non-increasing in \(\Omega\) and then, by the first part of the proof, converges almost everywhere to \(\zeta_{f}\).{}
	Let \(v_{k}\) and \(w\) be such that 
	\[{}
	\left\{
	\begin{alignedat}{2}
		- \Delta v_{k} & = T_{k}(V)\zeta_{f, k}	&& \quad \text{in \(\Omega\),}\\
		v_{k} & = 0	&&	\quad \text{on \(\partial\Omega\),}
	\end{alignedat}
	\right.
	\quad{}
	\text{and}
	\quad{}
	\left\{
	\begin{alignedat}{2}
		- \Delta w & = f	&& \quad \text{in \(\Omega\),}\\
		w & = 0	&&	\quad \text{on \(\partial\Omega\).}
	\end{alignedat}
	\right.		
	\]
	We then have \(v_{k} = w - \zeta_{f, k}\) almost everywhere in \(\Omega\), which implies that
	\[{}
	\quasi{v_{k}}(x)
	= \quasi{w}(x) - \quasi{\zeta_{f, k}}(x)
	\quad \text{for every \(x \in \Omega\).}
	\]
	Observe that \((\quasi{v_{k}})_{k \in \N}\) is a uniformly bounded and nondecreasing sequence of nonnegative superharmonic functions.
	Thus, its pointwise limit \(v\) coincides with the precise representative \(\quasi{v}\) in \(\Omega\).{}
	We then get
	\begin{equation}
	\label{eqLimitPreciseRepresentative1}
	\quasi{v}(x)
	= v(x)
	= \quasi{w}(x) - \lim_{k \to \infty}{\quasi{\zeta_{f, k}}(x)}
	\quad \text{for every \(x \in \Omega\).}
	\end{equation}
	Since \(v = w - \zeta_{f}\) almost everywhere in \(\Omega\), we also have
	\begin{equation}
	\label{eqLimitPreciseRepresentative2}
	\quasi{v}(x)
	= \quasi{w}(x) - \quasi{\zeta_{f}}(x)
	\quad \text{for every \(x \in \Omega\).}
	\end{equation}	
	The conclusion follows from comparison between \eqref{eqLimitPreciseRepresentative1} and \eqref{eqLimitPreciseRepresentative2} and the boundedness of \(w\).
\end{proof}

\begin{proof}[Proof of \Cref{propositionDistributionImpliesDuality}]
	Let us first assume that \(V\) is bounded.
	In this case,  for every \(f \in L^{\infty}(\Omega)\), \(\quasi{\zeta_{f}}\) is continuous, \(\Delta\zeta_{f}\) is bounded and satisfies
	\[{}
	- \Delta\zeta_{f} = f - V \zeta_{f}
	\quad \text{in the sense of distributions in \(\Omega\).}
	\]
	One can thus approximate \(\quasi{\zeta_{f}}\) uniformly by a sequence \((\quasi{\zeta_{f_{k}}})_{k \in \N}\) in \(C_{0}^{\infty}(\overline{\Omega})\) such that \((f_{k})_{k \in \N}\) is bounded in \(L^{\infty}(\Omega)\) and converges almost everywhere to \(f\).{}
	To construct such an example, one can take \(g_{k} = \rho_{k} * (f - V\zeta_{f})\), where \((\rho_{k})_{k \in \N}\) is a sequence of mollifiers, and \(v_{k} \in C_{0}^{\infty}(\overline\Omega)\) as the classical solution of
	\[{}
	\left\{
	\begin{alignedat}{2}
		- \Delta v_{k} & = g_{k}	&& \quad \text{in \(\Omega\),}\\
		v_{k} & = 0	&& \quad \text{on \(\partial\Omega\).}
	\end{alignedat}
	\right.
	\]
	We then have the desired approximation of \(\quasi{\zeta_{f}}\) by observing that \(v_{k} = \quasi{\zeta_{f_{k}}}\) with \(f_{k} = g_{k} + Vv_{k}\).
	For every \(k \in \N\), we get from \eqref{eqIntegralC0Infty} that
	\[{}
	\int_{\Omega}{u f_{k}}
	= \int_{\Omega}{u \, (-\Delta v_{k} + V v_{k})}
	= \int_{\Omega}{v_{k} \dif\mu}
	= \int_{\Omega}{\quasi{\zeta_{f_{k}}} \dif\mu}
	\]	
	and the conclusion for \(V\) bounded follows as \(k \to \infty\).
	
	We now assume that \(V\) is merely a Borel function and denote by \(u_{k}\) the distributional solution of the Dirichlet problem associated to  \(- \Delta + T_{k}(V)\) and datum \(\mu\). 
	By \Cref{lemmaDualitySolutionsDistributionalBis}, \((u_{k})_{k \in \N}\) converges to \(u\) in \(L^{1}(\Omega)\).{}
	On the other hand, from the first part of this proof and using the notation of \Cref{lemmaVariationalTruncationPointwiseConvergence},
	\[{}
	\int_{\Omega}{u_{k} f}
	= \int_{\Omega}{\quasi{\zeta_{f, k}} \dif\mu}
	\quad \text{for every \(f \in L^{\infty}(\Omega)\).}
	\]
	By uniform boundedness and pointwise convergence of \((\quasi{\zeta_{f, k}})_{k \in \N}\), the proposition follows.
\end{proof}

The concept of duality solution of the Dirichlet problem \eqref{eqDirichletProblem} is a useful tool in establishing the connection between the failure of the strong maximum principle and the nonexistence of distributional solutions of \eqref{eqDirichletProblem}.
We recall its definition from \cite{Malusa_Orsina:1996}:

\begin{definition}
	\label{definitionDuality}
	Given \(\mu \in \cM(\Omega)\), we say that \(u \in L^{1}(\Omega)\) is a \emph{duality solution} of \eqref{eqDirichletProblem} whenever 
	\[{}
	\int_{\Omega}{u f}
	= \int_{\Omega}{\quasi{\zeta_{f}} \dif\mu}
	\quad \text{for every \(f \in L^{\infty}(\Omega)\).}
	\]
\end{definition}

Uniqueness of the duality solution is a straightforward consequence of the fact that \(u \equiv 0\) is the only solution with \(\mu = 0\).{}
More generally, the weak maximum principle also holds in the duality setting.
While \Cref{propositionDistributionImpliesDuality} states that distributional solutions (whenever they exist) are duality solutions, the latter exist for any given finite measure:

\begin{proposition}
	\label{propositionDualityExistence}
	The Dirichlet problem \eqref{eqDirichletProblem} has a unique duality solution for every \(\mu \in \cM(\Omega)\). 
\end{proposition}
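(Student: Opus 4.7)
The plan is to construct the duality solution by approximating \(V\) with its truncations \(T_{k}(V)\) and passing to the limit using the tools already developed in this section.

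\emph{Uniqueness} is immediate: if \(u_{1}\) and \(u_{2}\) are two duality solutions with the same datum \(\mu\), then \(\int_{\Omega}{(u_{1} - u_{2}) f} = 0\) for every \(f \in L^{\infty}(\Omega)\), whence \(u_{1} = u_{2}\) almost everywhere in \(\Omega\) by \(L^{1}\)--\(L^{\infty}\) duality.

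For \emph{existence}, given \(\mu \in \cM(\Omega)\) and \(k \in \N\), I would take \(u_{k} \in W_{0}^{1, 1}(\Omega)\) to be the distributional solution of \eqref{eqDirichletProblem} for the bounded potential \(T_{k}(V)\) with datum \(\mu\), which exists by Stampacchia's theorem. Since the constant in the Sobolev estimate \eqref{eqEstimateSobolev} does not depend on the potential, the sequence \((u_{k})_{k \in \N}\) is bounded in \(W_{0}^{1, q}(\Omega)\) for every \(1 \le q < \frac{N}{N - 1}\). Extracting a subsequence, one obtains \(u \in W_{0}^{1, 1}(\Omega) \subset L^{1}(\Omega)\) such that \(u_{k_{j}} \to u\) in \(L^{1}(\Omega)\) and almost everywhere in \(\Omega\).

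By \Cref{propositionDistributionImpliesDuality} applied to the bounded potential \(T_{k}(V)\),
\[
\int_{\Omega}{u_{k} f} = \int_{\Omega}{\quasi{\zeta_{f, k}} \dif\mu}
\quad \text{for every \(f \in L^{\infty}(\Omega)\),}
\]
where \(\zeta_{f, k}\) is the minimizer from \Cref{lemmaVariationalTruncationPointwiseConvergence}. The same lemma gives \(\quasi{\zeta_{f, k}}(x) \to \quasi{\zeta_{f}}(x)\) at every \(x \in \Omega\), while the weak maximum principle provides the \(k\)-independent bound \(\norm{\zeta_{f, k}}_{L^{\infty}(\Omega)} \le \norm{f}_{L^{\infty}(\Omega)} \norm{\theta}_{L^{\infty}(\Omega)}\). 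Dominated convergence against \(\abs{\mu}\) on the right, together with \(L^{1}\) convergence of \((u_{k_{j}})_{j \in \N}\) on the left, then yields
\[
\int_{\Omega}{u f} = \int_{\Omega}{\quasi{\zeta_{f}} \dif\mu}
\quad \text{for every \(f \in L^{\infty}(\Omega)\),}
\]
so \(u\) is a duality solution of \eqref{eqDirichletProblem}.

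The only mildly delicate point is reconciling the two modes of convergence: the left-hand side requires \(L^{1}\) compactness of \((u_{k})_{k \in \N}\) with bounds uniform in \(V\), provided by \eqref{eqEstimateSobolev}, while the right-hand side demands pointwise convergence of the precise representatives \((\quasi{\zeta_{f, k}})_{k \in \N}\) together with a uniform \(L^{\infty}\) bound, which is precisely the content of \Cref{lemmaVariationalTruncationPointwiseConvergence} combined with the weak maximum principle. Once both ingredients are in place, Lebesgue's convergence theorems close the argument.
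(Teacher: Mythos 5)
Your proof is correct, but it follows a different route from the one the paper takes. The paper does not prove the proposition from scratch: it cites Malusa--Orsina (Theorem~5.6), whose argument, in the spirit of Littman--Stampacchia--Weinberger, is a functional-analytic duality argument built on Stampacchia's estimate \(\norm{\zeta_{f}}_{L^{\infty}(\Omega)} \le C \norm{f}_{(W_{0}^{1, q}(\Omega))'}\): the linear functional \(f \mapsto \int_{\Omega}{\quasi{\zeta_{f}}\dif\mu}\) is then continuous for the \((W_{0}^{1,q})'\) norm, so it is represented by some \(u \in W_{0}^{1, q}(\Omega)\), which is the duality solution and comes with the estimate \eqref{eqFederer} for free. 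You instead construct \(u\) by truncating the potential, using the \(V\)-independent constant in \eqref{eqEstimateSobolev} for compactness of \((u_{k})_{k\in\N}\) in \(L^{1}(\Omega)\), the identity \(\int_{\Omega}{u_{k} f} = \int_{\Omega}{\quasi{\zeta_{f,k}}\dif\mu}\) for the bounded potentials \(T_{k}(V)\), and \Cref{lemmaVariationalTruncationPointwiseConvergence} plus the uniform bound \(\norm{\zeta_{f,k}}_{L^{\infty}(\Omega)} \le \norm{f}_{L^{\infty}(\Omega)}\norm{\theta}_{L^{\infty}(\Omega)}\) to pass to the limit on the right-hand side by dominated convergence. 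This is exactly the approximation machinery the authors deploy elsewhere (it is the signed-measure analogue of \Cref{lemmaDualitySolutionsDistributional}, where monotonicity replaces your compactness step, and of the second half of the proof of \Cref{propositionDistributionImpliesDuality}), so your argument is self-contained given the section's lemmas. What the paper's route buys is the quantitative estimate \eqref{eqFederer} as an immediate byproduct; what yours buys is the stability of duality solutions under truncation of \(V\), which the paper needs anyway, and you still recover \(u \in W_{0}^{1,q}(\Omega)\) from the uniform bound on \((u_{k})_{k\in\N}\).
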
 

This proposition is proved in \cite{Malusa_Orsina:1996}*{Theorem~5.6}, in the spirit of \cite{Littman_Stampacchia_Weinberger:1963}. 
The proof is based on Stampacchia's estimate:
\[{}
\norm{\zeta_{f}}_{L^{\infty}(\Omega)}
\le C \norm{f}_{(W_{0}^{1, q}(\Omega))'}
\quad \text{for every \(f \in L^{\infty}(\Omega)\),}
\]
where \(q < \frac{N}{N-1}\) and \(C > 0\) depends on \(q\) and \(\Omega\),
which implies that any duality solution belongs to \(W_{0}^{1, q}(\Omega)\) and satisfies
\begin{equation}
\label{eqFederer}
\norm{u}_{W^{1, q}(\Omega)}
\le C \norm{\mu}_{\cM(\Omega)}.
\end{equation}

\begin{remark}
	\label{remarkDistributionalSolutionsVariational}
From the Euler-Lagrange equation \eqref{eqEulerLagrange}, minimizers of \(E\) are also duality solutions.
Indeed, for any \(h \in L^{2}(\Omega)\), one can apply \eqref{eqEulerLagrange} with \(z = \zeta_{h}\) to get
\[{}
\int_{\Omega}{\zeta_{h} f}
= \int_{\Omega}{\zeta_{f} h}
\quad \text{for every \(f \in L^{\infty}(\Omega)\).}
\]
Since \(\zeta_{f} = \quasi{\zeta_{f}}\) almost everywhere in \(\Omega\), \(\zeta_{h}\) is then a duality solution of \eqref{eqDirichletProblem} with \(\mu = h \dif x\).
Assuming that \eqref{eqDirichletProblem} has a distributional solution \(w\) with \(\mu = h \dif x\), then \(w\) is also a duality solution by \Cref{propositionDistributionImpliesDuality}.
Hence, by uniqueness, one has \(w = \zeta_{h}\).
\end{remark}

Denoting by \(G_{x}\) the duality solution associated to the Dirac mass \(\mu = \delta_{x}\) at any point \(x \in \Omega\), we have the following representation formula:
\begin{equation}
	\label{eqRepresentationGreen}
\quasi{\zeta_{f}}(x)
= \int_{\Omega}{G_{x} f}
\quad \text{for every \(f \in L^{\infty}(\Omega)\).}
\end{equation}
By \Cref{remarkDistributionalSolutionsVariational}, this formula also applies to distributional solutions with bounded data.
It thus seems that we have already fulfilled part of our goals we set in the introduction, more specifically in \Cref{theoremGreen}.
However, we do not know whether \(G_{x}\) is a distributional solution of \eqref{eqDirichletProblem} with \(\mu = \delta_{x}\)\,, and we still have to prove that this is the case if and only if \(x \in \Omega \setminus Z\).{}
In addition, we would like to identify all nonnegative functions \(f \in L^{\infty}(\Omega)\) such that \(\zeta_{f}\) is indeed a distributional solution with datum \(f\).{}
We eventually prove that this is true if and only if \(\int_{Z}{f} = 0\).{}

	To conclude this section we explain why duality solutions enjoy good approximation properties in the sense that reasonable approximation schemes of the measure \(\mu\) yield sequences of solutions that converge to the duality solution \(u\) associated to \(\mu\).

\begin{proposition}
	\label{remarkDualityApproximation}
	Let \(N \ge 2\) and let \((\rho_{k})_{k \in \N}\) be a sequence of mollifiers of the form \(\rho_{k}(x) = \frac{1}{r_{k}^{N}} \varphi(\frac{x}{r_{k}})\) for a fixed \(\varphi \in C_{c}^{\infty}(\R^{N})\) and a sequence \((r_{k})_{k \in \N}\) of positive numbers converging to zero.{}
	Given \(\mu \in \cM(\Omega)\), the sequence \((\zeta_{\rho_{k} * \mu})_{k \in \N}\) converges  in \(L^{p}(\Omega)\) to the duality solution of \eqref{eqDirichletProblem} for every \(1 \le p < \frac{N}{N-2}\).
\end{proposition}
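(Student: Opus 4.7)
The strategy is to combine the uniform a priori estimate \eqref{eqFederer} with a Fubini-type duality identity, and to identify limits via the uniqueness property from \Cref{propositionDualityExistence}. Extending \(\mu\) by zero to \(\R^{N}\), the convolution \(\rho_{k} * \mu\) is a smooth function with \(\norm{\rho_{k} * \mu}_{L^{1}(\R^{N})} \le \norm{\mu}_{\cM(\Omega)}\); by \Cref{remarkDistributionalSolutionsVariational} each \(\zeta_{\rho_{k} * \mu}\) is simultaneously the variational and the duality solution of~\eqref{eqDirichletProblem} with datum \(\bigl((\rho_{k} * \mu)\chi_{\Omega}\bigr) \dif x\).

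First I would extract a convergent subsequence. Applying \eqref{eqFederer} to each duality solution \(\zeta_{\rho_{k} * \mu}\) gives
\[{}
\norm{\zeta_{\rho_{k} * \mu}}_{W^{1, q}(\Omega)}
\le C \norm{(\rho_{k} * \mu)\chi_{\Omega}}_{L^{1}(\Omega)}
\le C \norm{\mu}_{\cM(\Omega)}
\]
for every \(1 \le q < N/(N-1)\), and the Rellich--Kondrachov theorem then yields compactness in \(L^{p}(\Omega)\) for \(1 \le p < Nq/(N-q)\). Since \(q\) is arbitrary below \(N/(N-1)\), this covers the full range \(1 \le p < N/(N-2)\).

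Next I would identify every subsequential \(L^{p}\)-limit with the duality solution \(u\) of \eqref{eqDirichletProblem} associated to \(\mu\). Given \(f \in L^{\infty}(\Omega)\), \Cref{definitionDuality} and Fubini's theorem give
\[{}
\int_{\Omega}{\zeta_{\rho_{k} * \mu}\, f}
= \int_{\Omega}{\quasi{\zeta_{f}}\, (\rho_{k} * \mu)}
= \int_{\Omega}{\bigl(\widecheck{\rho}_{k} * \quasi{\zeta_{f}}\bigr)(y) \dif\mu(y)},
\]
where \(\widecheck{\rho}_{k}(x) \vcentcolon= \rho_{k}(-x)\) and \(\quasi{\zeta_{f}}\) has been extended by zero outside \(\Omega\). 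For every fixed \(y \in \Omega\) and \(k\) large enough so that \(B_{r_{k}}(y) \Subset \Omega\), the quantity \(\bigl(\widecheck{\rho}_{k} * \quasi{\zeta_{f}}\bigr)(y)\) is a weighted average of \(\quasi{\zeta_{f}}\) on a ball contained in \(\Omega\); since every point of \(\Omega\) is a Lebesgue point of \(\quasi{\zeta_{f}}\), its limit equals \(\quasi{\zeta_{f}}(y)\). The uniform bound \(\bignorm{\widecheck{\rho}_{k} * \quasi{\zeta_{f}}}_{L^{\infty}(\R^{N})} \le \norm{\quasi{\zeta_{f}}}_{L^{\infty}(\Omega)}\) together with the finiteness of \(\mu\) then allow one to pass to the limit by dominated convergence, yielding
\[{}
\lim_{k \to \infty}{\int_{\Omega}{\zeta_{\rho_{k} * \mu}\, f}}
= \int_{\Omega}{\quasi{\zeta_{f}} \dif\mu}.
\]
Hence every subsequential limit \(v\) satisfies the duality identity characterising \(u\) and, by uniqueness in \Cref{propositionDualityExistence}, equals \(u\). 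The full sequence therefore converges to \(u\) in \(L^{p}(\Omega)\) for every \(1 \le p < N/(N-2)\).

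The step I expect to be the main obstacle is controlling \(\bigl(\widecheck{\rho}_{k} * \quasi{\zeta_{f}}\bigr)(y)\) when \(y\) is close to \(\partial\Omega\), since the extension by zero could in principle prevent convergence to \(\quasi{\zeta_{f}}(y)\) for points within distance \(r_{k}\) of the boundary. This is handled by noting that \(\mu\) is supported in \(\Omega\), so pointwise convergence is needed only at interior points \(y\), where \(B_{r_{k}}(y) \Subset \Omega\) holds once \(k\) is large; all other ingredients---the \(V\)-independent \(W^{1, q}\) bound, the duality test-function formula in terms of \(\quasi{\zeta_{f}}\), and the uniqueness of the duality solution---are already available from earlier in the excerpt.
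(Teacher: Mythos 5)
Your proposal is correct and follows essentially the same route as the paper: the duality identity plus Fubini reduces the matter to the pointwise convergence of \(\widecheck{\rho}_{k} * \quasi{\zeta_{f}}\) at Lebesgue points, dominated convergence gives weak convergence against \(L^{\infty}\) test functions, and the \(V\)-independent bound \eqref{eqFederer} in \(W_{0}^{1, q}(\Omega)\) for all \(q < N/(N-1)\) upgrades this to strong \(L^{p}\) convergence for \(p < N/(N-2)\). The only cosmetic difference is that you extract subsequences via Rellich--Kondrachov first and then identify the limit by uniqueness, whereas the paper establishes the weak\(^{*}\) limit first; the ingredients are identical.
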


\begin{proof}
	The assumption on \((\rho_{k})_{k \in \N}\) implies that, for every \(f \in L^{\infty}(\Omega)\) and \(x \in \Omega\),
	\[{}
	\lim_{k \to \infty}{\widecheck{\rho_{k}} * \zeta_{f}(x)}
	= \widehat{\zeta_{f}}(x),
	\]
	where \(\widecheck{\rho_{k}}(y) = \rho_{k}(-y)\).{}
	Then, by Fubini's theorem and the Dominated convergence theorem,
	\[{}
	\int_{\Omega}{\zeta_{f} \, \rho_{k} * \mu}
	= \int_{\Omega}{\widecheck{\rho_{k}} * \zeta_{f}  \dif\mu}
	\to \int_{\Omega}{\quasi{\zeta_{f}} \dif\mu}.
	\]
	Since \(\zeta_{\rho_{k} * \mu}\) is the duality solution with datum \(\rho_{k} * \mu\),{}
	the latter convergence can be rewritten as
	\[{}
	\lim_{k \to \infty}{\int_{\Omega}{\zeta_{\rho_{k} * \mu} \, f}}
	= \int_{\Omega}{u f},
	\] 
	where \(u\) is the duality solution with datum \(\mu\)
	As a result, \((\zeta_{\rho_{k} * \mu})_{k \in \N}\) converges to \(u\) with respect to the \(L^{\infty}(\Omega)\)-weak\(^{*}\) topology.
	By boundedness of \((\zeta_{\rho_{k} * \mu})_{k \in \N}\) in \(W_{0}^{1, q}(\Omega)\) for every \(1 \le q < \frac{N}{N-1}\)\,, see \eqref{eqFederer}, we have the strong convergence to \(u\) in \(L^{p}\) spaces.
\end{proof}


\section{Duality solutions as distributional solutions}
\label{sectionDualityAsDistributions}

We henceforth denote by \(S\) the subset of \(\Omega\) defined as the zero-set of the \emph{torsion function} \(\zeta_{1}\)\,, namely
\begin{equation}
	\label{eqSetS1}
	S = 
\bigl\{ x \in \Omega : \quasi{\zeta_{1}}(x) = 0 \bigr\}.
\end{equation}
We recall that \(\zeta_{1}\) is the minimizer of the energy functional \(E\) with constant \(f \equiv 1\), and so \(S\) is a Sobolev-closed set and depends on the potential \(V\).
By the weak maximum principle for variational solutions, for every  \(f \in L^{\infty}(\Omega)\) the torsion function dominates \(\zeta_{f}\) in the sense that
\begin{equation}
\label{eqComparisonTorsion}
|\zeta_{f}|
\le \norm{f}_{L^{\infty}(\Omega)} \zeta_{1}
\quad \text{almost everywhere in \(\Omega\).}
\end{equation}
The same estimate is then satisfied by the precise representatives, this time at every point in \(\Omega\), and we deduce that
\begin{equation}
	\label{eqSetS2}
S = 
\bigl\{ x \in \Omega : \quasi{\zeta_{f}}(x) = 0 \ \text{for every \(f \in L^{\infty}(\Omega)\)} \bigr\}.
\end{equation}

By \Cref{remarkDistributionalSolutionsVariational}, this characterization of \(S\) involves more functions than in the definition of the universal zero-set \(Z\).{}
Therefore,
\[{}
S \subset Z.
\]
For example, when \(V\) is bounded, the notions of distributional and duality solution coincide and the strong maximum principle holds everywhere in \(\Omega\).
We thus have in this case 
\[{}
S = Z = \emptyset.{}
\]
For unbounded potentials \(V\), the inclusion can be strict: 

\begin{example}
	\label{exampleBallSingular1-2}
The Dirichlet problem
	\begin{equation*}
	\left\{
	\begin{alignedat}{2}
		- \Delta u + \frac{1}{\abs{x_{1}}^{\alpha}} \, u & = \mu	&& \quad \text{in \(B_{1}(0)\),}\\
		u & = 0	&&	\quad \text{on \(\partial B_{1}(0)\),}
	\end{alignedat}
	\right.{}
	\end{equation*}
has no distributional solution with \(\mu\) nonnegative, \(\mu \ne 0\), for any exponent \(1 \le \alpha < 2\)\,; see \cite{Orsina_Ponce:2008}*{Theorem~9.1}.{}
In this case, \(\zeta_{1}\) solves two independent Dirichlet problems, one on each side of the hyperplane \(\{x_{1} = 0\}\), and then
\[{}
S = \{x_{1} = 0\} \cap B_{1}(0)
\quad \text{and} \quad Z = \Omega.
\]
For \(\alpha \ge 2\), the singularity of \(V\) is even stronger and, nevertheless, one has the equality
\[{}
S = Z = \{x_{1} = 0\} \cap B_{1}(0),
\]
since \(\zeta_{1}\) now satisfies \eqref{eqEquationSupersolution} with \(f \equiv 1\).
\end{example}

We prove in this section that duality solutions can be seen as distributional solutions of the Dirichlet problem, but  when \(S \ne \emptyset\) they need not solve the equation in the sense of distributions  with the same datum \(\mu\).

\begin{proposition}
	\label{propositionSolutionDualityasDistribution}
	If \(u \in L^{1}(\Omega)\) is a duality solution of \eqref{eqDirichletProblem} for some nonnegative measure \(\mu \in \cM(\Omega)\), then \(u \in W_{0}^{1, 1}(\Omega) \cap L^{1}(\Omega; V \dif x)\) and
	\[{}
	-\Delta u + Vu = \mu\lfloor_{\Omega\setminus S}{} - \lambda 
	\quad \text{in the sense of distributions in \(\Omega\),}
	\]
	where \(\lambda \in \cM(\Omega)\) is nonnegative, diffuse with respect to the \(W^{1, 2}\) capacity and carried by \(S\), that is, 
	\[{}
	\lambda(\Omega \setminus S) = 0.
	\]
\end{proposition}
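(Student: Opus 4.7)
The plan is to approach this via an approximation scheme, identifying $\lambda$ as a defect measure in a weak-$*$ limit.

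The starting point is the observation that, by \eqref{eqSetS2}, $\widehat{\zeta_{f}}$ vanishes identically on $S$ for every $f \in L^{\infty}(\Omega)$, so the duality identity gives
\[
\int_{\Omega}{u f} = \int_{\Omega}{\widehat{\zeta_{f}} \dif\mu} = \int_{\Omega}{\widehat{\zeta_{f}} \dif(\mu\lfloor_{\Omega\setminus S})} \quad \text{for every \(f \in L^{\infty}(\Omega)\)}.
\]
Hence $u$ coincides with the (unique) duality solution for the restricted measure $\mu\lfloor_{\Omega\setminus S}$, and this identifies the first term in the desired equation. Applying \Cref{remarkDualityApproximation} to $\mu\lfloor_{\Omega\setminus S}$, I would set $\sigma_{k} = \rho_{k} * (\mu\lfloor_{\Omega\setminus S}) \in L^{\infty}(\Omega)$; the variational solutions $u_{k} \vcentcolon= \zeta_{\sigma_{k}}$ are by \Cref{remarkDistributionalSolutionsVariational} simultaneously distributional and duality solutions of $-\Delta u_{k} + V u_{k} = \sigma_{k}$, and $u_{k} \to u$ in $L^{p}(\Omega)$ for $1 \le p < N/(N-2)$ (and almost everywhere along a subsequence). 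The potential-independent estimates \eqref{eqEstimateAbsorption} and \eqref{eqEstimateSobolev} give uniform bounds $\norm{V u_{k}}_{L^{1}(\Omega)} \le \norm{\mu}_{\cM(\Omega)}$ and $\norm{u_{k}}_{W^{1,q}(\Omega)} \le C \norm{\mu}_{\cM(\Omega)}$ for $1 \le q < N/(N-1)$, so $u \in W_{0}^{1,1}(\Omega)$ and Fatou yields $Vu \in L^{1}(\Omega)$. Extracting a further subsequence, $(Vu_{k})_{k \in \N}$ converges weakly-$*$ in $\cM(\Omega)$ to a nonnegative finite measure which, by Fatou tested against nonnegative $C_{c}$ functions, dominates $Vu \dif x$. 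Writing the limit as $Vu \dif x + \lambda$ with $\lambda \ge 0$ and passing to the limit in the distributional equation for $u_{k}$ (using $\sigma_{k} \to \mu\lfloor_{\Omega\setminus S}$ weakly-$*$) yields
\[
-\Delta u + V u = \mu\lfloor_{\Omega\setminus S} - \lambda \quad \text{in the sense of distributions in \(\Omega\)}.
\]

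That $\lambda$ is carried by $S$ follows from \Cref{propositionDistributionImpliesDuality} applied to this equation: $u$ is a duality solution with datum $\mu\lfloor_{\Omega\setminus S} - \lambda$, so $\int uf = \int \widehat{\zeta_{f}} \dif(\mu\lfloor_{\Omega\setminus S}) - \int \widehat{\zeta_{f}} \dif\lambda$ for every $f \in L^{\infty}(\Omega)$; comparing with the duality identity above forces $\int \widehat{\zeta_{f}} \dif\lambda = 0$ for every $f \in L^{\infty}(\Omega)$, and choosing $f \equiv 1$ with $\widehat{\zeta_{1}} > 0$ on $\Omega\setminus S$ (by the very definition of $S$) gives $\lambda(\Omega\setminus S) = 0$.

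The main obstacle is the diffuseness of $\lambda$ with respect to the $W^{1,2}$ capacity. My plan is to exploit that each $u_{k} = \zeta_{\sigma_{k}}$ lies in $W_{0}^{1,2}(\Omega) \cap L^{2}(\Omega; V \dif x)$ and to establish equi-integrability of $(Vu_{k})_{k \in \N}$ with respect to the capacity: given a Borel set $E$ with small $\capt_{W^{1,2}}$ and a $W_{0}^{1,2}$ cut-off $v$ with $v \ge 1$ quasi-everywhere on $E$ and $\int |\nabla v|^{2}$ small, one tests the variational equation against products involving $T_{M}(u_{k})$ and $v$ (with $T_{M}$ the truncation at height $M$) and uses the energy identity $\int |\nabla T_{M}(u_{k})|^{2} + \int V u_{k} T_{M}(u_{k}) \le M \norm{\mu}_{\cM(\Omega)}$ together with the absorption bound $\int V u_{k} \le \norm{\mu}_{\cM(\Omega)}$ to control $\int_{E} V u_{k}$ uniformly as $\capt_{W^{1,2}}(E) \to 0$. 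The delicate point is the absence of a uniform $W^{1,2}$ bound on $u_{k}$ itself, so the argument must balance the bulk on $\{u_{k} \le M\}$ against the tail on $\{u_{k} > M\}$ via an appropriate choice of $M$, in the spirit of the Boccardo--Gallou\"et--Orsina treatment of measure data.
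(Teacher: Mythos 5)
Your opening step (using \eqref{eqSetS2} to replace \(\mu\) by \(\mu\lfloor_{\Omega\setminus S}\) in the duality identity) and your argument that \(\lambda(\Omega\setminus S)=0\) (comparing the two duality identities and testing with \(f\equiv 1\)) coincide with the paper's. But there are two genuine gaps.

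First, your approximation scheme rests on the assertion that \(u_{k} := \zeta_{\sigma_{k}}\), with \(\sigma_{k} = \rho_{k} * (\mu\lfloor_{\Omega\setminus S}) \in L^{\infty}(\Omega)\), is a \emph{distributional} solution of \(-\Delta u_{k} + Vu_{k} = \sigma_{k}\). \Cref{remarkDistributionalSolutionsVariational} does not give this: it only says that \(\zeta_{h}\) is a duality solution, and that \emph{if} a distributional solution with datum \(h \dif x\) exists, then it equals \(\zeta_{h}\). For unbounded \(V\) one has in general only the inequality \eqref{eqInequalityZeta}, and \(\zeta_{\sigma_{k}}\) typically carries its own defect measure; see \Cref{exampleBallSingular1-2} with \(1\le\alpha<2\), where \(\zeta_{1}\not\equiv 0\) yet no nontrivial distributional supersolution exists. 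Since \(\sigma_{k}\) has no reason to vanish on \(S\), the very phenomenon you are trying to produce in the limit is already present at each fixed \(k\), so you are passing to the limit in an equation that does not hold. This is precisely why the paper approximates by truncating the potential (\(T_{k}(V)\), \Cref{lemmaDualitySolutionsDistributional}) rather than by mollifying the measure: for bounded potentials the distributional and duality notions genuinely coincide.

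Second, the diffuseness of \(\lambda\) --- which you correctly identify as the main obstacle --- is only sketched: the capacitary equi-integrability of \((Vu_{k})_{k\in\N}\) is not established, and you yourself flag the absence of a uniform \(W^{1,2}\) bound that your truncation argument would have to circumvent. The paper's route is different and complete: decompose \(\lambda = \lambda\ld + \lambda\lc\), restrict the equation to \(S\) to obtain \((\Delta u)\lc = \lambda\lc\) there, and invoke the inverse maximum principle (\((\Delta u)\lc \le 0\) for nonnegative \(u\)) to conclude \(\lambda\lc \le 0\), hence \(\lambda\lc = 0\) by nonnegativity of \(\lambda\). I recommend adopting that argument; it bypasses any quantitative control of \(Vu_{k}\) near sets of small capacity.
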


Here, \(\mu\lfloor_{A}\) denotes the contraction of \(\mu\) with respect to a Borel set \(A\), defined by 
\[{}
\mu\lfloor_{A}(B) = \mu(B \cap A).
\]
By a diffuse measure we mean that \(\lambda(B) = 0\) for every Borel subset \(B \subset \Omega\) having \(W^{1, 2}\)~capacity zero.
We finally recall that the \(W^{1, 2}\)~capacity of a compact subset \(K \subset \R^{N}\) is defined as
\[{}
\capt_{W^{1, 2}}{(K)}
= \inf{\Bigl\{\norm{\varphi}_{W^{1, 2}(\R^{N})} 
: \varphi \in C_{c}^{\infty}(\R^{N}),\ \varphi \ge 0\ \text{in \(\R^{N}\)}\ \text{and}\ \varphi > 1\ \text{on \(K\)}\Bigr\}}.
\]
It is then extended to open sets by inner regularity and then to arbitrary sets by outer regularity.

By \Cref{propositionSolutionDualityasDistribution}, a duality solution thus fails from being a distributional one for the same nonnegative datum \(\mu\) for two possible reasons: 
The existence of some nontrivial mass carried by \(\mu\) on \(S\) or the appearance of a nonpositive measure carried by \(S\).{}
This latter phenomenon always happens in the case of \Cref{exampleBallSingular1-2} when \(1 \le \alpha < 2\) since there are simply no distributional supersolutions, other than the trivial one.
One also shows that the measure \(\lambda\) is always singular with respect to the Lebesgue measure; see \Cref{remarkSingular}.

We begin with the following approximation procedure, where in contrast with \Cref{lemmaDualitySolutionsDistributionalBis} we do not assume that \eqref{eqDirichletProblem} has a distributional solution.

\begin{lemma}
	\label{lemmaDualitySolutionsDistributional}
	Let \(\mu \in \cM(\Omega)\) be a nonnegative measure and, for every \(k \in \N\), let \(u_{k} \in W_{0}^{1, 1}(\Omega)\) be such that
	\[{}
	- \Delta u_{k} + T_{k}(V) u_{k} = \mu{}
	\quad \text{in the sense of distributions in \(\Omega\).}
	\]
	Then, the sequence \((u_{k})_{k \in \N}\) converges in \(L^{1}(\Omega)\) to the duality solution \(u\) of \eqref{eqDirichletProblem}.
	Moreover, \(u \in W_{0}^{1, 1}(\Omega) \cap L^{1}(\Omega; V \dif x)\) and there exists a nonnegative measure \(\lambda \in \cM(\Omega)\) such that
	\[{}
	-\Delta u + Vu = \mu - \lambda 
	\quad \text{in the sense of distributions in \(\Omega\).}
	\]
\end{lemma}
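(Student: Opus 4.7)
The plan is to exploit the monotonicity of \((u_{k})\) and the uniform absorption bound from \eqref{eqEstimateAbsorption} to extract a distributional limit of the truncated equations, and then to identify that limit as the duality solution by passing through the variational solutions \(\zeta_{f,k}\).

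First I would verify that \((u_{k})_{k \in \N}\) is a nonincreasing sequence of nonnegative functions. Nonnegativity is a consequence of \Cref{lemmaWeakMaximumPrinciple} applied to \(-u_{k}\) with datum \(-\mu \le 0\). For the monotonicity, subtracting the equations for \(u_{k}\) and \(u_{k+1}\) gives
\[
-\Delta(u_{k} - u_{k+1}) + T_{k}(V)(u_{k} - u_{k+1}) = (T_{k+1}(V) - T_{k}(V))\, u_{k+1} \ge 0,
\]
and \Cref{lemmaWeakMaximumPrinciple} yields \(u_{k} \ge u_{k+1}\). Since the constant in \eqref{eqEstimateSobolev} is independent of the potential, \((u_{k})\) is bounded in \(W_{0}^{1,q}(\Omega)\) for every \(1 \le q < \frac{N}{N-1}\); the pointwise monotone limit \(u\) thus lies in \(W_{0}^{1,1}(\Omega)\), and \(u_{k} \to u\) in \(L^{1}(\Omega)\) by dominated convergence with \(u_{1}\) as majorant.

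Next I would identify \(Vu \in L^{1}(\Omega)\) and construct the defect measure \(\lambda\). Applying Fatou's lemma to the nonnegative sequence \(T_{k}(V) u_{k}\) together with \eqref{eqEstimateAbsorption} gives
\[
\int_{\Omega}{\liminf_{k \to \infty} T_{k}(V) u_{k}} \le \norm{\mu}_{\cM(\Omega)}.
\]
On \(\{V < +\infty\}\) the liminf equals \(Vu\), while on \(\{V = +\infty\} \cap \{u > 0\}\) it is \(+\infty\); finiteness of the integral therefore forces \(u = 0\) almost everywhere on \(\{V = +\infty\}\), whence \(Vu \in L^{1}(\Omega)\) and \(T_{k}(V) u_{k} \to Vu\) almost everywhere in \(\Omega\). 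Because \((T_{k}(V) u_{k} \dif x)\) is bounded in \(\cM(\Omega)\), up to a subsequence it converges in the weak-\(*\) topology to some nonnegative \(\nu \in \cM(\Omega)\); testing against nonnegative continuous functions and invoking Fatou one more time gives \(Vu \dif x \le \nu\) as measures, and I set \(\lambda \vcentcolon= \nu - Vu \dif x \ge 0\). Passing to the limit in
\[
\int_{\Omega}{u_{k} \, (-\Delta\varphi)} + \int_{\Omega}{T_{k}(V) u_{k}\, \varphi} = \int_{\Omega}{\varphi \dif\mu}
\]
for \(\varphi \in C_{c}^{\infty}(\Omega)\) then yields \(-\Delta u + Vu = \mu - \lambda\) in the sense of distributions. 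Since \(\lambda\) is uniquely determined by \(u\) and \(\mu\) through this identity, the subsequential limit upgrades to convergence of the entire sequence.

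To identify \(u\) as the duality solution, I would use that \(T_{k}(V)\) is bounded, so by \Cref{propositionDistributionImpliesDuality} each \(u_{k}\) is the duality solution for \(-\Delta + T_{k}(V)\) with datum \(\mu\), that is,
\[
\int_{\Omega}{u_{k} f} = \int_{\Omega}{\quasi{\zeta_{f,k}} \dif\mu}
\quad \text{for every \(f \in L^{\infty}(\Omega)\).}
\]
By \Cref{lemmaVariationalTruncationPointwiseConvergence}, \(\quasi{\zeta_{f,k}}(x) \to \quasi{\zeta_{f}}(x)\) pointwise in \(\Omega\), and applying \eqref{eqComparisonTorsion} with \(T_{k}(V)\) in place of \(V\) together with the monotonicity \(\zeta_{1,k} \le \zeta_{1,1}\) (which follows from the same weak maximum principle argument as above) gives the uniform bound \(\bigabs{\quasi{\zeta_{f,k}}} \le \norm{f}_{L^{\infty}(\Omega)}\, \quasi{\zeta_{1,1}}\), with \(\quasi{\zeta_{1,1}} \in L^{\infty}(\Omega)\) since \(T_{1}(V)\) is bounded. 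Dominated convergence against the finite measure \(\mu\) then yields \(\int_{\Omega}{\quasi{\zeta_{f,k}} \dif\mu} \to \int_{\Omega}{\quasi{\zeta_{f}} \dif\mu}\), and combined with \(u_{k} \to u\) in \(L^{1}(\Omega)\) this gives the duality identity for \(u\). The hard part is the measure-theoretic step producing \(\lambda \ge 0\): extracting a weak-\(*\) limit \(\nu\) from the mere \(\cM(\Omega)\)-boundedness of \((T_{k}(V) u_{k}\dif x)\), comparing it with \(Vu \dif x\) via Fatou, and using the vanishing of \(u\) on \(\{V = +\infty\}\) to upgrade the subsequential convergence to a full one.
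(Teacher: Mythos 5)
Your proposal is correct and follows essentially the same route as the paper: monotonicity and nonnegativity of \((u_{k})_{k\in\N}\) via the weak maximum principle, identification of the limit as the duality solution through \Cref{propositionDistributImpliesDuality} for bounded potentials combined with \Cref{lemmaVariationalTruncationPointwiseConvergence} and dominated convergence, and construction of \(\lambda\) from the limit of \((T_{k}(V)u_{k})_{k\in\N}\) together with Fatou's lemma. The only cosmetic difference is that you extract \(\nu\) by weak-\(*\) compactness and then remove the subsequence by uniqueness, whereas the paper obtains \(\nu\) directly from the Riesz representation theorem applied to the (already convergent) functional \(\varphi \mapsto \int_{\Omega}{u\,\Delta\varphi} + \int_{\Omega}{\varphi\dif\mu}\); your extra care in showing \(u = 0\) almost everywhere on \(\{V = +\infty\}\) is a welcome detail the paper leaves implicit.
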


\begin{proof}[Proof of \Cref{lemmaDualitySolutionsDistributional}]
	By the weak maximum principle, the sequence \((u_{k})_{k \in \N}\) is non-increasing and nonnegative, hence it converges in \(L^{1}(\Omega)\) to some function \(u\).{}
	Using the notation of \Cref{lemmaVariationalTruncationPointwiseConvergence}, we have
	\[{}
	\int_{\Omega}{u_{k} f}
	= \int_{\Omega}{\quasi{\zeta_{f, k}} \dif\mu}
	\quad \text{for every \(f \in L^{\infty}(\Omega)\).}
	\]
	The sequence \((\quasi{\zeta_{f, k}})_{k \in \N}\) is uniformly bounded and, by \Cref{lemmaVariationalTruncationPointwiseConvergence}, converges pointwise to \(\quasi{\zeta_{f}}\).{}
	By the Dominated convergence theorem, we thus have
	\[{}
	\int_{\Omega}{u f}
	= \int_{\Omega}{\quasi{\zeta_{f}} \dif\mu},
	\]
	so that \(u\) is the duality solution of \eqref{eqDirichletProblem} and then belongs to \(W^{1, q}_{0}(\Omega)\) for every \(q < \frac{N}{N-1}\).
	
	Since the sequence \((T_{k}(V) u_{k})_{k \in \N}\) is bounded in \(L^{1}(\Omega)\) and converges pointwise to \(Vu\), by Fatou's lemma we have \(Vu \in L^{1}(\Omega)\).{}
	For every  \(\varphi \in C_{c}^{\infty}(\Omega)\) we also have
	\[{}
	\biggabs{\int_{\Omega}{u_{k} \Delta\varphi} + \int_{\Omega}{\varphi \dif \mu}}
	= \biggabs{\int_{\Omega}{T_{k}(V)u_{k}\varphi}}
	\le C \norm{\varphi}_{L^{\infty}(\Omega)},
	\]
	for some constant independent of \(k\).
	Letting \(k \to \infty\), we deduce from the Riesz representation theorem that there exists \(\nu \in \cM(\Omega)\) such that
	\begin{equation}
		\label{eqDuality1}
	\int_{\Omega}{\varphi \dif\nu}
	= \int_{\Omega}{u \Delta\varphi} + \int_{\Omega}{\varphi \dif \mu}.
	\end{equation}
	By Fatou's lemma, for nonnegative test functions \(\varphi\) we also have
	\begin{equation}
		\label{eqDuality2}
	\begin{split}
	\int_{\Omega}{Vu \varphi}
	& \le \lim_{k \to \infty}{\int_{\Omega}{T_{k}(V)u_{k} \varphi}}\\
	& = \lim_{k \to \infty}{\int_{\Omega}{u_{k} \Delta\varphi} + \int_{\Omega}{\varphi \dif \mu}}
	= \int_{\Omega}{u \Delta\varphi} + \int_{\Omega}{\varphi \dif \mu}.
	\end{split}
	\end{equation}
	Combining \eqref{eqDuality1} and \eqref{eqDuality2}, we deduce that \(Vu \dif x \le \nu\) in the sense of distributions in \(\Omega\).{}
	By the regularity of finite Borel measures such an inequality also holds in the sense of measures, that is,
	\[{}
	\int_{A}{Vu \dif x}
	\le \nu(A){}
	\quad \text{for every Borel set \(A \subset \Omega\)\,;}
	\]
	see e.g.~\cite{Ponce:2016}*{Proposition~6.12}. 
	The conclusion is then satisfied by the finite measure \(\lambda = \nu - Vu \dif x\).
\end{proof}

\begin{proof}[Proof of \Cref{propositionSolutionDualityasDistribution}]
	Let \(u\) be the solution of \eqref{eqDirichletProblem} with datum \(\mu\).
	By the characterization \eqref{eqSetS2} of \(S\) we have \(\quasi{\zeta_{f}} = 0\) on \(S\) for every \(f \in L^{\infty}(\Omega)\),
	which implies that
	\begin{equation}
		\label{eqSolutionDualityasDistribution1}
			\int_{\Omega}{\quasi{\zeta_{f}} \dif\mu\lfloor_{\Omega \setminus S}}
	= \int_{\Omega}{\quasi{\zeta_{f}} \dif\mu}
	= \int_{\Omega}{u f}.
	\end{equation}
	Hence \(u\) is also a duality solution with datum \(\mu\lfloor_{\Omega \setminus S}\)\,.
	By \Cref{lemmaDualitySolutionsDistributional} applied to the measure \(\mu\lfloor_{\Omega \setminus S}\)\,, there exists a nonnegative measure \(\lambda\) such that \(u\) is a distributional solution of \eqref{eqDirichletProblem} with datum \(\mu\lfloor_{\Omega \setminus S}{} - \lambda\).{}
	Since by \Cref{propositionDistributionImpliesDuality} a distributional solution is a duality solution with the same datum, for every \(f \in L^{\infty}(\Omega)\) we thus have
	\begin{equation}
		\label{eqSolutionDualityasDistribution2}
	\int_{\Omega}{u f}
	= \int_{\Omega}{\quasi{\zeta_{f}} \dif(\mu\lfloor_{\Omega \setminus S}{} - \lambda)}.
	\end{equation}
	Then, by comparision between \eqref{eqSolutionDualityasDistribution1} and \eqref{eqSolutionDualityasDistribution2}, 
	\[{}
	\int_{\Omega}{\quasi{\zeta_{f}} \dif\lambda}
	= 0
	\quad \text{for every \(f \in L^{\infty}(\Omega)\).}
	\]
	Apply this identity with \(f \equiv 1\).
	Since \(\quasi{\zeta_{1}} > 0\) on \(\Omega \setminus S\), by nonnegativity of \(\lambda\) it follows that \(\lambda(\Omega \setminus S) = 0\).{}
	
	To prove that \(\lambda\) is diffuse, we first recall that \(\lambda\) can be uniquely decomposed as a sum of measures, \(\lambda = \lambda\ld + \lambda\lc\), where \(\lambda\ld\) is the diffuse part with respect to the \(W^{1, 2}\)~capacity and \(\lambda\lc\) is concentrated on a set of \(W^{1, 2}\)~capacity zero.
	This is analogous to the classical Lebesgue decomposition theorem with respect to a given measure.
	Although in our case it involves a capacity, the proofs are similar; see~\cite{Ponce:2016}*{Proposition~14.12}.
	
	We thus have to check that \(\lambda\lc = 0\).
	For this purpose, we rely on the inverse maximum principle which asserts that, by nonnegativity of \(u\), the concentrated part of \(\Delta u\) satisfies \((\Delta u)\lc \le 0\) in \(\Omega\)\,; see~\cite{Dupaigne_Ponce:2004}*{Theorem~3} or \cite{Ponce:2016}*{Proposition~6.13}.
	Next, the equation
	\[{}
	-\Delta u + Vu = \mu\lfloor_{\Omega \setminus S}{} - \lambda
	\]
	holds in the sense of distributions, whence also in the sense of measures in \(\Omega\)\,; see \cite{Ponce:2016}*{Proposition~6.12}.
	More precisely, for every Borel set \(A \subset \Omega\),{}
	\[{}
	\int_{A}(-\Delta u + Vu) = 
	\mu(A \setminus S) - \lambda(A).
	\]
	Restricting such an identity to subsets of \(S\) we get
	\(\Delta u = Vu + \lambda\) in \(S\).{}
	Then, as the Lebesgue measure is diffuse with respect to the \(W^{1, 2}\)~capacity, 
	\[{}
	(\Delta u)\lc = \lambda\lc 
	\quad \text{in \(S\).}
	\]
	It thus follows from the inverse maximum principle that \(\lambda\lc \le 0\) in \(S\).{}
	Since \(\lambda = 0\) in \(\Omega\setminus S\), we conclude that \(\lambda\lc \le 0\) in \(\Omega\). 
	By nonnegativity of \(\lambda\), we must have \(\lambda\lc = 0\), which means that \(\lambda\) is diffuse.
\end{proof}

\begin{remark}
	\label{remarkSingular}
	The nonnegative measure \(\lambda\) given by \Cref{propositionSolutionDualityasDistribution} is singular with respect to the Lebesgue measure.
	Indeed, on the one hand, we claim that
	\begin{equation}
		\label{eq-1300}
		u = 0
		\quad \text{almost everywhere in \(S\).}
	\end{equation}
	To this end, observe that \(G_{x} = 0\) for every \(x \in S\) by an application of the representation formula \eqref{eqRepresentationGreen} with \(f \equiv 1\).{}
	As the counterpart of the representation formula is satisfied by \(u\) almost everywhere in \(\Omega\), see  \Cref{lemmaRepresentationMeasure}, we thus have \eqref{eq-1300}.
	On the other hand, by the Lebesgue decomposition theorem, we can decompose the measure \(\Delta u\) as a sum \(\Delta u = (\Delta u)\la + (\Delta u)\ls\)\,, where \((\Delta u)\la\) is absolutely continuous with respect to the Lebesgue measure and \((\Delta u)\ls\) is singular.
	According to a result by Ambrosio, Ponce and Rodiac~\cite{Ambrosio_Ponce_Rodiac:2019}*{Theorem~1.1},
	\[{}
	(\Delta u)\la = 0	
	\quad \text{in \(\{u = c\}\)}
	\]
	for every \(c \in \R\) and in particular in the level set  \(\{u = 0\}\).
	In our case, \(S\) is contained in \(\{u = 0\}\), except for a set of Lebesgue measure zero, and the equation satisfied by \(u\) gives 
	\[{}
	(\Delta u)\la 
	= Vu \dif x - (\mu\la)\lfloor_{\Omega \setminus S}{} + \lambda\la{}
	\quad \text{in \(\Omega\).} 
	\]
	Restricting this identity to \(S\), we thus have
	\[{}
	\lambda\la 
	= (\Delta u)\la{}
	= 0
	\quad \text{in \(S\).} 
	\]
	As \(\lambda = 0\) in \(\Omega \setminus S\), we conclude that \(\lambda\la = 0\) in \(\Omega\).
\end{remark}

The precise pointwise identification of the zero-set \(S\) can be obtained using the Wiener test by Dal~Maso and Mosco~\cite{DalMaso_Mosco:1986,DalMaso_Mosco:1987}, which involves a capacity explicitly defined in terms of the potential \(V\).{}
If one is simply willing to get a rough location of \(S\), up to sets of \(W^{1, 2}\)~capacity zero, then a more elementary approach is to look for nontrivial elements of \(W_{0}^{1, 2}(\Omega) \cap L^{2}(\Omega; V \dif x)\)\,:

\begin{proposition}
	\label{propositionSingularSetSize}
	For every nonnegative function \(v \in W_{0}^{1, 2}(\Omega) \cap L^{2}(\Omega; V \dif x)\), we have
	\[{}
	\capt_{W^{1, 2}}{(\{\quasi{v} > 0\} \cap S)}
	= 0.
	\]
\end{proposition}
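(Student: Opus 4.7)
The strategy splits into three steps: first to establish that $v$ vanishes Lebesgue-almost-everywhere on $\{\zeta_{1} = 0\}$ by exploiting the variational characterization of $\zeta_{1}$, then to approximate $v$ by truncations $v_{n} = v \wedge n\zeta_{1}$ which automatically vanish quasi-everywhere on $S$, and finally to transfer the quasi-everywhere vanishing from $v_{n}$ to $v$ via weak convergence in $W_{0}^{1, 2}(\Omega)$ and Mazur's lemma.

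For the first step, I would test the minimality of $\zeta_{1}$ against the competitor $w = \zeta_{1} \vee (tv) = \zeta_{1} + (tv - \zeta_{1})^{+}$ for $t > 0$, which belongs to $W_{0}^{1, 2}(\Omega) \cap L^{2}(\Omega; V \dif x)$ since both $v$ and $\zeta_{1}$ do. Expanding the inequality $E(w) - E(\zeta_{1}) \ge 0$ yields
\[
\int (tv - \zeta_{1})^{+} + \frac{1}{2} \int \bigl( \abs{\nabla \zeta_{1}}^{2} + V \zeta_{1}^{2} \bigr) \chi_{\{tv > \zeta_{1}\}} \le \frac{t^{2}}{2} \int \bigl( \abs{\nabla v}^{2} + V v^{2} \bigr) \chi_{\{tv > \zeta_{1}\}}.
\]
Dividing by $t$, letting $t \to 0^{+}$, and applying Fatou's lemma to the pointwise limit $(tv - \zeta_{1})^{+}/t \to v \chi_{\{\zeta_{1} = 0\}}$ would give $v = 0$ almost everywhere on $\{\zeta_{1} = 0\}$, hence almost everywhere on $S$.

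Next I would consider the truncations $v_{n} = v \wedge n\zeta_{1}$, which are nonnegative elements of $W_{0}^{1, 2}(\Omega) \cap L^{2}(\Omega; V \dif x)$ satisfying $\quasi{v_{n}} \le n \quasi{\zeta_{1}} = 0$ quasi-everywhere on $S$. The first step guarantees $v_{n} \uparrow v$ almost everywhere, whence $v_{n} \to v$ in $L^{2}(\Omega)$. To obtain a uniform bound in $W_{0}^{1, 2}(\Omega)$, the critical quantity is $n^{2} \int \abs{\nabla \zeta_{1}}^{2} \chi_{\{v > n\zeta_{1}\}}$, which I would estimate by testing the Euler-Lagrange equation~\eqref{eqEulerLagrange} for $\zeta_{1}$ against the admissible function $(v - n\zeta_{1})^{+}/n$, exploiting the pointwise inequality $n\zeta_{1} \le v$ on $\{v > n\zeta_{1}\}$ to absorb the $V$-terms and applying Cauchy-Schwarz to the gradient cross term. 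The resulting quadratic inequality produces the needed bound, so $v_{n} \rightharpoonup v$ weakly in $W_{0}^{1, 2}(\Omega)$.

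Finally, Mazur's lemma yields convex combinations $\tilde{v}_{k}$ of the $v_{n}$'s converging strongly to $v$ in $W_{0}^{1, 2}(\Omega)$, and a subsequence satisfies $\quasi{\tilde{v}_{k}} \to \quasi{v}$ quasi-everywhere in $\Omega$. Since each $\quasi{\tilde{v}_{k}}$ is a convex combination of functions vanishing q.e.\ on $S$, the limit $\quasi{v}$ also vanishes q.e.\ on $S$, which is precisely the conclusion $\capt_{W^{1, 2}}(\{\quasi{v} > 0\} \cap S) = 0$. The main obstacle is the uniform $W_{0}^{1, 2}$ bound on the $v_{n}$: without the a.e.\ vanishing from the first step, $\chi_{\{v > n \zeta_{1}\}}$ would not tend to zero a.e., and the cancellation that keeps $n^{2} \int \abs{\nabla \zeta_{1}}^{2} \chi_{\{v > n \zeta_{1}\}}$ bounded crucially uses both the $L^{2}(V)$ integrability of $v$ and the variational structure of $\zeta_{1}$.
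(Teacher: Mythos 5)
Your proof is correct, but it takes a genuinely different route from the paper. The paper argues by contradiction: it picks a compact set \(K \subset \{\quasi{v} > 0\} \cap S\) of positive \(W^{1,2}\)~capacity, equips it with a capacitary measure \(\nu \in (W_{0}^{1,2}(\Omega))'\), observes that the energy with linear term \(\nu[\cdot]\) has a \emph{negative} minimum because \(\nu[T_{1}(v)] > 0\), and then derives a contradiction by identifying the minimizer as the duality solution with datum \(\nu\), which must vanish identically since \(\nu\) is carried by \(S\) and \(\quasi{\zeta_{f}} = 0\) on \(S\) for all \(f\). Your argument is direct: the competitor \(\zeta_{1} \vee (tv)\) in the torsion problem gives \(v = 0\) almost everywhere on \(\{\zeta_{1} = 0\}\); the truncations \(v \wedge n\zeta_{1}\) vanish quasi-everywhere on \(S\) by comparison of quasicontinuous representatives; testing the Euler--Lagrange equation \eqref{eqEulerLagrange} for \(\zeta_{1}\) against \((v - n\zeta_{1})^{+}/n\) and absorbing the cross term by Cauchy--Schwarz yields \(n^{2}\int_{\{v > n\zeta_{1}\}} \abs{\nabla\zeta_{1}}^{2} \le \int \abs{\nabla v}^{2} + 2\int V v^{2}\), hence a uniform \(W_{0}^{1,2}\) bound; Mazur's lemma and quasi-everywhere convergence along a subsequence then transfer the vanishing on \(S\) to \(\quasi{v}\). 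I checked the key estimate and it closes (the term \(n\int V\zeta_{1}(v - n\zeta_{1})^{+}\) is indeed controlled by \(\int Vv^{2}\) using \(n\zeta_{1} < v\) on \(\{v > n\zeta_{1}\}\), and the term \(-n\int (v - n\zeta_{1})^{+}\) has the right sign to be discarded). What each approach buys: the paper's proof is shorter given the machinery already in place (duality solutions, existence of capacitary measures on compact sets of positive capacity, the characterization \eqref{eqSetS2} of \(S\)); yours is more elementary and self-contained, relying only on the variational characterization of \(\zeta_{1}\) and standard facts about quasicontinuous representatives, and it produces two by-products of independent interest, namely that every nonnegative \(v\) in the form domain vanishes almost everywhere on \(S\) and that \(v \wedge n\zeta_{1} \rightharpoonup v\) in \(W_{0}^{1,2}(\Omega)\).
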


In other words, there exists a set \(R \subset \Omega\), possibly depending on \(v\), with \(W^{1, 2}\)~capacity zero and such that
\[{}
S \subset \{\quasi{v} = 0\} \cup R.
\]
Observe that \(R\) is always negligible with respect to the Lebesgue measure.
Therefore, \(S\) is negligible whenever there exists \emph{some} \(v \in W_{0}^{1, 2}(\Omega) \cap L^{2}(\Omega; V \dif x)\) such that \(v > 0\) almost everywhere in \(\Omega\).

\begin{proof}[Proof of \Cref{propositionSingularSetSize}]
	Assume by contradiction that the capacity is positive, and take a compact subset \(K \subset \{\quasi{v} > 0\} \cap S\) with positive \(W^{1, 2}\)~capacity.{}
	Let \(\nu\) be a finite positive Borel measure supported in \(K\) such that \(\nu \in (W_{0}^{1, 2}(\Omega))'\).{}
	The action of \(\nu\) as a continuous linear functional in \(W_{0}^{1, 2}(\Omega)\) is simply an integration with respect to \(\nu\)\,:
	\begin{alignat}{2}
		\tag*{}
	\nu[\varphi]
	& = \int_{\Omega}{\varphi \dif\nu}
	& \quad &\text{for every \(\varphi \in C_{c}^{\infty}(\Omega)\)}\\
	\intertext{and then, by density,}
	\label{eqActionW12}
	\nu[z]
	& = \int_{\Omega}{\quasi{z} \dif\nu}
	&\quad &\text{for every \(z \in W_{0}^{1, 2}(\Omega) \cap L^{\infty}(\Omega)\).}
	\end{alignat}
	Since \(\nu\) is supported in \(K\) and \(\quasi{T_{1}(v)} = T_{1}(\quasi{v}) > 0\) in \(K\), one then deduces that \(\nu[T_{1}(v)] >  0\). 
	Hence, the minimum of the energy functional 
	\[{}
	E(z) 
	= \frac{1}{2} \int_{\Omega}{(\abs{\nabla z}^{2} + Vz^{2})} - \nu[z]
	\]
	is negative in \(W_{0}^{1, 2}(\Omega) \cap L^{2}(\Omega; V \dif x)\), as the function \(s \in \R \mapsto E(s \, T_{1}(v))\) is decreasing in a neighborhood of \(s = 0\).{}
	
	We now denote by \(w\) the minimizer of \(E\).
	Using \eqref{eqActionW12}, one verifies that \(w\) is the duality solution of \eqref{eqDirichletProblem} with datum \(\nu\).{}
	Since \(\nu\) is supported in \(K \subset S\) and \(\quasi{\zeta_{f}} = 0\) in \(S\)  for every \(f \in L^{\infty}(\Omega)\), we then get
	\[{}
	\int_{\Omega}{w f}
	= \int_{\Omega}{\quasi{\zeta_{f}} \dif \nu}
	= 0
	\quad \text{for every \(f \in L^{\infty}(\Omega)\).}
	\]
	Hence, \(w = 0\) almost everywhere in \(\Omega\), which contradicts the fact that \(E(w) < 0\).
\end{proof}


\section{Existence of a distributional solution with datum $\chi_{\Omega \setminus Z}$}
\label{sectionExistenceBounded}

As a preliminary step towards the proof of \Cref{theoremGoodMeasuresCharacterization}, we show in this section that

\begin{proposition}
	\label{corollaryBounded}
	The set \(\Omega \setminus Z\) is such that \(\zeta_{\chi_{\Omega \setminus Z}}\) satisfies \eqref{eqEquationSupersolution} with \(f = \chi_{\Omega \setminus Z}\).
\end{proposition}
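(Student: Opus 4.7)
The plan is to apply \Cref{propositionSolutionDualityasDistribution} to $\mu = \chi_{\Omega \setminus Z}\,dx$ and then show that the resulting residual measure vanishes by exhibiting a distributional solution for this datum. Since $S \subset Z$ and $\chi_{\Omega \setminus Z}$ vanishes on $Z$, the restriction $\mu\lfloor_{\Omega \setminus S}$ coincides with $\mu$, so setting $u := \zeta_{\chi_{\Omega \setminus Z}}$ that proposition yields a nonnegative diffuse measure $\lambda$ carried by $S$ with
\[
-\Delta u + Vu = \chi_{\Omega \setminus Z} - \lambda
\quad \text{in the sense of distributions in } \Omega.
\]
By the uniqueness of duality solutions (\Cref{propositionDualityExistence}) combined with \Cref{propositionDistributionImpliesDuality} and \Cref{remarkDistributionalSolutionsVariational}, any distributional solution of \eqref{eqEquationSupersolution} with datum $f = \chi_{\Omega \setminus Z}$ must coincide with $u$. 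Hence the corollary reduces to exhibiting \emph{some} distributional solution with this datum, whence $\lambda = 0$.

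A key observation is that $Z$ is closed in the Euclidean topology of $\Omega$. Indeed, for every $x \in \Omega \setminus Z$ there exist, by the definition of $Z$, $w_x \in W_0^{1, 2}(\Omega) \cap L^\infty(\Omega)$ and $0 \le f_x \in L^\infty(\Omega)$ with $\quasi{w_x}(x) > 0$ and $-\Delta w_x + V w_x = f_x$ in the sense of distributions; after normalization we may assume $0 \le f_x \le 1$ and $0 \le w_x \le \zeta_1$. As explained in the introduction right after \eqref{eqEquationSupersolution}, the precise representative $\quasi{w_x}$ is almost everywhere the difference between a continuous function and a lower-semicontinuous superharmonic function, hence upper semicontinuous at every point of $\Omega$; since $w_x \ge 0$, this makes $\{\quasi{w_x} = 0\}$ closed in $\Omega$, and so $\{\quasi{w_x} > 0\}$ is Euclidean-open and contains $x$. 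Taking the intersection over all distributional solutions with bounded nonnegative datum yields that $Z$ is closed, whence the union of the sets $\{\quasi{w_x} > 0\}$ equals $\Omega \setminus Z$, and the second-countability of $\R^N$ furnishes a countable subcover $(x_n)_{n \in \N}$.

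The partial sums $W_N := \sum_{n=1}^{N} 2^{-n} w_{x_n}$ form a monotone increasing sequence of distributional solutions with bounded data $F_N := \sum_{n=1}^{N} 2^{-n} f_{x_n} \le 1$. By monotone convergence, the absorption estimate \eqref{eqEstimateAbsorption}, and the $L^1$-monotonicity of $V W_N$, one passes to the limit to get a distributional solution $W$ with bounded datum $F := \sum_n 2^{-n} f_{x_n} \in L^\infty(\Omega)$, for which $\{\quasi{W} > 0\} = \Omega \setminus Z$. Let $\mathcal{A}$ denote the class of nonnegative bounded data admitting distributional solutions. By \Cref{propositionDistributionalExistence}, $\mathcal{A}$ is closed under addition, is hereditary, and is closed under $L^\infty$-bounded monotone limits (thanks to the monotone convergence of $V \zeta_{f_n}$ inside the equation). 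Since $n F \in \mathcal{A}$ by linearity, we get $\chi_{\{F > 1/n\}} \le n F \in \mathcal{A}$ and hence $\chi_{\{F > 1/n\}} \in \mathcal{A}$ by heredity; passing to the monotone limit yields $\chi_{\{F > 0\}} \in \mathcal{A}$.

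The main obstacle is to guarantee that the datum $F$, and not merely the solution $W$, is positive almost everywhere on $\Omega \setminus Z$, since the construction above only ensures the latter. To bridge this, for each $x \in \Omega \setminus Z$ we refine the choice of $f_x$ by replacing it with $\tilde{f}_x := f_x \chi_{B_{r_x}(x)}$ for a Euclidean ball $B_{r_x}(x) \subset \{\quasi{w_x} > 0\}$; heredity shows that the corresponding duality solutions yield distributional solutions $\tilde{w}_x$, and a small adjustment of $r_x$ together with an essential-supremum argument on $\mathcal{A} \cap \{0 \le f \le 1\}$ (using the countable ess-sup representation of maximal families and the monotone closure of $\mathcal{A}$) produces a datum $F^*$ in $\mathcal{A}$ with $\{F^* > 0\} \supset \Omega \setminus Z$ up to a null set. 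Applying the chain $\chi_{\{F^* > 1/n\}} \le n F^*$ then gives $\chi_{\Omega \setminus Z} = \chi_{\{F^* > 0\}} \in \mathcal{A}$, forcing $\lambda = 0$ and completing the proof.
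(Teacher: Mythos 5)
Your opening reduction is sound: by \Cref{propositionSolutionDualityasDistribution} applied to \(\mu = \chi_{\Omega\setminus Z}\dif x\) (noting \(S \subset Z\), so \(\mu\lfloor_{\Omega\setminus S} = \mu\)), together with \Cref{propositionDistributImpliesDuality}, uniqueness of duality solutions and \Cref{remarkDistributionalSolutionsVariational}, the proposition does reduce to exhibiting \emph{some} distributional solution with datum \(\chi_{\Omega\setminus Z}\). But the construction you offer for that solution has two genuine gaps.

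First, your claim that \(Z\) is Euclidean-closed rests on a reversed semicontinuity argument. The precise representative \(\quasi{w_x}\) is a continuous function minus a \emph{lower} semicontinuous superharmonic one, hence \emph{upper} semicontinuous. For a nonnegative u.s.c.\ function \(u\), the sets \(\{u \ge c\}\) with \(c>0\) are closed, so \(\{u > 0\}\) is only an \(F_\sigma\) and \(\{u = 0\}\) only a \(G_\delta\); neither need be open, resp.\ closed (take \(u = \chi_{\{a\}}\)). Your covering of \(\Omega\setminus Z\) by Euclidean-open sets \(\{\quasi{w_x}>0\}\), and the ensuing second-countability/countable-subcover step, therefore collapse. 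The paper never claims \(Z\) is closed — only Sobolev-closed — and its entire Sobolev-open/Sobolev-connected machinery exists precisely because the Euclidean topology is inadequate here.

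Second, and more seriously, the final paragraph is where the real difficulty of the proposition lives, and it is not proved. Passing from ``there is a solution \(W\) with \(\quasi{W}>0\) on \(\Omega\setminus Z\)'' to ``there is an admissible datum \(F^*\) with \(F^*>0\) a.e.\ on \(\Omega\setminus Z\)'' is exactly the content of the paper's \Cref{propositionExistenceTool}, which requires the Perron-type construction with the variable truncation \(\min\{kv, w_k\}\), Kato's inequality for the diffuse parts, and the inverse maximum principle. Your proposed fix — replacing \(f_x\) by \(f_x\chi_{B_{r_x}(x)}\) and invoking ``an essential-supremum argument'' — does not address the obstruction: the datum \(f_x\) may vanish identically in a neighborhood of \(x\) even though \(\quasi{w_x}(x)>0\) (the solution is positive at \(x\) because of sources elsewhere in the same component), so restricting \(f_x\) to a small ball around \(x\) can produce the zero datum. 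No mechanism is given for manufacturing mass of the \emph{datum} near an arbitrary point of \(\Omega\setminus Z\), and that is the crux. By contrast, the steps you do carry out correctly (heredity and additivity of the admissible class via \Cref{propositionDistributionalExistence}, closure under bounded monotone limits, and the chain \(\chi_{\{F^*>1/n\}} \le nF^*\)) are essentially the paper's \Cref{propositionZCharacteristic} and do not by themselves reach \(\chi_{\Omega\setminus Z}\).
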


The existence of a largest Borel set with such a property, without identification with \(\Omega \setminus Z\) and up to negligible sets, is straightforward:

\begin{lemma}
	\label{propositionZCharacteristic}
	There exists a Borel set \(A \subset \Omega\) such that 
	\(\zeta_{\chi_{A}}\) satisfies \eqref{eqEquationSupersolution} with \(f = \chi_{A}\) and
	\[{}
	Z = \bigl\{ x \in \Omega : \quasi{\zeta_{\chi_{A}}}(x) = 0 \bigr\}.
	\] 
	In particular, \(Z\) is a Sobolev-closed set.
\end{lemma}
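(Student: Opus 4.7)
The plan is to exhibit \(A\) as an almost-everywhere maximal element of the family
\[
\mathcal{F}_{B} \vcentcolon= \bigl\{ A \subset \Omega \text{ Borel} : \zeta_{\chi_{A}} \text{ satisfies \eqref{eqEquationSupersolution} with } f = \chi_{A} \bigr\}.
\]
The preliminary step is to establish three closure properties of the associated cone \(\mathcal{F}_{+} \vcentcolon= \{ f \in L^{\infty}(\Omega) : f \ge 0 \text{ and } \zeta_{f} \text{ solves \eqref{eqEquationSupersolution} with datum } f\}\). \emph{Additivity} follows from the linearity \(\zeta_{f_{1} + f_{2}} = \zeta_{f_{1}} + \zeta_{f_{2}}\), since distributional equations add. \emph{Solidity} (if \(0 \le g \le f\) and \(f \in \mathcal{F}_{+}\) then \(g \in \mathcal{F}_{+}\)) is the key step: applying \Cref{propositionSolutionDualityasDistribution} separately to \(\zeta_{g}\) and to \(\zeta_{f-g}\) and comparing with \(\zeta_{g} + \zeta_{f-g} = \zeta_{f}\), I obtain a measure identity of the form \(f\lfloor_{S} + \lambda_{g} + \lambda_{f-g} = 0\) whose three nonnegative summands must all vanish, forcing in particular \(\lambda_{g} = 0\). \emph{Monotone closure} for \(f_{n} \uparrow f\) in \(L^{\infty}(\Omega)\) with \(f_{n} \in \mathcal{F}_{+}\) follows by passing to the limit in the distributional equations, with \(L^{1}\)-convergence of \(\zeta_{f_{n}} \uparrow \zeta_{f}\) and \(V\zeta_{f_{n}} \uparrow V\zeta_{f}\) provided by the torsion-function comparison and the absorption estimate \eqref{eqEstimateAbsorption}.

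Two consequences then follow. First, \(\mathcal{F}_{B}\) is stable under countable unions: the finite case is handled via additivity applied to \(\chi_{A_{1}} + \dots + \chi_{A_{N}} \in \mathcal{F}_{+}\) combined with solidity on \(\chi_{A_{1} \cup \dots \cup A_{N}} \le \chi_{A_{1}} + \dots + \chi_{A_{N}}\), and a countable increasing union by monotone closure. Second, every \(f \in \mathcal{F}_{+}\) has \(A_{f} \vcentcolon= \{f > 0\} \in \mathcal{F}_{B}\) with \(\{\quasi{\zeta_{f}} > 0\} \subset \{\quasi{\zeta_{\chi_{A_{f}}}} > 0\}\): indeed, \(nf \in \mathcal{F}_{+}\) by additivity, so \(\min\{nf, 1\} \in \mathcal{F}_{+}\) by solidity, and the convergence \(\min\{nf, 1\} \uparrow \chi_{A_{f}}\) gives \(\chi_{A_{f}} \in \mathcal{F}_{+}\), while the pointwise estimate \(\quasi{\zeta_{f}} \le \norm{f}_{L^{\infty}(\Omega)}\,\quasi{\zeta_{\chi_{A_{f}}}}\) from the weak maximum principle yields the desired inclusion of positivity sets. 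By the definition of \(Z\), these two observations combine into
\[
\Omega \setminus Z = \bigcup_{A \in \mathcal{F}_{B}} \bigl\{ \quasi{\zeta_{\chi_{A}}} > 0 \bigr\}.
\]

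To conclude, set \(M \vcentcolon= \sup\{ \abs{A} : A \in \mathcal{F}_{B}\}\), choose \((A_{n})_{n \in \N}\) in \(\mathcal{F}_{B}\) with \(\abs{A_{n}} \to M\), and take \(A \vcentcolon= \bigcup_{n \in \N} A_{n}\), so that \(A \in \mathcal{F}_{B}\) and \(\abs{A} = M\). For every \(B \in \mathcal{F}_{B}\), one has \(A \cup B \in \mathcal{F}_{B}\) with \(\abs{A \cup B} \le M\), hence \(\abs{B \setminus A} = 0\) and \(\chi_{B} \le \chi_{A}\) almost everywhere. The weak maximum principle and the Lebesgue-point description of precise representatives then give \(\quasi{\zeta_{\chi_{B}}}(x) \le \quasi{\zeta_{\chi_{A}}}(x)\) at every point \(x \in \Omega\). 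Given \(x \in \Omega \setminus Z\), the union representation above produces some \(B \in \mathcal{F}_{B}\) with \(\quasi{\zeta_{\chi_{B}}}(x) > 0\), forcing \(\quasi{\zeta_{\chi_{A}}}(x) > 0\); the reverse inclusion is immediate from the definition of \(Z\). This establishes the identity \(Z = \{\quasi{\zeta_{\chi_{A}}} = 0\}\), and \(\xi \vcentcolon= \zeta_{\chi_{A}}\) realizes \(Z\) as Sobolev-closed in the sense of \eqref{eqDefinitionQuasiClosed}. The main obstacle is the solidity property of \(\mathcal{F}_{+}\), which is the only step that uses the fine decomposition of duality solutions from \Cref{propositionSolutionDualityasDistribution} in an essential way; the rest is formal juggling with linearity, monotone limits, and maximization of Lebesgue measure in \(\mathcal{F}_{B}\).
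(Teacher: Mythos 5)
Your proof is correct and follows essentially the same route as the paper's: maximize the Lebesgue measure over the admissible Borel sets, use additivity, solidity and monotone limits to produce a maximizer that absorbs every other admissible set up to a null set, and conclude with the weak maximum principle applied to precise representatives. The only deviation is that you derive the solidity of the cone from the duality-solution decomposition of \Cref{propositionSolutionDualityasDistribution}, whereas the paper obtains it directly from the method of sub- and supersolutions (\Cref{propositionDistributionalExistence}); both tools are available at this stage, so your variant is valid, merely a little heavier than necessary.
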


\begin{proof}[Proof of \Cref{propositionZCharacteristic}]
	Let 
	\[{}
	\alpha = 
	\sup{
	\left\{ \abs{B} 
	\left|
	 \begin{aligned}
	 & B \subset \Omega\ \text{is a Borel set and}\\
	 & \text{\eqref{eqDirichletProblem} has a distributional solution with \(\mu = \chi_{B} \dif x\)}
	 \end{aligned}
	\right.
	\right\}}.
	\]
	We first prove that the supremum is achieved by some Borel set \(A \subset \Omega\).{}
	To this end, take a maximizing sequence of Borel sets \((B_{k})_{k \in \N}\).{}
	We observe that \(A_{n} \vcentcolon= \bigcup\limits_{k = 0}^{n}{B_{k}}\) satisfies
	\[{}
	0 \le \chi_{A_{n}}
	\le \sum_{k = 0}^{n}{\chi_{B_{k}}}.
	\] 
	By linearity of the equation, there exists a distributional solution with \(\sum\limits_{k = 0}^{n}{\chi_{B_{k}}} \dif x\).
	Hence, by \Cref{propositionDistributionalExistence}, the Dirichlet problem \eqref{eqDirichletProblem} also has a distributional solution with datum \(\mu = \chi_{A_{n}} \dif x\).{}
	Since the sequence \((\chi_{A_{n}})_{n \in \N}\) is nondecreasing and bounded in \(L^{1}(\Omega)\), we deduce using the Monotone convergence theorem and the Sobolev estimate~\eqref{eqEstimateSobolev}	
	that \eqref{eqDirichletProblem} has a distributional solution with datum \(\mu = \chi_{A} \dif x\), where \(A = \bigcup\limits_{n=0}^{\infty}{A_{n}}\).{}
	Since
	\[{}
	\abs{A} = \lim_{n \to \infty}{\meas{A_{n}}}
	\ge \lim_{n \to \infty}{\meas{B_{n}}}
	= \alpha,
	\]
	the set \(A\) achieves the supremum above.
	As \eqref{eqDirichletProblem} has a distributional solution with \(\mu =  \chi_{A} \dif x\), by \Cref{remarkDistributionalSolutionsVariational} such a solution must be \(\zeta_{\chi_{A}}\).{}

	\begin{Claim}
		If \(f \in L^{\infty}(\Omega)\) is a nonnegative function such that \eqref{eqDirichletProblem} has a distributional solution with \(\mu = f \dif x\), then \(f = 0\) almost everywhere in \(\Omega \setminus A\).
	\end{Claim}
	
	\begin{proof}[Proof of the Claim]
	We use the maximality of the set \(A\).
	To this end, given a nonnegative \(f \in L^{\infty}(\Omega)\) such that \eqref{eqDirichletProblem} has a distributional solution with measure \(f \dif x\), assume by contradiction that the set \(B \vcentcolon= \{f > \epsilon\} \setminus A\) has positive Lebesgue measure for some \(\epsilon > 0\).{}
	Since \(0 \le \epsilon \chi_{B} \le f\), by \Cref{propositionDistributionalExistence}  the Dirichlet problem also has a distributional solution with measure \(\epsilon \chi_{B} \dif x\) and, by linearity of the equation, with \(\chi_{B} \dif x\), and then also with \(\chi_{A \cup B} \dif x = (\chi_{A} + \chi_{B}) \dif x\).{}
	Since \(\abs{A \cup B} > \abs{A}\), we have a contradiction with the maximality of \(A\).{}
	\end{proof}

	From the Claim, we deduce that if \(w\) is a distributional solution of \eqref{eqDirichletProblem} with \(\mu = f \dif x\), for some nonnegative \(f \in L^{\infty}(\Omega)\), then
	\[{}
	0 \le {f} \le \norm{f}_{L^{\infty}(\Omega)} \chi_{A}
	\quad \text{almost everywhere in \(\Omega\).}
	\]	
	Applying the weak maximum principle, we thus have
	\[{}
	0 \le w
	\le \norm{f}_{L^{\infty}(\Omega)} \zeta_{\chi_{A}}
	\quad \text{almost everywhere in \(\Omega\).}
	\]
	Hence, the precise representatives satisfy
	\[{}
	0 \le \quasi{w}
	\le \norm{f}_{L^{\infty}(\Omega)} \quasi{\zeta_{\chi_{A}}}
	\quad \text{in \(\Omega\).}
	\]
	Since \(w\) is arbitrary, it follows that \(Z = \{\quasi{\zeta_{\chi_{A}}} = 0\}\), and this concludes the proof of the lemma.
\end{proof}

To show that the set \(A\) above can be taken at least as large as \(\Omega \setminus Z\) we proceed in the spirit of Perron's method.
To this end, we introduce a function \(w\) which dominates all subsolutions of \eqref{eqDirichletProblem} for a fixed measure \(\mu\) and such that if \eqref{eqDirichletProblem} has a distributional solution, then such a solution must be \(w\).{}
More precisely,

\begin{lemma}
	\label{propositionExistenceTool}
	For every nonnegative measure \(\mu \in \cM(\Omega)\), there exists a nonnegative function \(w \in W_{0}^{1, 1}(\Omega)\) such that \(\chi_{\Omega\setminus Z} w \in L^{1}(\Omega; V \dif x)\), \(\Delta w \in \cM(\Omega)\), 
	\[{}
	- \Delta w + V \chi_{\Omega\setminus Z} w \le \mu{}
	\quad \text{in the sense of distributions in \(\Omega\)}
	\]
	and
	\[{}
	- \Delta (\chi_{\Omega\setminus Z} w) + V \chi_{\Omega\setminus Z} w \ge \mu\ld \lfloor_{\Omega\setminus Z}
		\quad \text{in the sense of distributions in \(\Omega\),}
	\]	
	where \(\mu\ld\) is the diffuse part of \(\mu\) with respect to the \(W^{1, 2}\)~capacity.
	Moreover, for every \(u \in W_{0}^{1, 1}(\Omega) \cap L^{1}(\Omega; V \dif x)\) such that
	\[{}
	- \Delta u + V u \le \mu 
	\quad \text{in the sense of distributions in \(\Omega\),}
	\]
	we have 
	\[{}
	u \le w
	\quad \text{almost everywhere in \(\Omega\).}
	\]
\end{lemma}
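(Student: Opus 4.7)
The plan is to construct $w$ by Perron's method, defining it as the pointwise supremum of the class
\[
\mathcal{P} = \{v \in W_{0}^{1,1}(\Omega) \cap L^{1}(\Omega; V\dif x) : v \ge 0,\ -\Delta v + Vv \le \mu\ \text{in}\ \cD'(\Omega)\},
\]
and then verifying the two distributional inequalities. First I would show that $\mathcal{P}$ is closed under pairwise maxima: applying Kato's inequality to $(v_{2} - v_{1})^{+}$ together with the identity $\max(v_{1}, v_{2}) = v_{1} + (v_{2} - v_{1})^{+}$ yields the subsolution inequality for $\max(v_{1}, v_{2})$. Comparing any $v \in \mathcal{P}$ with the solutions $\bar w_{k}$ of $-\Delta \bar w_{k} + T_{k}(V) \bar w_{k} = \mu$ via the weak maximum principle (\Cref{lemmaWeakMaximumPrinciple}) for the bounded potential $T_{k}(V)$, and passing to the limit using \Cref{lemmaDualitySolutionsDistributional}, one obtains a uniform pointwise bound by the duality solution $\bar w$, which itself lies in $\mathcal{P}$ by the same lemma.

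Taking an increasing maximizing sequence in $\mathcal{P}$ (after replacement by iterated pairwise maxima), the Monotone Convergence Theorem applied in $L^{1}(\Omega)$ and in $L^{1}(\Omega; V\dif x)$ together with Fatou's lemma on the distributional inequality produces the desired $w \in \mathcal{P}$ as the maximum of $\mathcal{P}$. The maximality assertion for arbitrary subsolutions $u \in W_{0}^{1,1}(\Omega) \cap L^{1}(\Omega; V\dif x)$ with $-\Delta u + Vu \le \mu$ follows because $u^{+}$ also belongs to $\mathcal{P}$ by Kato's inequality applied to the positive part, hence $u \le u^{+} \le w$ almost everywhere. The inequality $-\Delta w + V\chi_{\Omega\setminus Z} w \le \mu$ is immediate from $w \in \mathcal{P}$ and $V\chi_{\Omega\setminus Z} w \le Vw$; both $\chi_{\Omega\setminus Z} w \in L^{1}(\Omega; V\dif x)$ and $\Delta w \in \cM(\Omega)$ then follow from the fact that $Vw \in L^{1}(\Omega)$ and the rearranged distributional identity.

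The hard part is establishing $-\Delta(\chi_{\Omega\setminus Z} w) + V\chi_{\Omega\setminus Z} w \ge \mu_d\lfloor_{\Omega\setminus Z}$ in $\cD'(\Omega)$. Observe that the distributional Laplacian $\Delta(\chi_{\Omega\setminus Z} w)$ need not be a measure, since $\chi_{\Omega\setminus Z}$ is merely the characteristic function of a Sobolev-closed set. My plan is to proceed by contradiction: if the inequality failed, a nonnegative test function $\varphi \in C_{c}^{\infty}(\Omega)$ would witness a strict defect $\delta > 0$. Using a variational solution $\zeta_{f}$ with $f$ a bounded nonnegative function supported in $\Omega\setminus Z$, \Cref{propositionSolutionDualityasDistribution} supplies the identity $-\Delta\zeta_{f} + V\zeta_{f} = f\lfloor_{\Omega\setminus S} - \lambda_{f}$ with $\lambda_{f}$ nonnegative, diffuse with respect to the $W^{1,2}$-capacity and carried by $S \subset Z$, so $\lambda_{f}$ does not interact with $\mu_d\lfloor_{\Omega\setminus Z}$. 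For a suitably small $\epsilon > 0$, the function $w + \epsilon \zeta_{f}$ would produce a strictly larger element of $\mathcal{P}$, contradicting the maximality of $w$. The main obstacle in implementing this is selecting the perturbation $\zeta_{f}$ so that it genuinely enlarges $w$ while not violating the subsolution inequality defining $\mathcal{P}$; this requires exploiting \Cref{propositionSolutionDualityasDistribution,propositionSingularSetSize} to control the $W^{1,2}$-capacity-zero exceptional sets within $Z$ and to handle the precise representative of $\chi_{\Omega\setminus Z}$ along the Sobolev-closed boundary between $Z$ and $\Omega\setminus Z$.
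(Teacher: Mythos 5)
Your Perron construction and the maximality assertion are essentially sound: the class $\mathcal{P}$ is closed under pairwise maxima by Kato's inequality, every element is dominated by the solutions $\bar w_{k}$ with potential $T_{k}(V)$ and hence by their limit, and the monotone-convergence argument produces a maximal element (which, as your own comparison shows, is in fact the duality solution of \cref{lemmaDualitySolutionsDistributional}). This yields the first inequality, the absorption bound, and $u \le w$ for all subsolutions. The problem is that the entire weight of the lemma rests on the second inequality, $-\Delta(\chi_{\Omega\setminus Z}w) + V\chi_{\Omega\setminus Z}w \ge \mu\ld\lfloor_{\Omega\setminus Z}$, and your plan for it does not go through. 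The perturbation $w + \epsilon\zeta_{f}$ satisfies $-\Delta(w+\epsilon\zeta_{f}) + V(w+\epsilon\zeta_{f}) \le \mu + \epsilon f$, which exceeds $\mu$ wherever $f>0$, so it never lies in $\mathcal{P}$ and no contradiction with maximality arises; you flag this obstacle yourself but do not resolve it. More structurally, maximality constrains $w$ as a subsolution of the \emph{full} operator $-\Delta+V$, whereas the inequality to be proved concerns the \emph{different} function $\chi_{\Omega\setminus Z}w$ and the \emph{truncated} potential $V\chi_{\Omega\setminus Z}$; a failure of the latter inequality gives no mechanism for enlarging $w$ within $\mathcal{P}$, and $\Delta(\chi_{\Omega\setminus Z}w)$ need not even be a measure a priori, as you note.

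The paper's proof supplies exactly the structure your argument lacks. It builds $w$ as the decreasing limit of \emph{exact} solutions $w_{k}$ of $-\Delta w_{k} + V\min\{k\zeta_{\chi_{A}}, w_{k}\} = \mu$ (\cref{lemmaExistenceTruncation}), where $\zeta_{\chi_{A}}$ vanishes precisely on $Z$; the pointwise truncation $\min\{k\zeta_{\chi_{A}},\cdot\}$ weakens the absorption so that the $w_{k}$ dominate every subsolution of the full equation, while the exact identity $(\Delta w_{k})\ld = V\min\{k\zeta_{\chi_{A}},w_{k}\} - \mu\ld$ feeds into Kato's inequality for $\min\{\ell\zeta_{\chi_{A}}, w_{k}\}$ and, together with the inverse maximum principle for the concentrated part, yields the second inequality after letting $k\to\infty$ and $\ell\to\infty$, since $\min\{\ell\zeta_{\chi_{A}}, w_{k}\} \to \chi_{\Omega\setminus Z}w$. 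If you want to salvage your route, you would have to prove the second inequality directly for the duality solution via \cref{propositionSolutionDualityasDistribution}, but there the correction measure $\lambda$ carried by $S$ enters with the wrong sign in the Kato computation, so this is not a shortcut; the truncation scheme is doing real work.
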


For the sake of proving \Cref{corollaryBounded}, we could have restricted ourselves to measures of the form \(\mu = f \dif x\) with \(f \in L^{1}(\Omega)\), which satisfy in particular \(\mu\ld = \mu\).{}
The statement for an arbitrary measure is used in the proof of \Cref{theoremGoodMeasuresCharacterization} in \Cref{sectionProofThm1} and we also show that
\[{}
w = 0 
\quad \text{almost everywhere in \(Z\).}
\]

To prove \Cref{propositionExistenceTool} we rely on a truncation strategy where the truncation level depends on \(x \in \Omega\).
We begin with the following observation:

\begin{lemma}
	\label{lemmaExistenceTruncation}
	Let \(v \in L^{1}(\Omega; V \dif x)\) be a nonnegative function.
	For every nonnegative measure \(\mu \in \cM(\Omega)\), there exists \(u \in W_{0}^{1, 1}(\Omega)\) such that
	\[{}
	- \Delta u + V \min{\{v, u\}} = \mu{}
	\quad{}
	\text{in the sense of distributions in \(\Omega\).}
	\]
\end{lemma}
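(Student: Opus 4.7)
The strategy is a double approximation: replace the potential \(V\) by its truncation \(V_k = T_k(V)\) and, when \(\mu\) is not already bounded, replace it by mollifications \(\mu_n = \rho_n * \mu \in L^\infty(\Omega)\). The whole scheme works thanks to the following pointwise bound, valid for any \emph{nonnegative} \(u\):
\[
0 \le V_k \min\{v, u\} \le V_k v \le V v \in L^{1}(\Omega).
\]
Thus the nonlinear term is dominated by a fixed \(L^{1}\) function uniformly in \(k\) and in the solution, which yields both the compactness of the approximating sequence and the ability to pass to the limit in the nonlinearity.

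\textbf{Step 1 (bounded potential, bounded datum).} For \(V_k\) bounded and \(\mu_n \in L^{\infty}(\Omega)\), nonnegative, I would obtain a nonnegative solution \(u_{n, k} \in W_{0}^{1, 2}(\Omega)\) of
\[
-\Delta u_{n, k} + V_k \min\{v, u_{n, k}\} = \mu_n
\]
as the unique minimizer of the strictly convex, coercive functional
\[
\mathcal{J}_{n, k}(z) = \frac{1}{2} \int_\Omega \abs{\nabla z}^{2} + \int_\Omega V_k \Psi(v, z) - \int_\Omega z \, \dif\mu_n,
\]
where \(\Psi(v, t) \vcentcolon= \int_{0}^{t}{\min\{v, s\} \dif s}\) is convex in \(t\) (since \(s \mapsto \min\{v, s\}\) is nondecreasing) and satisfies \(0 \le \Psi(v, t) \le t^{2}/2\), so that \(\mathcal{J}_{n, k}\) is well-defined on \(W_{0}^{1, 2}(\Omega)\). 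Its Euler–Lagrange equation is the desired identity. Nonnegativity of the minimizer follows from the weak comparison principle applied to the monotone nonlinearity \(t \mapsto V_k \min\{v, t\}\) with the subsolution \(0\), or equivalently from the observation that replacing \(z\) by \(\abs{z}\) does not increase \(\mathcal{J}_{n, k}\) when \(\mu_n \ge 0\).

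\textbf{Step 2 (double limit).} From the equation and the nonnegativity of \(u_{n, k}\) I would obtain the key a~priori estimate
\[
\bignorm{\Delta u_{n, k}}_{\cM(\Omega)}
\le \bignorm{\mu_n}_{\cM(\Omega)} + \bignorm{V_k v}_{L^{1}(\Omega)}
\le \bignorm{\mu}_{\cM(\Omega)} + \bignorm{V v}_{L^{1}(\Omega)},
\]
which combined with the Sobolev estimate~\eqref{eqEstimateSobolev} for \(-\Delta\) bounds \((u_{n, k})\) uniformly in \(W_{0}^{1, q}(\Omega)\) for every \(1 \le q < \frac{N}{N-1}\), independently of \(n\) and \(k\). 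A diagonal subsequence then converges in \(L^{1}(\Omega)\) and almost everywhere to some \(u \in W_{0}^{1, 1}(\Omega)\). Since \(T_{k}(V) \to V\) pointwise and \(t \mapsto \min\{v, t\}\) is continuous, \(V_k \min\{v, u_{n, k}\} \to V \min\{v, u\}\) almost everywhere, and the Dominated convergence theorem with dominant \(V v \in L^{1}(\Omega)\) upgrades this to strong \(L^{1}\) convergence. Passing to the limit in the distributional formulation yields the desired equation.

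\textbf{Main obstacle.} The delicate part is the simultaneous control of the three approximations (in \(\mu\), in \(V\), and in the possible loss of nonnegativity at intermediate stages). Only weak compactness is available for the gradients, so the nonlinear term can be passed to the limit exclusively thanks to the pointwise domination by \(V v \in L^{1}(\Omega)\); this in turn relies on each approximant \(u_{n, k}\) being nonnegative, which is precisely ensured by the variational structure together with the monotonicity of \(t \mapsto V_k \min\{v, t\}\).
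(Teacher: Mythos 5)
Your argument is correct, and it reaches the same skeleton as the paper's proof (variational approximation of \(\mu\) by mollification, nonnegativity of the approximants, the uniform domination \(0 \le V_{k}\min\{v, u_{n,k}\} \le Vv \in L^{1}(\Omega)\), the resulting \(W_{0}^{1,q}\) compactness, and dominated convergence in the absorption term), but it handles the singular potential differently in the approximating problems. The paper keeps the full \(V\) in the energy \(\widetilde E_{k}\) and therefore must work on the constraint set \(\mathcal V = \{z \in W_{0}^{1,2}(\Omega) : g(\cdot, z) \in L^{1}(\Omega)\}\): it proves that \(\mathcal V\) is a vector space via a \(\Delta_{2}\)-type condition on \(g\), checks that \(W_{0}^{1,2}(\Omega) \cap L^{\infty}(\Omega) \subset \mathcal V\) using \(v \in L^{1}(\Omega; V\dif x)\), and only then derives the Euler--Lagrange equation (in the spirit of Brezis--Browder); a single limit \(k \to \infty\) then suffices. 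You instead truncate \(V\) as well, so each approximating functional \(\mathcal J_{n,k}\) is finite, Gateaux-differentiable and strictly convex on all of \(W_{0}^{1,2}(\Omega)\), and the Euler--Lagrange equation is immediate; the price is a double (diagonal) limit in \((n,k)\), which is harmless because your a priori bound \(\norm{\Delta u_{n,k}}_{\cM(\Omega)} \le \norm{\mu}_{\cM(\Omega)} + \norm{Vv}_{L^{1}(\Omega)}\) is uniform in both indices and the dominant \(Vv\) does not depend on \(k\). Your route buys a more elementary Step~1 (no functional-analytic preliminaries on \(\mathcal V\)); the paper's buys a one-parameter limit and avoids having to track the simultaneous convergence \(T_{k}(V) \to V\) inside the nonlinearity. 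Your nonnegativity argument (\(\mathcal J_{n,k}(\abs{z}) \le \mathcal J_{n,k}(z)\) plus uniqueness) is also sound.
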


\begin{proof}[Proof of \Cref{lemmaExistenceTruncation}]
	We proceed by approximation by taking a nonnegative sequence \((\mu_{k})_{k \in \N}\) in \(L^{2}(\Omega)\) which is bounded in \(L^{1}(\Omega)\) and converges to \(\mu\) in the sense of measures in \(\Omega\)\,; an example is \(\mu_{k} = \rho_{k} * \mu\) where \((\rho_{k})_{k \in \N}\) is a sequence of mollifiers.{}
	Consider the energy functional
	\[{}
	\widetilde E_{k}(z)
	= \frac{1}{2} \int_{\Omega}{\abs{\nabla z}^{2}}
	+ \int_{\Omega}{g(\cdot , z)}
	- \int_{\Omega}{\mu_{k} z},
	\]
	where
	\[{}
	g(x, t)
	\vcentcolon= V(x) \int_{0}^{t}{T_{v(x)}(s) \dif s}
	\quad \text{for every \((x, t) \in \Omega \times \R\).}
	\]
	We take \(\widetilde E_{k}\) defined on 
	\[{}
	\mathcal V 
	\vcentcolon= \bigl\{ z \in W_{0}^{1, 2}(\Omega) : g(\cdot, z) \in L^{1}(\Omega) \bigr\}.
	\]	
	
	Existence of a solution of the Euler-Lagrange equation associated to \(\widetilde E_{k}\) follows from \cite{Brezis_Browder:1978}*{Theorem~1} by Brezis and Browder, based on a truncation of \(g\).{}
	Here we prove directly that the minimizer satisfies the equation.
 	We first claim that in our case \(\mathcal V\) is a vector subspace of \(W_{0}^{1, 2}(\Omega)\).{}
	To this end, observe that \(g\) is even, nondecreasing in \([0, +\infty)\), and satisfies the \(\Delta_{2}\) condition
	\[{}
	0 \le g(\cdot, 2t)
	\le C g(\cdot, t)
	\quad \text{in \(\Omega\)}
	\]
	for every \(t \in \R\) and some constant \(C > 0\).{}
	Thus, for every \(t_{1}, t_{2} \in \R\),
	\[{}
	0 \le g(\cdot, t_{1} + t_{2})
	\le C \bigl( g(\cdot, t_{1}) + g(\cdot, t_{2}) \bigr)
	\quad \text{in \(\Omega\).}
	\]
	These properties of \(g\) imply that the condition \(g(\cdot, z) \in L^{1}(\Omega)\) is stable under linear combinations of \(z \in L^{1}(\Omega)\), and then \(\mathcal V\) is a vector subspace of \(W_{0}^{1, 2}(\Omega)\) as claimed.{}
	
	Since \(g\) is nonnegative, \(\widetilde E_{k}\) is bounded from below in \(\mathcal V\).{}
	Moreover, by the Rellich-Kondrashov compactness theorem and Fatou's lemma, any minimizing sequence of \(\widetilde E_{k}\) in \(\mathcal V\) has a subsequence that converges weakly in \(W_{0}^{1, 2}(\Omega)\) to a minimizer \(u_{k} \in \mathcal V\).{}
	We now observe that
	\begin{equation*}
		W_{0}^{1, 2}(\Omega) \cap L^{\infty}(\Omega){}
		\subset \mathcal V,
	\end{equation*}
	which follows from the assumption \(v \in L^{1}(\Omega; V \dif x)\) and the fact that 
	\[{}
	0 \le g(\cdot, t) \le Vv \, \abs{t}
	\quad \text{for every \(t \in \R\).}
	\]
	Since \(\mathcal V\) is a vector space that contains \(W_{0}^{1, 2}(\Omega) \cap L^{\infty}(\Omega)\), the minimizer \(u_{k}\) satisfies the Euler-Lagrange equation
	\[{}
	\int_{\Omega}{\bigl( \nabla u_{k} \cdot \nabla z + V T_{v}(u_{k}) z \bigr)}
	= \int_{\Omega}{\mu_{k} z}
	\quad \text{for every \(z \in W_{0}^{1, 2}(\Omega) \cap L^{\infty}(\Omega)\).}
	\]
	Since \(\mu_{k}\) is nonnegative, one deduces that \(u_{k}\) is also nonnegative.
	Hence, \(T_{v}(u_{k}) = \min{\{v, u_{k}\}}\) and
	\begin{equation}
		\label{eq-1594}
	- \Delta u_{k} + V \min{\{v, u_{k}\}} = \mu_{k}
	\quad{}
	\text{in the sense of distributions in \(\Omega\).}
	\end{equation}
	
	We next observe that
	\[{}
	0 \le V \min{\{v, u_{k}\}}
	\le Vv
	\quad \text{for every \(k \in \N\).}
	\]
	From equation \eqref{eq-1594} and the assumption on \(v\), the sequence \((\Delta u_{k})_{k \in \N}\) is then bounded in \(L^{1}(\Omega)\).{}
	By Sobolev imbedding of solutions of the Dirichlet problem, we can extract a subsequence from \((u_{k})_{k \in \N}\) which converges in \(L^{1}(\Omega)\) to some function \(u \in W_{0}^{1, 1}(\Omega)\).{}
	We then have the conclusion using the Dominated convergence theorem.
\end{proof}

\begin{proof}[Proof of \Cref{propositionExistenceTool}]
	Let \(v \vcentcolon= \zeta_{\chi_{A}}\), where \(A\) is the set given by \Cref{propositionZCharacteristic}.	
	For each \(k \in \N\), let \(w_{k} \in W_{0}^{1, 1}(\Omega)\) be such that
	\begin{equation}
	\label{eqEquationTruncation}
	-\Delta w_{k} + V \min{\{k v, w_{k}\}} = \mu{}
	\quad{}
	\text{in the sense of distributions in \(\Omega\).}
	\end{equation}
	The existence of \(w_{k}\) follows from \Cref{lemmaExistenceTruncation} applied to the nonnegative function \(kv\).{}
	Since the function \(t \in \R \mapsto \min{\{kv, t\}}\) is nondecreasing, 
	the weak maximum principle applies; see e.g.~\cite{Brezis_Marcus_Ponce:2007}*{Corollary~4.B.2}.{}
	The sequence \((w_{k})_{k \in \N}\) is then nonnegative and non-increasing, whence converges pointwise and in \(L^{1}(\Omega)\) to some function \(w\).{}
	By construction of \(v\), the set \(\{v = 0\}\) equals \(Z\), except for a negligible set, so that the sequence \((\min{\{k v, w_{k}\}})_{k \in \N}\) converges almost everywhere to \(\chi_{\Omega \setminus Z}\,  w\).{}
	By the absorption estimate, 
	\[{}
	\bignorm{V \min{\{k v, w_{k}\}}}_{L^{1}(\Omega)}
	\le \norm{\mu}_{\cM(\Omega)},
	\]
	the sequence \((\min{\{k v, w_{k}\}})_{k \in \N}\) is bounded in \(L^{1}(\Omega; V \dif x)\).	
	Hence, by Fatou's lemma we have \(\chi_{\Omega \setminus Z}\, w \in L^{1}(\Omega; V \dif x)\) and
	\begin{equation}
	\label{eqTheoremFatou}
		-\Delta w + V \chi_{\Omega \setminus Z}\,  w \le \mu
	\quad{}
	\text{in the sense of distributions in \(\Omega\).}
	\end{equation}
	By the boundedness of the sequence \((\Delta w_{k})_{k \in \N}\) in \(\cM(\Omega)\), we also have \(w \in W_{0}^{1, 1}(\Omega)\) and \(\Delta w \in \cM(\Omega)\).{}

	We now suppose that we are given some function \(u \in W_{0}^{1, 1}(\Omega)\) such that
	\[{}
	- \Delta u + V u \le \mu 
	\quad \text{in the sense of distributions in \(\Omega\).}
	\] 
	Then, \(u\) is a subsolution of equation \eqref{eqEquationTruncation} and,
	by the weak maximum principle, we have \(u \le w_{k}\) almost everywhere in \(\Omega\).{}
	As \(k \to \infty\), we get \(u \le w\) almost everywhere in \(\Omega\).{}
	
	We are left with the proof of 
	\begin{equation}
	\label{eqToolDistributional}
	- \Delta (\chi_{\Omega\setminus Z} w) + V \chi_{\Omega\setminus Z} w \ge \mu\ld \lfloor_{\Omega\setminus Z}
	\quad \text{in the sense of distributions in \(\Omega\).}
	\end{equation}
	To this end, we begin by writing, for every \(a, b \in \R\),{}
	\begin{equation}
		\label{eqKato1}
	\min{\{a, b\}}
	= \frac{a + b - (a - b)^{+} - (b - a)^{+}}{2},
	\end{equation}
	which we shall apply with \(a = \ell v\) and \(b = w_{k}\), where \(\ell \in \N\).
	By Kato's inequality~\cites{Brezis_Ponce:2004,DalMaso_Murat_Orsina_Prignet:1999}, \(\Delta (\ell v - w_{k})^{+}\) and \(\Delta (w_{k} - \ell v)^{+}\) are locally finite measures in \(\Omega\) that satisfy
	\begin{equation}
	\label{eqKato2}
	\bigl[\Delta (\ell v - w_{k})^{+}\bigr]\ld
	 \ge \chi_{\{\ell\quasi{v} > \quasi{w_{k}}\}} \bigl[ \Delta(\ell v - w_{k}) \bigr]\ld{}
	\end{equation}
	and
	\begin{equation}
	\label{eqKato3}
	\bigl[\Delta (w_{k} - \ell v)^{+}\bigr]\ld
	\ge \chi_{\{\ell\quasi{v} < \quasi{w_{k}}\}} \bigl[ \Delta(w_{k} - \ell v) \bigr]\ld
	\end{equation}
	in the sense of measures in \(\Omega\).{}
	Here we recall that, since \(\Delta w \in \cM(\Omega)\), the precise representative \(\quasi{w}\) is defined quasi-everywhere in \(\Omega\), i.e.~except on a subset of \(W^{1, 2}\)~capacity zero; see \cite{Ponce:2016}*{Proposition~8.9}.	
	It thus follows from \eqref{eqKato1} to \eqref{eqKato3} that \(\Delta \min{\{\ell v, w_{k}\}}\) is a locally finite measure in \(\Omega\) such that
	\begin{multline*}
	\bigl(\Delta \min{\{\ell v, w_{k}\}}\bigr)\ld{}
	\le \chi_{\{\ell\quasi{v} < \quasi{w_{k}}\}} (\ell \Delta v)\ld{}
	+ \chi_{\{\ell\quasi{v} > \quasi{w_{k}}\}} (\Delta w_{k})\ld{}\\
	+  \chi_{\{\ell\quasi{v} = \quasi{w_{k}}\}} \frac{(\ell \Delta v)\ld + (\Delta w_{k})\ld}{2}.
	\end{multline*}
	Observe that since \(v\) is a distributional solution with datum \(\chi_{A} \dif x\) (and not just a duality solution) we have
	\[{}
	(\ell\Delta v)\ld = \ell\Delta v = \ell Vv - \ell\chi_{A} \le \ell Vv.
	\]
	We also have
	\[{}
	(\Delta w_{k})\ld{}
	= V \min{\{k v, w_{k}\}} - \mu\ld.
	\]
	Thus, 
	\begin{multline*}
	\bigl(\Delta \min{\{\ell v, w_{k}\}}\bigr)\ld{}
	\le \chi_{\{\ell\quasi{v}{<} \quasi{w_{k}}\}} \ell V v
	+ \chi_{\{\ell\quasi{v} \, > \quasi{w_{k}}\}} (V \min{\{k v, w_{k}\}} - \mu\ld)\\
	+  \chi_{\{\ell\quasi{v} = \quasi{w_{k}}\}} \frac{\ell V v + V \min{\{k v, w_{k}\}}}{2}.
	\end{multline*}	
	For \(\ell \le k\), we have
	\[{}
	\chi_{\{\ell\quasi{v} < \quasi{w_{k}}\}} \ell V v
	+ \chi_{\{\ell\quasi{v} \, > \quasi{w_{k}}\}} V \min{\{k v, w_{k}\}} 
	+  \chi_{\{\ell\quasi{v} = \quasi{w_{k}}\}} \frac{\ell V v + V \min{\{k v, w_{k}\}}}{2}
	= V \min{\{\ell v, w_{k}\}}
	\]
	almost everywhere in \(\Omega\).{}
	Hence,
	\[{}
	\bigl(\Delta \min{\{\ell v, w_{k}\}}\bigr)\ld{}
	\le V \min{\{\ell v, w_{k}\}} - \chi_{\{\ell\quasi{v} \, > \quasi{w_{k}}\}}\mu\ld.
	\]
	Since \(w_{k} \le w_{0}\) and \(\mu\ld\) is nonnegative, we then have
	\begin{equation}
		\label{eq1587}
	\bigl(\Delta \min{\{\ell v, w_{k}\}}\bigr)\ld{}
	\le V \min{\{\ell v, w_{k}\}} - \chi_{\{\ell\quasi{v} \, > \quasi{w_{0}}\}} \mu\ld.
	\end{equation}
	Since the function \( \min{\{\ell v, w_{k}\}}\) is nonnegative, by the inverse maximum principle we also have
	\begin{equation}
		\label{eq1593}
	\bigl(\Delta \min{\{\ell v, w_{k}\}}\bigr)\lc 
	\le 0. 
	\end{equation}
	Combining \eqref{eq1587} and \eqref{eq1593}, for every \(\ell \le k\) we get
	\[{}
	\Delta \min{\{\ell v, w_{k}\}}
	\le 
	V \min{\{\ell v, w_{k}\}} - \chi_{\{\ell\quasi{v} \, > \quasi{w_{0}}\}} \mu\ld
	\]
	in the sense of measures and then also in the sense of distributions in \(\Omega\).
	Letting \(k \to \infty\) and next \(\ell \to \infty\), we deduce that
	\[{}
	\Delta (\chi_{\Omega \setminus Z} w)
	\le 
	V \chi_{\Omega \setminus Z} w - \chi_{\{\quasi{w_{0}} < \infty\}\setminus \{\quasi{v} = 0\}} \, \mu\ld
	\quad \text{in the sense of distributions in \(\Omega\).}
	\]
	Since \(\Delta w_{0} \in \cM(\Omega)\), the set \(\Omega \setminus \{\quasi{w_{0}} < \infty\}\) has \(W^{1, 2}\)~capacity zero.
	Moreover, by the choice of \(v\) we have \(\{\quasi{v} = 0\} = Z\).{}
	We thus get
	\[{}
	\chi_{\{\quasi{w_{0}} < \infty\}\setminus \{\quasi{v} = 0\}} \, \mu\ld 
	= \mu\ld\lfloor_{\Omega \setminus Z}
	\]
	and \eqref{eqToolDistributional} follows.{}
\end{proof}

\begin{remark}
	\label{remarkBoundary}
	\Cref{propositionExistenceTool} does not provide enough information to conclude that \(\chi_{\Omega \setminus Z} w \in W_{0}^{1, 1}(\Omega)\).{}
	To encode the zero boundary datum of \(\chi_{\Omega \setminus Z} w\), one can rely instead on test functions in the larger class \(C_{0}^{\infty}(\overline\Omega)\), which is enough to apply the weak maximum principle.
	On the one hand, since \(w \in W_{0}^{1, 1}(\Omega)\), by \cite{Ponce:2016}*{Proposition~6.5} the property
	\[{}
	- \Delta w + V \chi_{\Omega\setminus Z} w \le \mu{}
	\quad \text{in the sense of distributions in \(\Omega\)}
	\]
	is equivalent to
	\[
	- \Delta w + V \chi_{\Omega\setminus Z} w \le \mu{}
	\quad \text{in the sense of \((C_{0}^{\infty}(\overline\Omega))'\).}
	\]
	On the other hand, since \(w \in W_{0}^{1, 1}(\Omega)\) and \(\Delta w \in \cM(\Omega)\), by \cite{Ponce:2016}*{Proposition~20.1} we also have 
	\[{}
	\int_{\{x \in \Omega\, :\, d(x, \partial\Omega) < \epsilon\}}{w}
	\le C \epsilon^{2} \norm{\Delta w}_{\cM(\Omega)}
	\quad \text{for every \(\epsilon > 0\).}
	\]
	In particular, \(\chi_{\Omega \setminus Z} w\) satisfies the following vanishing mean property on the boundary:
	\[{}
	\lim_{\epsilon \to 0}{\frac{1}{\epsilon}\int_{\{x \in \Omega : d(x, \partial\Omega) < \epsilon\}}{\chi_{\Omega \setminus Z} w}}
	= 0,
	\]
	which combined with an inequality of the type
		\[{}
	- \Delta (\chi_{\Omega\setminus Z} w) + V \chi_{\Omega\setminus Z} w \ge \mu\ld \lfloor_{\Omega\setminus Z}
		\quad \text{in the sense of distributions in \(\Omega\)}
	\]
	entitles us to recover test functions in \(C_{0}^{\infty}(\overline\Omega)\) as an application of \cite{Ponce:2016}*{Proposition~20.2}:
	\[{}
	- \Delta (\chi_{\Omega\setminus Z} w) + V \chi_{\Omega\setminus Z} w \ge \mu\ld \lfloor_{\Omega\setminus Z}
		\quad \text{in the sense of\/ \((C_{0}^{\infty}(\overline\Omega))'\).}
	\]
\end{remark}

\begin{proof}[Proof of \Cref{corollaryBounded}]
	Let \(w\) be the function given by \Cref{propositionExistenceTool} with \(\mu = \chi_{\Omega\setminus Z} \dif x\).{}
	In this case, since \(\mu\) is absolutely continuous with respect to the Lebesgue measure, \(\mu\ld = \mu\) and then \(\mu\ld\lfloor_{\Omega\setminus Z}{} = \mu\).
	Using the notation \(\widetilde{w} \vcentcolon= \chi_{\Omega \setminus Z} w\), we thus have
	\[{}
	- \Delta w + V \chi_{\Omega\setminus Z} w \le \chi_{\Omega\setminus Z}{}
	\quad\text{and}\quad{}
	- \Delta \widetilde{w} + V \chi_{\Omega\setminus Z} \widetilde{w} \ge \chi_{\Omega\setminus Z}
	\]
	in the sense of distributions in \(\Omega\).{}
	By \Cref{remarkBoundary}, both inequalities hold in the sense of\/ \((C_{0}^{\infty}(\overline\Omega))'\).
	Since the potential \(V \chi_{\Omega\setminus Z}\) is nonnegative, it thus follows from the weak maximum principle that \(\widetilde{w} \ge w\) almost everywhere in \(\Omega\).{}
	By the nonnegativity of \(w\), the reverse inequality also holds.
	Hence, \(\widetilde{w} = w \in W_{0}^{1, 1}(\Omega)\) and, as \(\widetilde{w} = 0\) on \(Z\),
	\[{}
	-\Delta w + V w  = -\Delta w + V \chi_{\Omega\setminus Z} w = \chi_{\Omega\setminus Z}
	\quad \text{in the sense of distributions in \(\Omega\).}
	\]
	From \Cref{remarkDistributionalSolutionsVariational}, we thus have \(w = \zeta_{\chi_{\Omega \setminus Z}}\).
\end{proof}

\begin{remark}
	\label{remarkExistenceBounded}
	As a consequence of \Cref{corollaryBounded}, one has existence of a solution of \eqref{eqEquationSupersolution} for every nonnegative function \(f \in L^{\infty}(\Omega)\) such that \(f = 0\) almost everywhere in \(Z\).{}
	Indeed, observe that a solution of \eqref{eqEquationSupersolution} with datum \(\norm{f}_{L^{\infty}(\Omega)} \chi_{\Omega \setminus Z}\) exists.
	Since
	\[{}
	0 \le f \le \norm{f}_{L^{\infty}(\Omega)} \chi_{\Omega \setminus Z}
	\quad \text{almost everywhere in \(\Omega\),}
	\]
	it then suffices to apply the method of sub- and supersolutions (\Cref{propositionDistributionalExistence}).
\end{remark}


\section{Orthogonality principle}
\label{sectionOrthogonality}

We establish in this section an orthogonality relation between the sets \(Z\) and \(\Omega\setminus Z\), which is used in the proof of \Cref{theoremGoodMeasuresCharacterization}\,:{}

\begin{proposition}
	\label{propositionOrthogonality}
	The universal zero-set \(Z\) satisfies
	\[{}
	\int_{\Omega}{\zeta_{\chi_{\Omega \setminus Z}}\chi_{Z}}
	= \int_{\Omega}{\zeta_{\chi_{Z}}\chi_{\Omega\setminus Z}}
	= 0.
	\]
	More precisely, we have
	\begin{alignat*}{2}
	\quasi{\zeta_{\chi_{\Omega \setminus Z}}}(x) & = 0
	&& \quad \text{for every \(x \in Z\),}\\
	\quasi{\zeta_{\chi_{Z}}}(x) & = 0
	&& \quad \text{for every \(x \in \Omega\setminus Z\).}
	\end{alignat*}
\end{proposition}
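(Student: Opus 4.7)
The first assertion is immediate: by \Cref{corollaryBounded} the function $\zeta_{\chi_{\Omega \setminus Z}}$ is a distributional solution of \eqref{eqEquationSupersolution} with the nonnegative bounded datum $\chi_{\Omega \setminus Z}$, so the definition of the universal zero-set forces $\widehat{\zeta_{\chi_{\Omega \setminus Z}}}(x) = 0$ for every $x \in Z$, whence $\int_\Omega \zeta_{\chi_{\Omega \setminus Z}} \chi_Z = \int_Z \widehat{\zeta_{\chi_{\Omega \setminus Z}}} = 0$.

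For the second assertion, I would first extract the integral form from the symmetry of the bilinear form in the Euler-Lagrange equation \eqref{eqEulerLagrange}. Both $\zeta_{\chi_{\Omega \setminus Z}}$ and $\zeta_{\chi_Z}$ belong to $W_0^{1,2}(\Omega) \cap L^2(\Omega; V \dif x)$ and are admissible as test functions in each other's Euler-Lagrange equations; this gives
\[
\int_{\Omega \setminus Z} \zeta_{\chi_Z}
\;=\; \int_\Omega \bigl(\nabla \zeta_{\chi_Z} \cdot \nabla \zeta_{\chi_{\Omega \setminus Z}} + V \zeta_{\chi_Z} \zeta_{\chi_{\Omega \setminus Z}}\bigr)
\;=\; \int_Z \zeta_{\chi_{\Omega \setminus Z}}
\;=\; \int_Z \widehat{\zeta_{\chi_{\Omega \setminus Z}}}
\;=\; 0,
\]
where the last equality uses the first assertion. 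Since $\zeta_{\chi_Z} \ge 0$, this yields $\zeta_{\chi_Z} = 0$ almost everywhere on $\Omega \setminus Z$, proving both integral identities.

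The pointwise refinement $\widehat{\zeta_{\chi_Z}}(x) = 0$ for every $x \in \Omega \setminus Z$ is the heart of the argument. Applying \Cref{propositionSolutionDualityasDistribution} with $\mu = \chi_Z \dif x$ yields
\[
-\Delta \zeta_{\chi_Z} + V \zeta_{\chi_Z} \;=\; \chi_{Z \setminus S}\, \dif x - \lambda
\]
in the sense of distributions, where $\lambda \ge 0$ is carried by $S \subset Z$. Because $\zeta_{\chi_Z}$ vanishes almost everywhere outside $Z$, the measure $V\zeta_{\chi_Z}\dif x$ is also carried by $Z$, so the signed measure
\[
-\Delta \zeta_{\chi_Z} \;=\; \chi_{Z \setminus S}\,\dif x - V\zeta_{\chi_Z}\,\dif x - \lambda
\]
is supported on $Z$. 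By elliptic interior regularity, $\zeta_{\chi_Z}$ is therefore harmonic---and in particular continuous---on every open subset of $\Omega$ disjoint from $Z$, so the a.e.\ vanishing forces $\zeta_{\chi_Z} \equiv 0$ on every such open set. One then concludes pointwise via the Lebesgue-limit characterization $\widehat{\zeta_{\chi_Z}}(x) = \lim_{r \to 0} \fint_{B_r(x)} \zeta_{\chi_Z}$, exploiting the decomposition $\zeta_{\chi_Z} = h - g$ into a bounded continuous function $h$ (the Newtonian potential of $\chi_{Z \setminus S}\dif x$) and a nonnegative superharmonic function $g$ arising from the nonnegative datum $V\zeta_{\chi_Z}\dif x + \lambda$, which ensures that every $x \in \Omega$ is a Lebesgue point.

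The main obstacle is precisely this last step: promoting the pointwise vanishing from the Euclidean interior of $\Omega \setminus Z$ to \emph{every} $x \in \Omega \setminus Z$, since a priori this set is only Sobolev-open and may contain points lying on its relative topological boundary, where nbhds meet $Z$. The argument must then carefully combine the interior harmonicity outside $Z$ with the density information of $Z$ near such exceptional points, together with the upper-semicontinuity of $\widehat{\zeta_{\chi_Z}}$ inherited from the decomposition above.
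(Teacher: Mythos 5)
Your first two steps are correct and essentially coincide with the paper's: the vanishing of \(\quasi{\zeta_{\chi_{\Omega \setminus Z}}}\) on \(Z\) follows from \Cref{corollaryBounded} and the definition of \(Z\), and your use of the symmetry of the Euler--Lagrange bilinear form \eqref{eqEulerLagrange} is just another way of invoking the duality identity with test function \(\chi_{Z}\); either way one gets \(\zeta_{\chi_{Z}} = 0\) almost everywhere in \(\Omega \setminus Z\).

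The pointwise claim \(\quasi{\zeta_{\chi_{Z}}}(x) = 0\) for \emph{every} \(x \in \Omega \setminus Z\) is, however, left with a genuine gap, and you acknowledge it yourself in your final paragraph. The machinery you assemble does not close it: harmonicity of \(\zeta_{\chi_{Z}}\) on the Euclidean interior of \(\Omega \setminus Z\) says nothing at a point whose every neighborhood meets \(Z\), and upper semicontinuity of \(\quasi{\zeta_{\chi_{Z}}}\) runs in the wrong direction (it bounds \(\limsup_{y \to x} \quasi{\zeta_{\chi_{Z}}}(y)\) from above by \(\quasi{\zeta_{\chi_{Z}}}(x)\), which cannot force the value \emph{at} \(x\) to be zero). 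The missing ingredient is the density-point property of \(\Omega \setminus Z\): for every \(x \in \Omega \setminus Z\) one has \(\abs{B_{r}(x) \cap Z}/\abs{B_{r}(x)} \to 0\) as \(r \to 0\). This is the paper's \Cref{propositionDensity}, proved by taking a solution \(w\) of \eqref{eqEquationSupersolution} with \(\quasi{w}(x) > 0\), using that \(w = 0\) almost everywhere on \(Z\) and that \(\quasi{w}\) is upper semicontinuous, to squeeze \(c - \epsilon \le (c + \epsilon)\, \abs{B_{r}(x) \setminus Z}/\abs{B_{r}(x)}\). Once this is available, the conclusion is immediate and needs none of your elliptic-regularity or decomposition apparatus: since \(\zeta_{\chi_{Z}}\) is bounded and vanishes almost everywhere in \(\Omega \setminus Z\),
\begin{equation*}
0 \le \fint_{B_{r}(x)}{\zeta_{\chi_{Z}}}
= \frac{1}{\abs{B_{r}(x)}} \int_{B_{r}(x) \cap Z}{\zeta_{\chi_{Z}}}
\le \norm{\zeta_{\chi_{Z}}}_{L^{\infty}(\Omega)} \, \frac{\abs{B_{r}(x) \cap Z}}{\abs{B_{r}(x)}}
\longrightarrow 0.
\end{equation*}
You should either prove the density-point lemma or cite it explicitly; without it the final step of your argument does not go through.
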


As a consequence, the function \(\zeta_{\chi_{Z}}\) satisfies \eqref{eqEquationSupersolution} with \(f = \chi_{Z}\) if and only if \(Z\) is negligible, since one must have \(\quasi{\zeta_{\chi_{Z}}} = 0\) in \((\Omega\setminus Z) \cup Z = \Omega\).
The proof of \Cref{propositionOrthogonality} relies on the existence of a solution of \eqref{eqEquationSupersolution} with \(f = \chi_{\Omega\setminus Z}\) that we proved in the previous section.{}
We also need to know that every point of \(\Omega \setminus Z\) is a density point of this set, which means that \(\Omega \setminus Z\) is open with respect to the density topology~\cite{Goffman_Waterman:1961}.
This is a general property of Sobolev-open sets (\Cref{propositionQuasiOpenDensityPoint}), but here we rely solely on the definition of \(Z\)\,:

\begin{lemma}
	\label{propositionDensity}
	For every \(x \in \Omega \setminus Z\), we have
	\[{}
	\lim_{r \to 0}{\frac{\meas{B_{r}(x) \setminus Z}}{\meas{B_{r}(x)}}} = 1.
	\]
\end{lemma}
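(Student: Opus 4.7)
The plan is to exhibit a single nonnegative distributional solution whose precise zero-set coincides with $Z$, and then extract the density statement from its Lebesgue-point property.

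First, set $u \vcentcolon= \zeta_{\chi_{\Omega \setminus Z}}$: by \Cref{corollaryBounded} this is a distributional solution of \eqref{eqEquationSupersolution} with nonnegative bounded datum $\chi_{\Omega \setminus Z}$, so by the very definition of $Z$ one has $Z \subseteq \{\quasi{u} = 0\}$. For the reverse inclusion, I would apply the Claim in the proof of \Cref{propositionZCharacteristic} with $f = \chi_{\Omega \setminus Z}$: this forces $\chi_{\Omega \setminus Z} = 0$ almost everywhere in $\Omega \setminus A$, and combined with the maximality of $\abs{A}$ it yields $A = \Omega \setminus Z$ up to a Lebesgue-null set; hence $u = \zeta_{\chi_{A}}$ and \Cref{propositionZCharacteristic} gives
\[
Z = \bigl\{y \in \Omega : \quasi{u}(y) = 0\bigr\},
\]
so that $\alpha \vcentcolon= \quasi{u}(x) > 0$ at the chosen point $x \in \Omega \setminus Z$.

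Next I would invoke the fact that $u$ is almost everywhere the difference of a continuous and a bounded superharmonic function, so every $y \in \Omega$ is a Lebesgue point of $u$. In particular, $\fint_{B_{r}(x)}{\abs{u - \alpha}} \to 0$ as $r \to 0$, and since $\abs{u - \alpha} \ge \alpha/2$ throughout the sublevel set $\{u \le \alpha/2\}$, a Chebyshev-type bound gives
\[
\frac{\alpha}{2} \cdot \frac{\bigabs{\{u \le \alpha/2\} \cap B_{r}(x)}}{\abs{B_{r}(x)}} \le \fint_{B_{r}(x)}{\abs{u - \alpha}} \longrightarrow 0 \quad \text{as } r \to 0,
\]
so $\{u \le \alpha/2\}$ has Lebesgue density zero at $x$.

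To conclude, since $u = \quasi{u}$ almost everywhere in $\Omega$ while $\quasi{u} \equiv 0$ on $Z$, one has $u = 0$ almost everywhere on $Z$; therefore $Z \subseteq \{u \le \alpha/2\}$ up to a Lebesgue-null set, and
\[
\frac{\abs{B_{r}(x) \cap Z}}{\abs{B_{r}(x)}} \le \frac{\bigabs{\{u \le \alpha/2\} \cap B_{r}(x)}}{\abs{B_{r}(x)}} \longrightarrow 0 \quad \text{as } r \to 0,
\]
which is equivalent to the statement of the proposition. The only delicate step is the opening identification $Z = \{\quasi{u} = 0\}$, i.e.~producing one bounded distributional solution whose precise zero-set pins down $Z$ exactly; once this is in place, the rest is a routine Lebesgue-point computation.
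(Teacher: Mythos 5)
Your closing computation is sound, but the opening identification on which you hang everything has a genuine gap — and the natural way to fill it would be circular. From the Claim in the proof of \Cref{propositionZCharacteristic} applied to \(f = \chi_{\Omega\setminus Z}\) you only get \(\chi_{\Omega\setminus Z} = 0\) a.e.\ in \(\Omega\setminus A\), i.e.\ the inclusion \(\Omega\setminus Z \subset A\) up to a null set. The maximality of \(\abs{A}\) does \emph{not} give the reverse inclusion: a set \(A\) with \(\abs{A\cap Z} > 0\) would not contradict maximality, it would simply be larger. The missing fact — that \(A\cap Z\) is negligible, equivalently \(Z = \{\quasi{\zeta_{\chi_{\Omega\setminus Z}}} = 0\}\) — is exactly \Cref{remarkSetA}, which the paper proves only \emph{after} this lemma, via the orthogonality principle (\Cref{propositionOrthogonality}), whose proof in turn uses the present density lemma. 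So as written, your argument either rests on an unproved claim or runs in a circle.

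The repair is cheap and makes the proof shorter: you do not need the particular solution \(\zeta_{\chi_{\Omega\setminus Z}}\). Since \(x \in \Omega\setminus Z\), the definition of \(Z\) hands you \emph{some} solution \(w \in W_{0}^{1,2}(\Omega)\cap L^{\infty}(\Omega)\) of \eqref{eqEquationSupersolution} with nonnegative bounded datum and \(c \vcentcolon= \quasi{w}(x) > 0\). Moreover \(\quasi{w} = 0\) everywhere on \(Z\) for \emph{every} such solution (again by definition of \(Z\)), hence \(w = 0\) a.e.\ on \(Z\). Your Chebyshev step then runs verbatim with this \(w\): from \(\fint_{B_{r}(x)}{\abs{w - c}} \to 0\) and \(\abs{w - c} \ge c/2\) on \(\{w \le c/2\} \supset Z\) (mod null sets), you conclude that \(Z\) has density zero at \(x\). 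With that substitution your argument is correct, and it is in fact a slightly more elementary route than the paper's, which instead combines the lower bound on averages with the upper semicontinuity of \(\quasi{w}\) (as a difference of a continuous and a superharmonic function); your version needs only the Lebesgue-point property at \(x\).
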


\begin{proof}[Proof of \Cref{propositionDensity}]
	Given \(x \in \Omega \setminus Z\), let \(w \in W_{0}^{1, 2}(\Omega) \cap L^{\infty}(\Omega)\) be a solution of \eqref{eqEquationSupersolution}
	for some nonnegative \(f \in L^{\infty}(\Omega)\) with \(\quasi{w}(x) > 0\).
	Since \(\quasi{w} = 0\) in \(Z\), by the Lebesgue differentiation theorem we have \(w = 0\) almost everywhere in \(Z\).{}
	Thus,
	\begin{equation}
		\label{eqOrthogonality4}
	\fint_{B_{r}(x)}{w}
	= \frac{1}{\meas{B_{r}(x)}} \int_{B_{r}(x) \setminus Z}{w}.
	\end{equation}
	We now denote \(c \vcentcolon= \quasi{w}(x)\) and choose \(r_{1} > 0\) such that
	\begin{equation}
		\label{eqOrthogonality5}
	c - \epsilon \le \fint_{B_{r}(x)}{w}
	\quad \text{for every \(0 < r \le r_{1}\).}
	\end{equation}

	Observe that \(\quasi{w}\), being the difference between a continuous and a superharmonic function, is upper semicontinuous in \(\Omega\).{}
	Thus,
	\[
	\limsup_{y \to x}{\quasi{w}(y)}
	\le \quasi{w}(x) = c.
	\]
	We then take \(r_{2} > 0\) such that
	\[{}
	\quasi{w}(y)
	\le c + \epsilon
	\quad \text{for every \(y \in B_{r_{2}}(x)\).}
	\]	
	In particular, 
	\(w	\le c + \epsilon\) almost everywhere in \(B_{r_{2}}(x)\),
	which implies that
	\begin{equation}
		\label{eqOrthogonality6}
	\frac{1}{\meas{B_{r}(x)}} \int_{B_{r}(x) \setminus Z}{w}
	\le (c + \epsilon) \frac{\meas{B_{r}(x) \setminus Z}}{\meas{B_{r}(x)}}
	\quad 	\text{for every \(0 < r \le r_{2}\).}
	\end{equation}

	Combining \eqref{eqOrthogonality4} to \eqref{eqOrthogonality6},  we get
	\[{}
	c - \epsilon 
	\le (c + \epsilon) \frac{\meas{B_{r}(x) \setminus Z}}{\meas{B_{r}(x)}}
	\quad \text{for every \(0 < r \le \min{\{r_{1}, r_{2}\}}\).}
	\]
	Therefore,	as \(r \to 0\),
	\[{}
	\frac{c - \epsilon}{c + \epsilon} 
	\le \liminf_{r \to 0}{\frac{\meas{B_{r}(x) \setminus Z}}{\meas{B_{r}(x)}}}
	\le \limsup_{r \to 0}{\frac{\meas{B_{r}(x) \setminus Z}}{\meas{B_{r}(x)}}}
	\le 1.
	\]
	Since \(c > 0\), the conclusion follows as \(\epsilon \to 0\).
\end{proof}

\begin{proof}[Proof of \Cref{propositionOrthogonality}]
	By \Cref{corollaryBounded}, the function \(\zeta_{\chi_{\Omega \setminus Z}}\) satisfies \eqref{eqEquationSupersolution} with \(f = \chi_{\Omega \setminus Z}\) and, in particular,
	\begin{equation}
		\label{eqOrthogonality1}
	\quasi{\zeta_{\chi_{\Omega \setminus Z}}}(x) = 0 
	\quad \text{for every \(x \in Z\).}
	\end{equation}
	To establish the integral orthogonality relation, we recall that \(\zeta_{\chi_{\Omega\setminus Z}}\) is also a duality solution.
	Using the test function \(\chi_{Z} \in L^{\infty}(\Omega)\) in the duality formulation, we then have by the Lebesgue differentiation theorem,
	\begin{equation}
		\label{eqOrthogonality2}
		\int_{\Omega}{\zeta_{\chi_{\Omega \setminus Z}}\chi_{Z}}
		= \int_{\Omega}{\quasi{\zeta_{\chi_{Z}}}\chi_{\Omega\setminus Z}}
		= \int_{\Omega}{\zeta_{\chi_{Z}}\chi_{\Omega\setminus Z}}~.
	\end{equation}
	By \eqref{eqOrthogonality1} and the Lebesgue differentiation theorem, \(\zeta_{\chi_{\Omega \setminus Z}} = 0\) almost everywhere in \(Z\).{}
	Hence,
	the integral in the left-hand side of \eqref{eqOrthogonality2} vanishes.
	This establishes the orthogonality identity and then, since \(\zeta_{\chi_{Z}}\) is nonnegative,
	\begin{equation}
		\label{eqOrthogonality3}
	{\zeta_{\chi_{Z}}} = 0 
	\quad \text{almost everywhere in \(\Omega\setminus Z\).}
	\end{equation}
	
	We now claim that
	\[{}
	\quasi{\zeta_{\chi_{Z}}}(x) = 0
	\quad \text{for every \(x \in \Omega \setminus Z\).}
	\]
	Indeed, by nonnegativity of \(\zeta_{\chi_{Z}}\) and \eqref{eqOrthogonality3}, for every ball \(B_{r}(x) \subset \Omega\) we have
	\begin{equation*}
	0 
	\le \fint_{B_{r}(x)}{\zeta_{\chi_{Z}}}
	= \frac{1}{\meas{B_{r}(x)}} \int_{B_{r}(x) \cap Z}{\zeta_{\chi_{Z}}}
	\le \norm{\zeta_{\chi_{Z}}}_{L^{\infty}(\Omega)} \frac{\meas{B_{r}(x) \cap Z}}{\meas{B_{r}(x)}}.
	\end{equation*}
	By \Cref{propositionDensity}, the right-hand side converges to zero as \(r \to 0\) when \(x \in \Omega \setminus Z\) and we conclude that \(\quasi{\zeta_{\chi_{Z}}}(x) = 0\).
\end{proof}

	From the orthogonality principle, we deduce \emph{a posteriori} that one can take \(A = \Omega \setminus Z\) in \Cref{propositionZCharacteristic}:

	\begin{corollary}
	\label{remarkSetA}
	The universal zero-set satisfies
	\[{}
	Z = \bigl\{ x \in \Omega : \quasi{\zeta_{\chi_{\Omega \setminus Z}}}(x) = 0 \bigr\}.
	\] 
	\end{corollary}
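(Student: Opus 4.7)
The plan is to establish both set inclusions. The containment \(Z \subseteq \{x \in \Omega : \widehat{\zeta_{\chi_{\Omega\setminus Z}}}(x) = 0\}\) is precisely the first pointwise assertion in Proposition~\ref{propositionOrthogonality}, so no further work is needed for that direction.

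For the reverse inclusion, I will exploit the linearity of the Euler-Lagrange equation~\eqref{eqEulerLagrange}, which gives \(\zeta_{\chi_Z} + \zeta_{\chi_{\Omega\setminus Z}} = \zeta_{1}\) almost everywhere in \(\Omega\). Because each of these minimizers is (almost everywhere) the difference between a continuous and a bounded superharmonic function, every point of \(\Omega\) is a Lebesgue point for all three, and the precise representative is the limit of averages. Linearity of averages then lifts the identity pointwise to
\[
\widehat{\zeta_{\chi_Z}}(y) + \widehat{\zeta_{\chi_{\Omega\setminus Z}}}(y) = \widehat{\zeta_{1}}(y) \quad \text{for every } y \in \Omega.
\]

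Given \(x \in \Omega \setminus Z\), I will combine three facts at \(x\): by the second pointwise assertion in Proposition~\ref{propositionOrthogonality}, \(\widehat{\zeta_{\chi_Z}}(x) = 0\); since the torsion zero-set \(S = \{\widehat{\zeta_{1}} = 0\}\) from~\eqref{eqSetS1} satisfies \(S \subseteq Z\) (noted just before Example~\ref{exampleBallSingular1-2}), the assumption \(x \notin Z\) forces \(x \notin S\), hence \(\widehat{\zeta_{1}}(x) > 0\); substituting these into the displayed identity yields \(\widehat{\zeta_{\chi_{\Omega\setminus Z}}}(x) = \widehat{\zeta_{1}}(x) > 0\). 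This shows that \(x\) does not belong to \(\{\widehat{\zeta_{\chi_{\Omega\setminus Z}}} = 0\}\), completing the reverse inclusion.

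There is essentially no obstacle in this step: the entire corollary reduces to a short linear decomposition built on top of the two orthogonality identities of Proposition~\ref{propositionOrthogonality} together with the inclusion \(S \subseteq Z\). All the real work has already been carried out in the proofs of Corollary~\ref{corollaryBounded} and the orthogonality principle.
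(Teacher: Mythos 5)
Your proof is correct, but it takes a genuinely different route from the paper's. The paper returns to the maximal Borel set \(A\) constructed in \Cref{propositionZCharacteristic} (for which \(Z = \{\quasi{\zeta_{\chi_{A}}} = 0\}\)), uses \Cref{corollaryBounded} to assume \(\Omega \setminus Z \subset A\), and then shows \(\abs{A \cap Z} = 0\) by producing the distributional solution \(\zeta_{\chi_{A \cap Z}}\), squeezing it between \(0\) and \(\zeta_{\chi_{Z}}\), and invoking the orthogonality identity to conclude that it vanishes almost everywhere; this identifies \(A\) with \(\Omega \setminus Z\) up to a Lebesgue-null set and hence yields the corollary. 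You bypass \(A\) entirely: the forward inclusion is read off from the first pointwise assertion of \Cref{propositionOrthogonality}, and for the reverse inclusion you use linearity of the Euler--Lagrange equation to get \(\quasi{\zeta_{\chi_{Z}}} + \quasi{\zeta_{\chi_{\Omega\setminus Z}}} = \quasi{\zeta_{1}}\) at every point (legitimate, since every point of \(\Omega\) is a Lebesgue point of each \(\zeta_{f}\) with \(f \in L^{\infty}(\Omega)\)), then combine the second orthogonality assertion with \(S \subset Z\) to conclude \(\quasi{\zeta_{\chi_{\Omega\setminus Z}}}(x) = \quasi{\zeta_{1}}(x) > 0\) for \(x \notin Z\). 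Your argument is shorter and purely pointwise, and yields the bonus identity \(\quasi{\zeta_{\chi_{\Omega\setminus Z}}} = \quasi{\zeta_{1}}\) on \(\Omega \setminus Z\); the paper's argument additionally delivers the a posteriori identification of the maximizing set \(A\) (namely \(\abs{A \cap Z} = 0\)), which is how the authors phrase the remark in the surrounding text. Both rest on the same foundations (\Cref{corollaryBounded} feeding into \Cref{propositionOrthogonality}), so there is no circularity in your use of those results.
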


\begin{proof}
	We recall that \(A\) is defined in the proof of \Cref{propositionZCharacteristic} as a maximizer among all Borel sets \(B \subset \Omega\) such that \eqref{eqDirichletProblem} has a distributional solution with \(\mu = \chi_{B} \dif x\).{}
	Since by \Cref{corollaryBounded} a solution with \(B = \Omega \setminus Z\) exists, we may assume from the beginning that 
	\[{}
	\Omega \setminus Z \subset A.
	\]
	It thus suffices to verify that \(A \cap Z\) is negligible with respect to the Lebesgue measure.
	To this end, we first observe that by \Cref{propositionDistributionalExistence} there exists a distributional solution of \eqref{eqDirichletProblem} with datum \(\mu = \chi_{A \cap Z} \dif x\), which by \Cref{remarkDistributionalSolutionsVariational} can be identified with \(\zeta_{\chi_{A \cap Z}}\).{}
	On the other hand, by comparison between variational solutions, 
	\[{}
	0 
	\le \zeta_{\chi_{A \cap Z}}
	\le \zeta_{\chi_{Z}}
	\quad \text{almost everywhere in \(\Omega\).}
	\]
	From the orthogonality principle, and in particular \eqref{eqOrthogonality3}, we thus have \(\zeta_{\chi_{A \cap Z}} = 0\) almost everywhere in \(\Omega \setminus Z\).{}
	But being a distributional solution, the same property holds in \(Z\).{}
	Hence, \(\zeta_{\chi_{A \cap Z}} = 0\) almost everywhere in the entire domain \(\Omega\) and then, from the distributional formulation,
	\[{}
	\int_{A \cap Z}{\varphi}
	= \int_{\Omega}{\chi_{A \cap Z} \, \varphi}
	= 0
	\quad \text{for every \(\varphi \in C_{c}^{\infty}(\Omega)\).}
	\]
	Therefore, \(A \cap Z\) is negligible.
\end{proof}


\section{Comparison principle}
\label{sectionComparison}

We investigate a comparison principle which establishes that every solution of the Dirichlet problem \eqref{eqDirichletProblem} with positive measure can always be bounded from below by a nontrivial solution involving some nonnegative \(L^{\infty}\) datum.
In the proof of \Cref{theoremGoodMeasuresCharacterization}, it implies that the assumption \(\mu(Z) = 0\) is necessary for the existence of distributional solutions.
While the naive strategy based on truncation gives a bounded supersolution \(T_{k}(u)\) underneath \(u\), such an approach is unsatisfactory since \(\Delta T_{k}(u)\) typically yields a singular measure on the level set \(\{u = k\}\).

Our main result in this direction is the following

\begin{proposition}
	\label{propositionTestFunctionPositive}
	There exists a bounded continuous nondecreasing function \(H : [0, +\infty) \to [0, +\infty)\), with \(H(t) > 0\) for \(t > 0\), such that, for every Borel function \(V : \Omega \to [0, +\infty]\), if \(u \in L^{1}(\Omega)\) is a duality solution of the Dirichlet problem \eqref{eqDirichletProblem} involving a nonnegative measure \(\mu \in \cM(\Omega)\), then
	\[{}
	u \ge \zeta_{H(u)}
	\quad \text{almost everywhere in \(\Omega\).}
	\]
\end{proposition}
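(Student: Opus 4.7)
My plan is to make the explicit choice \(H(t) = \epsilon \min\{t^{\alpha}, 1\}\) for a fixed \(\alpha > 1\) and \(\epsilon > 0\) small enough (depending only on \(\Omega\) via the torsion function \(\theta\) from \eqref{eqDirichletTheta}, not on \(V\) or \(u\)). The target inequality \(u \ge \zeta_{H(u)}\) will be deduced by showing that \(v \vcentcolon= (\zeta_{H(u)} - u)^{+}\) vanishes almost everywhere, via an energy estimate on \(v\) combined with a bootstrap that uses the superlinearity of \(H\) near zero.

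\paragraph{Reduction and energy estimate.} First I would reduce to a regular setting by the approximation procedure of \Cref{lemmaDualitySolutionsDistributional} (truncation \(T_{k}(V)\)) and \Cref{remarkDualityApproximation} (mollification of \(\mu\)). This reduces matters to the case where the approximating solutions lie in \(W_{0}^{1,2}(\Omega) \cap L^{\infty}(\Omega)\) and \(\zeta_{H(u)}\) is smooth. Then I would subtract the distributional equation \(-\Delta u + Vu = \mu\lfloor_{\Omega \setminus S}{} - \lambda\) from \Cref{propositionSolutionDualityasDistribution} from the variational equation for \(\zeta_{H(u)}\) and use \(v\) as a test function. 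The key cancellation is that \(v = 0\) quasi-everywhere on \(S\): indeed \(\zeta_{H(u)}\) vanishes quasi-everywhere on \(S\) by \Cref{propositionSingularSetSize} applied to \(\zeta_{H(u)} \in W_{0}^{1,2}(\Omega) \cap L^{2}(\Omega; V \dif x)\), and \(u = 0\) almost everywhere on \(S\) by \Cref{remarkSingular}; since \(\lambda\) is diffuse, \(\int v \dif \lambda = 0\). Discarding the nonnegative contribution of \(\mu\lfloor_{\Omega \setminus S}\) yields
\[
\int_{\Omega} \abs{\nabla v}^{2} + \int_{\Omega} V v^{2}
\le \int_{\Omega} v \, H(u).
\]

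\paragraph{Closing the estimate via superlinearity.} By Stampacchia's \(L^{\infty}\) bound together with \eqref{eqComparisonTorsion}, one has \(\norm{\zeta_{H(u)}}_{L^{\infty}} \le \epsilon \norm{\theta}_{L^{\infty}} \eqqcolon M\), a bound \emph{independent of \(V\)}. On \(\{v > 0\}\), the identity \(u + v = \zeta_{H(u)}\) gives \(u \le M\), so (for \(M \le 1\)) one has \(H(u) \le \epsilon u^{\alpha}\) there. Inserting this in the energy inequality and combining with the Sobolev and Poincaré inequalities produces a first smallness estimate of the form \(\norm{v}_{L^{p}} \le C \epsilon M^{\alpha}\) for some \(p > 1\). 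A Moser-type iteration testing against \(v^{2q-1}\) then upgrades this to an \(L^{\infty}\) bound of the shape \(\norm{v}_{L^{\infty}} \le C' \epsilon M^{\alpha}\); since \(M = O(\epsilon)\) and \(\alpha > 1\), the scheme \(M \mapsto C' \epsilon M^{\alpha}\) is strictly contractive for \(\epsilon\) small, forcing \(\norm{v}_{L^{\infty}} = 0\). The inequality \(u \ge \zeta_{H(u)}\) for the general duality solution then follows by passing to the limit in the approximations (using \Cref{lemmaVariationalTruncationPointwiseConvergence,remarkDualityApproximation}).

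\paragraph{Main obstacles.} The delicate step is the Moser bootstrap in paragraph~3: one must show that the Sobolev constants accumulated through iteration stay controlled and that the contraction factor remains strictly less than one. This is precisely where the hypothesis \(\alpha > 1\) enters --- the linear case \(\alpha = 1\) leads to a marginal inequality \(\norm{v}_{L^{2}}^{2} \le C\epsilon \norm{v}_{L^{2}}^{2}\) whose closure depends on a sharp comparison of \(\epsilon\) with the Poincaré constant, whereas any exponent \(\alpha > 1\) automatically provides the needed gap through the factor \(M^{\alpha - 1} = O(\epsilon^{\alpha - 1})\). A secondary technical point is justifying the use of \(v\) and \(v^{2q-1}\) as test functions against a distributional equation involving a measure; this is handled by approximating both sides and exploiting the regularity \(\zeta_{H(u)} \in W_{0}^{1,2}(\Omega) \cap L^{\infty}(\Omega)\).
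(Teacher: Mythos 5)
Your choice of \(H\) and your reduction to the regular case \(\mu = f\dif x\) with \(f \in L^{\infty}(\Omega)\) (via truncation of \(V\) and mollification of \(\mu\)) match the paper, and in that regular case the energy inequality \(\int_{\Omega}\abs{\nabla v}^{2} + \int_{\Omega}Vv^{2} \le \int_{\Omega}H(u)\,v\) for \(v = (\zeta_{H(u)}-u)^{+}\) is correct and can be obtained directly from the variational equation for \(\zeta_{H(u)-f}\) with test function \(v\) — no appeal to \Cref{propositionSolutionDualityasDistribution}, the set \(S\) or the measure \(\lambda\) is needed there. The genuine gap is in your closing step. What your Moser iteration can deliver is a bound of the form \(\norm{v}_{L^{\infty}} \le C'\epsilon M^{\alpha}\) where \(M = \epsilon\norm{\theta}_{L^{\infty}}\) is the \emph{fixed} a priori bound \(\norm{\zeta_{H(u)}}_{L^{\infty}} \le M\); this gives \(\norm{v}_{L^{\infty}} \lesssim \epsilon^{1+\alpha}\), which is small but not zero, and there is no contraction to run. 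The map \(M \mapsto C'\epsilon M^{\alpha}\) is not being iterated on anything: \(M\) controls \(u\) on \(\{v>0\}\) (through \(u < \zeta_{H(u)} \le M\)), not \(v\), and an improved bound on \(v\) cannot be fed back to improve the bound on \(u\) there — \(u\) can be close to \(M\) on \(\{v>0\}\) while \(v\) is tiny. To force \(v \equiv 0\) you would need \(H(u) \lesssim \epsilon\norm{v}_{L^{\infty}}^{\alpha-1}u\) on \(\{v>0\}\), which is false in general. Note also that the conclusion you need is an \emph{exact} comparison (in particular \(\zeta_{H(u)} = 0\) wherever \(u=0\)), which no smallness estimate can produce.

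The paper closes this step by a different mechanism: it writes \(H(t) = \frac{1}{C}\int_{0}^{\infty}\epsilon\rho(\epsilon)\chi_{\{t>\epsilon\}}\dif\epsilon\) and proves, \emph{for each level} \(\epsilon\), the exact inequality \(Cu \ge \epsilon\,\zeta_{\chi_{\{u>\epsilon\}}}\) with \(C = \norm{\theta}_{L^{\infty}(\Omega)}\). This is where \Cref{lemmaKatoL1} enters — a Kato-type inequality tested against \(J'(\zeta_{h})\zeta_{1}\), i.e.\ weighted by the torsion function — applied to \(\zeta_{h}\) with \(h = \epsilon\chi_{\{u>\epsilon\}} - Cf\). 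The decisive point is that the source \(\chi_{\{u>\epsilon\}}\) lives where \(u>\epsilon\), so the set \(\{\epsilon\zeta_{\chi_{\{u>\epsilon\}}} > Cu\}\cap\{u>\epsilon\}\) is contained in \(\{\zeta_{\chi_{\{u>\epsilon\}}} > C\}\), which is Lebesgue-negligible since \(\zeta_{\chi_{\{u>\epsilon\}}} \le \zeta_{1} \le \theta \le C\); hence \(\int(\epsilon\zeta_{\chi_{\{u>\epsilon\}}}-Cu)^{+} \le 0\) exactly, not just up to a small error. Your direct use of \(H(u)\) as a single source discards this level-set localization, which is precisely what makes the cancellation exact. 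If you want to keep an energy-type argument, you should restructure it around the levels \(\{u>\epsilon\}\) and the weight \(\zeta_{1}\) as in \Cref{lemmaKatoL1}, and only then average in \(\epsilon\) to produce \(H\).
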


Observe that \(\zeta_{H(u)}\) is well defined since \(H(u)\) is bounded.
We emphasize that \(H\) is independent of \(V\), and from the proof one can take \(H(t) \sim t^{\alpha}\) near \(t = 0\) for any given \(\alpha > 1\)\,; see \eqref{eqHChoice} below.
The comparison principle above also applies to distributional solutions, as they are also duality solutions, but the important fact that \(\zeta_{H(u)}\) is also a distributional solution with datum \(H(u)\) requires some justification; see \Cref{propositionExistenceBoundedSupersolutions} below.

To prove \Cref{propositionTestFunctionPositive}, we rely on a straightforward variant of Kato's inequality for \(\zeta_{h}\) in the spirit of \cite{Brezis_Marcus_Ponce:2007}*{Proposition~4.B.5}, which formally is
\[{}
 - \Delta\zeta_{h}^{+} + V \zeta_{h}^{+}
 \le \chi_{\{\zeta_{h} > 0\}} h,
\]
that also takes into account the boundary behavior of \(\zeta_{h}\) by allowing \(\zeta_{1}\) as test function.

\begin{lemma}
	\label{lemmaKatoL1}
	For every \(h \in L^{\infty}(\Omega)\), we have
\[{}
\int_{\Omega}{\zeta_{h}^{+}}
\le \int_{\{{\zeta_{h}} > 0\}}{ h \zeta_{1}}.
\]
\end{lemma}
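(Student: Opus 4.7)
The plan is to derive the stated inequality by comparing two instances of the Euler-Lagrange equation \eqref{eqEulerLagrange}: one for $\zeta_{h}$ tested against a Lipschitz cutoff of $\zeta_{h}$ multiplied by $\zeta_{1}$, and one for $\zeta_{1}$ tested against $\zeta_{h}^{+}$.

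For $\epsilon > 0$, I would introduce the Lipschitz approximation
\[
\chi_{\epsilon}(t) \vcentcolon= \min{\Bigl\{ \tfrac{t^{+}}{\epsilon},\, 1\Bigr\}}
\]
of the characteristic function of $(0, +\infty)$. Since $\chi_{\epsilon}$ is Lipschitz with $\chi_{\epsilon}(0) = 0$, and both $\zeta_{h}$ and $\zeta_{1}$ belong to $W_{0}^{1, 2}(\Omega) \cap L^{\infty}(\Omega) \cap L^{2}(\Omega; V \dif x)$, the product $z_{\epsilon} \vcentcolon= \chi_{\epsilon}(\zeta_{h}) \zeta_{1}$ is admissible as a test function in the Euler-Lagrange equation for $\zeta_{h}$. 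Plugging $z_{\epsilon}$ in, expanding $\nabla z_{\epsilon} = \chi_{\epsilon}'(\zeta_{h}) \zeta_{1} \nabla \zeta_{h} + \chi_{\epsilon}(\zeta_{h}) \nabla \zeta_{1}$, and dropping the nonnegative term $\int_{\Omega} \chi_{\epsilon}'(\zeta_{h}) \abs{\nabla \zeta_{h}}^{2} \zeta_{1}$ yields
\[
\int_{\Omega} \chi_{\epsilon}(\zeta_{h}) \nabla \zeta_{h} \cdot \nabla \zeta_{1} + \int_{\Omega} V \zeta_{h} \chi_{\epsilon}(\zeta_{h}) \zeta_{1} \le \int_{\Omega} h \chi_{\epsilon}(\zeta_{h}) \zeta_{1}.
\]

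Next I would let $\epsilon \to 0$, observing that $\chi_{\epsilon}(\zeta_{h}) \to \chi_{\{\zeta_{h} > 0\}}$ and $\zeta_{h} \chi_{\epsilon}(\zeta_{h}) \to \zeta_{h}^{+}$ pointwise almost everywhere in $\Omega$. The three integrands are dominated, respectively, by $\abs{\nabla \zeta_{h}} \abs{\nabla \zeta_{1}}$, by $V \abs{\zeta_{h}} \norm{\zeta_{1}}_{L^{\infty}(\Omega)}$ (using that $\zeta_{h} \in L^{1}(\Omega; V \dif x)$), and by $\norm{h}_{L^{\infty}(\Omega)} \zeta_{1}$, all of which are integrable on $\Omega$. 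Since $\chi_{\{\zeta_{h} > 0\}} \nabla \zeta_{h} = \nabla \zeta_{h}^{+}$ almost everywhere, the Dominated convergence theorem gives
\[
\int_{\Omega} \bigl(\nabla \zeta_{h}^{+} \cdot \nabla \zeta_{1} + V \zeta_{h}^{+} \zeta_{1}\bigr) \le \int_{\{\zeta_{h} > 0\}} h \zeta_{1}.
\]

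To conclude, I would use that $\zeta_{h}^{+} \in W_{0}^{1, 2}(\Omega) \cap L^{2}(\Omega; V \dif x)$ is itself admissible as a test function in the Euler-Lagrange equation for $\zeta_{1}$, which yields
\[
\int_{\Omega} \bigl(\nabla \zeta_{1} \cdot \nabla \zeta_{h}^{+} + V \zeta_{1} \zeta_{h}^{+}\bigr) = \int_{\Omega} \zeta_{h}^{+},
\]
and chaining the two displays delivers the lemma. The main technical hurdle is verifying that $z_{\epsilon}$ lies in the admissible test-function space $W_{0}^{1, 2}(\Omega) \cap L^{2}(\Omega; V \dif x)$ and that the dominants used in the limiting step are integrable; both issues reduce to the standing fact that $\zeta_{h}$ and $\zeta_{1}$ belong to $L^{\infty}(\Omega) \cap L^{1}(\Omega; V \dif x)$, together with the Cauchy-Schwarz inequality for the gradient term.
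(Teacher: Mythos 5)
Your argument is correct and is essentially the paper's proof: both test the equation for \(\zeta_{h}\) with (an approximation of) \(\chi_{\{\zeta_{h}>0\}}\,\zeta_{1}\) and the equation for \(\zeta_{1}\) with (an approximation of) \(\zeta_{h}^{+}\), drop the same nonnegative second-derivative term, and invoke the standing fact that \(\zeta_{h},\zeta_{1}\in L^{\infty}(\Omega)\cap L^{1}(\Omega;V\dif x)\). The only difference is cosmetic: you pass to the limit in \(\epsilon\) before combining the two identities (so the convexity inequality \(J(t)\le tJ'(t)\) used in the paper degenerates to an equality and is not needed), whereas the paper keeps a smooth convex \(J\) throughout and takes the limit at the end.
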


\begin{proof}[Proof of \Cref{lemmaKatoL1}]
	Since \(\zeta_{h}\) and \(\zeta_{1}\) satisfy an Euler-Lagrange equation involving test functions in \(W_{0}^{1, 2}(\Omega) \cap L^{2}(\Omega; V \dif x)\), the proof is implemented by suitable choices of test functions depending on \(\zeta_{h}\) and \(\zeta_{1}\) themselves.
	For example, the equation satisfied by \(\zeta_{1}\) with test function \(J(\zeta_{h})\) gives
\begin{equation}
	\label{eqKatoFunctional1}
\int_{\Omega}{\bigl(J'(\zeta_{h}) \nabla\zeta_{1} \cdot \nabla \zeta_{h} + V\zeta_{1} J(\zeta_{h})\bigr)}
= \int_{\Omega}{J(\zeta_{h})},
\end{equation}
where \(J : \R \to \R\) is a smooth function such that \(J(0) = 0\).
	Using now the test function \(J'(\zeta_{h})\zeta_{1}\) in the equation satisfied by \(\zeta_{h}\), we also have 
	\[{}
	\int_{\Omega}{\bigl(J''(\zeta_{h}) \abs{\nabla \zeta_{h}}^{2} \zeta_{1} + J'(\zeta_{h}) \nabla \zeta_{h} \cdot \nabla \zeta_{1} + V\zeta_{h} J'(\zeta_{h})\zeta_{1}\bigr)} 
	= \int_{\Omega}{h \, J'(\zeta_{h}) \zeta_{1} }. 
	\]
	Assuming that \(J'' \ge 0\), by nonnegativity of \(\zeta_{1}\) we get
	\begin{equation}
	\label{eqKatoFunctional2}
	\int_{\Omega}{\bigl(J'(\zeta_{h}) \nabla \zeta_{h} \cdot \nabla \zeta_{1} + V\zeta_{h} J'(\zeta_{h}) \zeta_{1}\bigr)} 
	\le \int_{\Omega}{h \, J'(\zeta_{h}) \zeta_{1} }. 
	\end{equation}
	Subtracting \eqref{eqKatoFunctional2} from \eqref{eqKatoFunctional1},
	\[{}
	\int_{\Omega}{V \zeta_{1} [J(\zeta_{h}) - \zeta_{h} J'(\zeta_{h})]}  
	\ge \int_{\Omega}{J(\zeta_{h})}  - \int_{\Omega}{h \, J'(\zeta_{h})\zeta_{1}}.
	\]
	We now take \(J\) convex such that \(J(t) = 0\) for \(t \le 0\) and \(0 \le J(t) \le t\) for \(t \ge 0\).{}
	In particular, for every \(t \in \R\) we have \(J(t) \le J'(t) t\).{}
	Since \(V\) and \(\zeta_{1}\) are nonnegative, the integrand in the left-hand side is nonpositive and we deduce that
	\[{}
	\int_{\Omega}{J(\zeta_{h})} 
	\le \int_{\Omega}{h \, J'(\zeta_{h}) \zeta_{1}}.{}
	\]
	To conclude, we apply this inequality to a sequence \((J_{k})_{k \in \N}\) of convex functions as above that converges pointwise to the function \(t \in \R \mapsto t^{+}\) and such that \((J_{k}')_{k \in \N}\) converges pointwise to \(\chi_{(0, +\infty)}\).{}
	As \(k \to \infty\), we have the conclusion.
\end{proof}

\begin{proof}[Proof of \Cref{propositionTestFunctionPositive}]
	We first assume that \(\mu\) is a measure of the form \(\mu = f \dif x\) with a nonnegative \(f \in L^{\infty}(\Omega)\).{}
	We have in this case that \(u = \zeta_{f}\) by uniqueness of duality solutions.
	For every \(\epsilon > 0\), we claim that 
	\begin{equation}
		\label{eqEstimateComparison}
	C u \ge \epsilon \, \zeta_{\chi_{\{u > \epsilon\}}}
	\quad \text{almost everywhere in \(\Omega\)}
	\end{equation}
	for \(C \vcentcolon= \norm{\theta}_{L^{\infty}(\Omega)}\), where \(\theta\) is the classical solution of 
	\eqref{eqDirichletTheta}.
	
Using the notation \(z_{\epsilon} \vcentcolon= \zeta_{\chi_{\{u > \epsilon\}}}\)\,, we have \(\epsilon z_{\epsilon} - C u = \zeta_{h}\)\,, where \(h = \epsilon \chi_{\{u > \epsilon\}} - Cf\). 
Thus, by \Cref{lemmaKatoL1},
\[{}
\int_{\Omega}{(\epsilon z_{\epsilon} - Cu)^{+}} 
\le \int_{\{\epsilon z_{\epsilon} > C u\}}{(\epsilon \chi_{\{u > \epsilon\}} - C f) \zeta_{1}}.
\]
Since \(f\) and \(\zeta_{1}\) are nonnegative,
\begin{equation}
	\label{eqComparisonIntegral}
\int_{\Omega}{(\epsilon z_{\epsilon} - Cu)^{+}}
\le \int_{\{\epsilon z_{\epsilon} > C u\}}{\epsilon \chi_{\{u > \epsilon\}} \zeta_{1}}
= \epsilon \int_{\{\epsilon z_{\epsilon}/C > u > \epsilon\}} \zeta_{1}
\le \epsilon \int_{\{z_{\epsilon} > C\}} \zeta_{1}. 
\end{equation}
The estimate
\[{}
0 \le z_{\epsilon} \le \zeta_{1} \le \theta
\quad \text{almost everywhere in \(\Omega\)}
\]
holds for every \(\epsilon > 0\) and is independent of \(V\).
In particular, with the choice \(C = \norm{\theta}_{L^{\infty}(\Omega)}\), the set \(\{z_{\epsilon} > C\}\) is negligible with respect to the Lebesgue measure.
We then deduce from \eqref{eqComparisonIntegral} that
\[{}
\int_{\Omega}{(\epsilon z_{\epsilon} - Cu)^{+}} 
\le 0
\]
and this implies \eqref{eqEstimateComparison}.

To obtain \(H\), it now suffices to apply \eqref{eqEstimateComparison} using an averaging argument.
For this purpose, let \(\rho : (0, +\infty) \to \R\) be a summable nonnegative function such that
\(\int_{0}^{\infty}{\rho} = 1\).
Multiplying both sides of \eqref{eqEstimateComparison} by \(\rho(\epsilon)\) and integrating with respect to \(\epsilon\) over \((0, +\infty)\), we get
\[{}
C u(x) \ge \int_{0}^{\infty}{\epsilon \rho(\epsilon) \zeta_{\chi_{\{u > \epsilon\}}}(x) \dif \epsilon}
\quad \text{for almost every \(x \in \Omega\).}
\]
By linearity of the equation, one identifies the right-hand side as \(\zeta_{\widetilde H(u)}(x)\), where
\[
\widetilde H(t) 
\vcentcolon= \int_{0}^{t}{\epsilon \rho(\epsilon) \dif\epsilon},
\]
so that the proposition holds with \(H(t) = \widetilde H(t)/C\).
Given \(\alpha > 1\), an explicit admissible choice of \(\rho\) is 
\[{}
\rho(\epsilon) = 
\begin{cases}
	(\alpha - 1)\epsilon^{\alpha - 2}
	& \text{for \(\epsilon < 1\),}\\
	0
	& \text{for \(\epsilon \ge 1\).}
\end{cases}
\]
In this case, 
\begin{equation}
\label{eqHChoice}
H(t) 
= \frac{\alpha - 1}{C \alpha} \min{\{t^{\alpha}, 1\}}
\quad \text{for every \(t \ge 0\)}.
\end{equation}

We have assumed so far that \(\mu = f \dif x\) with \(f\) bounded.{}
For an arbitrary nonnegative measure \(\mu \in \cM(\Omega)\), we apply the estimate to the function \(u_{k} \vcentcolon= \zeta_{\rho_{k} * \mu}\)\,, where \((\rho_{k})_{k \in \N}\) is a suitable sequence of mollifiers; see \Cref{remarkDualityApproximation}.
The sequence \((u_{k})_{k \in \N}\) converges to \(u\) in \(L^{1}(\Omega)\) and, for every \(k \in \N\), we have \(u_{k} \ge \zeta_{H(u_{k})}\) almost everywhere in \(\Omega\).{}
The conclusion thus follows as \(k \to \infty\).
\end{proof}

We now complement the comparison principle for distributional solutions \(u\) by showing that \(\zeta_{H(u)}\) is also a distributional solution with datum \(H(u)\).
More precisely, using the stability of duality solutions under truncation of the potential \(V\) and the independence of \(H\) with respect to \(V\), we prove

\begin{proposition}
	\label{propositionExistenceBoundedSupersolutions}
	If \(u\) is the distributional solution of \eqref{eqDirichletProblem} with nonnegative datum \(\mu \in \cM(\Omega)\), then \(\zeta_{H(u)}\) satisfies \eqref{eqEquationSupersolution} with datum \(f = H(u) \in L^{\infty}(\Omega)\), where \(H\) is the bounded continuous function given by \Cref{propositionTestFunctionPositive}.
\end{proposition}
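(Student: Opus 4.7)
The plan is to realize $\zeta_{H(u)}$ as the limit of distributional supersolutions built from the truncated potentials $T_{k}(V)$, where everything is well-defined because $T_{k}(V)$ is bounded, and then to identify the limit via \Cref{remarkDistributionalSolutionsVariational}. The comparison estimate from \Cref{propositionTestFunctionPositive}, applied to the truncated problems, provides the pointwise control needed to pass to the limit.

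For each $k \in \N$, let $u_{k} \in W_{0}^{1, 1}(\Omega)$ be the distributional solution of $-\Delta u_{k} + T_{k}(V) u_{k} = \mu$, and let $v_{k}$ be the variational solution of
\[
-\Delta v_{k} + T_{k}(V) v_{k} = H(u_{k})
\quad \text{in the sense of distributions in \(\Omega\),}
\]
given by the minimizer of the energy functional \eqref{eqEnergy} with $T_{k}(V)$ in place of $V$. By \Cref{lemmaDualitySolutionsDistributionalBis}, $u_{k} \to u$ and $T_{k}(V) u_{k} \to V u$ in $L^{1}(\Omega)$; after passing to a subsequence, both convergences also hold almost everywhere. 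Since $H$ is independent of the potential and $u_{k}$ is a duality solution for $-\Delta + T_{k}(V)$, \Cref{propositionTestFunctionPositive} yields $0 \le v_{k} \le u_{k}$ almost everywhere in $\Omega$. Combining the $L^{\infty}$-bound $\norm{v_{k}}_{L^{\infty}(\Omega)} \le \norm{H(u_{k})}_{L^{\infty}(\Omega)} \norm{\theta}_{L^{\infty}(\Omega)}$ with the domination $T_{k}(V) v_{k} \le T_{k}(V) u_{k}$, we find that $(\Delta v_{k})_{k \in \N}$ is bounded in $\cM(\Omega)$; by \eqref{eqEstimateSobolev}, a further subsequence of $(v_{k})_{k \in \N}$ converges in $L^{1}(\Omega)$ and almost everywhere to some $v \in W_{0}^{1, 1}(\Omega) \cap L^{\infty}(\Omega)$. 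The continuity and boundedness of $H$ together with the Dominated convergence theorem also give $H(u_{k}) \to H(u)$ in $L^{p}(\Omega)$ for every $1 \le p < \infty$.

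The crux is the $L^{1}$-passage to the limit in the absorption term $T_{k}(V) v_{k} \to Vv$. Pointwise almost everywhere, this is clear on $\{V < +\infty\}$ since $T_{k}(V) = V$ eventually, while on $\{V = +\infty\}$ the integrability $Vu \in L^{1}(\Omega)$ forces $u = 0$, hence $v = 0$, and $0 \le T_{k}(V) v_{k} \le T_{k}(V) u_{k} \to Vu = 0$ almost everywhere. The $L^{1}$-convergence of $T_{k}(V) u_{k}$ to $Vu$ furnished by \Cref{lemmaDualitySolutionsDistributionalBis} provides the equi-integrability of $(T_{k}(V) u_{k})_{k \in \N}$, which transfers by domination to $(T_{k}(V) v_{k})_{k \in \N}$; Vitali's theorem then gives $T_{k}(V) v_{k} \to Vv$ in $L^{1}(\Omega)$. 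Passing to the limit in the distributional equation for $v_{k}$, we conclude that $v$ is a distributional solution of \eqref{eqDirichletProblem} with datum $H(u) \dif x$; since $H(u) \in L^{\infty}(\Omega)$, \Cref{remarkDistributionalSolutionsVariational} identifies $v = \zeta_{H(u)}$, which therefore solves \eqref{eqEquationSupersolution} with $f = H(u)$ as required. The main obstacle is precisely this $L^{1}$-convergence of the absorption term, which rests on the equi-integrability inherited from \Cref{lemmaDualitySolutionsDistributionalBis}.
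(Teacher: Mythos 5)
Your proposal is correct and follows essentially the same route as the paper: truncate the potential, solve the variational problem with datum $H(u_{k})$, use the $V$-independence of $H$ to get the uniform comparison $0 \le v_{k} \le u_{k}$, and pass to the limit in the absorption term using the $L^{1}$-convergence of $T_{k}(V)u_{k}$ to $Vu$ from \Cref{lemmaDualitySolutionsDistributionalBis}. The only (harmless) variations are that you invoke Vitali's theorem with equi-integrability where the paper uses a generalized dominated convergence argument, and you recover the $W_{0}^{1,2}$-membership of the limit through the identification $v = \zeta_{H(u)}$ via \Cref{remarkDistributionalSolutionsVariational} rather than by interpolation.
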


\begin{proof}
	We use the notations of \Cref{lemmaDualitySolutionsDistributionalBis,lemmaVariationalTruncationPointwiseConvergence} for \(u_{k}\) and \(\zeta_{f, k}\)\,, respectively.
	Since \(T_{k}(V)\) is bounded, the function \(w_{k} \vcentcolon= \zeta_{H(u_{k}), k}\) satisfies
	\begin{equation}
		\label{eq2069}
	- \Delta w_{k} + T_{k}(V) w_{k}
	= H(u_{k})
		\quad \text{in the sense of distributions in \(\Omega\).}
	\end{equation}
	By nonnegativity of  \(\mu\), the sequence \((u_{k})_{k \in \N}\) is non-increasing and so is \((w_{k})_{k \in \N}\).{}
	As each \(w_{k}\) is also nonnegative, the sequence \((w_{k})_{k \in \N}\) converges in \(L^{1}(\Omega)\) to some function \(w\).
	Since \(H\) is independent of the potential \(V\), by the comparison principle (\Cref{propositionTestFunctionPositive}) and the nonnegativity of \(w_{k}\) we also have 
	\[{}
	0 \le w_{k} \le u_{k}
	\quad \text{almost everywhere in \(\Omega\).}
	\]
	By \Cref{lemmaDualitySolutionsDistributionalBis}, the sequence \((T_{k}(V)u_{k})_{k \in \N}\) converges to \(Vu\) in \(L^{1}(\Omega)\). 
	Thus, by the Dominated convergence theorem, the sequence \((T_{k}(V)w_{k})_{k \in \N}\) converges to \(Vw\) in \(L^{1}(\Omega)\).{}
	As \(k \to \infty\) in \eqref{eq2069}, we then deduce that \(w\) satisfies \eqref{eqEquationSupersolution} with \(f = H(u)\).{}
	To conclude, observe that since \((w_{k})_{k \in \N}\) is bounded in \(L^{\infty}(\Omega)\) and \((\Delta w_{k})_{k \in \N}\) is bounded in \(L^{1}(\Omega)\), by interpolation the sequence \((w_{k})_{k \in \N}\) is bounded in \(W_{0}^{1, 2}(\Omega)\).{}
	By the closure property in Sobolev spaces, we then have \(w \in W_{0}^{1, 2}(\Omega)\) and \(w = \zeta_{H(u)}\).
\end{proof}


\section{Proofs of \Cref{theoremGreen,theoremGoodMeasuresCharacterization}}
\label{sectionProofThm1}

\begin{proof}[Proof of \Cref{theoremGoodMeasuresCharacterization}. ``\(\Longrightarrow\)'']
	Since \(0 \le \mu\lfloor_{Z}{} \le \mu\), by \Cref{propositionDistributionalExistence} the Dirichlet problem \eqref{eqDirichletProblem} also has a distributional solution \(v\) with measure \( \mu\lfloor_{Z}{}\).{}
	As a consequence of the comparison principle from the previous section, we have \(v = 0\) almost everywhere in \(\Omega\).{}
	Indeed, by \Cref{propositionDistributionImpliesDuality}, \(v\) is also a duality solution and
	\begin{equation}
	\label{eq1681}
	\int_{\Omega}{v f}
	= \int_{\Omega}{\quasi{\zeta_{f}} \dif\mu\lfloor_{Z}}
	\quad \text{for every \(f \in L^{\infty}(\Omega)\).}
	\end{equation}	
	By \Cref{propositionExistenceBoundedSupersolutions}, the function \(\zeta_{H(v)}\) satisfies \eqref{eqEquationSupersolution} with bounded datum \(f = H(v)\) and then, by definition of \(Z\), we have \(\quasi{\zeta_{H(v)}} = 0\) in \(Z\).	
	Thus taking \(f = H(v)\) in \eqref{eq1681}, we get
	\[{}
	\int_{\Omega}{v H(v)}
	= \int_{\Omega}{\quasi{\zeta_{{H(v)}}} \dif\mu\lfloor_{Z}}
	= 0.
	\]
	By positivity of \(H\) on \((0, +\infty)\) we deduce that \(v = 0\) almost everywhere in \(\Omega\).
	Since \(v\) solves an equation with \(\mu\lfloor_{Z}\) in the sense of distributions, for every \(\varphi \in C_{c}^{\infty}(\Omega)\) we have
	\[{}
	\int_{\Omega}{\varphi \dif\mu\lfloor_{Z}}
	= \int_{\Omega}{v \, (- \Delta\varphi + V\varphi)}
	= 0.
	\]
	Hence, \(\mu\lfloor_{Z}{} = 0\) and then \(\mu(Z) = 0\).{}
\end{proof}

\begin{proof}[Proof of \Cref{theoremGoodMeasuresCharacterization}. ``\(\Longleftarrow\)'']
	Let \(w\) be the function provided by \Cref{propositionExistenceTool}:
	\(w\) dominates all distributional subsolutions of \eqref{eqDirichletProblem} and, by \Cref{remarkBoundary}, also satisfies
	\[{}
	 - \Delta w + V \chi_{\Omega \setminus Z} w 
	 \le \mu{}
	\quad \text{in the sense of \((C_{0}^{\infty}(\overline\Omega))'\).}
	\]
	Denoting \(\widetilde{w} \vcentcolon= \chi_{\Omega \setminus Z} w\), we claim that 
	\begin{equation}
	\label{eq1736}
	- \Delta \widetilde{w} 
	+ V\chi_{\Omega \setminus Z} \widetilde{w} \ge \mu{}
	\quad \text{in the sense of \((C_{0}^{\infty}(\overline\Omega))'\).}
	\end{equation}
	Once such a property is established, the weak maximum principle implies that \(w \le \widetilde w\) almost everywhere in \(\Omega\).{}
	By nonnegativity of \(w\), we also have \(w \ge \widetilde w\).{}
	Hence, equality holds and we deduce that 
	\[{}
	w = 0
	\quad \text{almost everywhere in \(Z\)}
	\]
	and
	\[{}
	 - \Delta w + V w 
	= \mu{}
	\quad \text{in the sense of distributions in \(\Omega\).}
	\]

	It thus suffices to prove \eqref{eq1736}.
	We perform this task by analyzing separately the diffuse and concentrated parts of \(\Delta\widetilde{w}\).{}
	Concerning the \emph{diffuse} part, we first observe that the assumption \(\mu(Z) = 0\) and the nonnegativity of \(\mu\) imply that \(\mu\ld\lfloor_{\Omega\setminus Z}{} = \mu\ld\).{}
	Thus, by \Cref{propositionExistenceTool},
	\[{}
	- \Delta \widetilde{w} 
	+ V \chi_{\Omega \setminus Z} \widetilde{w}
	\ge \mu\ld
	\quad \text{in the sense of distributions in \(\Omega\),}
	\]
	hence also in the sense of measures in \(\Omega\).{}
	Then, by comparison between the diffuse parts from both sides,
	\begin{equation}
		\label{eqProofTheorem1Diffuse}
	(- \Delta \widetilde{w})\ld 
	+ V \chi_{\Omega \setminus Z} \widetilde{w} \ge \mu\ld.{}
	\end{equation}

	Concerning the \emph{concentrated} part, 
	we first prove that
	\begin{equation}
		\label{eqProofTheorem1}
			u \le  \chi_{\Omega \setminus Z} \, w
			= \widetilde{w} 
	\quad 
	\text{almost everywhere in \(\Omega\),}
	\end{equation}
	where \(u \in L^{1}(\Omega)\) is the duality solution of \eqref{eqDirichletProblem} associated to \(\mu\).{}
	By \Cref{lemmaDualitySolutionsDistributional}, we have \(u \in W_{0}^{1, 1}(\Omega) \cap L^{1}(\Omega; V \dif x)\) and 
	\[{}
	- \Delta u + Vu 
	\le \mu \quad \text{in the sense of distributions in \(\Omega\).}
	\]
	Thus, by \Cref{propositionExistenceTool}, 
	\begin{equation}
		\label{eq2187}
		u \le w
		\quad \text{almost everywhere in \(\Omega\).}
	\end{equation}
	To prove that 
	\begin{equation}
		\label{eq2180}
	u = 0
	\quad \text{almost everywhere in \(Z\),}
	\end{equation}
	we use \(\chi_{Z}\) as test function in the duality formulation:
	\[{}
	\int_{Z}{u}
	= \int_{\Omega}{u \chi_{Z}}
	= \int_{\Omega}{\quasi{\zeta_{\chi_{Z}}} \dif\mu}.
	\]
	By the orthogonality principle (\Cref{propositionOrthogonality}), we have \(\{\quasi{\zeta_{\chi_{Z}}} > 0\} \subset Z\).
	Since \(\mu = 0\) on \(Z\), we get \(\int_{Z}{u} = 0\)
	which, by nonnegativity of \(u\), implies \eqref{eq2180}. 
	As a consequence of \eqref{eq2187}, \eqref{eq2180} and the nonnegativity of \(w\), \eqref{eqProofTheorem1} follows.
	Next, from the inverse maximum principle and \eqref{eqProofTheorem1}, we get
	\[{}
	(-\Delta \widetilde{w})\lc \ge (-\Delta u)\lc.
	\]
	We recall that, by \Cref{propositionSolutionDualityasDistribution}, \(u\) satisfies
	\[{}
	-\Delta u + Vu = \mu\lfloor_{\Omega \setminus S}{} - \lambda
	\quad{}
	\text{in the sense of distributions in \(\Omega\),}
	\]
	where the measure \(\lambda\) is diffuse, that is, \(\lambda\lc = 0\).
	Since \(S \subset Z\) and \(\mu = 0\) on \(Z\), we have \(\mu\lfloor_{\Omega \setminus S}{} = \mu\).	
	Thus,
	\begin{equation}
	\label{eqTheoremConcentrated}	
	(- \Delta \widetilde{w})\lc \ge (-\Delta u)\lc 
	= \mu\lc.
	\end{equation}
	Since \(\Delta \widetilde{w} = (\Delta \widetilde{w})\ld + (\Delta \widetilde{w})\lc\), a combination of \eqref{eqProofTheorem1Diffuse} and \eqref{eqTheoremConcentrated} gives \eqref{eq1736}, but only in the sense of distributions in \(\Omega\).
	As explained in \Cref{remarkBoundary}, the vanishing average property of \(\widetilde{w}\) then implies \eqref{eq1736}, which completes the proof.
\end{proof}

\begin{proof}[Proof of \Cref{theoremGreen}]
	By \Cref{theoremGoodMeasuresCharacterization}, the Dirichlet problem \eqref{eqDirichletProblem} does not have a distributional solution with \(\mu = \delta_{x}\) and \(x \in Z\).{}
	When \(x \not\in Z\), again by \Cref{theoremGoodMeasuresCharacterization} a distributional solution exists and, by \Cref{propositionDistributionImpliesDuality} and the uniqueness of the duality solution, it must coincide with the duality solution \(G_{x}\).{}
	In this case, if \(w \in W_{0}^{1, 2}(\Omega) \cap L^{\infty}(\Omega)\) satisfies \eqref{eqEquationSupersolution} with \(f \in L^{\infty}(\Omega)\), then by \Cref{remarkDistributionalSolutionsVariational} we have \(w = \zeta_{f}\).{}
	The representation formula~\eqref{eqRepresentationGreen} satisfied by \(\zeta_{f}\) then becomes
	\[{}
	\quasi{w}(x)
	= \quasi{\zeta_{f}}(x)
	= \int_{\Omega}{G_{x} f}
	\quad \text{for every \(x \in \Omega\).}
	\qedhere
	\]
\end{proof}


\section{Green's functions and decomposition of $\Omega \setminus S$}
\label{sectionGreen}

In this section, we utilize Green's function \(G_{x}\) in the duality sense for \(x \in \Omega \setminus S\) to identify the various components of \(\Omega \setminus S\).
The topological properties of these subsets will be investigated in the next two sections.
Observe that, when \(x \in S\), we have 
\[{}
\int_{\Omega}{G_{x} f}
= \quasi{\zeta_{f}}(x)
= 0
\quad
\text{for every \(f \in L^{\infty}(\Omega)\),}
\]
 whence 
 \[{}
 G_{x} = 0
 \quad \text{almost everywhere in \(\Omega\).}
 \]
For \(x \in \Omega \setminus S\), the picture is radically different as we know from \Cref{propositionSolutionDualityasDistribution} that \(G_{x}\) satisfies the equation
\begin{equation}
	\label{eqGreenDistributions}
- \Delta G_{x} + VG_{x}
= \delta_{x} - \lambda{}
\quad \text{in the sense of distributions in \(\Omega\)}
\end{equation}
for some nonnegative diffuse measure \(\lambda \in \cM(\Omega)\) carried by \(S\), where for simplicity we omit the possible dependence of \(\lambda\) on \(x\).
In particular, \(G_{x}\) is a nontrivial locally bounded subharmonic function in \(\Omega \setminus \{x\}\).{}
Hence, the Lebesgue set of \(G_{x}\) is \(\Omega \setminus \{x\}\) and the precise representative \(\quasi{G_{x}}\) is upper semicontinuous in this set.
We can interpret \(x\) as the point where \(G_{x}\) diverges to \(+\infty\).

\begin{definition}
	\label{definitionSetF}
For every \(x \in \Omega \setminus S\), the \emph{superlevel set \(U_{x}\)} is defined by 
\[
	U_{x}
	= \bigl\{y \in \Omega : y = x \ \text{or}\ \quasi{G_{x}}(y) > 0 \bigr\}.
\]
\end{definition}

By the Lebesgue differentiation theorem and the fact that \(G_{x} \not\equiv 0\) for \(x \in \Omega \setminus S\), each superlevel set \(U_{x}\) has positive Lebesgue measure.
We also observe that
\begin{equation}
\label{eqComplementSuperlevel}
\Omega \setminus U_{x}
= \{\quasi{G_{x}} = 0\}.
\end{equation}
We now prove that these superlevel sets yield equivalence classes in \(\Omega \setminus S\)\,:

\begin{proposition}
	\label{propositionEquivalenceClasses}
	For every \(x, y \in \Omega \setminus S\), we have that 
	\[{}
	\text{either}
	\quad
	U_{x} = U_{y}
	\quad \text{or} \quad
	U_{x} \cap U_{y} = \emptyset.{}
	\]
\end{proposition}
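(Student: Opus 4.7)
The plan is to show that $U_{x} \cap U_{y} \neq \emptyset$ forces $U_{x} = U_{y}$, resting on three ingredients that I would establish in sequence. The first is a zero-implication derived from the comparison principle (\Cref{propositionTestFunctionPositive}): for $a \in \Omega \setminus S$ and $b \in \Omega \setminus \{a\}$, if $\quasi{G_{a}}(b) = 0$ then $\bigabs{U_{a} \cap U_{b}} = 0$. Indeed, since $G_{a}$ is the duality solution associated with the nonnegative measure $\delta_{a}$, \Cref{propositionTestFunctionPositive} gives $G_{a} \ge \zeta_{H(G_{a})}$ almost everywhere, and comparing precise representatives at the Lebesgue point $b$ together with the representation formula~\eqref{eqRepresentationGreen} yields
\[
0 = \quasi{G_{a}}(b) \ge \quasi{\zeta_{H(G_{a})}}(b) = \int_{\Omega} G_{b} \, H(G_{a}) \ge 0.
\]
Since $H > 0$ on $(0, +\infty)$, I would then deduce $G_{b} = 0$ almost everywhere on $\{G_{a} > 0\}$, which is the desired measure-zero intersection.

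The second ingredient is an analog of \Cref{propositionDensity}: every $y \in U_{a} \setminus \{a\}$ is a Lebesgue-density-one point of $U_{a}$. The proof would mimic that of \Cref{propositionDensity}, using upper semicontinuity of $\quasi{G_{a}}$ on $\Omega \setminus \{a\}$ (a consequence of the subharmonicity of $G_{a}$ noted just before the statement) together with the vanishing of $G_{a}$ almost everywhere off $U_{a}$. The third ingredient is the pointwise symmetry $\quasi{G_{x}}(y) = \quasi{G_{y}}(x)$ for distinct $x, y \in \Omega \setminus S$. Starting from the bilinear symmetry $\int_{\Omega} \zeta_{f}\, g = \int_{\Omega} \zeta_{g}\, f$ of the Green's operator (\Cref{remarkDistributionalSolutionsVariational}), I would apply the representation formula to rewrite $\quasi{\zeta_{\rho_{n}(\cdot - y)}}(x) = \rho_{n} * G_{x}(y)$ and couple it with local uniform convergence of $\zeta_{\rho_{n}(\cdot - y)}$ to $G_{y}$ on compact subsets of $\Omega \setminus \{y\}$, obtained from the fact that the approximating data vanish off a shrinking neighborhood of $y$; the two sides then tend respectively to $\quasi{G_{y}}(x)$ and $\quasi{G_{x}}(y)$.

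With these three tools in hand, the conclusion proceeds as follows. Given $p \in U_{x} \cap U_{y}$: if $p \notin \{x, y\}$, the density property makes $p$ a density-one point of both $U_{x}$ and $U_{y}$, hence $\bigabs{U_{x} \cap U_{y}} > 0$, and the contrapositive of the first ingredient gives $y \in U_{x}$; if $p \in \{x, y\}$, say $p = x$, then $x \in U_{y}$ and the symmetry gives $y \in U_{x}$. To upgrade $y \in U_{x}$ into $U_{y} \subset U_{x}$, pick any $w \in U_{y} \setminus \{x, y\}$: symmetry yields $y \in U_{w}$, the density property then makes $y$ a density-one point of both $U_{w}$ and $U_{x}$, hence $\bigabs{U_{w} \cap U_{x}} > 0$, and the contrapositive of the first ingredient forces $w \in U_{x}$. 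Interchanging the roles of $x$ and $y$ gives the reverse inclusion. The main obstacle is the third ingredient: the variational identity only yields equality of $G_{x}(y)$ and $G_{y}(x)$ almost everywhere on $\Omega \times \Omega$, and promoting this to an equality of precise representatives at individual off-diagonal points requires a local regularity statement for $G_{x}$ as a function of its source variable that is not made explicit in the preceding sections.
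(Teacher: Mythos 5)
Your proof follows essentially the same route as the paper: the same three ingredients (the orthogonality relation obtained from the comparison principle together with the representation formula, the density lemma for the superlevel sets, and the symmetry of the Green's function), assembled in essentially the same way. The obstacle you flag at the end --- upgrading the a.e.\ symmetry to the pointwise identity $\quasi{G_{x}}(y) = \quasi{G_{y}}(x)$ off the diagonal --- is not proved in the paper either; it is quoted from \cite{Malusa_Orsina:1996}*{Theorem~7.4}, so your argument is complete once that result is invoked as a citation rather than rederived.
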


Since each \(U_{x}\) has positive Lebesgue measure, for \(x\) running over \(\Omega \setminus S\) one then gets a decomposition of \(\Omega \setminus S\) as a finite or countably infinite disjoint union of sets \(U_{x}\).
The components \(D_{j}\) that arise in \Cref{theoremMaximumPrincipleFull} are the superlevel sets that are contained in \(\Omega \setminus Z\).

We begin by showing that each point of \(\{\quasi{G_{x}} > 0\}\) is a density point of this set:

\begin{lemma}
	\label{lemmaGreenDensity}
	Let \(x \in \Omega \setminus S\).
	For every \(z \in \{\quasi{G_{x}} > 0\}\), we have
	\[{}
	\lim_{r \to 0}{\frac{\bigl|B_{r}(z) \cap \{\quasi{G_{x}} > 0\}\bigr|}{\meas{B_{r}(z)}}}
	= 1.
	\]
\end{lemma}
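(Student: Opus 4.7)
The plan is to adapt the density argument in the proof of \Cref{propositionDensity}, now localized at a point $z$ of the superlevel set $\{\quasi{G_{x}} > 0\}$ with $G_{x}$ playing the role of a generic supersolution~$w$. Since $\quasi{G_{x}}(z)$ is defined by assumption and the Lebesgue set of $G_{x}$ is $\Omega \setminus \{x\}$, we must have $z \ne x$, so every ball $B_{r}(z)$ with $r$ small enough lies in $\Omega \setminus \{x\}$, where $G_{x}$ is a nonnegative locally bounded subharmonic function and $\quasi{G_{x}}$ is upper semicontinuous. These structural properties replace the ``difference between a continuous and a bounded superharmonic function'' used in \Cref{propositionDensity}.

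Setting $c \vcentcolon= \quasi{G_{x}}(z) > 0$ and fixing $\epsilon \in (0, c)$, upper semicontinuity of $\quasi{G_{x}}$ at $z$ combined with $\quasi{G_{x}} = G_{x}$ almost everywhere in $\Omega$ gives
\[
G_{x} \le c + \epsilon \quad \text{almost everywhere in } B_{r_{2}}(z),
\]
for some $r_{2} > 0$, while the Lebesgue differentiation theorem applied at the Lebesgue point $z$ yields
\[
c - \epsilon \le \fint_{B_{r}(z)}{G_{x}} \quad \text{for every \(0 < r \le r_{1}\)},
\]
for some $r_{1} > 0$. Since $\quasi{G_{x}} = G_{x}$ almost everywhere, the function $G_{x}$ vanishes almost everywhere on $\{\quasi{G_{x}} = 0\}$; hence, for every $0 < r \le \min\{r_{1}, r_{2}\}$,
\[
c - \epsilon \le \fint_{B_{r}(z)}{G_{x}} = \frac{1}{\meas{B_{r}(z)}} \int_{B_{r}(z) \cap \{\quasi{G_{x}} > 0\}}{G_{x}} \le (c + \epsilon) \, \frac{\bigl|B_{r}(z) \cap \{\quasi{G_{x}} > 0\}\bigr|}{\meas{B_{r}(z)}}.
\]

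Letting $r \to 0$ and then $\epsilon \to 0$ delivers the density-one conclusion. The only delicate point I foresee is that, unlike in \Cref{propositionDensity} where upper semicontinuity of $\quasi{w}$ is available on all of $\Omega$, here it holds only away from the pole $x$; this is precisely reconciled by the observation that $\quasi{G_{x}}$ is not defined at $x$, so the hypothesis $z \in \{\quasi{G_{x}} > 0\}$ automatically forces $z \ne x$ and the local upper semicontinuity suffices.
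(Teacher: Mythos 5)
Your proof is correct and follows essentially the same route as the paper's: the paper likewise transplants the argument of \Cref{propositionDensity}, using the upper semicontinuity of \(\quasi{G_{x}}\) away from the pole, the Lebesgue point property at \(z\), and the vanishing of \(G_{x}\) almost everywhere on the complement of the superlevel set to squeeze the density ratio between \((c-\epsilon)/(c+\epsilon)\) and \(1\). Your remark that \(z \ne x\) is forced by the hypothesis, so that the local upper semicontinuity suffices, is exactly the right reconciliation.
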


\begin{proof}[Proof of \Cref{lemmaGreenDensity}]
	Let \(c = \quasi{G_{x}}(z) > 0\).{}
	Given \(\epsilon > 0\), one proceeds as in the proof of \Cref{propositionDensity} using the upper semicontinuity of \(G_{x}\) to find some \(\eta > 0\) such that, for every \(0 < r \le \eta\),
	\[{}
	c - \epsilon{}
	\le \frac{1}{\meas{B_{r}(z)}}\int_{B_{r}(z) \cap \{\quasi{G_{x}} > 0\}}{G_{x}}
	\le (c + \epsilon) \frac{\bigl|B_{r}(z) \cap \{\quasi{G_{x}} > 0\}\bigr|}{\meas{B_{r}(z)}},
	\]
	and then
	\[{}
	\frac{c - \epsilon}{c + \epsilon}
	\le \frac{\bigl|B_{r}(z) \cap \{\quasi{G_{x}} > 0\}\bigr|}{\meas{B_{r}(z)}} 
	\le 1.
	\]
	The conclusion follows letting \(r \to 0\) and then \(\epsilon \to 0\).
\end{proof}

We now prove an orthogonality relation among the superlevel sets \(U_{x}\)\,:

\begin{lemma}
	\label{lemmaGreenSets}
	Let \(x, y \in \Omega\setminus S\) with \(x \ne y\).{}
	If\/ \(\quasi{G_{x}}(y) = 0\), then \(U_{x} \cap U_{y} = \emptyset\).
\end{lemma}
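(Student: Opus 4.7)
The plan is to combine three ingredients: (a) a symmetry identity \(\quasi{G_{x}}(y) = \quasi{G_{y}}(x)\) for \(x \ne y\) both in \(\Omega \setminus S\); (b) the comparison principle \(G_{x} \ge \zeta_{H(G_{x})}\) from \Cref{propositionTestFunctionPositive}; and (c) the density-point property \Cref{lemmaGreenDensity}.

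First, I would establish the symmetry by truncating the potential. For each bounded potential \(T_{k}(V)\), the Green's function \(G_{x}^{k}\) of \(-\Delta + T_{k}(V)\) is continuous off the pole \(x\) and satisfies \(G_{x}^{k}(y) = G_{y}^{k}(x)\) for \(x \ne y\), a standard consequence of the self-adjointness of \(-\Delta + T_{k}(V)\). Since \(T_{k}(V) \nearrow V\), the maximum principle makes \(G_{x}^{k}\) decrease monotonically, with \(L^{1}\) limit equal to the duality Green's function \(G_{x}\) by \Cref{lemmaDualitySolutionsDistributional}. To promote this to pointwise convergence at any \(y \ne x\), I would fix a ball \(B_{0} \Subset \Omega \setminus \{x\}\) around \(y\) on which the classical Laplacian Green's function \(G_{x}^{0}\) is bounded by some constant \(M\). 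Then in \(B_{0}\) one has \(\Delta G_{x}^{k} = T_{k}(V) G_{x}^{k} \ge 0\), so \((M - G_{x}^{k})_{k \in \N}\) is a nonnegative, uniformly bounded, nondecreasing sequence of superharmonic functions in \(B_{0}\). The monotone-convergence-of-precise-representatives result invoked in the proof of \Cref{lemmaVariationalTruncationPointwiseConvergence} (cf.\ \cite{Malusa_Orsina:1996}*{Lemma~4.12}) then yields \(G_{x}^{k}(y) \to \quasi{G_{x}}(y)\) for every \(y \in B_{0}\). The same reasoning applied to \(G_{y}^{k}\) gives \(G_{y}^{k}(x) \to \quasi{G_{y}}(x)\), and passing to the limit in \(G_{x}^{k}(y) = G_{y}^{k}(x)\) provides the symmetry.

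Next, I would apply the comparison principle. Averaging the pointwise inequality \(G_{x} \ge \zeta_{H(G_{x})}\) over balls centered at \(y \ne x\) and using Green's representation formula~\eqref{eqRepresentationGreen} gives
\[
	0 = \quasi{G_{x}}(y) \ge \quasi{\zeta_{H(G_{x})}}(y) = \int_{\Omega}{G_{y}\,H(G_{x})}.
\]
Since \(G_{x}, G_{y} \ge 0\) and \(H > 0\) on \((0, +\infty)\), this forces \(G_{y}\,H(G_{x}) = 0\) almost everywhere, so \(\{G_{x} > 0\} \cap \{G_{y} > 0\}\) is Lebesgue-negligible. By the Lebesgue differentiation theorem, setting \(A \vcentcolon= \{\quasi{G_{x}} > 0\}\) and \(B \vcentcolon= \{\quasi{G_{y}} > 0\}\), one also has \(\abs{A \cap B} = 0\).

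To conclude, I would decompose
\[
	U_{x} \cap U_{y} = \bigl( \{x\} \cup A \bigr) \cap \bigl( \{y\} \cup B \bigr) = \bigl(\{x\}\cap\{y\}\bigr) \cup \bigl(\{x\}\cap B\bigr) \cup \bigl(A \cap \{y\}\bigr) \cup \bigl(A \cap B\bigr),
\]
and check that each piece is empty: the first vanishes since \(x \ne y\); the third by the hypothesis \(\quasi{G_{x}}(y) = 0\); the second by the symmetry step, which gives \(\quasi{G_{y}}(x) = 0\); and the fourth by a density argument---any \(z \in A \cap B\) lies outside \(\{x, y\}\) and, by \Cref{lemmaGreenDensity} applied to both \(G_{x}\) and \(G_{y}\), is a density point of both \(A\) and \(B\), hence of \(A \cap B\), contradicting \(\abs{A \cap B} = 0\). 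The main obstacle I foresee is the symmetry step: transferring the pointwise identity for the truncated problems to the precise representatives in the limit requires localizing the monotone-convergence-of-superharmonics statement to a ball off the pole, which I would handle via the classical Laplacian Green's function as a uniform upper bound.
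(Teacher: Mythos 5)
Your proof is correct and follows essentially the same route as the paper: the comparison principle together with the representation formula gives that \(\{G_{x}>0\}\cap\{G_{y}>0\}\) is negligible, and \Cref{lemmaGreenDensity} then forces \(\{\quasi{G_{x}}>0\}\cap\{\quasi{G_{y}}>0\}=\emptyset\), with symmetry handling the points \(x\) and \(y\) themselves. The only difference is that you supply a self-contained truncation-and-monotone-convergence proof of the symmetry \(\quasi{G_{x}}(y)=\quasi{G_{y}}(x)\), which the paper simply cites from \cite{Malusa_Orsina:1996}*{Theorem~7.4}; your localization via the Laplacian Green's function as a uniform bound is sound.
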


To prove this property we need the symmetry of the Green's function~\cite{Malusa_Orsina:1996}*{Theorem~7.4}:
	For every \(x, y \in \Omega\) with \(x \ne y\),{}
	\[{}
	\quasi{G_{x}}(y)
	= \quasi{G_{y}}(x).
	\]

\begin{proof}[Proof of \Cref{lemmaGreenSets}]
	Let \(y \in \Omega \setminus \{x\}\) with \(\quasi{G_{x}}(y) = 0\).{}
	We first show that
	\begin{equation}
	\label{eqGreenNegligible}
	\{G_{x} > 0\} \cap \{G_{y} > 0\} 
	\quad \text{is negligible.}
	\end{equation}
	To this end, by the comparison principle (\Cref{propositionTestFunctionPositive}) and the representation formula~\eqref{eqRepresentationGreen} we have
	\[{}
	\quasi{G_{x}}(y) 
	\ge \quasi{\zeta_{H(G_{x})}}(y)
	= \int_{\Omega}{G_{y} \, H(G_{x})}.
	\]
	Since the left-hand side vanishes by assumption and the integrand is nonnegative, we have \(G_{y} H(G_{x}) = 0\) almost everywhere in \(\Omega\), and \eqref{eqGreenNegligible} thus holds by positivity of \(H\) on \((0, +\infty)\).	
	
	It follows from \eqref{eqGreenNegligible} and the Lebesgue differentiation theorem that \eqref{eqGreenNegligible} is also satisfied by the precise representatives and then, for every \(z \in \Omega\),
	\[{}
	\frac{\bigl|B_{r}(z) \cap \{\quasi{G_{x}} > 0\}\bigr|}{\meas{B_{r}(z)}}
	+ \frac{\bigl|B_{r}(z) \cap \{\quasi{G_{y}} > 0\}\bigr|}{\meas{B_{r}(z)}}
	\le 1.
	\]
	As \(r \to 0\), we deduce using \Cref{lemmaGreenDensity} that the first quotient converges to \(1\) for \(z \in \{\quasi{G_{x}} > 0\}\), while the second one also converges to \(1\) for \(z \in \{\quasi{G_{y}} > 0\}\).{}
	Therefore, no point in \(\Omega\) can belong simultaneously to both sets, and so their intersection must be empty:
	\[{}
	\{\quasi{G_{x}} > 0\} \cap \{\quasi{G_{y}} > 0\} 
	= \emptyset.
	\]
	Since \(\quasi{G_{y}}(x) = \quasi{G_{x}}(y) = 0\), we also have 
	\[{}
	x \not\in \{\quasi{G_{y}} > 0\}
	\quad \text{and} \quad 
	y \not\in \{\quasi{G_{x}} > 0\}.
	\]
	Therefore, \(U_{x} \cap U_{y} = \emptyset\).
\end{proof}

\begin{proof}[Proof of \Cref{propositionEquivalenceClasses}]
	Assume that \(U_{x} \cap U_{y} \neq \emptyset\) and \(x \ne y\).{}
	We wish to show the equality \(U_{x} = U_{y}\), which, by \eqref{eqComplementSuperlevel}, is equivalent to 
	\begin{equation}
	\label{eqGreenEqualityZeroSet}
	\{\quasi{G_{x}} = 0\} = \{\quasi{G_{y}} = 0\}.	
	\end{equation}
	
	Let us prove the inclusion ``\(\subset\)'' in \eqref{eqGreenEqualityZeroSet}.{}
	To this end, take \(z \in \Omega\) such that \(\quasi{G_{x}}(z) = 0\).{}
	Then, by \Cref{lemmaGreenSets},
	\begin{equation}
		\label{eqUDisjoint}
	U_{x} \cap U_{z} = \emptyset.
	\end{equation}
	As another application of \Cref{lemmaGreenSets}, the assumption \(U_{x} \cap U_{y} \neq \emptyset\) implies that \(\quasi{G_{x}}(y) > 0\), and then \(y \in U_{x}\) by the definition of \(U_{x}\).{}
	In view of \eqref{eqUDisjoint}, we thus have \(y \not\in U_{z}\).
	By symmetry of the Green's function and the definition of \(U_{z}\)\,, we deduce that
	\(\quasi{G_{y}}(z) = \quasi{G_{z}}(y) = 0\). 
	Therefore,
	\[{}
	\{\quasi{G_{x}} = 0\} \subset \{\quasi{G_{y}} = 0\}.
	\]	
	We can now interchange the roles of \(x\) and \(y\) to get the reverse inclusion ``\(\supset\)'' and \eqref{eqGreenEqualityZeroSet} then follows.
\end{proof}

\section{Sobolev-openness of $U_{x}$}
\label{sectionQuasiTopology}

We provide in this section additional properties of the superlevel sets \(U_{x}\) related to the Sobolev-topology induced by the definition below:

\begin{definition}
	\label{definitionQuasiOpen}
	A set \(O \subset \Omega\) is \emph{Sobolev-open} whenever there exists a nonnegative function \(\xi \in W_{0}^{1, 2}(\Omega)\) such that every point in \(\Omega\) is a Lebesgue point of \(\xi\) and 
	\[{}
	O = \{\quasi{\xi} > 0\}.
	\]
\end{definition}

Replacing \(\xi\) in this definition by the truncated function \(T_{1}(\xi)\), one can assume to start with that \(\xi \in  W_{0}^{1, 2}(\Omega) \cap L^{\infty}(\Omega)\).
One verifies that a set \(O \subset \Omega\) is Sobolev-open if and only if \(\Omega \setminus O\) is Sobolev-closed, as defined in the Introduction.
The family of Sobolev-open subsets of \(\Omega\) is stable under finite intersections and countably infinite unions.

As a consequence of the Lebesgue differentiation theorem, if \(O\) is Sobolev-open and non-empty, then \(O\) has positive Lebesgue measure.
We now prove that every point of a Sobolev-open set is a density point:

\begin{proposition}
	\label{propositionQuasiOpenDensityPoint}
	Let \(O \subset \R^{N}\) be a Sobolev-open set.
	For every \(x \in O\), we have
	\[{}
	\lim_{r \to 0}{\frac{\abs{B_{r}(x) \cap O}}{\abs{B_{r}(x)}}}
	= 1.
	\]
\end{proposition}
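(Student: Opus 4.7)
The plan is to exploit the Lebesgue point property enjoyed by the witnessing function $\xi$ at the reference point $x \in O$, combined with the fact that $\xi$ vanishes almost everywhere outside $O$. By replacing $\xi$ with the truncation $T_{1}(\xi) \in W_{0}^{1, 2}(\Omega) \cap L^{\infty}(\Omega)$ (which still has the same positivity set for its precise representative and every point of $\Omega$ as a Lebesgue point), we may assume from the start that $\xi$ is bounded and nonnegative.

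Fix $x \in O$ and set $c \vcentcolon= \quasi{\xi}(x) > 0$. Since $x$ is by assumption a Lebesgue point of $\xi$, one has
\[{}
\lim_{r \to 0}{\fint_{B_{r}(x)}{\abs{\xi - c}}} = 0.
\]
Apply Chebyshev's inequality at level $c/2$: for every $r > 0$,
\[{}
\frac{\bigabs{\{y \in B_{r}(x) : \abs{\xi(y) - c} > c/2\}}}{\abs{B_{r}(x)}}
\le \frac{2}{c}\fint_{B_{r}(x)}{\abs{\xi - c}}.
\]
Letting $r \to 0$, the right-hand side vanishes, and consequently the complementary set $\{y \in B_{r}(x) : \xi(y) \ge c/2\}$ has relative measure tending to $1$.

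To finish, observe that $\{\xi \ge c/2\} \subset O$ up to a Lebesgue-null set. Indeed, for every $y \in \Omega \setminus O$ one has $\quasi{\xi}(y) = 0$ by definition of $O$, and as $\xi = \quasi{\xi}$ almost everywhere in $\Omega$ we deduce $\xi = 0$ almost everywhere in $\Omega \setminus O$. Consequently,
\[{}
\bigabs{\{y \in B_{r}(x) : \xi(y) \ge c/2\}}
\le \abs{B_{r}(x) \cap O},
\]
so that
\[{}
1 = \lim_{r \to 0}{\frac{\bigabs{\{y \in B_{r}(x) : \xi(y) \ge c/2\}}}{\abs{B_{r}(x)}}}
\le \liminf_{r \to 0}{\frac{\abs{B_{r}(x) \cap O}}{\abs{B_{r}(x)}}}
\le 1,
\]
which is the desired conclusion. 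There is no serious obstacle: the only subtlety is that, unlike in \Cref{propositionDensity,lemmaGreenDensity} where upper semicontinuity of the precise representative was available, here we must rely directly on the stronger form of the Lebesgue point condition (convergence of the averages of $\abs{\xi - c}$, not just of $\xi$) to control $\xi$ from below on a set of almost full density.
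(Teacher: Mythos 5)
Your proof is correct and rests on exactly the same two ingredients as the paper's: the Lebesgue point property of $\xi$ at $x$ and the fact that $\xi=0$ almost everywhere off $O$. The paper merely shortcuts your Chebyshev step by noting that $\abs{\xi-\quasi{\xi}(x)}=\quasi{\xi}(x)$ a.e.\ on $B_{r}(x)\setminus O$, so that $\frac{\abs{B_{r}(x)\setminus O}}{\abs{B_{r}(x)}}\,\quasi{\xi}(x)\le\fint_{B_{r}(x)}\abs{\xi-\quasi{\xi}(x)}\to 0$ directly.
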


\begin{proof}
For \(x \in O\) and \(\xi \in W_{0}^{1, 2}(\Omega)\) as in the definition of a Sobolev-open set we have \(\quasi{\xi}(x) > 0\).{}
Since \(\xi = 0\) almost everywhere in \(\Omega \setminus O\),
\[{}
\frac{\abs{B_{r}(x) \setminus O}}{\abs{B_{r}(x)}} \, \quasi{\xi}(x)
= \frac{1}{\abs{B_{r}(x)}} \int_{B_{r}(x) \setminus O}{\abs{\xi - \quasi{\xi}(x)}}
\le \fint_{B_{r}(x)}{\abs{\xi - \quasi{\xi}(x)}}.
\]
As \(r \to 0\), the quantity in the right-hand side converges to \(0\) and then
\[{}
\lim_{r \to 0}{\frac{\abs{B_{r}(x) \setminus O}}{\abs{B_{r}(x)}}}
= 0.
\qedhere
\]
\end{proof}

While every open set in the usual Euclidean topology is Sobolev-open, the converse is not true:

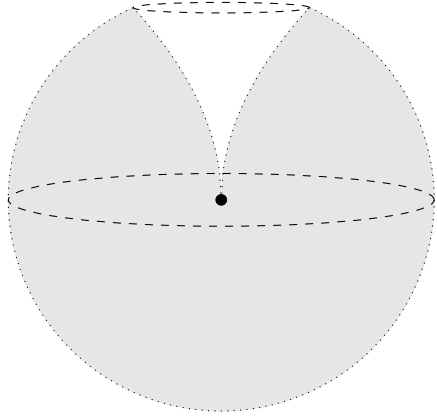
\begin{figure}
\centering
\begin{tikzpicture}[scale=0.7]
	\clip (-4.05,-4.05) rectangle (4.05,4);
	\def\fx{sqrt(8 * \x)}
	\filldraw [gray!20] (0,0) -- (1.657,3.641) arc (65.52:-245.52:4) -- cycle;
	\draw [dotted] (1.657,3.641) arc (65.52:-245.52:4);
	\filldraw[smooth,domain=0:1.657,samples=50,gray!20] plot (\x,{\fx}) -- cycle;
	\draw[smooth,dotted,domain=0:1.657,samples=50] plot (\x,{\fx});
	\filldraw[smooth,domain=0:1.657,samples=50,gray!20] plot (-\x,{\fx}) -- cycle;
	\draw[smooth,dotted,domain=0:1.657,samples=50] plot (-\x,{\fx});
	\draw[dashed] (0,0) ellipse (4 and 0.5);
	\draw[dashed] (0,3.641) ellipse (1.657 and 0.1);
	\filldraw (0,0) circle (0.1);
\end{tikzpicture}
\caption{Sobolev-open set which is not open.}
\label{figureQuasiOpen}
\end{figure}

\begin{example}
	\label{exampleQuasiOpen}
	Let \(N \ge 3\).{}
	For any given \(0 < \alpha < 1\), the set
	\[{}
	O 
	= \bigl\{x = (x', x_{N}) \in \R^{N-1} \times \R : |x| < 1 \ \text{and}\ x_{N} < |x'|^{\alpha} \bigr\} \cup \{0\}
	\]
	illustrated in \Cref{figureQuasiOpen} is Sobolev-open but not open in \(B_{1}(0)\).
	The assumption \(\alpha < 1\) ensures that \(0\) is a density point of \(O\).{}
	To verify that \(O\) is Sobolev-open, consider the function \(\xi : B_{1}(0) \to \R\) defined for \(x' \ne 0\) by 
	\begin{equation}
		\label{eqExampleQuasiOpen}
	\xi(x)
	= \min{\Bigl\{ \varphi\Bigl(\frac{x_{N}}{|x'|^{\alpha}}\Bigr), 1 - \abs{x}^{2} \Bigr\}},
	\end{equation}
	where \(\varphi : \R \to \R\) is a smooth function such that \(\varphi(t) = 0\) for \(t \ge 1\), \(\varphi(t) = 1\) for \(t \le 1/2\), and \(\varphi(t) > 0\) otherwise.
	Observe that \(\xi\) has a continuous extension to \(B_{1}(0) \setminus \{0\}\), every point in \(B_{1}(0)\) is a Lebesgue point of \(\xi\), and \(\quasi{\xi}(0) = 1\).{}
	The latter is due to the fact that \(\xi(x) = 1 - |x|^2\) on \(\{x_N < |x’|^\alpha/2\}\) and the origin is a density point of this set.	
	Moveover, 
	\[{}
	x \in O
	\quad \text{if and only if} \quad
	\quasi{\xi}(x) > 0.
	\]

	To verify that \(\xi \in W_{0}^{1, 2}(B_{1}(0))\) it suffices to check that \(v(x) = \varphi\bigl({x_{N}}/{|x'|^{\alpha}}\bigr)\) belongs to \(W^{1, 2}(B_{1}(0))\).
	Observe that
	\[{}
	\abs{\nabla v(x)}
	\le C_{1} \Bigabs{\varphi'\Bigl(\frac{x_{N}}{|x'|^{\alpha}}\Bigr)} \, \biggl(\frac{1}{\abs{x'}^{\alpha}} + \frac{\abs{x_{N}}}{\abs{x'}^{\alpha + 1}} \biggr).
	\]
	As \(\varphi'= 0\) outside the interval \((1/2, 1)\) and \(0 < \alpha < 1\), we have
	\[{}
	\abs{\nabla v(x)}
	\le C_{2} \, \chi_{\{{1}/{2} \le {x_{N}}/{|x'|^{\alpha}} \le 1\}}  \biggl(\frac{1}{x_{N}} + \frac{1}{x_{N}^{{1}/{\alpha}}} \biggr){}
	\le 2C_{2} \, \chi_{\{|x'|^{\alpha} \le 2{x_{N}}\}} \frac{1}{x_{N}^{{1}/{\alpha}}}.
	\]
	Thus, by Fubini's theorem,
	\[
	\int_{B_{1}(0)}{\abs{\nabla v}^{2}}
	\le C_{3} \int_{0}^{1}{ \frac{x_{N}^{(N-1)/\alpha}}{x_{N}^{{2}/{\alpha}} } \dif x_{N}}
	= C_{3} \int_{0}^{1}{x_{N}^{(N - 3)/{\alpha} } \dif x_{N}}
	\]
	and the integral in the right-hand side is finite for \(\alpha > 0\).{}
	This implies that \(\xi \in W_{0}^{1, 2}(B_{1}(0))\) and \(O\) is Sobolev-open.
\end{example}

The superlevel sets \(U_{x}\) defined in the previous section are Sobolev-open:

\begin{proposition}
	\label{propositionQuasiOpenFx}
	For every \(x \in \Omega \setminus S\), the set
	 \(U_{x}\) is Sobolev-open and contained in \(\Omega \setminus S\).
\end{proposition}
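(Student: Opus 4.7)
The plan is to exhibit a single bounded function \(\xi \in W_{0}^{1, 2}(\Omega)\) whose superlevel set \(\{\quasi{\xi} > 0\}\) coincides with \(U_{x}\), and to handle the inclusion \(U_{x} \subset \Omega \setminus S\) by symmetry of the Green's function.

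The inclusion essentially comes for free: \(x \in \Omega \setminus S\) by assumption, and for any \(y \in U_{x}\) with \(y \ne x\) one has \(\quasi{G_{x}}(y) > 0\), so the symmetry identity \(\quasi{G_{x}}(y) = \quasi{G_{y}}(x)\) used in the proof of \Cref{lemmaGreenSets} forces \(G_{y}\) not to vanish almost everywhere; since \(G_{y} \equiv 0\) whenever \(y \in S\), this places \(y\) in \(\Omega \setminus S\).

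For Sobolev-openness, I will pick the bounded continuous nondecreasing function \(H\) supplied by the comparison principle (\Cref{propositionTestFunctionPositive}) and set \(\xi \vcentcolon= \zeta_{H(G_{x})}\). Because \(H(G_{x}) \in L^{\infty}(\Omega)\), the discussion preceding \Cref{lemmaVariationalTruncationPointwiseConvergence} ensures that \(\xi \in W_{0}^{1, 2}(\Omega) \cap L^{\infty}(\Omega)\) and that every point of \(\Omega\) is a Lebesgue point of \(\xi\); then the representation formula \eqref{eqRepresentationGreen} yields
\[
\quasi{\xi}(y) = \int_{\Omega}{G_{y} \, H(G_{x})}
\quad \text{for every \(y \in \Omega\),}
\]
reducing the task to deciding when this integral vanishes. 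I will verify \(\{\quasi{\xi} > 0\} = U_{x}\) by cases: for \(y = x\), positivity follows because \(G_{x}\) is nontrivial on a set of positive measure and \(H\) is positive on \((0, +\infty)\); for \(y \ne x\) with \(\quasi{G_{x}}(y) > 0\), the first step gives \(y \in \Omega \setminus S\) with \(y \in U_{x} \cap U_{y}\), and \Cref{propositionEquivalenceClasses} yields \(U_{y} = U_{x}\), so \(\{G_{x} > 0\}\) and \(\{G_{y} > 0\}\) coincide up to negligible sets and the integral is strictly positive. Conversely, if \(\quasi{G_{x}}(y) = 0\), then either \(y \in S\) (and \(G_{y} \equiv 0\) makes the integral vanish), or \(y \in \Omega \setminus S\) and \Cref{lemmaGreenSets} together with the negligibility of \(\{G_{x} > 0\} \cap \{G_{y} > 0\}\) established inside its proof again forces the integral to vanish.

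The main obstacle is picking the right test datum. The function \(G_{x}\) itself fails to lie in \(W_{0}^{1, 2}(\Omega)\), and a bare truncation \(T_{k}(G_{x})\) would not obviously line up with \(U_{x}\) on the pointwise level or come equipped with a clean representation formula. Composing with \(H\) replaces \(G_{x}\) by a bounded positive multiple of itself on \(\{G_{x} > 0\}\), so that \(\zeta_{H(G_{x})}\) simultaneously inherits the regularity demanded by \Cref{definitionQuasiOpen} and preserves the \emph{support class} of \(G_{x}\), letting the dichotomy supplied by \Cref{propositionEquivalenceClasses,lemmaGreenSets} close the identification.
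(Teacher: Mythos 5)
Your proof is correct and follows essentially the same strategy as the paper: realize \(U_{x}\) as the positivity set \(\{\quasi{\zeta_{f}} > 0\}\) of a variational solution with bounded datum \(f\), then use the representation formula \eqref{eqRepresentationGreen} together with \Cref{propositionEquivalenceClasses} to run the case analysis. The only (harmless) differences are that the paper takes the simpler datum \(f = \chi_{U_{x}}\) rather than \(f = H(G_{x})\), and deduces the inclusion \(U_{x} \subset \Omega \setminus S\) directly from \eqref{eqSetS2} instead of from the symmetry of the Green's function.
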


\begin{proof}
	Since \(\zeta_{\chi_{U_{x}}}\) belongs to \(W_{0}^{1, 2}(\Omega)\) and its Lebesgue set coincides with \(\Omega\), it suffices to prove that 
	\begin{equation}
	\label{eqSetUx}
	U_{x} = \{\quasi{\zeta_{\chi_{U_{x}}}} > 0\}.
	\end{equation}
	To this end, recall that for every \(y \in \Omega\) the representation formula~\eqref{eqRepresentationGreen} gives
	\[{}
	\quasi{\zeta_{\chi_{U_{x}}}}(y)
	= \int_{\Omega}{G_{y} \chi_{U_{x}}}
	= \int_{U_{x}}{G_{y}}.
	\]
	If \(y \in U_{x}\), then by \Cref{propositionEquivalenceClasses} we have \(U_{x} = U_{y}\) and the integral in the right-hand side is thus positive.
	When \(y \not\in U_{x}\), by \Cref{propositionEquivalenceClasses} we have \(U_{x} \cap U_{y} = \emptyset\) and the integral equals zero.
	We conclude that \eqref{eqSetUx} holds and, in particular, \(U_{x} \subset \Omega \setminus S\) by \eqref{eqSetS2}.
\end{proof}

We recall that
\[{}
S \subset Z \subset \Omega,
\]
and then we can decompose \(\Omega\) as
\begin{equation}
	\label{eqDecompositionOmega}
\Omega 
=
S \cup (Z \setminus S) \cup (\Omega \setminus Z).
\end{equation}
The following property implies that \(Z \setminus S\) and \(\Omega \setminus Z\) can be further decomposed as a disjoint union of sets \(U_{x}\).{}
In particular, \(Z \setminus S\) is also a Sobolev-open set.

\begin{proposition}
	\label{propositionSZ}
	For every \(x \in \Omega \setminus S\), we have that 
	\[{}
	\text{either}
	\quad U_{x} \subset Z \setminus S
	\quad \text{or} \quad 
	U_{x} \subset \Omega \setminus Z.
	\]
\end{proposition}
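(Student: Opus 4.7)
My plan is to reduce the statement to an all-or-nothing property on $U_x$: for every admissible solution $w \in W_0^{1,2}(\Omega) \cap L^\infty(\Omega)$ of \eqref{eqEquationSupersolution} with nonnegative $f \in L^\infty(\Omega)$, the precise representative $\quasi{w}$ is either strictly positive at every point of $U_x$ or vanishes at every point of $U_x$. Once that is in hand, I would apply it to the distinguished solution $v = \zeta_{\chi_{\Omega \setminus Z}}$ furnished by \Cref{corollaryBounded,remarkSetA}, whose zero set is exactly $Z$. The dichotomy is then immediate: either $\quasi{v} > 0$ throughout $U_x$, in which case $U_x \cap Z = \emptyset$ and so $U_x \subset \Omega \setminus Z$; or $\quasi{v} \equiv 0$ on $U_x$, which gives $U_x \subset Z$ and therefore $U_x \subset Z \setminus S$ using $U_x \subset \Omega \setminus S$ from \Cref{propositionQuasiOpenFx}.

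To establish the all-or-nothing property, I would rely on the duality representation of $w$. Since $w$ is a distributional solution with bounded datum $f$, \Cref{remarkDistributionalSolutionsVariational} identifies it with the variational solution $\zeta_f$, and the representation formula \eqref{eqRepresentationGreen} then gives
\[
\quasi{w}(y) = \int_{\Omega}{G_y f} \quad \text{for every } y \in \Omega.
\]
For any two points $y, z \in U_x$, \Cref{propositionEquivalenceClasses} yields $U_y = U_z = U_x$. By the Lebesgue differentiation theorem applied to $G_y$ and $G_z$, the superlevel sets $\{G_y > 0\}$ and $\{G_z > 0\}$ each coincide with $U_x$ up to a Lebesgue-negligible set. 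Since $G_y$, $G_z$ and $f$ are all nonnegative, both integrals $\int_{\Omega}{G_y f}$ and $\int_{\Omega}{G_z f}$ vanish precisely when $\{f > 0\} \cap U_x$ has Lebesgue measure zero. Consequently, $\quasi{w}(y) > 0$ if and only if $\quasi{w}(z) > 0$, which is the required property.

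I do not foresee a serious obstacle: the proof is essentially a direct consequence of three previously established tools, namely the equivalence-class structure of superlevel sets (\Cref{propositionEquivalenceClasses}), Green's representation formula \eqref{eqRepresentationGreen}, and the distinguished bounded solution with zero set equal to $Z$ provided by \Cref{remarkSetA}. The only minor point to keep in mind is that $\{\quasi{G_y} > 0\}$ and $U_y$ differ at most by the single point $y$, which is negligible for integration against $f \in L^\infty(\Omega)$ and therefore causes no trouble in the argument.
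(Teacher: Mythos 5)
Your proof is correct, and it reaches the conclusion by a somewhat different route than the paper. The paper argues directly with Green's functions via the orthogonality principle (\Cref{propositionOrthogonality}): for \(x \in \Omega \setminus Z\) one has \(\int_{Z}{G_{x}} = \quasi{\zeta_{\chi_{Z}}}(x) = 0\), hence \(G_{x} = 0\) almost everywhere in \(Z\), and symmetrically \(G_{y} = 0\) almost everywhere in \(\Omega \setminus Z\) for \(y \in Z\); it then concludes that \(U_{x} \cap U_{y}\) is a negligible Sobolev-open set, hence empty, and deduces both inclusions. You instead first establish an all-or-nothing dichotomy for \(\quasi{w}\) on \(U_{x}\) — which is in substance the same computation the paper later carries out in the proof of \Cref{theoremMaximumPrincipleFull} via \eqref{eqRepresentationComponent} — and then apply it to the single distinguished solution \(\zeta_{\chi_{\Omega \setminus Z}}\), whose zero-set is exactly \(Z\) by \Cref{remarkSetA}. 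Since \(\Cref{remarkSetA}\) itself rests on the orthogonality principle, both proofs ultimately draw on the same facts; yours has the mild advantage of making the strong-maximum-principle mechanism on a single component \(U_{x}\) explicit early, at the cost of invoking \Cref{corollaryBounded} and \Cref{remarkSetA} where the paper only needs the duality identities. All the ingredients you cite (\Cref{propositionEquivalenceClasses}, \Cref{propositionQuasiOpenFx}, \eqref{eqRepresentationGreen}, \Cref{remarkDistributionalSolutionsVariational}, \Cref{corollaryBounded}, \Cref{remarkSetA}) are established before \Cref{propositionSZ}, so there is no circularity, and your handling of the single exceptional point \(y\) in \(U_{y} = \{y\} \cup \{\quasi{G_{y}} > 0\}\) is the right observation to make the integral comparison rigorous.
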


\begin{proof}
	Let \(x \in \Omega \setminus Z\).
	By the representation formula~\eqref{eqRepresentationGreen} and the orthogonality principle (\Cref{propositionOrthogonality}) we have
	\[{}
	\int_{Z}{G_{x}}
	= \quasi{\zeta_{\chi_{Z}}}(x)
	= 0.
	\]
	Thus, \(G_{x} = 0\) almost everywhere in \(Z\).{}
	Similarly, for \(y \in Z\),{}
	\[{}
	\int_{\Omega \setminus Z}{G_{y}}
	= \quasi{\zeta_{\chi_{\Omega \setminus Z}}}(y)
	= 0.
	\]
	Thus, \(G_{y} = 0\) almost everywhere in \(\Omega \setminus Z\).{}
	It then follows for every \(x \in \Omega \setminus Z\) and \(y \in Z \setminus S\) that \(U_{x} \cap U_{y}\) is a Sobolev-open negligible set.
	Hence, \(U_{x} \cap U_{y} = \emptyset\).{}
	In particular, as the superlevel sets are contained in  \(\Omega \setminus S\) (\Cref{propositionQuasiOpenFx}),
	\[{}
	U_{y} \subset (\Omega \setminus S) \setminus \{x\}
	\quad \text{and} \quad{}
	U_{x} \subset (\Omega \setminus S) \setminus \{y\}.
	\]
	Since both inclusions hold for every \(x \in \Omega \setminus Z\) and \(y \in Z \setminus S\),{}
	we then get
	\[{}
	U_{y} \subset Z \setminus S
	\quad \text{and} \quad{}
	U_{x} \subset \Omega \setminus Z.
	\qedhere
	\]
\end{proof}

\begin{figure}
\centering
\begin{tikzpicture}[scale=0.7]
	\draw (0,0) ellipse (6 and 3);
	\draw (2,0.25) ellipse (3 and 1.5);
	\draw (-2,0.25) ellipse (3 and 1.5);
	\draw (2.8,0.25)node{\small Sobolev-open};
	\draw (-2.8,0.25)node{\small fine-open};
	\draw (0,-2)node{\small quasi-open};
\end{tikzpicture}
\caption{Relation among the classes of quasi-, fine- and Sobolev-open sets.}
\label{figureRelationOpenSets}
\end{figure}
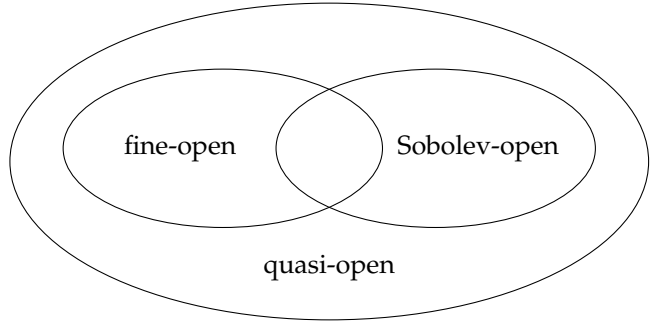

\begin{remark}
	\label{remarkQuasiOpen}
	There are in the literature several other definitions of open sets related to classical concepts of Potential theory, like regular point and capacity.
	For example, fine- and quasi-open sets are of particular interest and we refer the reader to Malý and Ziemer's book~\cite{Maly_Ziemer:1997} for their definitions.
	It is known that every fine-open set is quasi-open; see \cite{Maly_Ziemer:1997}*{Theorem~2.144}.
	In our case, as Sobolev-open sets are of the form \(\{\quasi{\xi} > 0\}\) for some \(\xi \in W_{0}^{1, 2}(\Omega)\) and \(\quasi{\xi}\) is quasicontinuous~\cite{Maly_Ziemer:1997}*{Lemma~2.152}, every Sobolev-open set is also quasi-open.
		
	The classes of fine- and Sobolev-open sets are nevertheless different and one is not contained in the other, which we summarize in \Cref{figureRelationOpenSets}.
	Indeed, any singleton \(\{a\}\) in dimension \(N \ge 2\) has \(W^{1, 2}\)~capacity zero and thus is fine-open (and also quasi-open), but never Sobolev-open.
	In dimension \(N = 3\), the Sobolev-open set \(O\) defined in \Cref{exampleQuasiOpen} is not fine-open as the origin is a regular point of \(\R^{3} \setminus O\)\,: 
	This observation goes back to Lebesgue and is due to the algebraic behavior of the boundary in the neighborhood of the origin; see \cite{Kellogg:1929}*{Chapter~XI, Section~19}.
\end{remark}

\section{Sobolev-connectedness of $U_{x}$}
\label{sectionSobolevConnected}

One can define Sobolev-connected sets in analogy with their classical topological counterpart:

\begin{definition}
	\label{definitionQuasiConnected}
	A set \(D \subset \Omega\) is \emph{Sobolev-connected} whenever, for every disjoint Sobolev-open sets \(A, B \subset \Omega\) such that \(D \subset A \cup B\), one has \(D \subset A\) or \(D \subset B\).
\end{definition}

Since there are more Sobolev-open sets than open sets, any Sobolev-connected set is connected in the usual Euclidean sense.
The converse is false; see \Cref{figureQuasiConnected}~(a) that is related to \Cref{exampleQuasiOpen}.
Using the Intermediate value theorem for Sobolev functions from \cite{VanSchaftingen_Willem:2008}, one verifies that an \emph{open} set is Sobolev-connected if and only if it is connected; see the proof of \Cref{theoremMaximumPrinciplePreciseImproved} below.
Alternatively, one can rely on the fact that such a property is also true for density-connected sets in the density-topology, see~\cite{Goffman_Waterman:1961}, and every Sobolev-open set is density-open by \Cref{propositionQuasiOpenDensityPoint} above.

\begin{example}
	\label{exampleQuasiConnected}
	Let \(N \ge 3\).{}
	For any given \(1 < \alpha < N-1\), the set
	\[{}
	D 
	= \bigl\{x = (x', x_{N}) \in \R^{N-1} \times \R : |x| < 1 \ \text{and}\ \abs{x_{N}} > |x'|^{\alpha} \bigr\} \cup \{0\}
	\]
	is Sobolev-open and Sobolev-connected in \(B_{1}(0)\), but not open for the Euclidean topology; see \Cref{figureQuasiConnected}~(b).
	One proceeds as in \Cref{exampleQuasiOpen} by taking \(\xi : B_{1}(0) \to \R\) defined by \eqref{eqExampleQuasiOpen},
	where the smooth function \(\varphi : \R \to \R\) is now such that \(\varphi(t) = 0\) for \(\abs{t} \le 1/2\), \(\varphi(t) = 1\) for \(\abs{t} \ge 1\), and \(\varphi(t) > 0\) otherwise.
	This new choice of \(\varphi\) ensures that the origin is a Lebesgue point of \(\xi\) for \(\alpha > 1\).
	Moreover, for \(x \in B_{1}(0)\) the function \(v(x) = \varphi\bigl({x_{N}}/{|x'|^{\alpha}}\bigr)\) satisfies
	\[{}
	\abs{\nabla v(x)}
	\le C_{3} \, \chi_{\{{1}/{2} \le {\abs{x_{N}}}/{|x'|^{\alpha}} \le 1\}} \biggl(\frac{1}{\abs{x_{N}}} + \frac{1}{\abs{x_{N}}^{{1}/{\alpha}}} \biggr){}
	\le 2C_{3} \, \chi_{\{|x'|^{\alpha} \le 2 \abs{x_{N}}\}} \frac{1}{\abs{x_{N}}}.
	\]
	Thus, by Fubini's theorem,
	\[
	\int_{B_{1}(0)}{\abs{\nabla v}^{2}}
	\le C_{4} \int_{0}^{1}{ \frac{x_{N}^{(N-1)/\alpha}}{x_{N}^{2} } \dif x_{N}}
	= C_{4} \int_{0}^{1}{x_{N}^{-2 + (N-1)/\alpha } \dif x_{N}}.
	\]
	The right-hand side is finite for \(\alpha < N-1\) and then \(\xi \in W_{0}^{1, 2}(B_{1}(0))\).	
	
	To prove that \(D\) is Sobolev-connected, take disjoint Sobolev-open sets \(A, B \subset \Omega\) such that \(D \subset A \cup B\) and assume that \(0 \in A\). 
	Since \(0\) is a density point of \(D\) but not a density point of \(D_{+} \vcentcolon= D \cap \{x_{N} > 0\}\) nor of \(D_{-} \vcentcolon= D \cap \{x_{N} < 0\}\), we have that \(A\) must intersect both \(D_{+}\) and \(D_{-}\).{}
	Since both sets are open and connected, they are Sobolev-connected and we deduce that \(D_{+}\) and \(D_{-}\) are contained in \(A\).{}
	Therefore, 
	\[{}
	D = \{0\}\cup D_{+} \cup D_{-}  \subset A,
	\]
	which implies that \(D\) is Sobolev-connected.
\end{example}

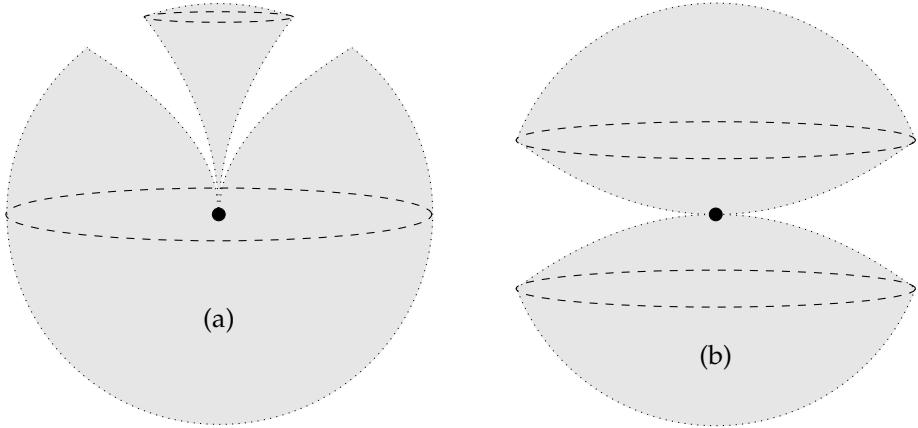
\begin{figure}
\centering{}
\begin{tikzpicture}[scale=0.7]
	\clip (-4.05,-4.05) rectangle (4.05,4.05);
	\def\fx{sqrt(4 * \x)}
	\def\gx{sqrt(10 * \x)}
	\filldraw [gray!20] (0,0) -- (2.5,3.16) arc (51.67:-231.67:4) -- cycle;
	\draw [dotted] (2.5,3.16) arc (51.67:-231.67:4);
	\filldraw [gray!20] (1.4,3.74) arc (69.47:110.53:4) -- cycle;
	\draw [dotted] (1.4,3.74) arc (69.47:110.53:4);
	\filldraw[smooth,domain=0:2.5,samples=50,gray!20] plot (\x,{\fx}) -- cycle;
	\draw[smooth,dotted,domain=0:2.5,samples=50] plot (\x,{\fx});
	\filldraw[smooth,domain=0:2.5,samples=50,gray!20] plot (-\x,{\fx}) -- cycle;
	\draw[smooth,dotted,domain=0:2.5,samples=50] plot (-\x,{\fx});
	\filldraw[smooth,domain=0:1.4,samples=50,gray!20] plot (\x,{\gx}) -- (0,3.74) -- cycle;
	\draw[smooth,dotted,domain=0:1.4,samples=50] plot (\x,{\gx});
	\filldraw[smooth,domain=0:1.4,samples=50,gray!20] plot (-\x,{\gx}) -- (0,3.74) -- cycle;
	\draw[smooth,dotted,domain=0:1.4,samples=50] plot (-\x,{\gx});
	\draw[dashed] (0,0) ellipse (4 and 0.5);
	\draw[dashed] (0,3.74) ellipse (1.4 and 0.1);
	\filldraw (0,0) circle (0.12);
	\draw (0,-2)node{\small (a)};
\end{tikzpicture}
\qquad
\begin{tikzpicture}[scale=0.7]
	\clip (-4.05,-4.05) rectangle (4.05,4.05);
	\def\fx{(\x * \x) / 10}
	\filldraw [gray!20] (3.750,1.406) arc (20.55:159.45:4) -- cycle;
	\draw [dotted] (3.750,1.406) arc (20.55:159.45:4);
	\filldraw [gray!20] (3.750,-1.406) arc (-20.55:-159.45:4) -- cycle;
	\draw [dotted] (3.750,-1.406) arc (-20.55:-159.45:4);
	\filldraw[smooth,domain=-3.75:3.75,samples=50,gray!20] plot (\x,{\fx}) -- cycle;
	\draw[smooth,dotted,domain=-3.75:3.75,samples=50] plot (\x,{\fx});
	\filldraw[smooth,domain=-3.75:3.75,samples=50,gray!20] plot (\x,{-\fx}) -- cycle;
	\draw[smooth,dotted,domain=-3.75:3.75,samples=50] plot (\x,{-\fx});
	\draw[dashed] (0,1.406) ellipse (3.75 and 0.35);
	\draw[dashed] (0,-1.406) ellipse (3.75 and 0.35);
	\filldraw (0,0) circle (0.12);
	\draw (0,-2.7)node{\small (b)};
\end{tikzpicture}
\caption{(a) Path-connected set which is not Sobolev-connected; (b) Sobolev-connected set.}
\label{figureQuasiConnected}
\end{figure}

We now show that a Sobolev-connected subset of \(\Omega \setminus S\) cannot intersect two different superlevel sets \(U_{x}\)\,:

\begin{proposition}
	\label{propositionConnected}
	If \(D \subset \Omega \setminus S\) is Sobolev-connected, then \(D \subset U_{x}\) for any \(x \in D\).
\end{proposition}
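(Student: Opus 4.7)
The plan is to exhibit \(\Omega \setminus S\) as a countable disjoint union of Sobolev-open sets and then use the Sobolev-connectedness of \(D\) to confine it to a single such set. By \Cref{propositionEquivalenceClasses} and \Cref{propositionQuasiOpenFx}, the collection \(\{U_{y} : y \in \Omega \setminus S\}\) partitions \(\Omega \setminus S\) into disjoint Sobolev-open sets. Since a non-empty Sobolev-open set has positive Lebesgue measure (by the Lebesgue differentiation theorem applied to the witnessing function \(\xi \in W_{0}^{1, 2}(\Omega)\)) and \(\Omega\) has finite measure, there can be only countably many distinct classes. Enumerate them as \((U_{j})_{j \in J}\) with \(J\) finite or countable, fix \(x \in D\), and let \(j_{0} \in J\) be the index with \(U_{j_{0}} = U_{x}\).

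Setting
\[{}
A \vcentcolon= U_{x}
\quad \text{and} \quad
B \vcentcolon= \bigcup_{j \in J \setminus \{j_{0}\}} U_{j},
\]
both \(A\) and \(B\) are Sobolev-open, using the stability of the class of Sobolev-open sets under countable unions recorded after \Cref{definitionQuasiOpen}, and they are disjoint by \Cref{propositionEquivalenceClasses}. Since
\[{}
D \subset \Omega \setminus S = A \cup B,
\]
the Sobolev-connectedness of \(D\) yields \(D \subset A\) or \(D \subset B\). The second alternative is excluded by \(x \in D \cap A\), so \(D \subset U_{x}\), which is the desired conclusion.

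The argument is essentially bookkeeping once the superlevel sets are known to be Sobolev-open and to form equivalence classes. The only nontrivial point is countability of \(J\), which I would justify from the disjointness of the \(U_{j}\)'s together with \(\sum_{j \in J}{\abs{U_{j}}} \le \abs{\Omega} < +\infty\); no further input beyond the previous section is required.
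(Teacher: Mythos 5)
Your argument is correct and is essentially the paper's proof: the set \(B = \bigcup_{j \ne j_{0}} U_{j}\) you construct is exactly \(\Omega \setminus (S \cup U_{x})\), which the paper observes is Sobolev-open for the same reason (countable disjoint decomposition of \(\Omega \setminus S\) into superlevel sets of positive measure, plus stability under countable unions), and the conclusion then follows from Sobolev-connectedness in the same way.
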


\begin{proof}
	Since \(\Omega \setminus S\) is a finite or countably infinite disjoint union of the Sobolev-open sets \(U_{y}\) (\Cref{propositionEquivalenceClasses,propositionQuasiOpenFx}), the set
	\(\Omega \setminus (S \cup U_{x})\) is Sobolev-open.
	By Sobolev-connectedness of \(D\) and the inclusion
	\[{}
	D \subset U_{x} \cup \bigl(\Omega \setminus (S \cup U_{x})\bigr),
	\]
	it follows that \(D\) is contained in one of the Sobolev-open sets in the right-hand side.
	For \(x \in D\), we then must have \(D \subset U_{x}\).
\end{proof}

A deeper property concerns the Sobolev-connectedness of all sets \(U_{x}\)\,:

\begin{proposition}
	\label{propositionConnectedF}
	For every \(x \in \Omega \setminus S\), the set \(U_{x}\) is Sobolev-connected.
\end{proposition}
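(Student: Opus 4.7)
The strategy is to argue by contradiction via a double computation of the energy. Suppose $U_x$ is not Sobolev-connected: there exist disjoint Sobolev-open sets $A, B \subset \Omega$ with $U_x \subset A \cup B$ and both $A \cap U_x, B \cap U_x$ non-empty. Since $U_x$ is itself Sobolev-open by \Cref{propositionQuasiOpenFx}, replacing $A, B$ by $A \cap U_x, B \cap U_x$ we may assume $U_x = A \sqcup B$ with both $A, B$ non-empty.

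The first step is to show that $\int_B \zeta_{\chi_A} > 0$. Take any $y \in B$: by \Cref{propositionEquivalenceClasses} we have $U_y = U_x \supset A$, so by the definition of $U_y$ the Green's function satisfies $\quasi{G_y} > 0$ on $A$, whence $G_y > 0$ almost everywhere on $A$. The representation formula~\eqref{eqRepresentationGreen} then yields
\[
\quasi{\zeta_{\chi_A}}(y) = \int_A G_y > 0 \quad \text{for every } y \in B,
\]
and by the Lebesgue differentiation theorem $\zeta_{\chi_A} > 0$ almost everywhere on $B$, so $\int_B \zeta_{\chi_A} > 0$.

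Let $u \vcentcolon= \zeta_{\chi_{U_x}}$ and write $E_f$ for the functional \eqref{eqEnergy} with datum $f$, with minimum $m_f \vcentcolon= E_f(\zeta_f)$. Applying the Green-representation argument of the previous step with $U_x$ in place of $A$ shows that $\quasi{u}$ vanishes on $\Omega \setminus U_x$. The main technical ingredient, provided by Poincaré's balayage method on Sobolev-open sets, is the direct-sum decomposition of the Sobolev energy space along a disjoint partition into Sobolev-open sets: $u$ admits a unique decomposition
\[
u = u_A + u_B
\]
with $u_A, u_B \ge 0$ in $W_0^{1,2}(\Omega) \cap L^2(\Omega; V \dif x)$, $u_A = 0$ almost everywhere on $B$, and $u_B = 0$ almost everywhere on $A$. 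Since the supports are disjoint, the energy separates,
\[
E_{\chi_{U_x}}(u) = E_{\chi_A}(u_A) + E_{\chi_B}(u_B) \ge m_{\chi_A} + m_{\chi_B}.
\]

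On the other hand, by linearity of \eqref{eqEulerLagrange} we have $u = \zeta_{\chi_A} + \zeta_{\chi_B}$; testing \eqref{eqEulerLagrange} for $\zeta_{\chi_A}$ against $\zeta_{\chi_B}$ yields the cross identity $\int(\nabla\zeta_{\chi_A} \cdot \nabla\zeta_{\chi_B} + V\zeta_{\chi_A}\zeta_{\chi_B}) = \int_A \zeta_{\chi_B}$, and a direct expansion of $E_{\chi_{U_x}}(\zeta_{\chi_A} + \zeta_{\chi_B})$ gives
\[
E_{\chi_{U_x}}(u) = m_{\chi_A} + m_{\chi_B} - \int_B \zeta_{\chi_A}.
\]
Combining the two evaluations forces $\int_B \zeta_{\chi_A} \le 0$, contradicting the first step. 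The main obstacle is the direct-sum decomposition of $u$, which is the place where Sobolev-openness of $A$ and $B$ (and not just Borel measurability) is used in an essential way.
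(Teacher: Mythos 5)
Your setup (reduction to $U_x = A \sqcup B$, the positivity $\int_B \zeta_{\chi_A} > 0$ via the representation formula~\eqref{eqRepresentationGreen}, and the energy expansion $E_{\chi_{U_x}}(\zeta_{\chi_A}+\zeta_{\chi_B}) = m_{\chi_A}+m_{\chi_B} - \int_B \zeta_{\chi_A}$) is correct, and the first step coincides with~\eqref{eq2498} in the paper. But the proof has a genuine gap at its central point: the ``direct-sum decomposition'' $u = u_A + u_B$ with $u_A, u_B \in W_0^{1,2}(\Omega)\cap L^2(\Omega; V\dif x)$ supported on $A$ and $B$ respectively. You attribute this to ``Poincar\'e's balayage method on Sobolev-open sets,'' but \Cref{lemmaBalayage} provides nothing of the sort: it constructs the minimizer over $\mathcal{W}(O,\Omega)$ for a \emph{single} Sobolev-open set $O$ and identifies the defect measure $\tau$ concentrated on its Sobolev-boundary; it does not split a given function along a partition. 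Worse, the decomposition as you state it is \emph{false} for general nonnegative $u\in W_0^{1,2}$ vanishing almost everywhere outside $A\sqcup B$: take $\Omega = B_1(0)\subset\R^3$, $A$ and $B$ the open upper and lower half-balls (both Sobolev-open, e.g.\ via $\xi = \max\{\pm x_3,0\}(1-|x|^2)$), and $u = 1-|x|^2$; then $u$ vanishes a.e.\ outside $A\sqcup B$ but $u\chi_A \notin W^{1,2}$ because of the jump across the equatorial disc. So the decomposition cannot be invoked as a known tool; it can only hold for the specific $u=\zeta_{\chi_{U_x}}$ because the interface between $A$ and $B$ carries no trace of $u$ --- which is essentially the content of the proposition you are trying to prove.

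The paper closes exactly this gap differently: with $O = A\cap U_x$ and $h=\chi_O$, it shows that the balayage minimizer $u$ over $\mathcal{W}(O,\Omega)$ \emph{coincides with} $\zeta_h$, by proving that the defect measure $\tau$ from \Cref{lemmaBalayage} is carried by $S$ (it vanishes on $O$, on every Sobolev-open subset of $\Omega\setminus O$ such as $B$ and the other superlevel sets $U_y$, and hence on all of $\Omega\setminus S$), so that $\int\quasi{\zeta_f}\dif\tau = 0$ and $u$ is the duality solution with datum $h\dif x$. Since the balayage minimizer vanishes a.e.\ on $\Omega\setminus O \supset B$, one gets $\quasi{\zeta_h}=0$ on $B$ while $\quasi{\zeta_h}>0$ on $U_x$, forcing $U_x\cap B=\emptyset$. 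If you want to salvage your energy-splitting strategy, you would have to first establish that $\zeta_{\chi_A}$ vanishes a.e.\ on $B$ (so that $u_A=\zeta_{\chi_A}$, $u_B=\zeta_{\chi_B}$ works) --- but that statement directly contradicts your own first step inside the contradiction hypothesis and is precisely the Claim the paper proves via balayage, so the energy comparison becomes redundant once it is available.
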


To prove \Cref{propositionConnectedF} we need a counterpart of Poincaré's balayage method for the Schrödinger operator on a non-empty Sobolev-open set \(O \subset \Omega\).
We use the following notation
\[{}
\mathcal{W}(O, \Omega)
= \bigl\{ v \in W_{0}^{1, 2}(\Omega) : {v} = 0 \ \text{almost everywhere in \(\Omega \setminus O\)}  \bigr\}.
\]
Observe that \(\mathcal{W}(O, \Omega)\) contains any function \(\xi\) that verifies the Sobolev-openness of \(O\). 
As a vector space, \(\mathcal{W}(O, \Omega)\) is then nontrivial and also complete with respect to the \(W^{1, 2}\)~norm.

\begin{lemma}
	\label{lemmaBalayage}
	Given a non-empty Sobolev-open set \(O \subset \Omega \setminus S\) and a nonnegative function \(h \in L^{2}(\Omega)\) such that \(h = 0\) almost everywhere in \(\Omega \setminus O\),{}
	let \(u\) be the minimizer of
	\[{}
	E(v)
	= 
	\frac{1}{2} \int_{\Omega}{(\abs{\nabla v}^{2} + Vv^{2})} - \int_{\Omega}{h v}
	\quad \text{in \(\mathcal{W}(O, \Omega) \cap L^{2}(\Omega; V \dif x)\).}
	\]
	Then, there exists a nonnegative locally finite diffuse Borel measure  \(\tau \in L^{1}(\Omega) + (W_{0}^{1, 2}(\Omega))'\) such that 
	\[{}
	\int_{\Omega}{u f}
	= \int_{\Omega}{\zeta_{f} h} - \int_{\Omega}{\quasi{\zeta_{f}} \dif\tau}
	\quad \text{for every \(f \in L^{\infty}(\Omega)\),}
	\]
	with
	\[{}
	\tau(O) = 0
	\]
	and
	\[{}
	\tau(T) = 0
	\quad \text{for every Sobolev-open set \(T \subset \Omega \setminus O\).}
	\] 
\end{lemma}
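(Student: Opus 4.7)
My approach is variational and relies on truncating the potential in the spirit of \Cref{lemmaDualitySolutionsDistributionalBis,lemmaDualitySolutionsDistributional}. Since $\mathcal{W}(O,\Omega) \cap L^{2}(\Omega; V \dif x)$ is a nontrivial closed linear subspace of $W_{0}^{1,2}(\Omega) \cap L^{2}(\Omega; V \dif x)$ on which $E$ is strictly convex and coercive, the minimizer $u$ exists and is unique. Because $|u|$ remains in the same subspace and $E(|u|) \le E(u)$ when $h \ge 0$, uniqueness forces $u \ge 0$. The Euler--Lagrange identity reads
\begin{equation*}
\int_{\Omega}(\nabla u \cdot \nabla z + V u z) = \int_{\Omega} h z
\quad \text{for every } z \in \mathcal{W}(O,\Omega) \cap L^{2}(\Omega; V \dif x).
\end{equation*}
To construct $\tau$, I denote by $u_{k}$ the minimizer over $\mathcal{W}(O,\Omega)$ of the energy $E_{k}$ obtained by replacing $V$ with $T_{k}(V)$, and I introduce on $W_{0}^{1,2}(\Omega)$ the continuous linear functional
\begin{equation*}
\langle \tau_{k}, z \rangle \vcentcolon= \int_{\Omega} h z - \int_{\Omega}(\nabla u_{k} \cdot \nabla z + T_{k}(V) u_{k} z),
\end{equation*}
which vanishes on $\mathcal{W}(O,\Omega)$ by the Euler--Lagrange identity for $u_{k}$.

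The key point is that $\tau_{k}$ is nonnegative. Given $\varphi \in C_{c}^{\infty}(\Omega)$ with $\varphi \ge 0$, I consider the balayage $\psi_{k}$ of $\varphi$ relative to $O$, defined as the minimizer of $\frac{1}{2}\int_{\Omega}(|\nabla w|^{2} + T_{k}(V) w^{2})$ over $\{w \in W_{0}^{1,2}(\Omega) : w - \varphi \in \mathcal{W}(O,\Omega)\}$. The weak maximum principle yields $\psi_{k} \ge 0$, and the Euler--Lagrange identity $\int_{\Omega}(\nabla \psi_{k} \cdot \nabla z + T_{k}(V) \psi_{k} z) = 0$ for $z \in \mathcal{W}(O,\Omega)$, applied at $z = u_{k}$, combined with $\varphi - \psi_{k} \in \mathcal{W}(O,\Omega)$, gives
\begin{equation*}
\langle \tau_{k}, \varphi \rangle = \langle \tau_{k}, \psi_{k}\rangle = \int_{\Omega} h \psi_{k} \ge 0.
\end{equation*}
The Riesz representation theorem then identifies $\tau_{k}$ with a nonnegative Radon measure. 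Reasoning as in \Cref{lemmaDualitySolutionsDistributionalBis,lemmaDualitySolutionsDistributional}, I pass to the limit and get $u_{k} \to u$ and $T_{k}(V) u_{k} \to Vu$ in $L^{1}(\Omega)$, whence $\tau_{k} \to \tau$ as a nonnegative locally finite measure. The decomposition $\tau = (h - Vu) \dif x + \Delta u$ shows $\tau \in L^{1}(\Omega) + (W_{0}^{1,2}(\Omega))'$, and diffuseness with respect to the $W^{1,2}$~capacity follows because nonnegative distributions in $(W_{0}^{1,2}(\Omega))'$ never charge sets of capacity zero, a property preserved in the monotone limit.

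The integral identity is then obtained by testing the limit equation against $z = \zeta_{f}$ for $f \in L^{\infty}(\Omega)$ and pairing with the Euler--Lagrange identity for $\zeta_{f}$ at $z = u \in \mathcal{W}(O,\Omega)$, yielding
\begin{equation*}
\int_{\Omega} u f = \int_{\Omega} \zeta_{f} h - \int_{\Omega} \quasi{\zeta_{f}} \dif\tau,
\end{equation*}
where the measure pairing is legitimate by quasi-continuity of $\quasi{\zeta_{f}}$ and diffuseness of $\tau$. The relation $\tau(T) = 0$ for Sobolev-open $T \subset \Omega \setminus O$ follows from the clean observation that $\langle \tau, \xi\rangle = 0$ whenever $\xi \in W_{0}^{1,2}(\Omega)$ is nonnegative with $\xi = 0$ almost everywhere in $O$: indeed $h\xi = 0$ a.e.~since $h$ is supported in $O$, $Vu\xi = 0$ a.e.~since $u$ vanishes outside $O$, and $\nabla u \cdot \nabla \xi = 0$ a.e.~in $\Omega$ by the standard identity $\nabla v = 0$ on $\{v = 0\}$ applied to $u$ on $\Omega \setminus O$ and to $\xi$ on $O$; taking $\xi$ as a verifier of $T$ and invoking nonnegativity of $\tau$ gives $\tau(T) = 0$. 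The main obstacle is the companion identity $\tau(O) = 0$: a verifier of $O$ need not belong to $L^{2}(\Omega; V \dif x)$ and so cannot be used directly as a test function in the Euler--Lagrange identity for $u$; my plan is to exhaust $O$ from within by superlevel sets $O_{n} = \{\quasi{\xi_{n}} > 1/n\}$ of constrained minimizers $\xi_{n}$ of $E$ with bounded data adapted to $O$, each of which is admissible as a test function, and to conclude via monotone convergence together with the diffuseness of $\tau$.
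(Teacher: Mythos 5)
Your construction of \(\tau\) (truncating \(V\), comparing with a balayage of nonnegative test functions, and passing to the limit) is a workable alternative to the paper's more direct route, which simply tests the Euler--Lagrange identity for \(u\) with \(T_{1}(ku)\varphi\) to get \(-\Delta u + Vu \le h\) in \(\mathcal D'(\Omega)\) and invokes the representation of positive distributions. One caveat in your limit passage: the claim that \(T_{k}(V)u_{k} \to Vu\) in \(L^{1}(\Omega)\) is not justified by ``reasoning as in'' the earlier lemmas (there it either presupposes the existence of a distributional solution or is not proved at all --- only a Fatou inequality is). Fortunately you do not need it: Fatou gives \(Vu\dif x \le \nu\) for any weak-\(^*\) limit \(\nu\) of \(T_{k}(V)u_{k}\dif x\), and then \((h - Vu)\dif x + \Delta u = \bigl(h\dif x - \nu + \Delta u\bigr) + \bigl(\nu - Vu \dif x\bigr) \ge 0\), after which diffuseness and membership in \(L^{1}(\Omega) + (W_{0}^{1,2}(\Omega))'\) follow from this decomposition (not from ``preservation of diffuseness under monotone limits,'' which is false in general --- mollified Dirac masses are a counterexample). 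Your arguments for the integral identity and for \(\tau(T) = 0\) coincide with the paper's.

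The genuine gap is \(\tau(O) = 0\), which you explicitly leave as a ``plan.'' The proposed exhaustion of \(O\) by superlevel sets of constrained minimizers is not obviously workable: to conclude you would need \(\tau\bigl(O \setminus \bigcup_{n} O_{n}\bigr) = 0\), i.e.\ that these minimizers are positive quasi-everywhere on \(O\), which is a strong-maximum-principle statement on \(O\) not available at this stage of the argument. Note also that your completed steps never use the hypothesis \(O \subset \Omega \setminus S\), which must enter precisely here. The paper's resolution is short: comparing the Euler--Lagrange identity with the identity defining \(\tau\) gives \(\int_{\Omega}{\quasi{v}\dif\tau} = 0\) for every \(v \in \mathcal{W}(O,\Omega) \cap L^{2}(\Omega; V\dif x) \cap L^{\infty}(\Omega)\); one then takes \(v = \min\{\xi, \zeta_{1}\}\), where \(\xi\) is a bounded verifier of the Sobolev-openness of \(O\). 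This \(v\) lies in the admissible class because \(0 \le v \le \zeta_{1} \in L^{2}(\Omega; V\dif x)\), and \(\quasi{v} = \min\{\quasi{\xi}, \quasi{\zeta_{1}}\} > 0\) on \(O\) exactly because \(O \subset \Omega\setminus S\) forces \(\quasi{\zeta_{1}} > 0\) there; nonnegativity of \(\tau\) then yields \(\tau(O) = 0\). You should replace your exhaustion plan by this test function.
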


The measure \(\tau\) can be interpreted as the density of charges in \(\Omega\) needed to obtain a zero potential outside \(O\), starting from a given potential \(u\) that satisfies the equation \(-\Delta u + Vu = h\) in \(O\) and vanishes on the Sobolev-boundary of \(O\).
The properties \(\tau(O) = \tau(T) = 0\) encode the concentration of \(\tau\) on the Sobolev-boundary of \(O\).

\begin{proof}[Proof of \Cref{lemmaBalayage}]
	The minimizer \(u\) exists and satisfies the Euler-Lagrange equation
	\begin{equation}
	\label{eqBalayageWeak}
	\int_{\Omega}{(\nabla u \cdot \nabla v + V uv)} 
	= \int_{\Omega}{h v}
	\quad \text{for every \(v \in \mathcal{W}(O, \Omega) \cap L^{2}(\Omega; V \dif x)\).}
	\end{equation}
	Using various choices of test functions in \eqref{eqBalayageWeak}, one shows that \(u\) is nonnegative, \(u \in L^{1}(\Omega; V \dif x)\) and 
	\begin{equation}
	\label{eqVariationalQuasiOpen}
	- \Delta u + V u 
	\le h
	\quad 
	\text{in the sense of distributions in \(\Omega\).} 
	\end{equation}
	Indeed, taking \(v = \min{\{u, 0\}}\) in \eqref{eqBalayageWeak}, one sees that \(u \ge 0\) almost everywhere in \(\Omega\).{}
	Taking \(v = T_{1}(ku)\) with \(k \in \N\) and letting \(k \to \infty\), one deduces that
	\[{}
	\norm{Vu}_{L^{1}(\Omega)}
	\le \norm{h}_{L^{1}(\Omega)}.
	\] 
	Finally, to show \eqref{eqVariationalQuasiOpen}, one chooses \(v = T_{1}(ku) \varphi \) for any nonnegative \(\varphi \in C_{c}^{\infty}(\Omega)\).{}
	Dropping the nonnegative term \(\nabla u \cdot \nabla T_{1}(ku) \, \varphi\), as \(k \to \infty\) one gets
	\[{}
	\int_{\Omega}{(\nabla u \cdot \nabla \varphi \, \chi_{\{u > 0\}} + V u\varphi)} 
	\le \int_{\Omega}{h \varphi},
	\]
	from which \eqref{eqVariationalQuasiOpen} follows since \(\nabla u = 0\) almost everywhere on \(\{u = 0\}\).{}
	
	By \eqref{eqVariationalQuasiOpen} and a classical property of positive distributions, there exists a nonnegative locally finite Borel measure \(\tau\) in \(\Omega\) such that
	\begin{equation}
	\label{eqPoincareDistributions}
	- \Delta u + V u 
	= h - \tau{}
	\quad 
	\text{in the sense of distributions in \(\Omega\).} 
	\end{equation}
	Since \(u \in W_{0}^{1, 2}(\Omega)\), \(Vu \in L^{1}(\Omega)\) and \(h \in L^{2}(\Omega)\), we have that \(\tau\) is diffuse with respect to the \(W^{1, 2}\)~capacity \cite{Grun-Rehomme:1977} and belongs to \(L^{1}(\Omega) + (W_{0}^{1, 2}(\Omega))'\). 
	The latter property is a general fact satisfied by diffuse measures that has been established in~\cite{Boccardo_Gallouet_Orsina:1996}.{}
	We now prove that
	\begin{equation}
	\label{eqTestFunctionDuality}
	\int_{\Omega}{(\nabla u \cdot \nabla z + Vu z)} 
	= \int_{\Omega}{z h} - \int_{\Omega}{\quasi{z} \dif\tau}
	\quad 
	\text{for every \(z \in W_{0}^{1, 2}(\Omega) \cap L^{\infty}(\Omega)\)}.
	\end{equation}	
	To this end, we write in functional form the action on any \(\varphi \in C_{c}^{\infty}(\Omega)\) in \eqref{eqPoincareDistributions} as
	\[{}
	\int_{\Omega}{(\nabla u \cdot \nabla \varphi + Vu \varphi)} 
	= \int_{\Omega}{\varphi h} - \int_{\Omega}{\varphi \dif\tau}
	= \int_{\Omega}{\varphi h} - \tau[\varphi].
	\]
	Since \(\tau \in L^{1}(\Omega) + (W_{0}^{1, 2}(\Omega))'\), by an approximation of \(z\in W_{0}^{1, 2}(\Omega) \cap L^{\infty}(\Omega)\) with functions in \(C_{c}^{\infty}(\Omega)\) we get
	\[
	\int_{\Omega}{(\nabla u \cdot \nabla z + Vu z)} 
	= \int_{\Omega}{z h} - \tau[z].
	\]
	The identification of \(\tau[z]\) as integration with respect to \(\tau\) then gives \eqref{eqTestFunctionDuality}.
	In particular, for every \(f \in L^{\infty}(\Omega)\) we can apply \eqref{eqTestFunctionDuality} with \(z = \zeta_{f}\).{}
	Using \(u\) as test function in the Euler-Lagrange equation~\eqref{eqEulerLagrange} satisfied by \(\zeta_{f}\) we deduce that
	\[{}
	\int_{\Omega}{u f}
	= \int_{\Omega}{(\nabla u \cdot \nabla \zeta_{f} + V u \zeta_{f})} 
	= \int_{\Omega}{\zeta_{f} h} - \int_{\Omega}{\quasi{\zeta_{f}} \dif\tau}.
	\]
	
	We now prove that
	\(
	\tau(T) = 0
	\)
	holds for every Sobolev-open set \(T \subset \Omega \setminus O\).{}
	Replacing the function \(\xi\) coming from the definition of Sobolev-openness of \(T\) by the truncated function \(T_{1}(\xi)\), we may assume from the beginning that \(\xi \in W_{0}^{1, 2}(\Omega) \cap L^{\infty}(\Omega)\).
	We are thus entitled to take in \eqref{eqTestFunctionDuality} the test function \(z = \xi\).{}
	Since \(\xi = 0\) almost everywhere in \(O\) and \(u = h = 0\) almost everywhere in \(\Omega \setminus O\), we get
	\[{}
	\int_{\Omega}{\quasi{\xi} \dif\tau} 
	= \int_{\Omega}{\xi h} - \int_{\Omega}{(\nabla u \cdot \nabla \xi + Vu \xi)} 
	=  0.
	\]
	As \(\quasi{\xi} > 0\) in \(T\), we conclude that \(\tau(T) = 0\) .	
	
	We are left to prove that \(\tau(O) = 0\).{}
	For this purpose, we now take \(v \in \mathcal{W}(O, \Omega) \cap L^{2}(\Omega; V \dif x) \cap L^{\infty}(\Omega)\), which is an admissible test function for both \eqref{eqBalayageWeak} and \eqref{eqTestFunctionDuality}.{}
	Comparison between both identities gives
	\begin{equation}
	\label{eqLambda}
	\int_{\Omega}{\quasi{v} \dif\tau}
	= 0.
	\end{equation}
	As the function \(\xi\) coming from the definition of the Sobolev-openness of \(O\) belongs to \(\mathcal{W}(O, \Omega)\) and the torsion function \(\zeta_{1}\) belongs to \(W_{0}^{1, 2}(\Omega) \cap L^{2}(\Omega; V \dif x) \cap L^{\infty}(\Omega)\), we may apply \eqref{eqLambda} with \(v \vcentcolon= \min{\{\xi, \zeta_{1}\}}\).{}
	Observe that every point in \(\Omega\) is a Lebesgue point of \(v\) and 
	\begin{equation}
		\label{eq2894}
	\quasi{v} 
	= \min{\{\quasi{\xi}, \quasi{\zeta_{1}}\}},
	\end{equation}
	which is a consequence of the facts that \(\min{\{a, b\}} = a - (a - b)^{+}\) for every \(a, b \in \R\) and composition with Lipschitz functions preserves Lebesgue points.
	Moreover, the assumption \(O \subset \Omega \setminus S\) implies that \(\quasi{\zeta_{1}}(x) > 0\) for every \(x \in O\) and then by \eqref{eq2894} and the choice of \(\xi\) we have \(\quasi{v} > 0\) in \(O\).{}
	As \(\tau\) is nonnegative, we deduce from \eqref{eqLambda} that \(\tau(O) = 0\).{}
\end{proof}

\begin{proof}[Proof of \Cref{propositionConnectedF}]
	Assume that \(U_{x} \subset A \cup B\), where \(A, B \subset \Omega\) are disjoint Sobolev-open sets, and \(A \cap U_{x} \neq \emptyset\).{}
	Since \(A \cap U_{x}\) is Sobolev-open, it has positive Lebesgue measure.
	We then let \(h \vcentcolon= \chi_{A \cap U_{x}}\)\,.{}
	As \(\zeta_{h}\) is a duality solution of \eqref{eqDirichletProblem} with datum \(\mu = h \dif x\), by the representation formula \eqref{eqRepresentationGreen} we have
	\begin{equation}
	\label{eq2491}
	\quasi{\zeta_{h}}(y)
	= \int_{\Omega}{G_{y} h}
	= \int_{A \cap U_{x}}{G_{y}}
	\quad \text{for every \(y \in \Omega\).}
	\end{equation}
	We then observe that
	\begin{equation}
		\label{eq2498}
	\quasi{\zeta_{h}} > 0
	\quad \text{in \(U_{x}\).}
	\end{equation}
	Indeed, since \(U_{y} = U_{x}\) for \(y \in U_{x}\) (by \Cref{propositionEquivalenceClasses}) and \(A \cap U_{x}\) has positive Lebesgue measure, from \eqref{eq2491} we get
	\[{}
	\quasi{\zeta_{h}}(y)
	= \int_{A \cap U_{y}}{G_{y}}
	> 0
	\quad \text{for every \(y \in U_{x}\).}
	\]
	
	In view of \eqref{eq2498}, the proof of \(U_{x} \subset A\) will be complete once we show that
	\begin{equation}
		\label{eq2499}
	\quasi{\zeta_{h}} = 0
	\quad \text{in \(B\).}
	\end{equation}	
	The heart of the matter lies in the following
	
	\begin{Claim}
		\(\zeta_{h} = 0\) almost everywhere in \(B\).
	\end{Claim}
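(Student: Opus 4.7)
The plan is to apply the balayage result \Cref{lemmaBalayage} with \(O \vcentcolon= A \cap U_{x}\), which is admissible because \(O\) is Sobolev-open (intersection of two such sets), it is contained in \(U_{x} \subset \Omega \setminus S\) by \Cref{propositionQuasiOpenFx}, and the datum \(h = \chi_{A \cap U_{x}}\) vanishes a.e.\ outside \(O\).  The lemma then produces a nonnegative function \(u \in \mathcal{W}(O, \Omega)\) with \(u = 0\) a.e.\ on \(\Omega \setminus O\)---in particular a.e.\ on \(B\), since \(B \subset \Omega \setminus A \subset \Omega \setminus O\)---together with a nonnegative diffuse measure \(\tau\) satisfying
\[
\int_{\Omega}{u f} = \int_{\Omega}{\zeta_{f} h} - \int_{\Omega}{\quasi{\zeta_{f}} \dif\tau}
\quad \text{for every \(f \in L^{\infty}(\Omega)\),}
\]
with \(\tau(O) = 0\) and \(\tau(T) = 0\) for every Sobolev-open \(T \subset \Omega \setminus O\).

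The crucial step will be to show that \(\tau\) is carried by \(S\).  I would decompose \(\Omega \setminus S\) as a countable disjoint union of the Sobolev-open classes \(U_{y}\) (\Cref{propositionEquivalenceClasses,propositionQuasiOpenFx}) and treat the class \(U_{x}\) separately from all the others.  For \(U_{y} \ne U_{x}\), \Cref{propositionEquivalenceClasses} gives \(U_{y} \cap U_{x} = \emptyset\), so \(U_{y} \subset \Omega \setminus O\) is itself a Sobolev-open test set and \(\tau(U_{y}) = 0\).  For the class \(U_{x}\), the hypothesis \(U_{x} \subset A \cup B\) yields \(U_{x} \setminus O = U_{x} \cap (\Omega \setminus A) \subset B\); since \(B\) is Sobolev-open and disjoint from \(A \supset O\), the property of \(\tau\) forces \(\tau(B) = 0\), hence \(\tau(U_{x} \setminus O) = 0\).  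Combined with \(\tau(O) = 0\), this yields \(\tau(\Omega \setminus S) = 0\).

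Once \(\tau\) is carried by \(S\), the characterization \eqref{eqSetS2} ensures that \(\quasi{\zeta_{f}} = 0\) on \(S\) for every \(f \in L^{\infty}(\Omega)\), so the \(\tau\)-integral in the balayage identity vanishes.  Exploiting the symmetry \(\int_{\Omega}{\zeta_{f} h} = \int_{\Omega}{\zeta_{h} f}\) (obtained by testing \eqref{eqEulerLagrange} for \(\zeta_{h}\) against \(z = \zeta_{f}\)), one concludes that \(\int_{\Omega}{u f} = \int_{\Omega}{\zeta_{h} f}\) for every \(f \in L^{\infty}(\Omega)\), whence \(u = \zeta_{h}\) almost everywhere in \(\Omega\).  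Combined with the vanishing of \(u\) on \(B\), this gives the claim.

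The main obstacle is pinning down the support of the balayage measure \(\tau\): the lemma only controls \(\tau\) against arbitrary Sobolev-open subsets of \(\Omega \setminus O\), and the portion of \(\Omega \setminus S\) that sits inside \(U_{x}\) is not automatically of this form.  The hypothesis \(U_{x} \subset A \cup B\) is exactly what supplies the missing Sobolev-open cover, through the set \(B\); without it one could not exclude mass of \(\tau\) on \(U_{x} \setminus O\).
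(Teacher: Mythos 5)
Your proposal is correct and follows essentially the same route as the paper: balayage on \(O = A \cap U_{x}\), showing \(\tau\) is carried by \(S\) by testing against the classes \(U_{y}\) with \(y \notin U_{x}\) and using \(\tau(O) = \tau(B) = 0\) to handle \(U_{x}\) itself, then identifying \(u\) with \(\zeta_{h}\). The only cosmetic difference is that you conclude via the symmetry \(\int_{\Omega}\zeta_{f}h = \int_{\Omega}\zeta_{h}f\) where the paper invokes uniqueness of the duality solution, which amounts to the same thing.
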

	
	\begin{proof}[Proof of the Claim]
		It suffices to prove that \(\zeta_{h} = u\), where \(u\) is the function given by Poincaré's balayage method with \(h = \chi_{A \cap U_{x}}\) as above and \(O = A \cap U_{x}\).{}
		Indeed, we recall that \(u = 0\) almost everywhere in \(\Omega \setminus O\) and, by the choice of \(O\), we have  
		\[{}
		B \subset \Omega \setminus A \subset \Omega \setminus O.{}
		\]
		By \Cref{lemmaBalayage}, the function \(u\) satisfies
		\begin{equation}
		\label{eqpropositionConnectedF}
		\int_{\Omega}{u f}
		= \int_{\Omega}{\zeta_{f} h} - \int_{\Omega}{\quasi{\zeta_{f}} \dif\tau}
		\quad \text{for every \(f \in L^{\infty}(\Omega)\),}
		\end{equation}
		where \(\tau\) is a nonnegative measure in \(\Omega\).{}
		Let us first show that \(\tau\) is carried by the Sobolev-closed set \(S\), that is,
		\begin{equation}
		\label{eqBalayageZeroMeasure}
		\tau = 0 
		\quad \text{in \(\Omega \setminus S\).}
		\end{equation}
		Since \(\tau\) is nonnegative and, as a consequence of \Cref{propositionEquivalenceClasses}, \(\Omega \setminus S\) can be covered by at most countably many sets \(U_{y}\)\,, it suffices to prove that \(\tau(U_{y}) = 0\) for every \(y \in \Omega \setminus S\).{}
		
		When \(y \not\in U_{x}\), an application of \Cref{propositionEquivalenceClasses} gives
		\[{}
		U_{y} \subset \Omega \setminus U_{x} \subset \Omega \setminus O
		\]
		and one applies \Cref{lemmaBalayage} with \(T = U_{y}\).
		We are left to prove that \(\tau(U_{x}) = 0\).
		To this end, we observe that \(U_{x} \subset O \cup B\).{}
		Thus, by monotonicity and additivity of \(\tau\),{}
		\[{}
		0 \le \tau(U_{x})
		 \le \tau(O \cup B)
		 = \tau(O) + \tau(B).
		\]
		By \Cref{lemmaBalayage} we have \(\tau(O) = 0\).{}
		Since \(B \subset \Omega \setminus O\) is Sobolev-open, once again by \Cref{lemmaBalayage} we have \(\tau(B) = 0\). 
		Thus, \(\tau(U_{x}) = 0\) and \eqref{eqBalayageZeroMeasure} is satisfied.

		Since \(\quasi{\zeta_{f}} = 0\) in \(S\), we thus have
		\[{}
		\int_{\Omega}{\quasi{\zeta_{f}} \dif\tau}
		= \int_{\Omega \setminus S}{\quasi{\zeta_{f}} \dif\tau}
		= 0
		\quad \text{for every \(f \in L^{\infty}(\Omega)\).}
		\]
		Inserting this identity in \eqref{eqpropositionConnectedF}, we conclude that \(u\) is the duality solution of \eqref{eqDirichletProblem} with datum \(\mu = h \dif x\) and then, by uniqueness, \(u = \zeta_{h}\).{}
	\end{proof}
	
	We now proceed with the proof of \eqref{eq2499}.
	By the Claim, for every ball \(B_{r}(x) \subset \Omega\) we have
	\[{}
	0 
	\le \fint_{B_{r}(x)}{\zeta_{h}}
	= \frac{1}{\abs{B_{r}(x)}} \int_{B_{r}(x) \setminus B}{\zeta_{h}}
	\le \frac{\abs{B_{r}(x) \setminus B}}{\abs{B_{r}(x)}} \, \norm{\zeta_{h}}_{L^{\infty}(\Omega)}.
	\]
	Since \(B\) is Sobolev-open, every \(x \in B\) is a density point of \(B\) by \Cref{propositionQuasiOpenDensityPoint}.
	In this case, the right-hand side converges to zero as \(r \to 0\) and we conclude that \(\quasi{\zeta_{h}}(x) = 0\), which is \eqref{eq2499}.{}
	\end{proof}	


\section{Proofs of \Cref{theoremMaximumPrincipleFull} and \Cref{theoremMaximumPrinciplePrecise}}
\label{sectionProofs}

\begin{proof}[Proof of \Cref{theoremMaximumPrincipleFull}]
Each superlevel set \(U_{x}\) from \Cref{definitionSetF} is Sobolev-open (\Cref{propositionQuasiOpenFx}), Sobolev-connected (\Cref{propositionConnectedF}) and \(U_{x} \subset \Omega \setminus Z\) whenever \(x \in \Omega \setminus Z\) (\Cref{propositionSZ}).
Since \(U_{x}\) is non-empty and Sobolev-open, it has positive Lebesgue measure.
Thus, by \Cref{propositionEquivalenceClasses}, the set \(\Omega \setminus Z\) is a finite or countably infinite disjoint union of  components \((D_{j})_{j \in J}\) of the form \(D_{j} = U_{x_{j}}\) for some \(x_{j} \in \Omega \setminus Z\).{}

Uniqueness of the decomposition is based on a standard topological argument.
Indeed, let \((\widetilde D_{i})_{i \in I}\)\/ be another finite or infinite countable decomposition of \(\Omega \setminus Z\) in terms of disjoint Sobolev-connected-open sets.
If \(D_{k} \cap \widetilde D_{l} \neq \emptyset\), then as
\[{}
D_{k} \subset \widetilde D_{l} \cup \bigcup_{i \in I \setminus \{l\}}{\widetilde D_{i}}
\]
and the sets in the right-hand side are disjoint and Sobolev-open,
it follows from the definition of Sobolev-connectedness that \(D_{k} \subset \widetilde D_{l}\)\,.{}
Interchanging the roles of \(D_{k}\) and \(\widetilde D_{l}\)\,, the reverse inclusion also holds.
Hence, both families \((D_{j})_{j \in J}\) and \((\widetilde D_{i})_{i \in I}\) coincide up to a bijection between indices.

It remains to prove that a function \(w \in W_{0}^{1, 2}(\Omega) \cap L^{\infty}(\Omega)\) satisfying \eqref{eqEquationSupersolution} with nonnegative \(f \in L^{\infty}(\Omega)\) is either positive or zero in each component \(D_{j}\).
To this end, take \(x \in D_{j}\).{}
By \Cref{propositionEquivalenceClasses}, we have \(U_{x} = D_{j}\) and then \(G_{x} = 0\) almost everywhere in \(\Omega \setminus U_{x} = \Omega \setminus D_{j}\).{}
By Green's representation formula in \Cref{theoremGreen} we thus have
\begin{equation}
	\label{eqRepresentationComponent}
\quasi{w}(x)
= \int_{\Omega}{G_{x} f}
= \int_{D_{j}}{G_{x} f}
\quad \text{for every \(x \in D_{j}\)}.
\end{equation}
If \(\quasi{w}(x) = 0\) for some \(x \in D_{j}\)\,, then by positivity of \(G_{x}\) in \(U_{x} = D_{j}\) and nonnegativity of \(f\), we must have 
\begin{equation}
	\label{eq3059}
	f = 0
	\quad \text{almost everywhere in \(D_{j}\).}
\end{equation}
By \eqref{eqRepresentationComponent} applied at a point \(y \in D_{j}\) and \eqref{eq3059} we conclude that
\[{}
\quasi{w}(y)
= \int_{D_{j}}{G_{y} f}
= 0
\quad \text{for every \(y \in D_{j}\)}.
\qedhere
\]
\end{proof}

The representation formula \eqref{eqRepresentationComponent} in terms of each Sobolev-connected component \(D_{j}\) makes more transparent the fact that the strong-maximum-principle alternative in \(D_{j}\) is independent of the behavior of the solution in the other components.
Observe that for any given subset of indices \(L \subset J\), by \Cref{theoremGoodMeasuresCharacterization}, see also \Cref{remarkExistenceBounded}, there exists a solution \(w \in W_{0}^{1, 2}(\Omega) \cap L^{\infty}(\Omega)\) of \eqref{eqEquationSupersolution} with \(f = \chi_{B}\) and \(B = \bigcup\limits_{l \in L}{D_{l}}\).
We then deduce from \eqref{eqRepresentationComponent} in this case that \(\quasi{w} > 0\) in \(D_{j}\) if and only if \(j \in L\).

The proof of \Cref{theoremMaximumPrincipleFull} adapts automatically to duality solutions after replacing the universal zero-set \(Z\) by  the zero-set of the torsion function,
\[{}
S = \{\quasi{\zeta_{1}} = 0\}.
\]
As the Sobolev-connected components of \(\Omega \setminus S\) are obtained from all distinct superlevel sets \(U_{x}\) with \(x \in \Omega \setminus S\) (and not only \(x \in \Omega \setminus Z\) as in \Cref{theoremMaximumPrincipleFull}), from \Cref{propositionSZ} they are formed by the collection \((D_{j})_{j \in J}\) of Sobolev-connected components of \(\Omega \setminus Z\) and the Sobolev-connected components of the Sobolev-open set \(Z \setminus S\).{}
In this respect, there can be more components when \(Z \ne S\), but they can never get larger by replacing \(\Omega \setminus Z\) with \(\Omega \setminus S\).
We may then summarize the counterpart of \Cref{theoremMaximumPrincipleFull} for duality solutions as follows:

\begin{theorem}
	The Sobolev-open set \(\Omega \setminus S\) can be uniquely decomposed as a finite or countably infinite family \((D_{j})_{j \in \widetilde J}\)\/ of Sobolev-connected-open sets that contains \((D_{j})_{j \in J}\).{}
	In addition, every duality solution \(\zeta_{f}\) of \eqref{eqDirichletProblem} with \(\mu = f \dif x\) and nonnegative \(f \in L^{\infty}(\Omega)\) satisfies, in each component \(D_{j}\) with \(j \in \widetilde J\),{}
	\begin{center}
		either 
		\quad
		 \(\quasi{\zeta_{f}} > 0\) \ in \(D_{j}\) \quad{}
		or 
		\quad{}
		\(\quasi{\zeta_{f}} \equiv 0\) \ in \(D_{j}\).
	\end{center}	
\end{theorem}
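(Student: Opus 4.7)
The plan is to mirror the proof of \Cref{theoremMaximumPrincipleFull}, but replacing the role of the universal zero-set $Z$ by $S$ and the role of Green's representation formula for distributional solutions by its duality counterpart~\eqref{eqRepresentationGreen}. All the heavy machinery has already been set up, so the proof mostly consists in assembling it.

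I would first construct the family $(D_j)_{j \in \widetilde J}$ using the superlevel sets $U_x$ from \Cref{definitionSetF}. By \Cref{propositionEquivalenceClasses,propositionQuasiOpenFx,propositionConnectedF}, the collection $\{U_x : x \in \Omega \setminus S\}$ consists of pairwise disjoint Sobolev-open, Sobolev-connected subsets of $\Omega \setminus S$ whose union is $\Omega \setminus S$. Since each $U_x$ is non-empty and Sobolev-open, it has positive Lebesgue measure, hence there are at most countably many distinct such sets, which we enumerate as $(D_j)_{j \in \widetilde J}$. Uniqueness of the decomposition follows from the same topological argument as in the proof of \Cref{theoremMaximumPrincipleFull}: if $(\widetilde D_i)_{i \in I}$ is another decomposition into disjoint Sobolev-connected-open sets and $D_k \cap \widetilde D_l \neq \emptyset$, then writing $D_k \subset \widetilde D_l \cup \bigcup_{i \neq l} \widetilde D_i$ and using the Sobolev-connectedness of $D_k$ forces $D_k \subset \widetilde D_l$, and interchanging the roles gives equality.

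For the containment $(D_j)_{j \in J} \subset (D_j)_{j \in \widetilde J}$, I recall that $(D_j)_{j \in J}$ are exactly the superlevel sets $U_x$ with $x \in \Omega \setminus Z$. Since $S \subset Z$, we have $\Omega \setminus Z \subset \Omega \setminus S$, so each such $U_x$ is automatically one of the components in the new decomposition. \Cref{propositionSZ} then guarantees that the remaining components of $(D_j)_{j \in \widetilde J}$ sit inside $Z \setminus S$ and form its Sobolev-connected-open decomposition.

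The alternative on each component follows from the representation formula~\eqref{eqRepresentationGreen}: for every $x \in \Omega$ and $f \in L^\infty(\Omega)$,
\[
\quasi{\zeta_f}(x) = \int_\Omega G_x f.
\]
Fix $j \in \widetilde J$ and $x \in D_j$. By \Cref{propositionEquivalenceClasses}, $U_x = D_j$, and by \eqref{eqComplementSuperlevel} together with the Lebesgue differentiation theorem, $G_x = 0$ almost everywhere in $\Omega \setminus D_j$ while $G_x > 0$ almost everywhere in $D_j$. Consequently
\[
\quasi{\zeta_f}(x) = \int_{D_j} G_x f \quad \text{for every } x \in D_j.
\]
If there exists some $x_0 \in D_j$ with $\quasi{\zeta_f}(x_0) = 0$, then by positivity of $G_{x_0}$ on $D_j$ and nonnegativity of $f$, we deduce $f = 0$ almost everywhere in $D_j$, whence the formula above yields $\quasi{\zeta_f}(y) = 0$ for every $y \in D_j$. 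The only nonobvious ingredient above is the identification of the support structure of $G_x$ in terms of $U_x$, which follows directly from \Cref{definitionSetF} and \eqref{eqComplementSuperlevel}; I do not expect any genuine obstacle, as the proof is essentially a transcription of the one of \Cref{theoremMaximumPrincipleFull} with the broader class of sets $U_x$ now indexed by $\Omega \setminus S$ instead of $\Omega \setminus Z$.
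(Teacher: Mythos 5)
Your proposal is correct and follows essentially the same route as the paper, which explicitly obtains this theorem by adapting the proof of \Cref{theoremMaximumPrincipleFull}: one takes all superlevel sets \(U_{x}\) with \(x \in \Omega \setminus S\) (rather than only \(x \in \Omega \setminus Z\)), invokes \Cref{propositionEquivalenceClasses,propositionQuasiOpenFx,propositionConnectedF,propositionSZ}, and runs the same argument via the representation formula \eqref{eqRepresentationGreen}. No gaps.
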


We now present a stronger version of \Cref{theoremMaximumPrinciplePrecise}, where the Hausdorff-measure assumption is made upon \(S\) and gives a sufficient condition for equality with the universal zero-set \(Z\)\,:

\begin{proposition}
	\label{theoremMaximumPrinciplePreciseImproved}
	If\/ \(\cH^{N-1}(S) = 0\) and \(Z \neq \Omega\), then \(S = Z\) and the Sobolev-open set \(\Omega \setminus Z\) is Sobolev-connected. 
	Hence, every solution \(w \in W_{0}^{1, 2}(\Omega) \cap L^{\infty}(\Omega)\) of \eqref{eqEquationSupersolution} for some nonnegative \(f \in L^{\infty}(\Omega)\) with \(\int_{\Omega}{f} > 0\)  satisfies
	\[{}
		\quasi{w}(x) = 0
		\quad \text{if and only if} \quad
		x \in Z.
	\]
\end{proposition}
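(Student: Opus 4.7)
The plan is threefold: first use $\cH^{N-1}(S) = 0$ together with the Van~Schaftingen–Willem intermediate value theorem for Sobolev functions to prove that $\Omega \setminus S$ is Sobolev-connected; then combine this with the Sobolev-open decomposition $\Omega \setminus S = (\Omega \setminus Z) \cup (Z \setminus S)$ from \Cref{propositionSZ} to deduce $S = Z$; finally, invoke \Cref{theoremMaximumPrincipleFull} on the single surviving Sobolev-connected component $\Omega \setminus Z$ to locate the zero-set of every admissible solution.

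For the Sobolev-connectedness step I would argue by contradiction. Suppose $\Omega \setminus S = A \cup B$ for two disjoint non-empty Sobolev-open sets, and take witnesses $\xi_{A}, \xi_{B} \in W_{0}^{1,2}(\Omega) \cap L^{\infty}(\Omega)$ with $A = \{\quasi{\xi_{A}} > 0\}$ and $B = \{\quasi{\xi_{B}} > 0\}$, each having $\Omega$ as its Lebesgue set. Setting $u \vcentcolon= \xi_{A} - \xi_{B} \in W_{0}^{1,2}(\Omega)$, the precise representative $\quasi{u} = \quasi{\xi_{A}} - \quasi{\xi_{B}}$ is strictly positive on $A$ and strictly negative on $B$ (using $\xi_{A}, \xi_{B} \ge 0$ together with $A \cap B = \emptyset$, which forces $\quasi{\xi_{B}} = 0$ on $A$ and $\quasi{\xi_{A}} = 0$ on $B$), and vanishes precisely on $S = \Omega \setminus (A \cup B)$. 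Hence $\{\quasi{u} = 0\} = S$ has $\cH^{N-1}$-measure zero, while $A$ and $B$ both have positive Lebesgue measure (by \Cref{propositionQuasiOpenDensityPoint}, for instance). This contradicts the Van~Schaftingen–Willem theorem, which rules out a $W^{1,2}$ function on a connected open set changing sign on sets of positive Lebesgue measure across an $\cH^{N-1}$-negligible zero set.

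From Sobolev-connectedness of $\Omega \setminus S$ and the decomposition \(\Omega \setminus S = (\Omega \setminus Z) \cup (Z \setminus S)\) into disjoint Sobolev-open pieces (\Cref{propositionSZ}), one of these must be empty. The hypothesis $Z \neq \Omega$ excludes $\Omega \setminus Z = \emptyset$, so $Z \setminus S = \emptyset$, giving $S = Z$ and that $\Omega \setminus Z = \Omega \setminus S$ is Sobolev-connected. For any admissible $w$, the identity $\quasi{w} = 0$ on $Z$ holds by the very definition of the universal zero-set, and \Cref{theoremMaximumPrincipleFull} applied to the unique component $\Omega \setminus Z$ yields the dichotomy $\quasi{w} > 0$ or $\quasi{w} \equiv 0$ on $\Omega \setminus Z$. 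In the latter case $w = 0$ almost everywhere in $\Omega$, whence $f = -\Delta w + Vw = 0$ distributionally, contradicting $\int_{\Omega} f > 0$. Therefore $\{\quasi{w} = 0\} = Z$, as required.

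The main obstacle is applying the Van~Schaftingen–Willem intermediate value theorem in precisely the form needed — a version valid for $W^{1,2}$ functions on a connected open subset of $\R^{N}$, formulated through the precise representative, asserting that an $\cH^{N-1}$-negligible zero-set forces constant sign almost everywhere. This is exactly the tool earmarked by the authors for \Cref{theoremMaximumPrinciplePrecise} in the introduction, so its availability in the correct form is not in doubt; what requires care is the verification that the candidate sign-taking sets $A$ and $B$ carry positive Lebesgue measure, which here follows from the density characterization of Sobolev-open sets in \Cref{propositionQuasiOpenDensityPoint}.
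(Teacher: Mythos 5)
Your argument is correct and follows essentially the same route as the paper: the Van~Schaftingen--Willem intermediate value theorem applied to $\quasi{\xi_{A}} - \quasi{\xi_{B}}$ gives Sobolev-connectedness of $\Omega \setminus S$, and the hypothesis $Z \neq \Omega$ then forces $Z \setminus S = \emptyset$. The only (harmless) variation is in the middle step, where you apply Sobolev-connectedness directly to the disjoint Sobolev-open decomposition $\Omega \setminus S = (\Omega \setminus Z) \cup (Z \setminus S)$, whereas the paper passes through \Cref{propositionConnected} to identify $\Omega \setminus S$ with a single superlevel set $U_{x}$; both hinge on the same facts established around \Cref{propositionSZ}.
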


The assumption \(\cH^{N-1}(S) = 0\) can be verified with the help of \Cref{propositionSingularSetSize}.
To check that \(Z \ne \Omega\), it is enough to know there is a distributional solution of the Dirichlet problem~\eqref{eqDirichletProblem} for some finite nonnegative measure \(\mu \ne 0\), since by \Cref{theoremGoodMeasuresCharacterization} one must have \(\mu(Z) = 0\).

\begin{example}
\label{exampleCapacityS}
If there exists \(v \in W_{0}^{1, 2}(\Omega) \cap L^{2}(\Omega; V \dif x)\) such that 
\[{}
\quasi{v} > 0
\quad \text{quasi-everywhere in \(\Omega\),}
\]
then \(\capt_{W^{1, 2}}{(S)} = 0\).{}
As the measure \(\lambda\) in \Cref{propositionSolutionDualityasDistribution} is diffuse and carried by \(S\), we then have \(\lambda = 0\).{}
Since \(S\) is negligible for the Lebesgue measure, the torsion function \(\zeta_{1}\) satisfies \eqref{eqEquationSupersolution} with \(f \equiv 1\), whence \(Z \ne \Omega\).{}
Thus, by \Cref{theoremMaximumPrinciplePreciseImproved}, we have \(S = Z\) and \(\Omega \setminus Z\) is Sobolev-connected.
\end{example}

\begin{proof}[Proof of \Cref{theoremMaximumPrinciplePreciseImproved}]
	Since \(\Omega\) is connected and \(\cH^{N-1}(S) = 0\), the set \(\Omega \setminus S\) is Sobolev-connected.
	Indeed, let \(A, B \subset \Omega\) be disjoint Sobolev-open sets such that
	\[{}
	\Omega \setminus S \subset A \cup B
	\]
	and assume by contradiction that the sets \(\widetilde{A} \vcentcolon= A \setminus S\) and \(\widetilde{B} \vcentcolon= B \setminus S\) are both non-empty.
	Observe that \(\widetilde{A}\) and \(\widetilde{B}\) are also Sobolev-open and
	\[{}
	\Omega \setminus S = \widetilde{A} \cup \widetilde{B}.
	\]	
	Let \(\xi_{1}, \xi_{2} \in W_{0}^{1, 2}(\Omega)\) be nonnegative functions such that their Lebesgue sets coincide with \(\Omega\) and \(\widetilde{A} = \{\quasi{\xi_{1}} > 0\}\) and \(\widetilde{B} = \{\quasi{\xi_{2}} > 0\}\).{}
	The function \(\quasi{\xi_{1}} - \quasi{\xi_{2}}\) is positive on \(\widetilde{A}\), negative on \(\widetilde{B}\), and vanishes on \(S\).{}
	By the Intermediate value theorem for Sobolev functions~\cite{VanSchaftingen_Willem:2008}*{Proposition~2.11}, we have \(\cH^{N-1}(S) > 0\), which is a contradiction.
	We conclude that \(\Omega \setminus S\) is Sobolev-connected.
	
	It thus follows from \Cref{propositionConnected} that \(\Omega \setminus S \subset U_{x}\) for any \(x \in \Omega \setminus S\). 
	Since \(U_{x} \subset \Omega \setminus S\), equality holds and we can write
	\[{}
	\Omega = S \cup U_{x}.
	\]
	When \(Z \ne \Omega\), we can take \(x \in \Omega \setminus Z\) and deduce from \Cref{propositionSZ} and the decomposition \eqref{eqDecompositionOmega} that 
	\[{}
	U_{x} = \Omega \setminus Z 
	\quad \text{and} \quad{}
	Z \setminus S = \emptyset.{}
	\]
	Therefore, \(S = Z\).
	Since \(\Omega \setminus Z = \Omega \setminus S\) contains only one Sobolev-connected component, the conclusion follows from \Cref{theoremMaximumPrincipleFull}.
\end{proof}


\section{Strong maximum principle for distributional solutions involving measures}
\label{sectionCounterpart}

We prove in this last section a counterpart of \Cref{theoremMaximumPrincipleFull} for distributional solutions of the Dirichlet problem \eqref{eqDirichletProblem}:

\begin{theorem}
	\label{theoremMaximumPrincipleFullMeasure}
	Let \((D_{j})_{j \in \N}\) be the Sobolev-connected components of \(\Omega\setminus Z\).
	If \(u\) is a distributional solution of~\eqref{eqDirichletProblem} for some nonnegative measure \(\mu \in \cM(\Omega)\) and if there exists a Lebesgue point \(x \in D_{j}\) such that \(\quasi{u}(x) = 0\), then
	\begin{center}
	\(\quasi{u} = 0\) \ in \(D_{j}\)
	\quad \text{and} \quad{}
	\(\mu(D_{j}) = 0\).
	\end{center}
\end{theorem}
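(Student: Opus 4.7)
The plan is to reduce the measure-datum setting to the bounded-datum theorem \Cref{theoremMaximumPrincipleFull} via the comparison principle from \Cref{sectionComparison}. First, I would apply \Cref{propositionTestFunctionPositive} to the distributional (hence duality) solution \(u\) to obtain the pointwise sandwich
\[
0 \le \zeta_{H(u)} \le u
\quad \text{almost everywhere in \(\Omega\),}
\]
where \(H\) is the bounded continuous nondecreasing function, positive on \((0, +\infty)\) and independent of \(V\). By \Cref{propositionExistenceBoundedSupersolutions}, the function \(\zeta_{H(u)}\) lies in \(W_{0}^{1, 2}(\Omega) \cap L^{\infty}(\Omega)\) and is a distributional solution of \eqref{eqEquationSupersolution} with bounded nonnegative datum \(H(u)\). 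Hence \Cref{theoremMaximumPrincipleFull} applies to \(\zeta_{H(u)}\) and gives on every Sobolev-connected component \(D_{k}\) of \(\Omega \setminus Z\) the alternative \(\quasi{\zeta_{H(u)}} > 0\) or \(\quasi{\zeta_{H(u)}} \equiv 0\) in \(D_{k}\).

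The next step is to transfer the zero of \(u\) at \(x\) to \(\zeta_{H(u)}\). Since every point of \(\Omega\) is a Lebesgue point of the variational solution \(\zeta_{H(u)}\) and \(x\) is a Lebesgue point of \(u\), averaging the sandwich over balls \(B_{r}(x)\) and letting \(r \to 0\) forces \(\quasi{\zeta_{H(u)}}(x) = 0\); by the dichotomy above, \(\quasi{\zeta_{H(u)}}\equiv 0\) on \(D_{j}\). Using Green's representation formula from \Cref{theoremGreen}, together with \(D_{j} = U_{y}\) for every \(y \in D_{j}\) (\Cref{propositionEquivalenceClasses,propositionConnectedF}) and \(G_{y} = 0\) almost everywhere on \(\Omega \setminus U_{y}\), one arrives at
\[
0 = \quasi{\zeta_{H(u)}}(y) = \int_{D_{j}}{G_{y} H(u)}
\quad \text{for every \(y \in D_{j}\).}
\]
The positivity of \(G_{y}\) almost everywhere on \(U_{y} = D_{j}\) and of \(H\) on \((0, +\infty)\) then yield \(u = 0\) almost everywhere in \(D_{j}\).

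From this almost-everywhere vanishing two further conclusions follow. For the pointwise statement at any Lebesgue point \(y \in D_{j}\) of \(u\), \Cref{propositionQuasiOpenDensityPoint} guarantees that \(y\) is a density point of \(D_{j}\); since \(\abs{u - \quasi{u}(y)} = \abs{\quasi{u}(y)}\) almost everywhere on \(B_{r}(y) \cap D_{j}\), the elementary bound
\[
\abs{\quasi{u}(y)} \, \frac{\abs{B_{r}(y) \cap D_{j}}}{\abs{B_{r}(y)}}
\le \fint_{B_{r}(y)}{\abs{u - \quasi{u}(y)}}
\]
forces \(\quasi{u}(y) = 0\) as \(r \to 0\). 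For the statement \(\mu(D_{j}) = 0\), the plan is to exploit that \(u\) is also a duality solution (\Cref{propositionDistributionImpliesDuality}) and test against \(f = \chi_{D_{j}} \in L^{\infty}(\Omega)\): the left-hand side \(\int_{D_{j}}{u}\) vanishes by the previous step, while the representation formula \eqref{eqRepresentationGreen} expresses \(\quasi{\zeta_{\chi_{D_{j}}}}(z) = \int_{D_{j}}{G_{z}}\), which is strictly positive on \(D_{j}\) and identically zero outside (by the equivalence-class structure of the superlevel sets and the vanishing of \(G_{z}\) on \(S\)), so that nonnegativity of \(\mu\) forces \(\mu(D_{j}) = 0\). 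The main obstacle I anticipate lies in the transition at the Lebesgue point \(x\): the reason for using the comparison principle rather than, say, truncating \(u\), is precisely that the precise representative of \(\zeta_{H(u)}\) is defined everywhere in \(\Omega\), which is what allows the pointwise vanishing at \(x\) to be promoted to the entire component via \Cref{theoremMaximumPrincipleFull}.
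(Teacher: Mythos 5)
Your argument is correct. The first half (deducing \(\quasi{u}\equiv 0\) on \(D_{j}\)) follows the paper's route exactly: comparison with \(\zeta_{H(u)}\) via \Cref{propositionTestFunctionPositive,propositionExistenceBoundedSupersolutions}, transfer of the zero at the Lebesgue point \(x\), the bounded-datum alternative, positivity of \(G_{y}\) on \(U_{y}=D_{j}\) and of \(H\) on \((0,+\infty)\), and finally the density-point argument for arbitrary Lebesgue points of \(D_{j}\). For the second half, however, you take a genuinely different and in fact shorter path. The paper proves \(\mu(D_{j})=0\) by first establishing \Cref{lemmaRepresentationMeasure}, a weak Green representation \(\quasi{u}(x)=\int_{\Omega}{\quasi{G_{x}}\dif\mu}\) valid only for \emph{almost every} \(x\) (obtained by mollifying \(\mu\), dominating \(\widecheck{\rho_{k}}*G_{x}\) by the fundamental solution, and discarding the exceptional set where \(F*\mu=+\infty\)); it then selects a good point \(y\in D_{j}\) where the formula holds and \(\mu(\{y\})=0\), and uses \(D_{j}=\{y\}\cup\{\quasi{G_{y}}>0\}\). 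You instead test the duality formulation (\Cref{propositionDistributionImpliesDuality}) against \(f=\chi_{D_{j}}\): the left-hand side \(\int_{D_{j}}{u}\) vanishes by the first half, while \(\quasi{\zeta_{\chi_{D_{j}}}}\ge 0\) everywhere and \(\{\quasi{\zeta_{\chi_{D_{j}}}}>0\}=D_{j}\) by \Cref{propositionQuasiOpenFx}, so nonnegativity of \(\mu\) gives \(\mu(D_{j})=0\) directly. This bypasses the mollification and domination machinery of \Cref{lemmaRepresentationMeasure} entirely; what the paper's heavier lemma buys in exchange is the pointwise representation formula itself, which is of independent interest and is also invoked in \Cref{remarkSingular}.
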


\begin{proof}
	We first observe that
	\begin{equation}
		\label{eqVanishAE}
		u = 0 \quad \text{almost everywhere in \(D_{j}\).}
	\end{equation}
	To this end, we apply the comparison principle to deduce that \(u \ge \zeta_{H(u)}\) almost everywhere in \(\Omega\), where \(\zeta_{H(u)}\) satisfies \eqref{eqEquationSupersolution} with \(f = H(u)\).
	Since \(\zeta_{H(u)}\) is nonnegative and \(\quasi{u}(x) = 0\), we get \(\quasi{\zeta_{H(u)}}(x) = 0\).{}
	From \eqref{eq3059}, we thus have \(H(u) = 0\) almost everywhere in \(D_{j}\) and then \eqref{eqVanishAE} is satisfied by positivity of \(H\) on \((0, +\infty)\).
	Now, at any Lebesgue point \(y \in D_{j}\), by \eqref{eqVanishAE} and the fact that \(y\) is a density point of \(D_{j}\), we then have \(\quasi{u}(y) = 0\).{}
\end{proof}

To prove that \(\mu(D_{j}) = 0\), we first need a weak form of Green's representation formula for general duality solutions of \eqref{eqDirichletProblem}:

\begin{lemma}
	\label{lemmaRepresentationMeasure}
	If \(u\) is a duality solution of \eqref{eqDirichletProblem} with nonnegative datum \(\mu \in \cM(\Omega)\), then for almost every \(x \in \Omega\) we have \(\mu(\{x\}) = 0\) and
	\[{}
	\quasi{u}(x)
	= \int_{\Omega}{\quasi{G_{x}} \dif\mu}.
	\]
\end{lemma}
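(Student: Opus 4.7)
The plan is to derive the representation formula from the duality definition of $u$, the identity \eqref{eqRepresentationGreen} for $\quasi{\zeta_f}$, and the symmetry of the Green's function, combined via Fubini's theorem.

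The first claim, $\mu(\{x\}) = 0$ for almost every $x \in \Omega$, is cost-free: a finite Borel measure has at most countably many atoms, so their union is Lebesgue-negligible.

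For the representation formula, by linearity it suffices to test against nonnegative $f \in L^\infty(\Omega)$. Combining the duality definition with \eqref{eqRepresentationGreen} gives
\[
\int_\Omega u f = \int_\Omega \quasi{\zeta_f}(x) \, d\mu(x) = \int_\Omega \int_\Omega G_x(y) f(y) \, dy \, d\mu(x).
\]
Using that $G_x = \quasi{G_x}$ almost everywhere, the Green's function symmetry $\quasi{G_x}(y) = \quasi{G_y}(x)$ for $x \ne y$ (recalled in \Cref{sectionGreen} from \cite{Malusa_Orsina:1996}*{Theorem~7.4}), and negligibility of the diagonal for $\mu \otimes dy$, Tonelli's theorem yields
\[
\int_\Omega u f = \int_\Omega f(y) v(y) \, dy, \qquad v(y) \vcentcolon= \int_\Omega \quasi{G_y}(x) \, d\mu(x).
\]
Since $f \in L^\infty(\Omega)$ is an arbitrary nonnegative function, we deduce $v \in L^1(\Omega)$ and $u = v$ almost everywhere. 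Consequently, at every common Lebesgue point $y$ of $u$ and $v$ — that is, for almost every $y \in \Omega$ —
\[
\quasi{u}(y) = \lim_{r \to 0} \fint_{B_r(y)} u = \lim_{r \to 0} \fint_{B_r(y)} v = v(y),
\]
which is the desired identity.

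The main technical point is the application of Tonelli, which nominally requires joint $(\mu \otimes dy)$-measurability of the kernel $(x, y) \mapsto \quasi{G_y}(x)$. The cleanest workaround is by approximation, in the spirit of \Cref{remarkDualityApproximation}: I would first apply the representation formula to $u_k \vcentcolon= \zeta_{\rho_k * \mu}$, for which the bounded datum $\rho_k * \mu \in L^\infty(\Omega)$ reduces Fubini to a standard application on a product of Lebesgue spaces, and then pass to the limit using the $L^1$-convergence $u_k \to u$ of \Cref{remarkDualityApproximation} on the left, together with the pointwise convergence $(\widecheck{\rho}_k * G_y)(x) \to \quasi{G_y}(x)$ at every $x \ne y$ on the right. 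This mirrors the manipulation already carried out in the proof of \Cref{remarkDualityApproximation} and avoids having to appeal to joint measurability of the Green kernel directly.
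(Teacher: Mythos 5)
Your fallback argument is, in structure, exactly the paper's proof (mollify \(\mu\), use \eqref{eqRepresentationGreen} for the bounded data \(\rho_{k}*\mu\), rewrite via Fubini as \(\int_{\Omega}{\widecheck{\rho_{k}}*G_{x}\dif\mu}\), pass to the limit), and your observation that \(\mu(\{x\})=0\) a.e.\ because a finite measure has at most countably many atoms is correct and even simpler than the paper's route through the Newtonian potential. But the limit passage on the \(\mu\)-side is where the real work lies, and you skip it. Pointwise convergence \((\widecheck{\rho}_{k}*G_{x})(y)\to\quasi{G_{x}}(y)\) for \(y\ne x\) does \emph{not} by itself give \(\int_{\Omega}{\widecheck{\rho}_{k}*G_{x}\dif\mu}\to\int_{\Omega}{\quasi{G_{x}}\dif\mu}\): the measure \(\mu\) can charge arbitrarily small sets near the singularity of \(G_{x}\), and the integrands are unbounded there, so no form of bounded or dominated convergence is available for free. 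The paper's proof is devoted precisely to producing the missing majorant: one first shows \(0\le G_{x}\le F(x-\cdot)\) a.e.\ by the weak maximum principle (using that \(G_{x}\) solves \eqref{eqGreenDistributions} with a \emph{nonnegative} correction \(\lambda\)), then chooses the mollifiers \emph{radial} so that superharmonicity of \(F(x-\cdot)\) yields \(0\le\widecheck{\rho}_{k}*G_{x}\le F(x-\cdot)\) everywhere, and finally restricts \(x\) to the complement of the Lebesgue-null set where \(F*\mu=+\infty\), so that \(F(x-\cdot)\in L^{1}(\Omega;\dif\mu)\) and dominated convergence applies. Without this chain your "pass to the limit on the right" is unjustified; this is a genuine gap, not a routine detail.

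Your primary Tonelli argument would, if completed, be a genuinely different and shorter route: the finiteness of the double integral (it equals \(\int_{\Omega}{\quasi{\zeta_{f}}\dif\mu}\le\norm{f}_{L^{\infty}(\Omega)}\norm{\theta}_{L^{\infty}(\Omega)}\norm{\mu}_{\cM(\Omega)}\)) would give \(v\in L^{1}(\Omega)\) and \(u=v\) a.e.\ directly, bypassing any dominated-convergence step. But it stands or falls on the joint \((\mu\otimes\dif y)\)-measurability of \((x,y)\mapsto\quasi{G_{x}}(y)\), which you explicitly do not prove and instead defer to the approximation scheme — so the gap above is decisive either way. A minor additional point: \(L^{1}\)-convergence of \(u_{k}\) to \(u\) gives convergence of \(\quasi{u_{k}}(x)\) to \(\quasi{u}(x)\) only almost everywhere and only after extracting a subsequence, which should be said when matching the two sides of the identity pointwise.
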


\begin{proof}[Proof of \Cref{lemmaRepresentationMeasure}]
	Let \((\rho_{k})_{k \in \N}\) be a sequence of mollifiers and let \(u_{k}\) be the duality solution associated to \(\rho_{k} * \mu\).{}
	Passing to a subsequence if necessary, by \Cref{remarkDualityApproximation} we have that \((\quasi{u_{k}})_{k \in \N}\) converges to \(\quasi{u}\) in \(L^{1}(\Omega)\) and everywhere in \(\Omega \setminus E_{1}\) for some negligible set \(E_{1}\).
	By the representation formula \eqref{eqRepresentationGreen} for bounded data,
	\[{}
	\quasi{u_{k}}(x)
	= \int_{\Omega}{G_{x} \, \rho_{k} * \mu}
	\quad \text{for every \(x \in \Omega\).}
	\]
	When \(x \in S\), we have \(\quasi{u_{k}}(x) = 0\) and \(G_{x} = 0\) almost everywhere in \(\Omega\).{}
	Thus, for every \(x \in S \setminus E_{1}\), it follows that \(\quasi{u}(x) = 0\) and the representation formula is satisfied almost everywhere in \(S\).{}
	
	When \(x \not\in S\), we first apply Fubini's theorem,
	\begin{equation}
	\label{eqGreenApproximation}
	\quasi{u_{k}}(x)
	= \int_{\Omega}{\widecheck{\rho_{k}} * G_{x} \dif \mu}.
	\end{equation}
	Taking \(\rho_{k}\) of the form \(\rho_{k}(z) = \frac{1}{r_{k}^{N}} \rho(\frac{z}{r_{k}})\), where \(\rho \in C_{c}^{\infty}(\Omega)\) and \((r_{k})_{k \in \N}\) converges to zero, we have
	\begin{equation*}
	(\widecheck{\rho_{k}} * G_{x})(y) \to \quasi{G_{x}}(y)
	\quad \text{for every \(y \in \Omega \setminus \{x\}\),}
	\end{equation*}
	since every point in \(\Omega \setminus \{x\}\) is a Lebesgue point of \(G_{x}\).
	To apply the Dominated convergence theorem in \eqref{eqGreenApproximation}, we first observe that, for every \(x \in \Omega\),
	\begin{equation}
		\label{eqGreenFundamentalSolution}
		0 \le G_{x}(a) \le F(x - a)
		\quad \text{for almost every \(a \in \Omega\),}
	\end{equation}
	where \(F\) is the fundamental solution of the Laplacian:
	\[{}
	F(z) =
	\begin{cases}
		\displaystyle \frac{1}{\gamma_{N}} \frac{1}{\abs{z}^{N - 2}}
		\quad \text{if \(N \ge 3\),}\\[2ex]
		\displaystyle \frac{1}{2\pi} \log{\frac{d}{\abs{z}}}
		\quad \text{if \(N = 2\),}
	\end{cases}
	\]
	and we take \(d > \diam{(\Omega)}\)  in dimension two to make sure that \(F(x - y) > 0\) for every \(x, y \in \overline{\Omega}\).
	The second inequality in \eqref{eqGreenFundamentalSolution} follows from the weak maximum principle since \(G_{x} \in W_{0}^{1, 1}(\Omega)\) satisfies \eqref{eqGreenDistributions} for some nonnegative measure \(\lambda\) and \(F(\cdot - a)\) is a positive function on \(\overline\Omega\) that satisfies the Poisson equation with \(\delta_{a}\)\,; see \cite{Ponce:2016}*{Example~6.2}. 
	
	Assuming that \(\rho\) is radial, by \eqref{eqGreenFundamentalSolution} and superharmonicity of \(F(x - \cdot)\), we then have
	\begin{equation}
		\label{eq3237}
	0 \le (\widecheck{\rho_{k}} * G_{x})(y) \le F(x - y)
	\quad \text{for every \(y \in \Omega\).}
	\end{equation}
	Let \(E_{2} \subset \Omega\) be a negligible set such that \((F * \mu)(x) < \infty\) for every \(x \in \Omega \setminus E_{2}\).
	For such a point \(x\), \(\mu(\{x\}) = 0\), the function \(F(x - \cdot)\) is summable with respect to \(\mu\) and, by \eqref{eq3237}, we can apply the Dominated convergence theorem to get
	\[{}
	\lim_{k \to \infty}{\int_{\Omega}{\widecheck{\rho_{k}} * G_{x} \dif \mu}}
	= \int_{\Omega}{\quasi{G_{x}} \dif\mu}
	\quad \text{for every \(x \in \Omega \setminus (S \cup E_{2})\).}
	\]
	We deduce the representation formula for every \(x \in \Omega \setminus (S \cup E_{1} \cup E_{2})\) as \(k \to \infty\) in \eqref{eqGreenApproximation}.
\end{proof}

\begin{proof}[Proof of \Cref{theoremMaximumPrincipleFullMeasure} completed]
	Take a Lebesgue point \(y \in D_{j}\) such that \(\quasi{u}(y) = 0\), \(\mu(\{y\}) = 0\) and the representation formula in \Cref{lemmaRepresentationMeasure} holds.
	Then,
	\[{}
	\int_{\Omega}{\quasi{G_{y}} \dif\mu}
	= \quasi{u}(y)
	= 0.
	\]
	This implies that \(\mu(\{\quasi{G_{y}} > 0\}) = 0\).{}
	Since by \Cref{propositionEquivalenceClasses} we have 
	\[{}
	D_{j} = U_{y} = \{y\} \cup \{\quasi{G_{y}} > 0\},
	\]
	we conclude using the additivity of \(\mu\) that
	\[{}
	\mu(D_{j})
	= \mu(\{y\}) + \mu(\{\quasi{G_{y}} > 0\}) 
	= 0.
	\qedhere
	\]
\end{proof}

Although all distributional solutions with bounded datum vanish on \(Z\), the same need not be true in the case of measures or even \(L^{1}\)~functions:{}

\begin{example}
	For \(N \ge 3\), take \(V(x) = 1/\abs{x - a}^{2}\) for some fixed \(a \in \Omega\).{}
	Any nontrivial superharmonic function \(\psi \in C_{0}^{\infty}(\overline{\Omega})\) satisfies the Schrödinger equation
	\[{}
	- \Delta\psi + V\psi{}
	=: f
	\quad \text{in the sense of distributions in \(\Omega\),}
	\]
	where the function \(f\) is nonnegative and belongs to \(L^{p}(\Omega)\) for every \(1 \le p < N/2\).{}
	However,
	\[{}
	Z = \{a\}
	\quad \text{and} \quad \psi(a) > 0.
	\]
\end{example}

From our proof of \Cref{theoremGoodMeasuresCharacterization}, see \eqref{eq2180}, we know that all distributional solutions vanish almost everywhere in \(Z\).{}
We now show the stronger property that this actually holds except for a set of \(W^{1, 2}\)~capacity zero:

\begin{proposition}
	If \(u\) is a distributional solution of \eqref{eqDirichletProblem} for some nonnegative measure \(\mu \in \cM(\Omega)\), then
	\[{}
	\quasi{u} = 0
	\quad \text{quasi-everywhere in \(Z\).}
	\]
\end{proposition}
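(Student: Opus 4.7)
The plan is to lift the almost-everywhere vanishing of $u$ on $Z$ to quasi-everywhere vanishing by a duality argument based on pairing the equation for $u$ with the variational solution $\zeta_\nu$ associated to nonnegative measures $\nu \in (W_0^{1,2}(\Omega))'$ supported in $Z$. By the proof of the ``$\Longleftarrow$'' direction of \Cref{theoremGoodMeasuresCharacterization} (see in particular \eqref{eq2180}), we already have $\mu(Z) = 0$ and $u = 0$ almost everywhere in $Z$; and since $\Delta u \in \cM(\Omega)$, the precise representative $\quasi u$ is defined quasi-everywhere in $\Omega$ with respect to the $W^{1,2}$-capacity, with the standard regularity for distributional solutions giving $T_k(u) \in W_0^{1,2}(\Omega) \cap L^\infty(\Omega)$ and $\quasi{T_k(u)} = T_k(\quasi u)$ q.e.\ for every $k > 0$. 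It therefore suffices to prove $\capt_{W^{1,2}}(\{T_k(\quasi u) > \epsilon\} \cap Z) = 0$ for every $k$ and every $\epsilon > 0$, and then let $k \to \infty$.

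Arguing by contradiction, fix $k$ and $\epsilon$ such that the capacity above is positive, and let $\nu \in (W_0^{1,2}(\Omega))'$ be a nontrivial nonnegative Borel measure supported in a compact subset of $\{T_k(\quasi u) > \epsilon\} \cap Z$. The first step is to show that the variational solution $\zeta_\nu \in W_0^{1,2}(\Omega) \cap L^2(\Omega; V\dif x)$ associated to $\nu$ satisfies $\quasi{\zeta_\nu} = 0$ quasi-everywhere in $\Omega \setminus Z$. To see this, test the Euler--Lagrange equation for $\zeta_\nu$ against $\eta \vcentcolon= \zeta_{\chi_{\Omega\setminus Z}}$, which by \Cref{remarkSetA} satisfies $\quasi\eta(x) = 0$ for every $x \in Z$; using the equation satisfied by $\eta$,
\[
\int_{\Omega\setminus Z} \zeta_\nu
= \int_\Omega \bigl(\nabla \zeta_\nu \cdot \nabla \eta + V \zeta_\nu \eta\bigr)
= \nu[\eta]
= \int_\Omega \quasi\eta \dif\nu
= 0,
\]
so $\zeta_\nu = 0$ almost everywhere in $\Omega\setminus Z$. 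By \Cref{propositionQuasiOpenDensityPoint} every point of $\Omega \setminus Z$ is a Lebesgue density point of this set, so each bounded truncation $T_j(\zeta_\nu) \in W_0^{1,2}(\Omega) \cap L^\infty(\Omega)$ is quasi-continuous with value zero at every Lebesgue point lying in $\Omega \setminus Z$; consequently $\quasi{T_j(\zeta_\nu)} = 0$ quasi-everywhere in $\Omega \setminus Z$, and letting $j \to \infty$ yields the claim for $\zeta_\nu$.

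The second step is the duality inequality. Kato's inequality applied to $-\Delta u + Vu = \mu$ gives the distributional subsolution estimate
\[
-\Delta T_k(u) + V T_k(u) \le \chi_{\{u \le k\}} \mu + V k \chi_{\{u > k\}}.
\]
Pairing this with $T_j(\zeta_\nu)$ and the Euler--Lagrange equation for $\zeta_\nu$ with test function $T_k(u)$, and letting $j \to \infty$ via the standard energy bounds on $\zeta_\nu$, we obtain
\[
\int_\Omega \quasi{T_k(u)} \dif\nu
\le \int_{\Omega\setminus Z} \quasi{\zeta_\nu} \, \chi_{\{u \le k\}}\dif\mu
+ \int_\Omega Vu\, \zeta_\nu\, \chi_{\{u > k\}},
\]
where the last term uses $k V \chi_{\{u > k\}} \le V u \chi_{\{u > k\}}$. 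The first term vanishes because $\quasi{\zeta_\nu} = 0$ q.e.\ in $\Omega \setminus Z$ and $\mu(Z) = 0$, the capacity-concentrated part of $\mu$ being handled separately by splitting $u = u_d + u_c$ according to the diffuse/concentrated decomposition of $\mu$ and invoking the Green's function representation of $u_c$ together with \Cref{propositionSZ}. The second term tends to zero as $k \to \infty$ by dominated convergence, since $Vu\zeta_\nu \in L^1(\Omega)$ and $\abs{\{u > k\}} \to 0$. Passing to the limit $k \to \infty$ and using the monotone convergence $T_k(\quasi u) \nearrow \quasi u$ q.e., one obtains $\int \quasi u \dif\nu \le 0$; but $\quasi u > \epsilon$ on $\supp\nu$ gives $\int \quasi u \dif\nu \ge \epsilon \, \nu(\Omega) > 0$, a contradiction.

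The main obstacle is the rigorous derivation of the duality inequality in the second step: since $\zeta_\nu$ need not be bounded and $u$ need not belong to $W_0^{1,2}(\Omega)$, one must justify a double-truncation argument in both $u$ and $\zeta_\nu$ using energy estimates for the variational solution, and treat any $W^{1,2}$-capacity-concentrated part of $\mu$ supported in $\Omega \setminus Z$ through a separate Green's-function analysis based on Section~\ref{sectionGreen}.
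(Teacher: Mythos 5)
Your overall strategy is genuinely different from the paper's: you run a capacitary contradiction argument, pairing the equation against an equilibrium-type measure \(\nu\) supported in \(\{\quasi{u} > \epsilon\} \cap Z\), whereas the paper approximates \(u\) in the \(W_0^{1,2}\)-norm by distributional solutions with \emph{bounded} data (mollifications of \(\mu\) cut off to \(\Omega \setminus Z\), and, for general \(\mu\), an increasing approximation of the diffuse measure associated to \(T_{1}(u)\) by measures with bounded Newtonian potential); since solutions with bounded data vanish \emph{everywhere} on \(Z\) by the very definition of \(Z\), quasi-everywhere convergence of the precise representatives along a subsequence transfers this vanishing to \(\quasi{u}\). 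Your Step 1 is sound: testing the Euler--Lagrange equations for \(\zeta_{\nu}\) and \(\zeta_{\chi_{\Omega\setminus Z}}\) against each other, using \(\quasi{\zeta_{\chi_{\Omega \setminus Z}}} = 0\) on \(Z \supset \supp\nu\), does give \(\zeta_{\nu} = 0\) almost everywhere in \(\Omega \setminus Z\), and the density-point argument upgrades this to quasi-everywhere; this is essentially the dual of \Cref{propositionSingularSetSize}.

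The genuine gap is Step 2, and you flag it yourself. Two things are missing. First, the Kato-type inequality \(-\Delta T_{k}(u) + VT_{k}(u) \le \chi_{\{u \le k\}}\mu + kV\chi_{\{u>k\}}\) is not established (the correct statement must separate the diffuse and concentrated parts of \(\Delta T_{k}(u)\), and the concentrated part of \(\mu\) does not disappear from the right-hand side), and pairing it with the possibly unbounded \(\zeta_{\nu}\) requires the double-truncation limit you only sketch. Second, and more seriously, the term \(\int_{\Omega\setminus Z}\quasi{\zeta_{\nu}}\,\chi_{\{u\le k\}}\dif\mu\) is not well defined as written when \(\mu\) has a part concentrated on a set of \(W^{1,2}\)-capacity zero inside \(\Omega \setminus Z\) (e.g.\ \(\mu = \delta_{x_{0}}\) with \(x_{0} \in \Omega\setminus Z\)): the statement ``\(\quasi{\zeta_{\nu}} = 0\) quasi-everywhere in \(\Omega\setminus Z\)'' gives no control of \(\quasi{\zeta_{\nu}}\) on such a set, so the first term cannot be dismissed as zero. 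Your proposed remedy --- splitting \(u = u\ld + u\lc\) and ``invoking the Green's function representation of \(u\lc\)'' --- is exactly the part that would need to be carried out, and it is not; until it is, the duality inequality, and hence the contradiction, is not established. A cleaner repair is to replace the pairing argument altogether by an approximation in energy, which is what the paper does.
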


\begin{proof}
	We first assume that the Newtonian potential \(F * \mu\) is bounded.
	In particular, \(\mu\) is diffuse with respect to the \(W^{1, 2}\)~capacity and also belongs to \((W_{0}^{1, 2}(\Omega))'\).{}
	In this case, \(u \in W_{0}^{1, 2}(\Omega) \cap L^{\infty}(\Omega)\) and every element in this space is an admissible test function.
	
	Given a sequence of mollifiers \((\rho_{k})_{k \in \N}\), let \(u_{k}\) be the distributional solution of \eqref{eqDirichletProblem} with datum \(\chi_{\Omega \setminus Z}(\rho_{k} * \mu)\), which exists by \Cref{theoremGoodMeasuresCharacterization}; see also \Cref{remarkExistenceBounded}.{}
	We claim that \((u_{k})_{k \in \N}\) converges to \(u\) in \(W_{0}^{1, 2}(\Omega)\).{}
	To this end, we apply \(u_{k} - u\) as test function in the equation satisfied by \(u_{k} - u\) to get
	\begin{equation}
	\label{eq3211}
	\int_{\Omega}{\abs{\nabla(u_{k} - u)}^{2}} + \int_{\Omega}{V (u_{k} - u)^{2}}
	= \int_{\Omega \setminus Z}{(u_{k} - u) \rho_{k} * \mu} - \int_{\Omega}{(\quasi{u_{k}} - \quasi{u}) \dif\mu}.
	\end{equation}
	We write the action of \(\mu\) as an element of the dual \((W_{0}^{1, 2}(\Omega))'\) in the form
	\[{}
	\int_{\Omega}{(\quasi{u_{k}} - \quasi{u}) \dif\mu}
	= \mu[u_{k} - u].
	\]
	Since the sequence \((u_{k})_{k \in \N}\) is bounded in \(W_{0}^{1, 2}(\Omega)\) and converges to \(u\) in \(L^{1}(\Omega)\), we have weak convergence in \(W_{0}^{1, 2}(\Omega)\) and then
	\[{}
	\lim_{k \to \infty}{\int_{\Omega}{(\quasi{u_{k}} - \quasi{u}) \dif\mu}}
	= 0.
	\]
	We next recall that \(u_{k} = u = 0\) almost everywhere in \(Z\).{}
	Thus, using Fubini's theorem,
	\[{}
	\int_{\Omega \setminus Z}{(u_{k} - u) \rho_{k} * \mu}
	= \int_{\Omega}{(u_{k} - u) \rho_{k} * \mu}
	= \int_{\Omega}{\widecheck{\rho_{k}} * (u_{k} - u)  \dif\mu}.
	\]
	The sequence \((\widecheck{\rho_{k}} * (u_{k} - u))_{k \in \N}\) converges weakly to zero in \(W^{1, 2}(\R^{N})\), and then one has as before,
	\[{}
	\lim_{k \to \infty}{\int_{\Omega \setminus Z}{(u_{k} - u) \rho_{k} * \mu}}
	= 0.
	\]
	As \(k \to \infty\) in \eqref{eq3211},  we get
	\[{}
	\lim_{k \to \infty}{\int_{\Omega}{\abs{\nabla(u_{k} - u)}^{2}}} 
	= 0,
	\]
	which implies the claim.
	Now passing to a subsequence \((u_{k_{j}})_{j \in \N}\), one deduces that 
	\[{}
	\quasi{u_{k_{j}}} \to \quasi{u}
	\quad \text{quasi-everywhere in \(\Omega\).}
	\]
	Since every \(u_{k_{j}}\) satisfies an equation in the sense of distributions with bounded datum, we have
	\[{}
	\quasi{u_{k_{j}}} = 0
	\quad \text{in \(Z\).}
	\]
	The conclusion thus follows when \(F * \mu\) is bounded.
	
	In the case of a general nonnegative measure \(\mu\), it suffices to prove that the truncated function \(T_{1}(u)\) satisfies the conclusion.
	Observe that \(T_{1}(u) \in W_{0}^{1, 1}(\Omega) \cap L^{1}(\Omega; V \dif x)\) and
	\[{}
	- \Delta T_{1}(u) + V T_{1}(u)
	 = \widetilde{\mu}
	 \quad \text{in the sense of distributions in \(\Omega\),}
	\]
	for some nonnegative diffuse measure \(\widetilde\mu \in \cM(\Omega)\)\,; see \cites{DalMaso_Murat_Orsina_Prignet:1999,Brezis_Ponce:2004,Brezis_Ponce:2008}.{}
	By a classical property in Potential theory~\cite{Helms:2009}*{Theorem~3.6.3}, this measure can be strongly approximated in \(\cM(\Omega)\) by a nondecreasing sequence of measures \((\nu_{k})_{k \in \N}\) with bounded Newtonian potential, for which the conclusion holds from the first part of the proof.
	This implies the theorem as the distributional solutions \(v_{k}\) of \eqref{eqDirichletProblem} with data \(\nu_{k}\) converge strongly to \(T_{1}(u)\) in \(W_{0}^{1, 2}(\Omega)\) and, for each \(k \in \N\), they satisfy \(\quasi{v_{k}} = 0\) quasi-everywhere in \(\Omega\). 
\end{proof}

\section*{Acknowledgements}

The second author (ACP) was supported by the Fonds de la Recherche scientifique--FNRS under research grant  J.0020.18. 
He warmly thanks the Dipartimento di Matematica of the ``Sapienza'' Universit\`a di Roma and the Math Department of the Technion (Haifa) for the invitations. 
He also acknowledges the hospitality of the Academia Belgica in Rome.


\begin{bibdiv}

\begin{biblist}

\bib{Ambrosio_Ponce_Rodiac:2019}{article}{
   author={Ambrosio, Luigi},
   author={Ponce, Augusto C.},
   author={Rodiac, Rémy},
   title={Critical weak-$L^{p}$ differentiability of singular integrals},
	note={In preparation},
}

\bib{Ancona:1979}{article}{
   author={Ancona, Alano},
   title={Une propri\'et\'e d'invariance des ensembles absorbants par
   perturbation d'un op\'erateur elliptique},
   journal={Comm. Partial Differential Equations},
   volume={4},
   date={1979},
   pages={321--337},
}

\bib{Benilan_Brezis:2004}{article}{
      author={B{\'e}nilan, {\mbox{Ph}}ilippe},
      author={Brezis, Ha\"{\i}m},
       title={Nonlinear problems related to the {T}homas-{F}ermi equation},
        date={2004},
     journal={J. Evol. Equ.},
      volume={3},
       pages={673\ndash 770},
}

\bib{Bertsch_Smarrazzo_Tesei:2015}{article}{
   author={Bertsch, Michiel},
   author={Smarrazzo, Flavia},
   author={Tesei, Alberto},
   title={A note on the strong maximum principle},
   journal={J. Differential Equations},
   volume={259},
   date={2015},
   pages={4356--4375},
}

\bib{Boccardo_Gallouet_Orsina:1996}{article}{
   author={Boccardo, Lucio},
   author={Gallou\"et, Thierry},
   author={Orsina, Luigi},
   title={Existence and uniqueness of entropy solutions for nonlinear
   elliptic equations with measure data},
   journal={Ann. Inst. H. Poincar\'e Anal. Non Lin\'eaire},
   volume={13},
   date={1996},
   pages={539--551},
}

\bib{Brezis_Browder:1978}{article}{
   author={Brezis, Ha{\"{\i}}m},
   author={Browder, F. E.},
   title={Strongly nonlinear elliptic boundary value problems},
   journal={Ann. Scuola Norm. Sup. Pisa Cl. Sci. (4)},
   volume={5},
   date={1978},
   pages={587--603},
}

\bib{Brezis_Marcus_Ponce:2007}{article}{
   author={Brezis, Ha{\"{\i}}m},
   author={Marcus, M.},
   author={Ponce, A. C.},
   title={Nonlinear elliptic equations with measures revisited},
   conference={
      title={Mathematical aspects of nonlinear dispersive equations},
   },
   book={
      editor={Bourgain, J.},
      editor={Kenig, C.},
      editor={Klainerman, S.},
      series={Ann. of Math. Stud.},
      volume={163},
      publisher={Princeton Univ. Press},
      place={Princeton, NJ},
   },
   date={2007},
   pages={55--109},
}

\bib{Brezis_Ponce:2004}{article}{
   author={Brezis, Ha{\"{\i}}m},
   author={Ponce, Augusto C.},
   title={Kato's inequality when \(\Delta u\) is a measure},
   journal={C. R. Math. Acad. Sci. Paris},
   volume={338},
   date={2004},
   pages={599--604},
}

\bib{Brezis_Ponce:2008}{article}{
   author={Brezis, Ha{\"{\i}}m},
   author={Ponce, Augusto C.},
   title={Kato's inequality up to the boundary},
   journal={Commun. Contemp. Math.},
   volume={10},
   date={2008},
   pages={1217--1241},
}

\bib{DalMaso_Mosco:1986}{article}{
   author={Dal Maso, Gianni},
   author={Mosco, Umberto},
   title={Wiener criteria and energy decay for relaxed Dirichlet problems},
   journal={Arch. Rational Mech. Anal.},
   volume={95},
   date={1986},
   pages={345--387},
}

\bib{DalMaso_Mosco:1987}{article}{
   author={Dal Maso, Gianni},
   author={Mosco, Umberto},
   title={Wiener's criterion and $\Gamma$-convergence},
   journal={Appl. Math. Optim.},
   volume={15},
   date={1987},
   pages={15--63},
}

\bib{DalMaso_Murat_Orsina_Prignet:1999}{article}{
   author={Dal Maso, Gianni},
   author={Murat, Fran{\c{c}}ois},
   author={Orsina, Luigi},
   author={Prignet, Alain},
   title={Renormalized solutions of elliptic equations with general measure
   data},
   journal={Ann. Scuola Norm. Sup. Pisa Cl. Sci. (4)},
   volume={28},
   date={1999},
   pages={741--808},
}

\bib{Devyver:2018}{article}{
   author={Devyver, Baptiste},
   title={Heat kernel and Riesz transform of Schrödinger operators},
	note={To appear in Ann. Inst. Fourier (Grenoble)}
}

\bib{Diaz:2015}{article}{
   author={D\'\i az, Jes\'us Ildefonso},
   title={On the ambiguous treatment of the Schr\"odinger equation for the
   infinite potential well and an alternative via flat solutions: the
   one-dimensional case},
   journal={Interfaces Free Bound.},
   volume={17},
   date={2015},
   pages={333--351},
}

\bib{Diaz:2017}{article}{
   author={D\'\i az, Jes\'us Ildefonso},
   title={On the ambiguous treatment of the Schr\"odinger equation for the
   infinite potential well and an alternative via singular potentials: the
   multi-dimensional case},
   journal={SeMA J.},
   volume={74},
   date={2017},
   pages={255--278},
}

\bib{Dupaigne_Ponce:2004}{article}{
      author={Dupaigne, Louis},
      author={Ponce, Augusto~C.},
       title={Singularities of positive supersolutions in elliptic {PDE}s},
        date={2004},
     journal={Selecta Math. (N.S.)},
      volume={10},
       pages={341\ndash 358},
}

\bib{Goffman_Waterman:1961}{article}{
   author={Goffman, Casper},
   author={Waterman, Daniel},
   title={Approximately continuous transformations},
   journal={Proc. Amer. Math. Soc.},
   volume={12},
   date={1961},
   pages={116--121},
}

\bib{Grun-Rehomme:1977}{article}{
   author={Grun-Rehomme, Michel},
   title={Caract\'erisation du sous-diff\'erentiel d'int\'egrandes convexes dans
   les espaces de Sobolev},
   journal={J. Math. Pures Appl. (9)},
   volume={56},
   date={1977},
   pages={149--156},
}

\bib{Helms:2009}{book}{
   author={Helms, Lester L.},
   title={Potential theory},
   series={Universitext},
   edition={2},
   publisher={Springer, London},
   date={2014},
}

\bib{Kellogg:1929}{book}{
   author={Kellogg, Oliver Dimon},
   title={Foundations of potential theory},
   series={Die Grundlehren der Mathematischen Wissenschaften}
   volume={31},
   publisher={Springer-Verlag, Berlin},
   date={1967},
}

\bib{Littman_Stampacchia_Weinberger:1963}{article}{
   author={Littman, W.},
   author={Stampacchia, G.},
   author={Weinberger, H. F.},
   title={Regular points for elliptic equations with discontinuous
   coefficients},
   journal={Ann. Scuola Norm. Sup. Pisa (3)},
   volume={17},
   date={1963},
   pages={43--77},
}

\bib{Malusa_Orsina:1996}{article}{
   author={Malusa, A.},
   author={Orsina, L.},
   title={Existence and regularity results for relaxed Dirichlet problems with measure data},
   journal={Ann. Mat. Pura Appl. (4)},
   volume={170},
   date={1996},
   pages={57--87},
}

\bib{Maly_Ziemer:1997}{book}{
   author={Mal\'y, Jan},
   author={Ziemer, William P.},
   title={Fine regularity of solutions of elliptic partial differential
   equations},
   series={Mathematical Surveys and Monographs},
   volume={51},
   publisher={American Mathematical Society, Providence, RI},
   date={1997},
   pages={xiv+291},
}

\bib{Murata:1986}{article}{
   author={Murata, Minoru},
   title={Structure of positive solutions to $(-\Delta+V)u=0$ in $\mathbb{R}^n$},
   journal={Duke Math. J.},
   volume={53},
   date={1986},
   pages={869--943},
}

\bib{Orsina_Ponce:2008}{article}{
   author={Orsina, Luigi},
   author={Ponce, Augusto C.},
   title={Semilinear elliptic equations and systems with diffuse measures},
   journal={J. Evol. Equ.},
   volume={8},
   date={2008},
   pages={781--812},
}

\bib{Orsina_Ponce:2016}{article}{
   author={Orsina, Luigi},
   author={Ponce, Augusto C.},
   title={Strong maximum principle for Schr\"{o}dinger operators with singular
   potential},
   journal={Ann. Inst. H. Poincar\'e Anal. Non Lin\'eaire},
   volume={33},
   date={2016},
   pages={477--493},
}

\bib{Orsina_Ponce:2017}{article}{
   author={Orsina, Luigi},
   author={Ponce, Augusto C.},
   title={Hopf potentials for the Schr\"odinger operator},
   journal={Anal. PDE},
   volume={11},
   date={2018},
   pages={2015--2047},
}

\bib{Ponce:2016}{book}{
  author={Ponce, Augusto C.},
  title={Elliptic PDEs, measures and capacities. From the Poisson equation to nonlinear Thomas-Fermi problems},
   series={EMS Tracts in Mathematics},
   volume={23},
   publisher={European Mathematical Society (EMS)},
   place={Zürich},
   date={2016},
}

\bib{Ponce_Wilmet:2017}{article}{
   author={Ponce, Augusto C.},
   author={Wilmet, Nicolas},
   title={Schr\"odinger operators involving singular potentials and measure
   data},
   journal={J. Differential Equations},
   volume={263},
   date={2017},
   pages={3581--3610},
}

\bib{Rauch_Taylor:1975}{article}{
   author={Rauch, Jeffrey},
   author={Taylor, Michael},
   title={Potential and scattering theory on wildly perturbed domains},
   journal={J. Funct. Anal.},
   volume={18},
   date={1975},
   pages={27--59},
}

\bib{Stampacchia:1965}{article}{
      author={Stampacchia, Guido},
       title={Le probl\`eme de {D}irichlet pour les \'equations elliptiques du
  second ordre \`a coefficients discontinus},
        date={1965},
     journal={Ann. Inst. Fourier (Grenoble)},
      volume={15},
       pages={189\ndash 258},
}

\bib{VanSchaftingen_Willem:2008}{article}{
   author={Van Schaftingen, Jean},
   author={Willem, Michel},
   title={Symmetry of solutions of semilinear elliptic problems},
   journal={J. Eur. Math. Soc. (JEMS)},
   volume={10},
   date={2008},
   pages={439--456},
}

\bib{Zhao:1986}{article}{
   author={Zhao, Zhong Xin},
   title={Green function for Schr\"odinger operator and conditioned
   Feynman-Kac gauge},
   journal={J. Math. Anal. Appl.},
   volume={116},
   date={1986},
   pages={309--334},
}

\end{biblist}

\end{bibdiv}

\end{document}